\newcommand{\cmark}{\ding{51}}%
\newcommand{\xmark}{\ding{55}}%
\definecolor{LightCyan}{rgb}{0.88,1,1}
\theoremstyle{plain}
\newtheorem{theorem}{Theorem}[section]
\newtheorem{proposition}[theorem]{Proposition}
\newtheorem{lemma}[theorem]{Lemma}
\newtheorem{corollary}[theorem]{Corollary}
\theoremstyle{definition}
\newtheorem{definition}[theorem]{Definition}
\newtheorem{assumption}[theorem]{Assumption}
\newtheorem{example}[theorem]{Example}
\theoremstyle{remark}
\newtheorem{remark}[theorem]{Remark}
\newcommand{\bE}{\mathbb{E}}
\newcommand{\bR}{\mathbb{R}}
\newcommand{\cA}{\mathcal{A}}
\newcommand{\cB}{\mathcal{B}}
\newcommand{\cG}{\mathcal{G}}
\newcommand{\cO}{\mathcal{O}}
\newcommand{\cS}{\mathcal{S}}
\newcommand{\cX}{\mathcal{X}}
\newcommand{\cY}{\mathcal{Y}}
\newcommand{\cZ}{\mathcal{Z}}
\newcommand{\bZ}{\mathbb{Z}}
\newcommand{\bN}{\mathbb{N}}
\newcommand{\bolA}{\boldsymbol{A}}
\newcommand{\bolalpha}{\boldsymbol{\alpha}}
\newcommand{\bolB}{\mathbf{B}}
\newcommand{\bolC}{\mathbf{C}}
\newcommand{\mbold}{\boldsymbol{d}}
\newcommand{\bole}{\boldsymbol{e}}
\newcommand{\bolg}{\boldsymbol{g}}
\newcommand{\bolL}{\boldsymbol{L}}
\newcommand{\bolp}{\boldsymbol{p}}
\newcommand{\bolpsi}{\boldsymbol{\psi}}
\newcommand{\bolq}{\boldsymbol{q}}
\newcommand{\bolr}{\boldsymbol{r}}
\newcommand{\bolrho}{\boldsymbol{\rho}}
\newcommand{\bolu}{\boldsymbol{u}}
\newcommand{\bolv}{\boldsymbol{v}}
\newcommand{\bolw}{\boldsymbol{w}}
\newcommand{\bolx}{\boldsymbol{x}}
\newcommand{\boly}{\boldsymbol{y}}
\newcommand{\bolZ}{\boldsymbol{Z}}
\newcommand{\bolz}{\boldsymbol{z}}
\newcommand{\bolF}{\boldsymbol{F}}
\newcommand{\bolphi}{\boldsymbol{\phi}}
\newcommand{\bolpi}{\boldsymbol{\pi}}
\newcommand{\bolP}{\mathbf{P}}
\newcommand{\bolQ}{\mathbf{Q}}
\newcommand{\bolR}{\boldsymbol{R}}
\renewcommand{\epsilon}{\varepsilon}
\newcommand{\symrho}{\boldsymbol{\rho}}
\newcommand{\llangle}{\left\langle}
\newcommand{\rrangle}{\right\rangle}
\DeclareMathOperator*{\argmin}{argmin}
\DeclareMathOperator*{\argmax}{argmax}
\title{Optimizing over Multiple Distributions under \\Generalized Quasar-Convexity Condition}
\author{Shihong Ding$^{\dag}$\quad Long Yang$^{\dag}$\quad Luo Luo$^{\ddag}$\quad Cong Fang$^{\dag}$ \\
\\
        \small $^{\dag}$Peking University \quad
         $^{\ddag}$Fudan University 
        \\\\
}
\date{}
\begin{document}
\maketitle
\begin{abstract}
  We study a typical optimization model where the optimization variable is composed of multiple probability distributions. Though the model appears frequently in practice, such as for policy problems, it lacks specific analysis in the general setting. For this optimization problem,  we propose a new structural condition/landscape description named  generalized quasar-convexity (GQC) beyond the realms of convexity. In contrast to original quasar-convexity \citep{hinder2020near}, GQC allows an individual quasar-convex parameter $\gamma_i$ for each variable block $i$ and the smaller of $\gamma_i$ implies less block-convexity. To minimize the objective function, we consider a generalized oracle termed as the internal function that includes the standard gradient oracle as a special case. We provide optimistic mirror descent (OMD) for multiple distributions and prove that the algorithm can achieve an adaptive $\tilde{\mathcal{O}}((\sum_{i=1}^d1/\gamma_i)\epsilon^{-1})$ iteration complexity to find an $\varepsilon$-suboptimal global solution without pre-known the exact values of $\gamma_i$ when the objective admits ``polynomial-like'' structural. Notably, it achieves iteration complexity that does not explicitly depend on the number of distributions and strictly faster $(\sum_{i=1}^d 1/\gamma_i \text{ v.s. } d\max_{i\in[1:d]} 1/\gamma_i)$ than mirror decent methods. We also extend GQC to the minimax optimization problem proposing the generalized quasar-convexity-concavity (GQCC) condition and a decentralized variant of OMD with regularization. Finally, we show the applications of our algorithmic framework on
discounted Markov Decision
Processes problem and Markov games, which bring new insights on the landscape analysis of reinforcement learning.
\end{abstract}

\section{Introduction}
We study a common class of generic minimization problem
\begin{align}\label{minimization-problem}
			\min_{\bolx\in\cX}f(\bolx),
		\end{align}
	where the optimization variable $\bolx$ is composed of $d$ probability distributions $\{\bolx_i\}_{i=1}^d$ and $\cX$ denotes the product space of the $d$ probability simplexes. 
    Problem \eqref{minimization-problem} meets widespread applications in reinforcement learning optimization \citep{sutton1999policy, Agarwal21OnThe, Lan2022Policy}, multi-class classification \citep{Patterson2013Sto} and model selection type aggregation \citep{Juditsky2005LearningBM}. 
    In this paper, we are particularly interested in the case where 
    $d$ is reasonably large and we manage to obtain complexities dependent of $d$ non-explicitly.

    When $f$ is convex with respect to $\bolx$, many efficient algorithms can be powerful tools for solving Problem \eqref{minimization-problem}. One well-known algorithm is mirror descent (MD) \citep{Blair1985ProblemCA} which is based on Bregman divergence. The wide choices of Bregman divergence enable the algorithm to iterate and converge under specifically constrained region \citep{Lan2020First}. In particular, if one applies the usual Euclidean distance, the algorithm reduces to project gradient descent \citep{Lev1966Cons}. One common and more sophisticated selection is the Kullback-Leibler (KL) divergence, the algorithm thereby becoming the variant of multiplicative weights update (MWU) \citep{Littlestone1989TheWM} over probability distribution.

    Turning to the non-convex world, specific analysis for Problem \eqref{minimization-problem} is rare. In general, finding an approximate global solution suffers from the curse of dimensionality \citep{Novak1988DeterministicAS, Nesterov2014IntroductoryLO}. And one interesting direction is to consider suitable relaxations for the desired solutions, such as an approximate local stationary point of smooth functions \citep{Kaplan1998ProximalPM, Forsgren2002InteriorMF}. However, for many cases, local solutions may not be sufficient. Moreover, the algorithms often converge much faster in practice than the theoretic lower bounds in non-convex optimization suggest. This observed discrepancy can be attributed to the fairly weak assumptions underpinning these generic bounds. For example,  many generic non-convex optimization theories, e.g. \citet{Carmon2017LowerBFa, Carmon2017LowerBFb} only focus on the consideration of Lipschitz continuity of the gradient and some higher-order derivatives. In practice, the objective is often more ``structured''.  For example,  the recent progress in neural networks shows that
systems of neural networks approximate convex kernel systems when the model is overparameterized \citep{jacot2018neural}.  As pointed out by \citet{hinder2020near}, much more research is needed to characterize structured sets of functions for which minimizers can be efficiently found; It was also noted by Yurii Nesterov \citep{nesterov2006cubic} that lots of functions are essentially convex; Our work follows this research line.

    We propose generalized quasar-convexity (GQC) for the class of ``structure''. The original quasar-convex functions \citep{hardt2018gradient} is parameterized by a constant $\gamma\in(0,1]$ and requires $f(\bolx) -f(\bolx^*)\leq \frac{1}{\gamma}\langle\nabla f(\bolx), \bolx- \bolx^* \rangle$. These functions are unimodal on all lines that pass through a global minimizer and so all critical points are minimizers. We extend quasar-convexity by introducing individual quasar-convex parameter $\gamma_i$ for each distribution $\bolx_i$. Therefore GQC is parameterized by $d$ constants $\{\gamma_i\}_{i=1}^d$ and implies quasar-convexity in the case $d=1$. 
    The main intuition of the generalization is the observation that $d/\min_{i\in[1:d]}\gamma_i$ often depends on the number of distributions $d$ in real problems, whereas, $\sum_{i=1}^d 1/\gamma_i$ may not. That is to say, the hardness for distribution $i$ diverges according to the magnitude of $\gamma_i$. The larger of $\gamma_i$ implies more convexity and the simpler to solve $\bolx_i$. In general, one always have $\sum_{i=1}^d 1/\gamma_i \leq d\max_{i\in[1:d]} 1/\gamma_i$. In the worst case, $\sum_{i=1}^d 1/\gamma_i$ can be $d$ times smaller than $d\max_{i\in[1:d]} 1/\gamma_i$ (see discussions in Section \ref{GQC-application}), which motivates us to study the GQC condition.
    
    We then study designing efficient algorithms to solve \eqref{minimization-problem}. One simple case is when $\{\gamma_i\}_{i=1}^m$ is pre-known by the algorithms. The possible direction is to impose a $\gamma_i$-dependent update rule, such as by non-uniform sampling.  However, in general cases,  $\{\gamma_i\}_{i=1}^m$ is not known and determining $\{\gamma_i\}_{i=1}^m$ require non-negligible costs. 

    In this paper, we consider a generalized oracle, which we refer to as the internal function. Here the standard gradient oracle can be viewed as a special case of the internal function. We provide the optimistic mirror descent algorithm for multiple distributions, which makes sure that each probability distribution is updated according to its own internal function. We first establish an $\cO((d\gamma_{\mathrm{max}})^{1/2}(\sum_{i=1}^d\gamma_i^{-1})^{3/2}L\epsilon^{-1}\log(N))$ complexity with $N=\max_{i\in[1:d]}n_i$ and $\gamma_{\mathrm{max}}=$ $\max_{i\in[1:d]}\gamma_i$ when $\gamma_{\mathrm{max}}<\infty$. However, such an complexity depends on $d\gamma_{\mathrm{max}}$ and requires the step size rely on pre-known $\gamma_{\mathrm{max}}\sum_{i=1}^d\gamma_i^{-1}$. We then consider $f$ satisfies ``polynomial-like'' structural (see Assumption \ref{ass-0}). We show the assumption can be achieved in a variety of function classes and important machine learning problems. Under the assumption, we show the algorithm can adapt to the values of $\{\gamma_i\}_{i=1}^m$ and guarantees an reduced iteration complexity 
    $\mathcal{O}((\sum_{i=1}^d1/\gamma_i)\epsilon^{-1}\log(N)\log^{4.5}(\epsilon^{-1}))$. 
    In the following, the $\widetilde{\cO}(\cdot)$ notation hides factors that are polynomial in $\log(\epsilon^{-1})$ and $\log(N)$.


We also extend our framework  to the minimax optimization  \begin{align}\label{minimax-problem}
			\min_{\bolx\in\cX}\max_{\boly\in\cY} f(\bolx,\boly),
		\end{align}
where both $\bolx$ and $\boly$ are composed of $d$ probability distributions, and $\cZ=\cX\times\cY$ is a joint region. In the general non-convex and non-concave setting, it is known that finding even an approximated local solution for \eqref{minimax-problem} is computationally intractable \citep{daskalakis2021complexity}. We introduce the generalized quasar-convexity-concavity (GQCC) condition analogous to GQC and demonstrate the feasibility of obtaining an $\epsilon$-approximate Nash equilibrium with $\cO((1-\theta)^{-2.5}\max_{\bolz\in\cZ}(\sum_{i=1}^d\psi_i(\bolz))\epsilon^{-1}$ $\log(M)\log(\epsilon^{-1}))$ iteration complexities, where $\max_{\bolz\in\cZ}(\sum_{i=1}^d\psi_i(\bolz))$ is analogous to $(\sum_{i=1}^d 1/\gamma_i)$ with $\psi_i(\bolz)$ defined in the GQCC condition; $\theta$ is the discount parameter; $M=\max_{i\in[1:d]}\{m_i+n_i\}$. Intuitively, the GQCC condition can be viewed as the generalization of convexity-concavity condition. Similarly, the $\widetilde{\cO}(\cdot)$ notation hides factors that are polynomial in $\log(\epsilon^{-1})$ and $\log(M)$. 

Finally, we demonstrate the applications of our framework. For problem~\eqref{minimization-problem}, we consider both infinite horizon discounted and finite horizon MDPs problem. For problem~\eqref{minimax-problem}, we study the infinite horizon two-player zero-sum Markov games. We prove the learning objectives admit the GQC and GQCC conditions, respectively. This provides new  landscape description for RL problems, thereby bringing new insights. Accordingly,  our algorithms achieve state-of-the-art iteration complexities up to logarithmic factors. We provide $\widetilde{\cO}(\epsilon^{-1})$ iteration bound for finding an $\epsilon$-approximate Nash equilibrium of infinite horizon two-player zero-sum Markov games, which outperforms the $\widetilde{\cO}(|\cS|^{3}\epsilon^{-2})$ bound of \citet{Wei2021LastiterateCO} and the $\widetilde{\cO}(|\cS|\epsilon^{-1})$ bound of \citet{Cen2022FasterLC} by factors of $|\cS|^{3}\epsilon^{-1}$ and $|\cS|$, respectively, up to a logarithmic factor.

   \subsection{Contribution}
    \begin{enumerate}
    \item[(A)] We introduce new structural conditions GQC for minimization problems and GQCC for minimax problems over multiple distributions.
    \item[(B)] We provide adaptive algorithm that achieves $\widetilde{\cO}((\sum_{i=1}^d1/\gamma_i)\epsilon^{-1})$
    iteration complexities to find an $\epsilon$-suboptimal global minimum of ``polynomial-like'' function under GQC. We also provide an implementable minimax algorithm, given a generalized quasar-convex-concave function with proper conditions, uses $\widetilde{\cO}((1-\theta)^{-2.5}$ $ \max_{\bolz\in\cZ}(\sum_{i=1}^d\psi_i(\bolz))$ $\epsilon^{-1})$
    iterations to find an $\epsilon$-approximate Nash equilibrium.
    \item[(C)]  We show that discounted MDP and infinite horizon two-player zero-sum Markov games admit the GQC and GQCC conditions, respectively, and also satisfy our mild assumptions. In addition, we provide $\widetilde{\cO}((1-\theta)^{-2.5}\epsilon^{-1})$
    iteration bound for finding an $\epsilon$-approximate Nash equilibrium of infinite horizon two-player zero-sum Markov games. Detailed comparisons between our method and prior arts are provided in Table \ref{table:list}.

\begin{table}[t]
\centering
\begin{tabular}{cccc}
\toprule
\makecell{Solution\\type}& \makecell{Related\\work}  & \makecell{Iteration\\complexity} & \makecell{Single\\loop}\\
\midrule
\multirow{8}{*}{$\epsilon$-approximate NE} 
& \makecell{ \textbf{\citet{Cen2021FastPE}}\\ \textbf{\citet{chen2021sample}}} & $\widetilde{\cO}\Big(\frac{1}{(1-\theta)^2\epsilon}\Big)$ & \xmark\\
\cmidrule{2-4}
& \textbf{\citet{Wei2021LastiterateCO}} & $\widetilde{\cO} \Big(\frac{|\cS|^3}{(1-\theta)^{8} \epsilon^2}\Big)$& {\color{blue}\cmark}\\
\cmidrule{2-4}
& \textbf{\citet{Cen2022FasterLC}}  & $\widetilde{\cO}\Big(\frac{|\cS|}{(1-\theta)^4\epsilon}\Big)$ & {\color{blue}\cmark}\\
\cmidrule{2-4}
& \textbf{{\color{blue}This Work}} \cellcolor{blue!25}&
$\widetilde{\cO}\Big(\frac{1}{(1-\theta)^{2.5}\epsilon}\Big)$ \cellcolor{blue!25}& 
{\color{blue}\cmark}\cellcolor{blue!25}\\
\bottomrule
\end{tabular}
\caption{Comparison of policy optimization methods for finding an $\epsilon$-approximate NE of infinite horizon two-player zero-sum Markov games in terms of the max-min gap (see Eq.~\eqref{max-min-gap}). Since the iteration complexity of several research works (such as \citet{zhao2021provably}, \citet{alacaoglu2022natural} and \citet{zeng2022regularized}) involve concentrability coefficient and initial distribution mismatch coefficient, we will not delve into them here.}
\label{table:list}
\end{table}

\end{enumerate}

\subsection{Related Works}
    
    \textbf{Minimization: } Convexity condition has been studied at length and plays a critical role in optimizing minimization problems \citep{Rockafellar1970ConvexA, nesterov1983method, HiriartUrruty1993ConvexAA, Rockafellar1998VariationalA, Boyd2005ConvexO, Nocedal2018NumericalO}.
    Several other ``convexity-like'' conditions have attracted considerable attention, which provide opportunity for designing algorithmic framework to achieve global convergence. Star-convexity \citep{nesterov2006cubic} is a typical example that relaxes convexity, showing potential in machine learning recently \citep{Kleinberg2018AnAV, Zhou2019SGDCT}. Quasi-convexity, which admits that  the highest point along any line segment is one of the endpoints, is also an important condition \citep{Boyd2005ConvexO}. Following this, the concept of weak quasi-convexity is proposed by \citet{hardt2018gradient} which is an extension of star-convexity in the differentiable case, and \citet{hinder2020near} provides lower bound for the number of gradient evaluations  to
    find an $\epsilon$-minimizer of a quasar-convex function (a linguistically clearer redefinition of weak quasi-convex function claimed by \citet{hinder2020near} ). 
    

    \textbf{Minimax Optimization: } Minimax problem attracted considerable attention in machine learning. There exist a variety of algorithms to find the approximate Nash equilibrium points \citep{TSENG1995237, ProxNem, Nesterov2006SolvingSM, Nesterov2007DualEA,lin20aNear, korpelevich1977extragradient, popov1980modification,wang2020improved,huang2021efficient} or stationary points \citep{yoon2021accelerated} for  convex-concave functions.     
    Without convex-concave assumption, there exist related work considered specific structures in objective, including nonconvex-(strongly-)concave assumption \citep{lin2020gradient,zhang2020single,Noui19Solving}, Kurdyka--Lojasiewicz condition (or specific PL condition) \citep{yang2020global,chen2022faster,yang2022faster,li2022nonsmooth}, interaction dominant condition \citep{grimmer2023landscape} and negative comonotonicity \citep{diakonikolas21Effi,lee2021fast}.    
     
    \textbf{RL Landscape Descriptions: }
    For the policy gradient based model of infinite horizon reinforcement learning problems, 
    \citet{Agarwal21OnThe} 
    provides a convergence proof for the natural policy gradient descent, which is the same as the mirror descent-modified policy iteration algorithm \citep{Geist2019ATO} with negative entropy as the Bregman divergence. Subsequently, \citet{Lan2022Policy} focuses on exploring the structural properties of infinite horizon reinforcement learning problems with convex regularizers. For two-player zero-sum Markov games \citep{Shapley1953StochasticG,Littman1994MarkovGA} under full information setting, there are various algorithms \citep{Hoffman1966OnNS, Pollatschek1969AlgorithmsFS, Wal1978DiscountedMG, Filar1991OnTA, Littman1994MarkovGA, Wei2021LastiterateCO, Cen2021FastPE, Zhao2021ProvablyEP, yang2022t} have been proposed. Specifically, \citet{Cen2021FastPE} focus on finding approximate minimax soft $Q$-function in regularized infinite horizon setting; \citet{Zhao2021ProvablyEP} focus on finding one-sided approximate Nash equilibrium in standard infinite horizon setting with $\tilde{\cO}(\epsilon^{-1})$ iteration bound which depends on the concentrability coefficient; \citet{yang2022t} focus on finding approximate Nash equilibrium in standard finite horizon setting with $\tilde{\cO}(\epsilon^{-1})$ iteration bound.


    \textbf{Related Works on Optimistic Mirror Descent (OMD) and Optimistic Multiplicative Weights Update (OMWU): } 
    The connection between online learning and game theory \citep{Robinson1951ANIM, Blackwell1956Anao, Hart2000ASA, Abernethy2010BlackwellAA} has since led to the discovery of broad learning algorithms such as multiplicative weights update (MWU) \citep{Littlestone1989TheWM}. \citet{Rakhlin2013Opt} introduces an optimistic variant of online mirror descent \citep{Rakhlin2012OnlineLW,chiang2012online}--optimistic mirror descent. 
    \citet{Daskalakis2021Near} shows that the external regret of each player achieves near-optimal growth in multi-player general-sum games, with all players employ the optimistic multiplicative weights update. 

\section{Preliminary}\label{sec:Preliminary}
    \textbf{Notation: } 
    Let $\bolx=(\bolx_1,\cdots,\bolx_d)\in\bR^{\sum_{i=1}^d n_i}$ be the joint vector variable, for every vector variable $\bolx_i\in\bR^{n_i}$.
    Let $\bolalpha=(\bolalpha(1),\cdots,\bolalpha(n))$ be the multi-indices, where $\bolalpha(i)\in\mathbb{Z}_+$, we define $|\bolalpha|=\sum_{i=1}^{n}\bolalpha(i)$ and $ \bolalpha!=\bolalpha(1)!\cdots\bolalpha(n)!$. For any vector $\bolu=(\bolu(1),\cdots,\bolu(n))\in\bR^n$, we define $\bolu^{\alpha}=\bolu(1)^{\alpha(1)}\cdots \bolu(n)^{\alpha(n)}$. Let $f:\mathbb{R}^n\rightarrow\mathbb{R}$ be a smooth function, we expand its Taylor expansion with Lagrange remainder $R_{K,\bolw}^f(\bolu)$ as follows,
	\begin{align}\label{remainder}
        \small
		R_{K,\bolw}^f(\bolu)=f(\bolu)-\sum_{i=0}^{K}\sum_{|\bolalpha|=i}\frac{D^{\bolalpha}f(\bolw)}{\bolalpha !}\cdot(\bolu-\bolw)^{\bolalpha}.
	\end{align}
    Given matrices $\bolQ$ and $\bolP$ in $\bR^{\ell_1\times \ell_2}$ we claim that $\bolQ\leq \bolP$ if $[\bolQ]_{i,j}-[\bolP]_{i,j}\leq0$ for every $i,j$. For a sequence of vector-valued functions $\{\bolF_i\}_{i=1}^d$, we say that $\{\bolF_i\}_{i=1}^d$ is uniformly $L$-Lipschitz continuous with respect to $\|\cdot\|_{'}$ under $\|\cdot\|$ if $\left\|\bolF_i\left(\bolx_i\right)-\bolF_i\left(\bolu_i\right)\right\|_{'}\leq L\|\bolx_i-\bolu_i\|$ for every $i\in[1:d]$ and any $\bolx,\bolu\in\cX$. We denote by $\|\cdot\|_*$ the dual norm of $\|\cdot\|$. Let $\bolP:\bR^{\ell_1\times \ell_2}\rightarrow\bR^{n_1\times n_2}$ be a matrix function, we say that $\bolP$ is a $\theta$-contraction mapping under $\|\cdot\|$ if $\|\bolP(\bolQ_1)-\bolP(\bolQ_2)\|_{\infty}\leq\theta\|\bolQ_1-\bolQ_2\|$ for any $\bolQ_1,\bolQ_2\in\bR^{\ell_1\times \ell_2}$. For matrix-valued function $\bolP:\bR^n\rightarrow\bR^{\ell_1\times\ell_2}$,we define $\mathbf{D}_{\bolP}(\bolx,\bolx')=\bolP(\bolx)-\bolP(\bolx')$ for any $\bolx,\bolx'\in\bR^n$. The KL divergence $\mathrm{KL}(\bolp\|\bolq)=\sum_{j=1}^n\bolp(j)\cdot\log\left(\frac{\bolp(j)}{\bolq(j)}\right)$ between distributions $\bolp$ and $\bolq$ is defined on probability simplex $\Delta_n$. And the variance of $\bolx$ over $\bolp$ is defined by $\mathrm{Var}_{\bolp}(\bolx)=\sum_{j=1}^n\bolp(j)\cdot\left(\bolx(j)-\bE_{j'\sim\bolp}[\bolx(j')]\right)^2$. We define max-min gap of function $f:\cX\times\cY\rightarrow\bR$ as follows,
    \begin{align}\label{max-min-gap}     
    \small
    \cG_f(\bolx,\boly):=\max\limits_{\boly'\in\cY}f(\bolx,\boly')-\min\limits_{\bolx'\in\cX}f(\bolx',\boly).
    \end{align}
    We claim that $(\bolx,\boly)$ is an $\epsilon$-approximate Nash equilibrium ($\epsilon$-approximate NE) if $\cG_f(\bolx,\boly)\leq\epsilon$. When $\epsilon=0$, $(\bolx,\boly)$ is a Nash equilibrium.
    
    \textbf{Infinite Horizon Discounted Markov Decision Process: } We consider the setting of an infinite horizon discounted Markov decision process (MDP), denoted by $\mathcal{M}:=(\mathcal{S},\mathcal{A}, \mathbb{P}, \sigma, \theta, \bolrho_0)$. $\mathcal{S}$ is a finite state space; $\mathcal{A}$ is a finite action space; $\mathbb{P}(s|s',a')$ denotes the probability of transitioning from $s$ to $s'$ under playing action $a'$; $\sigma:\mathcal{S}\times\mathcal{A}\rightarrow[0,1]$ is a cost function, which quantifies the cost associated with taking action $a$ in state $s$; $\theta\in[0,1)$ is a discount factor; $\bolrho_0$ is an initial state distribution over $\mathcal{S}$. 
    
    $\boldsymbol{\pi}:\cS\rightarrow\Delta_{\cA}$ (where $\Delta_{\cA}$ is the probability simplex over $\cA$) denotes a stochastic policy, i.e., the agent play actions according to $a\sim\boldsymbol{\pi}(\cdot|s)$. We use $\boldsymbol{\mathrm{Pr}}^{\boldsymbol{\pi}}_{t}(s'|s)=\boldsymbol{\mathrm{Pr}}^{\boldsymbol{\pi}}(s_t=s'|s_0=s)$ to denote the probability of visiting the state $s'$ from the state $s$ after $t$ time steps according to policy $\bolpi$. Let trajectory $\tau=\{(s_t,a_t)\}_{t=0}^{\infty}$,  where $s_0\sim \bolrho_0$, and, for all subsequent time steps $t$, $a_t\sim\boldsymbol{\pi}(\cdot|s_t)$ and $s_{t+1}\sim \mathbb{P}(\cdot|s_t,a_t)$.
    The value function $V^{\boldsymbol{\pi}}:\mathcal{S}\rightarrow\mathbb{R}$ is defined as the discounted sum of future cost starting at state $s$ and executing $\boldsymbol{\pi}$, i.e.
    \begin{align}
        \small
        V^{\boldsymbol{\pi}}(s)=(1-\theta)\mathbb{E}\left[\left.\sum_{t=0}^{\infty}\theta^t \sigma(s_t,a_t)\right|\boldsymbol{\pi},s_0=s\right].\notag
    \end{align}
    Moreover, we define the action-value function $Q^{\boldsymbol{\pi}}:\mathcal{S}\times\mathcal{A}\rightarrow\mathbb{R}$ and the advantage function $A^{\boldsymbol{\pi}}:\mathcal{S}\times\mathcal{A}\rightarrow\mathbb{R}$ as follows:
    \begin{align}
        \small
        Q^{\boldsymbol{\pi}}(s,a)=(1-\theta)\mathbb{E}\left[\left.\sum_{t=0}^{\infty}\theta^t \sigma(s_t,a_t)\right|\boldsymbol{\pi},s_0=s,a_0=a\right],\, \, \, A^{\boldsymbol{\pi}}(s,a)=Q^{\boldsymbol{\pi}}(s,a)-V^{\boldsymbol{\pi}}(s).\notag
    \end{align}
    It's also useful to define the discounted state visitation distribution $\mbold_{s_0}^{\boldsymbol{\pi}}$ of a policy $\boldsymbol{\pi}$ as $\mbold_{s_0}^{\boldsymbol{\pi}}(s)=(1-\theta)\sum_{t=0}^{\infty}\theta^t \boldsymbol{\mathrm{Pr}}_t^{\boldsymbol{\pi}}(s|s_0)$.
    In order to simplify notation, we write $\mbold_{\bolrho_0}^{\boldsymbol{\pi}}(s)=\bE_{s_0\sim \bolrho_0}[\mbold_{s_0}^{\boldsymbol{\pi}}(s)]$, where $\mbold_{\bolrho_0}^{\boldsymbol{\pi}}$ is the discounted state visitation distribution under initial distribution $\bolrho_0$.

\section{Minimization Optimization}\label{mini}
In this section, we propose the generalized quasar-convexity (GQC) condition, and analyze a related algorithmic framework for minimization over $\cX=\prod_{i=1}^d\Delta_{n_i}$, under mild assumptions. 
    \subsection{Generalized Quasar-Convexity (GQC)}\label{GQC-discussion}
    We provide a novel depiction of function structure--generalized quasar-convexity, which is defined as follows:
    \begin{definition}[Generalized Quasar-Convexity (GQC)]\label{def-GQC}
        Let $\bolx^*\in\cX\subset\bR^{\sum_{i=1}^d n_i}$ be a minimizer of the function $f:\cX\rightarrow\bR$. We say that $f$ is generalized quasar-convex on $\cX$ with respect to $\bolx^*$ if for all $\bolx\in\cX$, there exist a sequence of vector-valued functions $\{\bolF_i:\cX\rightarrow\bR^{n_i}\}_{i=1}^d$ and a sequence of positive scalars $\{\gamma_i\}_{i=1}^d$ such that
        \begin{align}\label{GQC-equation}
            f(\bolx^*)\geq f(\bolx)+\sum_{i=1}^d \frac{1}{\gamma_i}\llangle \bolF_i(\bolx),\bolx_i^*-\bolx_i\rrangle.
        \end{align}
        If Eq.~\eqref{GQC-equation} holds, we say that $\bolF=(\bolF_1^{\top},\cdots,\bolF_d^{\top})^{\top}$ is the internal function of $f$. Given $i\in[1:d]$ we say that $\bolF_i$ is the internal function of $f$ for variable block $\bolx_i$. 
    \end{definition}
    Our proposed GQC condition concerns the multi-variable generalized extension of the quasar-convexity condition. In the case $d=1$, the GQC condition degenerates into the $\gamma$-quasar-convexity condition as studied in \citet{hinder2020near} with the gradient $\nabla f(\bolx)$ belongs to the internal functions of $f$.  
    In the case $d>1$, the GQC condition is instrumental in capturing the crucial characteristic of those optimization applications with each variable block has difficulty to be optimized. 
	
    \subsection{Main Results}\label{main-minimization-result}


    Recall that GQC condition provides a perspective to bound function error $f(\bolx)-f(\bolx^*)$ based on internal function, which is different from that based on gradient oracle. We therefore aim to provide an algorithmic framework for finding an approximate suboptimal global solution using internal function. Given an objective function $f:\cX\rightarrow\bR$ with internal function $\bolF$, our algorithm (Algorithm \ref{optimistic-hedge-minimization}) independently computes points $\bolg_i^t$ and $\bolx_i^t$ following OMD over each block. If $\max_{i\in[1:d]}\gamma_i<\infty$ and internal function $\bolF$ has Lipschitz continuity, we have following basic and primary convergence result of Algorithm \ref{optimistic-hedge-minimization},
    \begin{theorem}\label{trival-convergence-minimization}
        Assuming that $\bolF$ is $L$-Lipschitz continuous with respect to $\|\cdot\|_*$ under $\|\cdot\|$ and $\gamma_{\mathrm{max}}=\max_{i\in[1:d]}\gamma_i<\infty$, and setting $\eta=(L^2d\gamma_{\mathrm{max}}\sum_{i=1}^d \gamma_i^{-1})^{-1/2}/2$, we have
        \begin{align}
            \frac{1}{T}\sum_{t=1}^T(f(\bolx^t)-f(\bolx^*))\leq\frac{2L\max_{i\in[1:d]}\log(n_i)\left(d\gamma_{\mathrm{max}}\right)^{1/2}\left(\sum_{i=1}^d\gamma_i^{-1}\right)^{3/2}}{T}.
        \end{align}
    \end{theorem}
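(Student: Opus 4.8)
The plan is to reduce the averaged optimality gap to a $\{1/\gamma_i\}$-weighted sum of per-block online-learning regrets, and then control each regret with an optimistic-mirror-descent estimate whose negative ``stability'' terms exactly absorb the internal-function-variation terms for the prescribed step size. First I would invoke the GQC inequality \eqref{GQC-equation} at each iterate $\bolx^t$ and rearrange it into
\begin{align}
    f(\bolx^t)-f(\bolx^*)\leq\sum_{i=1}^d\frac{1}{\gamma_i}\llangle\bolF_i(\bolx^t),\bolx_i^t-\bolx_i^*\rrangle, \notag
\end{align}
so that averaging over $t=1,\dots,T$ turns the left-hand side of the claim into $\frac1T\sum_{i}\frac{1}{\gamma_i}\sum_t\langle\bolF_i(\bolx^t),\bolx_i^t-\bolx_i^*\rangle$. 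Each inner sum is precisely the linear regret of the per-block optimistic update of Algorithm~\ref{optimistic-hedge-minimization} played against the internal loss vectors $\bolF_i(\bolx^t)$, so the whole problem becomes a weighted regret bound.

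Next, for each block $i$ I would apply the standard optimistic mirror descent (optimistic hedge) regret estimate on the simplex $\Delta_{n_i}$ with the negative-entropy mirror map and the previous internal value $\bolF_i(\bolx^{t-1})$ as the optimistic prediction. Using $1$-strong convexity of negative entropy with respect to $\|\cdot\|_1$ and uniform initialization (so that $\mathrm{KL}(\bolx_i^*\|\bolx_i^1)\leq\log n_i$), this yields an RVU-type bound
\begin{align}
    \sum_{t=1}^T\llangle\bolF_i(\bolx^t),\bolx_i^t-\bolx_i^*\rrangle\leq\frac{\log n_i}{\eta}+\eta\sum_{t=1}^T\|\bolF_i(\bolx^t)-\bolF_i(\bolx^{t-1})\|_*^2-\frac{1}{4\eta}\sum_{t=1}^T\|\bolx_i^t-\bolx_i^{t-1}\|_1^2, \notag
\end{align}
with the $t=1$ boundary contribution handled by the usual convention $\bolF_i(\bolx^0)=\bolF_i(\bolx^1)$.

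I would then weight the $i$-th bound by $1/\gamma_i$ and sum over $i$. The leading terms contribute $\sum_i\frac{\log n_i}{\gamma_i\eta}\leq\frac{\max_i\log n_i}{\eta}\sum_i\frac{1}{\gamma_i}$, which, after inserting $\eta^{-1}=2L(d\gamma_{\mathrm{max}}\sum_i\gamma_i^{-1})^{1/2}$, already produces exactly the stated numerator $2L\max_i\log n_i\,(d\gamma_{\mathrm{max}})^{1/2}(\sum_i\gamma_i^{-1})^{3/2}$. It then remains to show the residual variation terms are nonpositive: bounding $\|\bolF_i(\bolx^t)-\bolF_i(\bolx^{t-1})\|_*^2\leq L^2\|\bolx^t-\bolx^{t-1}\|^2$ by uniform Lipschitzness and expanding the global norm via Cauchy--Schwarz into the sum of squared block movements (which is where the factor $d$ enters), the aggregated positive part is at most $\eta L^2 d(\sum_i\gamma_i^{-1})\sum_{t,j}\|\bolx_j^t-\bolx_j^{t-1}\|_1^2$, while the aggregated negative part is at least $\frac{1}{4\eta\gamma_{\mathrm{max}}}\sum_{t,j}\|\bolx_j^t-\bolx_j^{t-1}\|_1^2$; the choice of $\eta$ makes these coefficients coincide, so the net variation is $\le 0$ and only the leading term survives.

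The main obstacle I anticipate is the norm bookkeeping in this final cancellation. The factor $d$, and hence the $d\gamma_{\mathrm{max}}$ appearing inside $\eta$, arises exactly from passing from the global Lipschitz estimate to the sum of per-block squared movements through Cauchy--Schwarz, so I must keep the block-wise $\ell_1$ norms aligned with the $\ell_1$-strong convexity of the entropy and verify that the relaxation $\frac{1}{\gamma_i}\geq\frac{1}{\gamma_{\mathrm{max}}}$ is applied in the correct (negative-term) direction so that the stability terms genuinely dominate across all $d$ blocks simultaneously. A secondary technical point is establishing the RVU inequality with the precise $\tfrac14$ constant and disposing of the initial-iterate term; both are routine once the optimistic update of Algorithm~\ref{optimistic-hedge-minimization} is written in mirror-descent form.
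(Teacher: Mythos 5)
Your proposal is correct and follows essentially the same route as the paper's proof (Theorem \ref{trival-convergence-result-general}): the GQC reduction to $1/\gamma_i$-weighted per-block regrets, an optimistic-mirror-descent regret bound whose negative stability terms are retained, the Lipschitz-plus-Cauchy--Schwarz step that introduces the factor $d$, the relaxation $1/\gamma_i\geq 1/\gamma_{\mathrm{max}}$ on the negative terms, and the step size chosen so the variation terms cancel. The only difference is presentational: you invoke the standard RVU-type bound for optimistic hedge, while the paper derives the same inequality in-line via the three-term decomposition $\mathcal{I}+\mathcal{II}+\mathcal{III}$ and the three-point lemma, with the boundary term killed by the initialization $\bolg_i^0=\bolx_i^0$ rather than your convention $\bolF_i(\bolx^0)=\bolF_i(\bolx^1)$.
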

    However, the estimation provided by Theorem \ref{trival-convergence-minimization} depends on $d \gamma_{\mathrm{max}}$. And the step size relying on $\gamma_{\mathrm{max}}\left(\sum_{i=1}^d\gamma_i^{-1}\right)$ might be difficult to set when $\{\gamma_i\}_{i=1}^d$ is unknown.

   
    We then hope to propose an alternative analytical method that can adapt to unknown $\{\gamma_i\}_{i=1}^d$ and obtain complexity which does not depends on block dimension $d$ explicitly.  
    The challenges  includes:  1) The algorithm does not know the weight $1/\gamma_i$; 2) every $\bolF_i$ has dependence on the joint variable $\bolx$ instead of depending on $\bolx_i$. Before we present the details of convergence analysis, we need the following notations and assumptions: 
    
    Denote $P_{K,\boly}^f(\bolx))=\sum_{i=0}^{K}\sum_{|\bolalpha|=i}\frac{\left|D^{\bolalpha}f(\boly)\right|}{\bolalpha !}\cdot(|\bolx|+|\boly|)^{\bolalpha}$ and let $\boldsymbol{P}_{K,\boly}^{\bolphi}(\bolx)=(P_{K,\boly}^{\bolphi(1)}(\bolx),\cdots,$ $P_{K,\boly}^{\bolphi(\ell)}(\bolx))$ for any vector-valued function $\bolphi:\bR^n\rightarrow\bR^{\ell}$. Recalling the definition of $R_{K,\bolw}^f$ in Eq.~\eqref{remainder}, we shall also define $\bolR_{K,\boly}^{\bolphi}(\bolx)=(R_{K,\boly}^{\bolphi(1)}(\bolx),\cdots,R_{K,\boly}^{\bolphi(\ell)}(\bolx))$.

    \begin{algorithm*}[tb]
    \small
    \caption{Optimistic Mirror Descent for Multi-Distributions}\label{optimistic-hedge-minimization}
    \hspace*{0.02in}{\bf Input:} $ \left\{\bolg_i^0=\bolx_i^0=\left({1}/{n_i},\cdots,{1}/{n_i}\right)\right\}_{i=1}^d$, $\eta$ and $T$.
    \\
    \hspace*{0.02in}{\bf Output:} Randomly pick up $t\in\{1,\cdots,T\}$ following the probability $\mathbb{P}[t]={1}/{T}$ and return $\bolx^t$.
  
	\begin{algorithmic}[1]
		\WHILE{$t\leq T$}
		\FORALL{$i\in[1:d]$}
		\STATE  
            $\bolx_i^t=\argmin\limits_{\bolx_i\in\Delta_{n_i}}\, \eta\left\langle\bolF_i(\bolx^{t-1}),\bolx_i\right\rangle+\mathrm{KL}\left(\bolx_i\left.\|\bolg_i^{t-1}\right)\right.,$\notag
            \STATE
            $\bolg_i^t=\argmin\limits_{\bolg_i\in\Delta_{n_i}}\, \eta\left\langle \bolF_i(\bolx^t),\bolg_i\right\rangle+\mathrm{KL}\left(\bolg_i\left\|\bolg_i^{t-1}\right)\right..$\notag
		\ENDFOR
		\STATE $t\leftarrow t+1.$
		\ENDWHILE
	\end{algorithmic}
    \end{algorithm*}
    
    \begin{assumption}\label{ass-0}
        Let $\bolF$ be the internal function of $f$. There exists $\Theta_1, \Theta_2>0$, $K_0\in\bZ_+$, and $\theta\in[0,1)$, and a fixed $\boly\in\bR^{\sum_{i=1}^d n_i}$ such that
        \begin{enumerate}
            \item[\textbf{[A$_\text{1}$]}]\label{ass-0-1-main}$\left\|\bolR_{K,\boly}^{\bolF}(\bolx)\right\|_{\infty}\leq\Theta_1\theta^K$ for any integer $K>K_0$ and $\bolx\in\cX$.
            \item[\textbf{[A$_\text{2}$]}]\label{ass-0-2-main} $\left\|\boldsymbol{P}_{K,\boly}^{\bolF}(\bolx)\right\|_{\infty}\leq\Theta_2$ \, for any integer $K\in\bZ_+$ and $\bolx\in\cX$.
        \end{enumerate}
    \end{assumption}
    Assumption \ref{ass-0} is a characterization of ``polynomial-like'' functions. We clarify this view as follows. For a standard polynomial function $p$, it's clear that $p$ satisfies Assumption \ref{ass-0}, 
    since the Taylor expansion of $p$ after order $K_0$ is always equal to 0 (\textbf{[A$_\text{1}$]} in Assumption \ref{ass-0} holds) and $\cX$ is a bounded and closed set (\textbf{[A$_\text{2}$]} in Assumption \ref{ass-0} holds). 
    Assumption \ref{ass-0} is easy to achieve. Shown in Proposition \ref{proposition-assumption-smooth-gradient} and Remark \ref{remark-support} in Appendix \ref{main-proof-of-minimization}, Assumption \ref{ass-0} can be  satisfied by many smooth functions defined on bounded region $\cX$. In addition, we introduce a simple machine learning example: 
     learning one single neuron network over a simplex in the realizable setting. 
    \vspace{0.1in}   
    \begin{example}\label{example-1}
        The objective function is written as $f(\bolp,\bolP)=\frac{1}{2}\bE_{\bolx,y}(\sum_{i=1}^m\bolp_i\sigma(\bolx^\top \bolP_{i})-y)^2 $, where $\bolp\in \Delta_m$ and $\bolP = (\bolP_1,\cdots,\bolP_m)\in\prod_{i=1}^m\Delta_d $ and the target $y$ given $\bolx\in[-C,C]^{d}$ admits $y = \sigma(\bolx^\top\bolP_1^*)$ for some $\bolP_1^*\in \Delta_d$. For activation function $\sigma(x) = \exp\{x\}$, $f$ satisfies GQC condition and Assumption \ref{ass-0} with the internal functions $\bolF_{\bolp}=\{\bE[(\sum_{j=1}^m\bolp_j\sigma(\bolx^{\top}\bolP_j)-y)\sigma(\bolx^{\top}\bolP_i)]\}_{i=1}^m$ for block $\bolp$ and $\bolF_{\bolP_{i}}=\bE[(\sigma(\bolx^{\top}\bolP_i)-y)\bolx]$ for block $\bolP_i$.    
    \end{example}
    Note previous work \citep{Vardi2021LearningAS} studies single neuron learning by considering $\bolP_1^*$ in the sphere and assuming $\bolx$ follows from a Gaussian distribution. To our knowledge, there is no evidence shows that objective function of Example \ref{example-1} has quasar-convexity.  This example demonstrates the advantage of studying the GQC framework over the previous approach. The proof of Example \ref{example-1} is in Section \ref{simple-example}.
    
    \textbf{Parameter Setting}\, \, Before stating the convergence result, we set the parameters as follows:
    \begin{equation}\label{parameter-selecting-minimization}
        \small
        \begin{split}
            \Theta&=\Theta_1+\Theta_2+1,\quad H=\lceil\log(T)\rceil,\quad \beta_0=(4H)^{-1},\quad \beta=\min\left\{\frac{\sqrt{\beta_0/8}}{H^{3}},\frac{1}{2\Theta (H+3)}\right\},
            \\
            \Gamma&=e^2+\cO(\Theta_2),\quad 
            \hat{K}=\max\left\{\frac{H\log(4\beta^{-1})+\log(\Theta_1)}{\log(\theta^{-1})},K_0\right\},\quad \eta=\min\left\{\frac{\beta}{6e^3\hat{K}\Gamma\max\{\Theta, 1\}}, \frac{\beta_0^4}{\cO(\Theta)}\right\}.
        \end{split}
    \end{equation}
    
    \begin{theorem}\label{main-theorem-minimization}
		Let $f$ satisfies the GQC condition and denote $N=\max_{i\in[1:d]}\{n_i\}$. Under Assumption \ref{ass-0}, the following estimation holds for Algorithm~\ref{optimistic-hedge-minimization}'s output $\{\bolx^t\}_{t=1}^T$
		\begin{align}
			\frac{1}{T}\sum_{t=1}^T(f(\bolx^t)-f(\bolx^*))\leq&\left(\sum_{i=1}^d 1/\gamma_i\right)\left[\frac{1}{\eta}\log(N)+\eta\Theta^3(6+330240\Theta H^5)\right]T^{-1},
		\end{align}
	\end{theorem}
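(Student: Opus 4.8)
The plan is to reduce the suboptimality gap to a weighted sum of single-block optimistic-mirror-descent regrets, and then to control the prediction-error (variation) terms through the polynomial-like structure of Assumption~\ref{ass-0} so that the cross-block coupling collapses into the factor $\sum_{i=1}^d 1/\gamma_i$ rather than $d\,\gamma_{\mathrm{max}}$. First I would invoke the GQC condition \eqref{GQC-equation} at each iterate $\bolx^t$ with comparator $\bolx^*$, giving $f(\bolx^t)-f(\bolx^*)\le \sum_{i=1}^d \tfrac{1}{\gamma_i}\langle \bolF_i(\bolx^t),\bolx_i^t-\bolx_i^*\rangle$. Averaging over $t=1,\dots,T$ reduces the claim to bounding, for each block $i$, the regret $\mathrm{Reg}_i:=\sum_{t=1}^T \langle \bolF_i(\bolx^t),\bolx_i^t-\bolx_i^*\rangle$ of the two-step update in Algorithm~\ref{optimistic-hedge-minimization} against the fixed comparator $\bolx_i^*\in\Delta_{n_i}$, since the stated bound is exactly $\tfrac1T\sum_i\tfrac1{\gamma_i}\mathrm{Reg}_i$ with each $\mathrm{Reg}_i$ bounded by the bracketed quantity.

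Next I would establish the per-block RVU-type bound for optimistic mirror descent with the entropic (KL) regularizer. Treating $\bolF_i(\bolx^{t})$ as the realized vector and $\bolF_i(\bolx^{t-1})$ as its optimistic prediction, the standard three-point / generalized-Pythagorean analysis yields
\begin{align*}
\mathrm{Reg}_i\le \frac{\mathrm{KL}(\bolx_i^*\|\bolg_i^0)}{\eta}+\eta\sum_{t=1}^T\big\|\bolF_i(\bolx^t)-\bolF_i(\bolx^{t-1})\big\|_\infty^2-\frac{1}{4\eta}\sum_{t=1}^T\Big(\|\bolx_i^t-\bolg_i^{t-1}\|_1^2+\|\bolx_i^t-\bolg_i^t\|_1^2\Big).
\end{align*}
Because $\bolg_i^0$ is uniform, $\mathrm{KL}(\bolx_i^*\|\bolg_i^0)\le \log n_i\le \log N$, which produces the $\tfrac1\eta\log(N)$ term. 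It then remains to absorb the positive variation term into the negative movement terms at the cost of a residual of order $\eta\Theta^3(6+330240\Theta H^5)$ per block.

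The crux is the variation estimate, and here Assumption~\ref{ass-0} replaces the crude Lipschitz bound used in Theorem~\ref{trival-convergence-minimization}. I would first replace $\bolF_i$ by its degree-$\hat K$ Taylor polynomial about $\boly$; by \textbf{[A1]} the discarded remainder obeys $\|\bolR^{\bolF}_{\hat K,\boly}\|_\infty\le \Theta_1\theta^{\hat K}$, and the choice of $\hat K$ in \eqref{parameter-selecting-minimization} drives this below the $\beta$-scale tolerance, so it contributes only lower-order terms. For the polynomial part, the entropic updates are \emph{multiplicatively} stable: one shows $\log \bolx_i^t-\log\bolx_i^{t-1}$ is coordinatewise $\cO(\eta\Theta)$ (using \textbf{[A2]} to bound $\|\bolF_i\|_\infty\le\Theta_2$), so each monomial of degree $\le\hat K$ changes by a factor $\exp(\cO(\eta\hat K\Theta))$ close to $1$. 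Expanding this factor and summing monomial contributions, with the aggregate coefficient mass controlled by $\|\boldsymbol{P}^{\bolF}_{\hat K,\boly}\|_\infty\le\Theta_2$ from \textbf{[A2]}, bounds $\|\bolF_i(\bolx^t)-\bolF_i(\bolx^{t-1})\|_\infty^2$ by a $\Theta^2$-multiple of the local (variance-weighted) block movements, up to higher powers of $\eta\hat K\Theta$.

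The decisive and hardest step is the decoupling: after multiplying the per-block variation estimate by $1/\gamma_i$ and summing, I must charge the cross-block movements back to the \emph{same} block's negative term $-\tfrac{1}{4\gamma_i\eta}\|\bolx_i^t-\bolg_i^{t-1}\|_1^2$, so the coupling aggregates to $\sum_{i=1}^d 1/\gamma_i$ rather than $d\,\gamma_{\mathrm{max}}$. This is exactly where the bounded coefficient mass of \textbf{[A2]} is essential, since it caps the total influence of any single block on all the $\bolF_i$'s and thereby prevents a dimension blow-up; it is the structural gain over the generic Lipschitz estimate behind Theorem~\ref{trival-convergence-minimization}. Concretely I would set up a self-bounding (bootstrapping) recursion in which the movement at step $t$ is controlled by the variation, which is in turn controlled by the movement; iterating this over the $H=\lceil\log T\rceil$ dyadic levels, and using the small step size $\eta$ of \eqref{parameter-selecting-minimization} to ensure that at each level the positive variation is dominated by the negative movement, produces the accumulated constant $\Theta^3(6+330240\Theta H^5)$ with its $H^5$ factor. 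Combining the per-block bounds with the weights $1/\gamma_i$ and dividing by $T$ gives the stated inequality. I expect this decoupling/bootstrapping argument to be the main obstacle: the difficulty is that every $\bolF_i$ depends on the \emph{joint} variable $\bolx$, yet the dependence on the number of blocks must remain implicit inside $\sum_{i=1}^d 1/\gamma_i$.
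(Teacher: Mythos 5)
Your overall scaffolding matches the paper — the GQC reduction to weighted per-block regrets, the $\log N$ from the uniform initialization, the Taylor truncation of $\bolF_i$ with remainder $\Theta_1\theta^{\hat K}$ via \textbf{[A$_\text{1}$]}, and the multiplicative stability of the entropic iterates (the paper's Lemma \ref{consecutively-x}) — but the decisive step fails as proposed. You start from the standard RVU bound whose negative terms are iterate movements $-\frac{1}{4\eta}\left(\|\bolx_i^t-\bolg_i^{t-1}\|_1^2+\|\bolx_i^t-\bolg_i^t\|_1^2\right)$ and then propose to charge the variation $\|\bolF_i(\bolx^t)-\bolF_i(\bolx^{t-1})\|_\infty^2$, which depends on the movement of \emph{all} blocks, back against block $i$'s own negative term. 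No charging scheme of this kind can produce $\sum_{i=1}^d 1/\gamma_i$: the variation of $\bolF_i$ caused by block $j$'s movement enters the sum with weight $1/\gamma_i$, while the only available negative movement term for block $j$ carries weight $1/\gamma_j$; \textbf{[A$_\text{2}$]} bounds the total Taylor-coefficient mass $\Theta_2$ of each $\bolF_i$ but provides no per-block splitting that reconciles these mismatched weights, so any absorption along this route degenerates to the generic estimate and reintroduces a factor of order $d\gamma_{\mathrm{max}}$ — i.e., exactly the bound of Theorem \ref{trival-convergence-minimization} that this theorem is designed to improve upon.

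The paper avoids movement-based negative terms altogether. Its per-block bound (Lemma \ref{optimistic-hege-lemma}), obtained from the explicit softmax form of the updates together with the KL-versus-chi-squared comparisons (Lemmas \ref{bound-ka-by-KL} and \ref{lemma:bound-ka-by-var}), is
\begin{align*}
\sum_{t=1}^T\left\langle\bolF_i(\bolx^t),\bolx_i^t-\bolx_i^*\right\rangle\leq\frac{\log n_i}{\eta}+\hat g_1(\eta\Theta)\eta\Theta^2\sum_{t=1}^T\mathrm{Var}_{\bolx_i^t}\left(\bolF_i(\bolx^t)-\bolF_i(\bolx^{t-1})\right)-\hat g_2(\eta\Theta)\eta\Theta^2\sum_{t=1}^T\mathrm{Var}_{\bolx_i^t}\left(\bolF_i(\bolx^{t-1})\right),
\end{align*}
so the negative term is the local variance of the payoff vector \emph{of the same block $i$}, not a movement term. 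The cross-block coupling is then handled temporally rather than spatially: Theorem \ref{finite-difference-bound} — an induction on the finite-difference order $h$, using Lemma \ref{auxiliary-lemma-polynomial} and the $(Q,R)$-bounded softmax machinery of Lemmas \ref{Q-RR-bounded-softmax} and \ref{Boundness-chain-rule-finite-difference}, where \textbf{[A$_\text{1}$]} and \textbf{[A$_\text{2}$]} enter — shows that each payoff sequence $t\mapsto\bolF_i(\bolx^t)$ satisfies $\|(D_h\bolF_i(\bolx))^{t_0}\|_\infty\leq\beta^h h^{3h+1}$, and then Lemma \ref{varience-bound} converts this, \emph{per block}, into $\sum_t\mathrm{Var}_{\bolx_i^t}(\bolF_i(\bolx^t)-\bolF_i(\bolx^{t-1}))\leq 2\beta_0\sum_t\mathrm{Var}_{\bolx_i^t}(\bolF_i(\bolx^{t-1}))+\cO(\Theta^2H^5)$, so the positive term is absorbed by the matching negative term carrying the same weight $1/\gamma_i$ and no cross-block charging is ever needed. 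Your ``self-bounding recursion over $H$ dyadic levels'' gestures at the right object (the $H=\lceil\log T\rceil$ and $H^5$ constants do come from the finite-difference order), but the paper's recursion is in the order of differences of the payoff sequence, not a movement-versus-variation bootstrap; unless you replace your RVU negative terms by payoff-variance negative terms in the local norm, the proof as proposed does not close.
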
 
        Theorem \ref{main-theorem-minimization} implies that for any generalized quasar-convex function $f$ satisfies Assumption \ref{ass-0}, the $T$-step random solution outputted by Algorithm \ref{optimistic-hedge-minimization} is a $\cO((\sum_{i=1}^d1/\gamma_i)T^{-1}\log(N)\log^{4.5}(T))$-suboptimal solution. Ignoring the logarithmic factor, the iteration complexity of our algorithm is competitive to the state-of-the-art algorithm when applied to specific application (i.e. policy optimization of reinforcement learning \citep{Agarwal21OnThe}). Moreover, our algorithm  makes iteration complexity depend on $\sum_{i=1}^d 1/\gamma_i$ linearly. In some common applications, $\sum_{i=1}^d 1/\gamma_i$ has no dependence on $d$, which is the number of variable blocks (see discussions in Section \ref{GQC-application}). 
        
	
	\subsection{Application to Reinforcement Learning}\label{GQC-application}
    This section reveals that GQC condition provides a novel analytical approach to reinforcement learning. 
    We show how to leverage Algorithm \ref{optimistic-hedge-minimization} to find $\epsilon$-suboptimal global solution for infinite horizon reinforcement learning problem. And in 
    Appendix \ref{analysis-finite-horizon-reinforcement-learning}, 
    we show how to leverage Algorithm \ref{optimistic-hedge-minimization} to minimize finite horizon reinforcement learning problem. 
    
    The infinite horizon reinforcement learning is formulated as the following policy optimization problem:
    \begin{align}\label{reinforcement-optimization}
        \min\limits_{\bolpi\in\cX}J^{\bolpi}(\bolrho_0),
    \end{align}
    where $J^{\boldsymbol{\pi}}(\bolrho_0)=\mathbb{E}_{s_0\sim \bolrho_0}[V^{\boldsymbol{\pi}}(s_0)]$ and $\cX=\prod_{i=1}^{|\cS|}\Delta_{\cA}$ denotes $|\mathcal{S}|$ probability simplexes. We write $\cS=\{s_i\}_{i=1}^{|\cS|}$ and denote the action-value vector on state $s_i$ by $\boldsymbol{Q}^{\bolpi}(s_i,\cdot)$. The next Proposition \ref{lemma-reinforcement-GQC} states that $J^{\bolpi}(\bolrho_0)$ satisfies the GQC condition for any initial state distribution $\bolrho_0$.
    \begin{proposition}\label{lemma-reinforcement-GQC}
	    Let $\left\{\bolpi^{*}(\cdot|s)\in\Delta_{\cA}\right\}_{s\in\mathcal{S}}$ denote the optimal global solution of problem~\eqref{reinforcement-optimization}. We have that $J^{\bolpi}(\bolrho_0)$ satisfies the GQC condition in Eq.~\eqref{GQC-equation} with internal function $\bolF_{i}(\bolpi)=\boldsymbol{Q}^{\bolpi}(s_i,\cdot)$ for variable block $\bolpi_i$ and $\bolF$ satisfies Assumption \ref{ass-0} with $\Theta_1=\theta,\Theta_2=1$ and $K_0=1$.
    \end{proposition}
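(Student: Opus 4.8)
The plan is to establish the two assertions separately: first that $J^{\bolpi}(\bolrho_0)$ obeys the GQC inequality~\eqref{GQC-equation} with the stated internal functions, and second that this internal function meets Assumption~\ref{ass-0} with $\Theta_1=\theta$, $\Theta_2=1$, $K_0=1$. The first part is a direct consequence of the performance difference lemma; the second rests on the observation that each coordinate $Q^{\bolpi}(s_i,a)$ is a power series in the joint variable $\bolpi$ (absolutely convergent on $\cX$) whose degree-$t$ homogeneous part is controlled by $\theta^t$, which makes the Taylor expansion about $\boly=\boldsymbol{0}$ transparent.

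For the GQC part, I would invoke the performance difference lemma in the $(1-\theta)$-normalized convention used here, namely
\[
J^{\bolpi^*}(\bolrho_0)-J^{\bolpi}(\bolrho_0)=\sum_{s\in\cS}\mbold_{\bolrho_0}^{\bolpi^*}(s)\sum_{a}\bolpi^*(a|s)A^{\bolpi}(s,a).
\]
Using $A^{\bolpi}(s,a)=Q^{\bolpi}(s,a)-V^{\bolpi}(s)$ together with the Bellman consistency $V^{\bolpi}(s)=\sum_a \bolpi(a|s)Q^{\bolpi}(s,a)$ and $\sum_a\bolpi^*(a|s)=1$, the inner sum collapses to $\langle \boldsymbol{Q}^{\bolpi}(s,\cdot),\bolpi^*(\cdot|s)-\bolpi(\cdot|s)\rangle$. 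Writing $\cS=\{s_i\}$ and setting $1/\gamma_i=\mbold_{\bolrho_0}^{\bolpi^*}(s_i)$ and $\bolF_i(\bolpi)=\boldsymbol{Q}^{\bolpi}(s_i,\cdot)$, the identity becomes exactly~\eqref{GQC-equation} with equality, so the required inequality $f(\bolx^*)\ge\cdots$ holds. As a consistency check with the narrative, $\sum_i 1/\gamma_i=\sum_i\mbold_{\bolrho_0}^{\bolpi^*}(s_i)=1$ independent of $d=|\cS|$; positivity of each $\gamma_i$ follows from $\mbold_{\bolrho_0}^{\bolpi^*}(s_i)\ge(1-\theta)\bolrho_0(s_i)$ once $\bolrho_0$ has full support (unreachable states carry vanishing weight and simply drop out).

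For Assumption~\ref{ass-0}, I would expand a generic coordinate $Q^{\bolpi}(s_i,a)=(1-\theta)\sum_{t\ge0}\theta^t\,\bE[\sigma(s_t,a_t)\mid \bolpi,s_0=s_i,a_0=a]$ and note that the $t$-th summand, call it $c_t(\bolpi)$, is a homogeneous polynomial of degree $t$ in the entries of $\bolpi$ (each length-$t$ path contributes exactly the $t$ factors $\bolpi(a_1|s_1)\cdots\bolpi(a_t|s_t)$), has nonnegative coefficients (products of transition probabilities and $\sigma\ge0$), and satisfies $0\le c_t(\bolpi)\le(1-\theta)\theta^t$ for $\bolpi\in\cX$. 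Choosing the fixed point $\boly=\boldsymbol{0}$, the Taylor expansion about $\boldsymbol{0}$ merely extracts homogeneous parts, so coordinatewise the order-$K$ Lagrange remainder equals $\sum_{t>K}c_t(\bolx)$, whence $\|\bolR_{K,\boldsymbol{0}}^{\bolF}(\bolx)\|_\infty\le\sum_{t>K}(1-\theta)\theta^t=\theta^{K+1}=\theta\cdot\theta^{K}$, giving $\mathbf{[A_1]}$ with $\Theta_1=\theta$ (and any $K_0\ge0$, in particular $K_0=1$). For $\mathbf{[A_2]}$, since $\boly=\boldsymbol{0}$ and the coefficients are nonnegative, the absolute values in $\boldsymbol{P}_{K,\boly}^{\bolF}$ can be dropped and $\boldsymbol{P}_{K,\boldsymbol{0}}^{\bolF}(\bolx)=\sum_{t=0}^K c_t(\bolx)$ on $\cX$; this partial sum is dominated by the full series $\sum_{t\ge0}c_t(\bolx)=Q^{\bolx}(s_i,a)\le1$, yielding $\Theta_2=1$.

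The main technical obstacle is the rigorous justification of the power-series/homogeneity structure: I must verify that $\sum_t c_t(\bolpi)$ converges (absolutely on $\cX$, where each block has unit $\ell_1$-norm, and on a neighborhood of $\boldsymbol{0}$), that the derivatives of $Q^{\bolpi}(s_i,a)$ at $\boldsymbol{0}$ indeed select precisely the homogeneous components $c_t$, and that differentiation and truncation may be interchanged with the infinite sum — this is exactly what lets me identify the remainder with the tail $\sum_{t>K}c_t$ and the majorant with a partial sum. The remaining work is bookkeeping: confirming nonnegativity of all monomial coefficients and the uniformity of the bounds across all coordinates $(s_i,a)$, which is immediate since $\theta$ and $\sigma\in[0,1]$ do not depend on the coordinate.
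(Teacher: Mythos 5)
Your proposal is correct and follows essentially the same route as the paper's proof: the GQC identity via the performance difference lemma with $1/\gamma_i=\mbold_{\bolrho_0}^{\bolpi^*}(s_i)$ and $\bolF_i(\bolpi)=\boldsymbol{Q}^{\bolpi}(s_i,\cdot)$, and Assumption~\ref{ass-0} via the observation that the order-$K$ Taylor expansion of $Q^{\bolpi}(s_i,a)$ at $\mathbf{0}$ coincides with the horizon-$K$ truncation, so the remainder is $\theta^{K+1}\bE[V^{\bolpi}(s_{K+1})]\leq\theta\cdot\theta^{K}$ and $P_{K,\mathbf{0}}\leq Q^{\bolpi}(s_i,a)\leq 1$. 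Your explicit justification of the homogeneous-part/nonnegative-coefficient structure (and the full-support caveat on $\bolrho_0$) merely fleshes out details the paper leaves implicit.
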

    According to Theorem \ref{main-theorem-minimization}, if we apply Algorithm \ref{optimistic-hedge-minimization} to the infinite horizon reinforcement learning basing  action-value vector $\bolQ^{\bolpi}$ with parameter selection Eq.~\eqref{parameter-selecting-minimization}, which is actually a simple variant of natural policy gradient descent \citep{Agarwal21OnThe}, then the iteations $T$ we need to find an $\epsilon$-suboptimal global solution is upper-bounded by $\mathcal{O}(\max\{1,\log^{-1}(\theta^{-1})\}(1-\theta)^{-1}\epsilon^{-1}\log^{4.5}(\epsilon^{-1})\log(|\cA|))$ under \citet{Agarwal21OnThe}'s setting.
    Therefore, the iteration complexity of Algorithm \ref{optimistic-hedge-minimization} does not depend on the size of states, since the summation of $\mbold_{\bolrho_0}^{\bolpi^*}$ over $\cS$ $(\sum_{i=1}^{|\cS|} 1/\gamma_i=\sum_{i=1}^{|\cS|} \mbold_{\bolrho_0}^{\bolpi^*}(s_i)=1)$ mollifies the accumulation of the maximum of $\mbold_{\bolrho_0}^{\bolpi^*}$ over $\cS$ with $|\cS|$ times. Specifically, if we take into account the loosest upper bound $|\cS|\max_{i\in[1:|\cS|]}\mbold_{\bolrho_0}^{\bolpi^*}(s_i)$, then the iteration complexity of algorithm may suffer from the linear dependence on $|\cS|$, since $\max_{i\in[1:|\cS|]}\mbold_{\bolrho_0}^{\bolpi^*}(s_i)\geq (1-\theta)\max_{i\in[1:|\cS|]}\bolrho_0(s_i)$. Previous research \citep[Theorem 5.3]{Agarwal21OnThe} has demonstrated that utilizing the information of joint variables to separately update each variable block ensures global convergence for problem \eqref{reinforcement-optimization} with $\cO((1-\theta)^{-2}\epsilon^{-1})$ iteration complexity. However, their analytical approach is carefully designed for infinite horizon reinforcement learning problems.

\section{Minimax Optimization}\label{minimax}
    In this section, we introduce the generalized quasar-convexity-concavity (GQCC) condition, which can be verified in real applications such as two-player zero-sum Markov games.
    We provide a related algorithm for minimax optimization (minimizing $\mathcal{G}_f(\bolx,\boly)$ has been defined in Eq.~\eqref{max-min-gap}) over $\cZ=\prod_{i=1}^d\cZ_i=\prod_{i=1}^d\left(\Delta_{n_i}\times\Delta_{m_i}\right)$, under proper assumptions. We specify the divergence-generating function $v$ as $v(\bolx)=\bE_{i\sim\bolx(\cdot)}[\log(\bolx(i))]$ in probability simplexes setting. We also provide a framework for minimax problem  over the general compact convex regions in 
    Appendix \ref{main-proof-of-minimax}. 
	

    \subsection{Generalized Quasar-Convexity-Concavity (GQCC)}


    We provide a new notion called generalized quasar-convexity-concavity for nonconvex-nonconcave minimax optimization, which is defined as follows:
    
    \begin{definition}[Generalized Quasar-Convexity-Concavity (GQCC)]\label{def-GQCC}
        Denote $\cZ_i=\cX_i\times\cY_i$ for any $i\in[1:d]$, and let $f:\cZ\rightarrow\bR$ be the objective function.
        We say that $f$ is generalized quasar-convex-concave on $\cZ$ if for all $\bolz=(\bolx,\boly)\in\cZ$, there exist a sequence of functions $\{f_i:\bR^{\ell\times d}\times\cZ_i\rightarrow\bR\}_{i=1}^d$, a sequence of non-negative functions $\{\psi_i:\cZ\rightarrow\bR_+\cup 0\}_{i=1}^d$ and a matrix-valued function $\bolP=(\bolP_1,\cdots,\bolP_d):\cZ\rightarrow\bR^{\ell\times d}$ where every $\bolP_i$ is a $\ell$-dimensional vector-valued function, such that 
        \begin{align}\label{GQCC-def-eq}
            \cG_f(\bolx,\boly)\leq \sum_{i=1}^d\psi_i(\bolz)\cG_{f_i(\bolP(\bolz),\cdot,\cdot)}(\bolx_i,\boly_i),
        \end{align}
        where each $f_i(\bolQ,\cdot,\cdot)$ is convex-concave for a fixed $\bolQ=(\bolQ_1,\cdots,\bolQ_d)\in\bR^{\ell\times d}$. We denote the internal operator of $f$ for variable block $\bolz_i$ by $\bolF_i$ where $\bolF_i(\bolQ,\bolz_i)=((\nabla_{\bolx_i}f_i(\bolQ,\bolz_i))^{\top},(-\nabla_{\boly_i}f_i(\bolQ,$ $\bolz_i))^{\top})^{\top}$. Moreover, we say that $\bolF=(\bolF_1^{\top},\cdots,\bolF_d^{\top})^{\top}$ is the internal operator of $f$.
    \end{definition}
    The GQCC condition is an extension of the GQC condition in minimax optimization setting. The specific connection between them can be found in 
    Appendix \ref{main-proof-of-minimax}.
    The GQCC condition can be viewed as an extension of the convexity-concavity condition in multi-variable optimization; it seamlessly reduces to the convexity-concavity condition with $f_1(\bolP(\bolz),\bolz)=f(\bolz)$ and $\psi_1(\bolz)\equiv 1$, in the case $d=1$. 
    Assuming every $\psi_i$ is bounded, $f_i(\bolP(\bolz),\bolz_i)\equiv f_i(\mathbf{0},\bolz_i)$ with Lipschitz continuous gradient and is convex-concave with respect to $\bolz_i$, then  finding the Nash equilibrium point of $f$ is reduced to finding the Nash equilibrium points of $d$ independent convex-concave minimax problems. 
    However, how to find the approximate Nash equilibrium points in more general case has not been well-studied. 
    Most of existing work for minimax optimization without convex-concave assumption are focused on finding the approximate stationary points.
    
    \subsection{Main Results}\label{convergence-minimax}
    For simplicity, we denote by $\bolF_i^{\bolx}$ and $\bolF_i^{\boly}$ the projection of $\bolF_i$ in the $\bolx_i$ and $\boly_i$ directions, respectively, i.e., $\bolF_i^{\top}=\left((\bolF_i^{\bolx})^{\top},(\bolF_i^{\boly})^{\top}\right)$.
    Given an objective function $f:\cZ\rightarrow\bR$ with internal operator $\bolF$, our algorithm (Algorithm \ref{regularized-optimistic-hedge}) employs regularized OMD over each distribution independently basing on $\bolF_i$ and updates matrix $\bolQ^t$ to track the behavior of function~$\bolP$ iteratively. It's worth noting that each iteration of Algorithm \ref{regularized-optimistic-hedge} provides explicit expressions for $\bolx_i^t$ and $\bolg_i^t$ (see the proof of Theorem \ref{main-theorem-minimization} in Appendix \ref{main-proof-of-minimization}). Consequently, Algorithm \ref{regularized-optimistic-hedge} essentially operates as a single-loop algorithm.

	\begin{assumption}\label{ass-2}
		In Definition \ref{def-GQCC}, we assume that matrix-valued function $\bolP$ has the form of $\bolP(\bolQ^{\bolz},\bolz)$ where $\bolQ^{\bolz}\in\bR^{\ell\times d}$ depends on $\bolz$, and $\bolP$ satisfies the following properties on region 
        $\left.\left\{\bolQ\in\bR^{\ell\times d}\right|\right.$ $\left.\|\bolQ\|_{\infty}\leq C\right\}\times\cZ$ for some constant $C>0$:
		\begin{enumerate}
			\item[\textbf{[A$_\text{1}$]}] There exist constants $L_1,L_2\geq 0$ such that $\bolF_i(\cdot,\bolz_i)$ is uniformly $L_1$-Lipschitz continuous with respect to $\|\cdot\|_{\infty}$ under $\|\cdot\|_{\infty}$, and $\bolF_i(\bolQ,\cdot)$ is uniformly $L_2$-Lipschitz continuous with respect to $\|\cdot\|_{\infty}$ under $\|\cdot\|_1$.
		
		\item[\textbf{[A$_\text{2}$]}] There are a positive constant $\gamma>0$ and a set of non-negative constant matrices $\{\bolB_{i},\bolC_{i}\}_{i=1}^d$ satisfying $\big\|\sum_{i=1}^{d}(\bolB_i+\bolC_i)\big\|_{\infty}\leq \gamma$, such that $\mathbf{D}_{\bolP(\bolQ,\cdot,\boly)}(\bolx,$ $\bolx')\leq\sum_{i=1}^d \bolC_{i}\langle \bolF_{i}^{\bolx}(\bolQ,\bolz_i),\bolx_i-\bolx'_i\rangle$ and $\mathbf{D}_{\bolP(\bolQ,\bolx,\cdot)}(\boly,\boly')\geq\sum_{i=1}^d \bolB_i\langle\bolF_{i}^{\boly}(\bolQ,\bolz_i),\boly'_i-\boly_i\rangle$.
		\item[\textbf{[A$_\text{3}$]}] There exists $\theta\in[0,1)$ such that $\bolP(\cdot,\bolz)$ 
            is a $\theta$-contraction mapping under $\|\cdot\|_{\infty}$,  and $\|\bolP(\bolQ,\bolz)\|_{\infty}\leq C$ for any $\bolz\in\cZ$.
		\end{enumerate}
	\end{assumption}
    \begin{algorithm}[tb]
		\small
		\caption{Optimistic Mirror Descent with Regularization for Multiple Distributions}
		\label{regularized-optimistic-hedge}
		\hspace*{0.02in}{\bf Input:} $\left\{\bolz_i^0\right\}_{i=1}^d\!=\!\left\{\bolg_i^0\right\}_{i=1}^d\!=\!\left\{({1}/{n_i},\cdots,{1}/{n_i}),({1}/{m_i},\cdots,{1}/{m_i})\right\}_{i=1}^d$, $\{\alpha_t\geq0\}_{t=1}^T$ with \mbox{$\sum_{t=1}^T\alpha_t=1$}, $\{\gamma_t\geq0\}_{t=1}^T$, $\{\lambda_t\geq0\}_{t=1}^T$, $\eta$ and $\bolQ^0=\mathbf{0}$.
		\\
		\hspace*{0.02in}{\bf Output:} $\bar{\bolz}_T=\sum_{t=1}^T\alpha_t \bolz^t$.
		\begin{algorithmic}[1]
			\WHILE{$t\leq T$}
			\STATE $\bolQ^{t}=(1-\beta_{t-1})\bolQ^{t-1}+\beta_{t-1} \bolP(\bolQ^{t-1},\bolz_{t-1})$.
            \FORALL{$i\in[1:d]$}
			\STATE   
                $\bolx_i^t=\argmin\limits_{\bolx_i\in\cX_i}\, \eta\llangle\bolF_i^{\bolx}(\bolQ^{t-1},\bolz_i^{t-1}),\bolx_i\rrangle+\gamma_t \mathrm{KL}\left(\bolx_i\left\|(\bolg_i^{\bolx})^{t-1}\right)\right.+\lambda_t v(\bolx_i),$\notag
                
                \STATE  $\boly_i^t=\argmin\limits_{\boly_i\in\cY_i}\, \eta\left\langle\bolF_i^{\boly}(\bolQ^{t-1},\bolz_i^{t-1}),\boly_i\right\rangle+\gamma_t \mathrm{KL}\left(\boly_i\left\|(\bolg_i^{\boly})^{t-1}\right)\right.+\lambda_t v(\boly_i),$\notag
			 
                \STATE $(\bolg_i^{\bolx})^t=\argmin\limits_{\bolg_i^{\bolx}\in\cX_i}\, \eta\llangle\bolF_i^{\bolx}(\bolQ^{t},\bolz_i^{t}),\bolg_i^{\bolx}\rrangle+\gamma_t \mathrm{KL}\left(\bolg_i^{\bolx}\left\|(\bolg_i^{\bolx})^{t-1}\right)\right.+\lambda_t v(\bolg_i^{\bolx}),$\notag
			
                \STATE       $(\bolg_i^{\boly})^t=\argmin\limits_{\bolg_i^{\boly}\in\cY_i}\, \eta\llangle\bolF_i^{\boly}(\bolQ^{t},\bolz_i^{t}),\bolg_i^{\boly}\rrangle+\gamma_t \mathrm{KL}\left(\bolg_i^{\boly}\left\|(\bolg_i^{\boly})^{t-1}\right)\right.+\lambda_t v(\bolg_i^{\boly}).$\notag
			\ENDFOR
			\STATE $t\leftarrow t+1.$
			\ENDWHILE
		\end{algorithmic}
	\end{algorithm}
    We present Lemma \ref{function-class-minimax-remark} to demonstrate that there exist $\bolQ^*\in\bR^{\ell\times d}$, $\bolx^*\in\cX$ and $\boly^*\in\cY$ satisfy the saddle point and fixed point conditions of function $\bolP$, i.e., Eq.~\eqref{contraction-mapping}, under proper assumptions.
	\begin{lemma}\label{function-class-minimax-remark}
		Assuming that Assumption \ref{ass-2} holds, $[\bolP(\bolQ,\cdot,\cdot)]_{k,j}$ is continuous,  convex with respect to $\bolx$, concave with respect to $\boly$ for any $(k,j)$, and $\min_{k,j,i}\frac{\min\{[\bolC_{i}]_{k,j},[\bolB_{i}]_{k,j}\}}{[\bolC_{i}]_{k,j}+[\bolB_{i}]_{k,j}}\geq C'$ for some $C'>0$, then there exist $\bolQ^*\in\bR^{\ell\times d}$ and $\bolz^*\in\cZ$ such that
		\begin{align}
		  \bolQ^*=\bolP(\bolQ^*,\bolx^*,\boly^*), \quad   \bolQ^*\leq\bolP(\bolQ^*,\bolx,\boly^*), \quad \text{and} \quad \bolQ^*\geq\bolP(\bolQ^*,\bolx^*,\boly).\label{contraction-mapping}
		\end{align}
    \end{lemma}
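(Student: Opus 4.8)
The plan is to recast the three requirements as the existence of a fixed point of a Shapley-type value operator on the matrices $\bolQ$, and to recover the two saddle relations from the per-block first-order optimality encoded by the internal operator $\bolF$. First I would fix a matrix $\bolQ$ with $\|\bolQ\|_{\infty}\le C$ and exhibit a \emph{common} entry-wise saddle point of the matrix-valued map $\bolP(\bolQ,\cdot,\cdot)$. Since each $f_i(\bolQ,\cdot,\cdot)$ is continuous and convex--concave on the compact convex set $\cZ_i=\cX_i\times\cY_i$, a minimax theorem (Sion) yields a block Nash point $(\bolx_i^{\bolQ},\boly_i^{\bolQ})$, i.e. $\langle \bolF_i^{\bolx}(\bolQ,\bolz_i^{\bolQ}),\bolx_i-\bolx_i^{\bolQ}\rangle\ge 0$ and $\langle \bolF_i^{\boly}(\bolQ,\bolz_i^{\bolQ}),\boly_i-\boly_i^{\bolQ}\rangle\ge 0$ for all feasible $\bolx_i,\boly_i$. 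Collecting the blocks into $\bolz^{\bolQ}=(\bolx^{\bolQ},\boly^{\bolQ})$ and substituting these variational inequalities into the two estimates of \textbf{[A$_2$]} — evaluated with the first argument of $\mathbf{D}_{\bolP}$ placed at $\bolz^{\bolQ}$ — the non-negativity of $\{\bolB_i,\bolC_i\}$ forces the right-hand sides to be entry-wise non-positive, respectively non-negative. This yields $\bolP(\bolQ,\bolx,\boly^{\bolQ})\ge \bolP(\bolQ,\bolx^{\bolQ},\boly^{\bolQ})\ge \bolP(\bolQ,\bolx^{\bolQ},\boly)$ entry-wise for all $\bolx,\boly$, so $\bolz^{\bolQ}$ is simultaneously a saddle point of every coordinate $[\bolP(\bolQ,\cdot,\cdot)]_{k,j}$. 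The balance condition $\min_{k,j,i}\frac{\min\{[\bolC_i]_{k,j},[\bolB_i]_{k,j}\}}{[\bolC_i]_{k,j}+[\bolB_i]_{k,j}}\ge C'$ keeps both inequalities simultaneously active, so the common saddle is non-degenerate, while the entry-wise convexity--concavity of $\bolP$ makes each coordinate saddle value unique. Hence the value matrix $\mathcal{T}(\bolQ):=\bolP(\bolQ,\bolz^{\bolQ})$ is well defined independently of which $\bolz^{\bolQ}$ is selected.

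Next I would show $\mathcal{T}$ is a $\theta$-contraction on the complete metric space $\{\bolQ:\|\bolQ\|_{\infty}\le C\}$ under $\|\cdot\|_{\infty}$. The self-mapping property is immediate from $\|\bolP(\bolQ,\bolz)\|_{\infty}\le C$ in \textbf{[A$_3$]}. For the contraction, fix a coordinate $(k,j)$ and two matrices $\bolQ_1,\bolQ_2$ with common saddles $\bolz^{1},\bolz^{2}$. Using that $\bolx^{1}$ minimizes and $\boly^{2}$ maximizes coordinate $(k,j)$ in their respective games, I would chain
\[
[\mathcal{T}\bolQ_1]_{k,j}-[\mathcal{T}\bolQ_2]_{k,j}\le [\bolP(\bolQ_1,\bolx^{2},\boly^{1})]_{k,j}-[\bolP(\bolQ_2,\bolx^{2},\boly^{1})]_{k,j}\le \theta\|\bolQ_1-\bolQ_2\|_{\infty},
\]
where the last inequality is the $\theta$-contraction of $\bolP(\cdot,\bolz)$ from \textbf{[A$_3$]} at the frozen point $\bolz=(\bolx^{2},\boly^{1})$; the symmetric choice gives the reverse bound, whence $\|\mathcal{T}\bolQ_1-\mathcal{T}\bolQ_2\|_{\infty}\le\theta\|\bolQ_1-\bolQ_2\|_{\infty}$. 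Banach's fixed point theorem then produces a unique $\bolQ^*=\mathcal{T}(\bolQ^*)$. Setting $(\bolx^*,\boly^*):=\bolz^{\bolQ^*}$, the common-saddle identity gives $\bolQ^*=\bolP(\bolQ^*,\bolx^*,\boly^*)$, and the two common-saddle inequalities from the first step specialize to $\bolQ^*\le\bolP(\bolQ^*,\bolx,\boly^*)$ and $\bolQ^*\ge\bolP(\bolQ^*,\bolx^*,\boly)$, which are exactly \eqref{contraction-mapping}.

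The main obstacle is the first step: for a genuinely matrix-valued convex--concave map, a single point need not saddle every coordinate at once, so the crux is to certify that the per-block Nash point supplied by the internal operator $\bolF$ is in fact a common saddle of all entries of $\bolP(\bolQ,\cdot,\cdot)$. This is precisely where the GQCC decomposition \textbf{[A$_2$]} and the ratio lower bound $C'$ are indispensable: they convert the block-wise first-order optimality of $\bolF$ into coordinate-wise saddle inequalities for $\bolP$ and exclude the degenerate regime in which one of the two directions collapses. Once the common saddle is secured and $\mathcal{T}$ is confirmed single-valued, the remaining Shapley-style contraction argument is routine, modulo carefully propagating the $\|\cdot\|_{\infty}$ bounds through \textbf{[A$_3$]}.
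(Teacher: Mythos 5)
Your proof is correct, but it reaches the result by a genuinely different route from the paper's. For the crucial first step---producing, for each fixed $\bolQ$, one point $\bolz^{\bolQ}$ that saddles \emph{every} entry of $\bolP(\bolQ,\cdot,\cdot)$ simultaneously---the paper argues constructively (its Lemma \ref{sub-minimax-Q}): it runs an optimistic mirror descent iteration driven by $\bolF_i(\bolQ,\cdot)$ on each block, proves a per-coordinate regret bound in which the positive error terms are absorbed by the negative telescoping terms using the ratio condition via the stepsize $\eta=\sqrt{C'}/(2L_2)$, and then extracts an exact common saddle as an accumulation point of the averaged iterates; this is precisely where the continuity and entrywise convexity--concavity of $\bolP$ enter. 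You instead obtain the common saddle in one shot: Sion's theorem gives a blockwise saddle of each convex--concave $f_i(\bolQ,\cdot,\cdot)$, the first-order variational inequalities $\langle \bolF_i^{\bolx}(\bolQ,\bolz_i^{\bolQ}),\bolx_i^{\bolQ}-\bolx_i\rangle\leq 0$ and $\langle \bolF_i^{\boly}(\bolQ,\bolz_i^{\bolQ}),\boly_i-\boly_i^{\bolQ}\rangle\geq 0$ make the right-hand sides of \textbf{[A$_2$]} entrywise non-positive, respectively non-negative, through the sign constraints on $\bolC_i,\bolB_i$, and the coordinate saddle inequalities for $\bolP$ drop out exactly. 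This is shorter and in fact uses \emph{fewer} hypotheses---contrary to your closing remark, your argument never invokes the ratio bound $C'$ (nor the convexity--concavity or continuity of the entries of $\bolP$); those are artifacts of the paper's constructive route, so your claim that $C'$ is ``indispensable'' for your step one misdiagnoses your own proof. For the second step the two arguments essentially coincide: the paper iterates $\bolQ^{t+1}=\bolP(\bolQ^t,\bolx_t^*,\boly_t^*)$ and shows consecutive differences contract by the same saddle-sandwich inequality you chain through $(\bolx^2,\boly^1)$, concluding by completeness, whereas you package this as the Shapley operator $\mathcal{T}(\bolQ)=\bolP(\bolQ,\bolz^{\bolQ})$ being a $\theta$-contraction and invoke Banach; your explicit well-definedness check for $\mathcal{T}$ is a point the paper glosses over, though the uniqueness of each coordinate's saddle value follows from the interchangeability of saddle points of a single scalar function rather than from convexity as you assert. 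On balance, the paper's route buys an implementable approximation scheme for the saddle (the same iteration is reused in its Corollary \ref{corollary-saddle-point}), while yours buys brevity and weaker hypotheses.
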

    For Algorithm \ref{regularized-optimistic-hedge}, we let 
    $\beta_{T,t}=\beta_t\prod_{j=t+1}^T(1-\beta_j)$ for any $T\geq t$ and $\beta_{T,T}=\beta_T$, and set parameters  
	\begin{equation}\label{parameters-setting-1}
        \small
		\begin{split}
		c=2(1-\theta)^{-1},\, \eta\leq\frac{(1-\theta)^{1/2}}{16L_2((\gamma L_1)^{1/2}+1)},\, \beta_t=\frac{c}{c+t},\,  \alpha_t=\beta_{T,t},\, \gamma_t=\frac{\alpha_{t-1}}{\alpha_t},\, \lambda_t=1-\gamma_t.
		\end{split}
	\end{equation}
    Then we have the following convergence result by denoting $M=\max_{i\in[1:d]}\left\{m_i+n_i\right\}$.
    \begin{theorem}\label{main-thm-minimax}
		For any generalized quasar-convex-concave function $f$ which satisfies Assumption~\ref{ass-2} with $\bolP\equiv\bolQ^*$, where $\bolQ^*$  satisfies Eq.~\eqref{contraction-mapping}. Algorithm~\ref{regularized-optimistic-hedge}'s output $\bar\bolz_T=(\bar\bolx_T,\bar\boly_T)$ satisfies 
		\begin{align*}        
        \cG_f(\bar{\bolx}_T,\bar{\boly}_T)\leq60\max\limits_{\bolz\in\cZ}\left(\sum_{i=1}^d\psi_i(\bolz)\right)(1-\theta)^{-1}\left(\frac{2}{\eta}\log(M)+\eta L_1^2+L_1 Y_T^{\eta}\right)T^{-1},
		\end{align*}
        where $Y_{T}^\eta=8(c+1)[\frac{4\gamma}{\eta}\log(M)+160\gamma L_2+2\eta\gamma L_1^2(1+64C^2)](\log(c+T)+1)$.
	\end{theorem}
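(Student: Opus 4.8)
The plan is to reduce the global duality gap to a weighted sum of per-block convex–concave duality gaps through the GQCC inequality, bound each of these by a weighted linearized regret of the optimistic iterates, and then close a coupled recursion between that regret and the matrix-tracking error $\|\bolQ^t-\bolQ^*\|_\infty$. First I would instantiate Eq.~\eqref{GQCC-def-eq} at the averaged iterate $\bar\bolz_T$, using $\bolP(\bar\bolz_T)=\bolQ^*$ from \eqref{contraction-mapping} so that the convex–concave surrogates are exactly $f_i(\bolQ^*,\cdot,\cdot)$. Pulling the weights out via $\sum_i\psi_i(\bar\bolz_T)\le\max_{\bolz\in\cZ}\sum_i\psi_i(\bolz)$ reduces the task to a uniform-in-$i$ bound on $\cG_{f_i(\bolQ^*,\cdot,\cdot)}(\bar\bolx_{i,T},\bar\boly_{i,T})$. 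Since $\bar\bolz_T=\sum_t\alpha_t\bolz^t$ is a convex average and $f_i(\bolQ^*,\cdot,\cdot)$ is convex–concave, Jensen's inequality followed by the convex–concave subgradient inequality (recalling $\bolF_i=((\nabla_{\bolx_i}f_i)^\top,(-\nabla_{\boly_i}f_i)^\top)^\top$) gives $\cG_{f_i}(\bar\bolz_{i,T})\le\max_{\bolz_i'}\sum_t\alpha_t\langle\bolF_i(\bolQ^*,\bolz_i^t),\bolz_i^t-\bolz_i'\rangle$, i.e. a weighted linearized regret evaluated at the fixed-point operator.

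Next I would run the optimistic-mirror-descent analysis on the pair $(\bolz_i^t,\bolg_i^t)$. The choices $\gamma_t=\alpha_{t-1}/\alpha_t$ and $\lambda_t=1-\gamma_t$ are engineered so that weighting the one-step inequality by $\alpha_t$ makes the Bregman terms telescope: $\alpha_t\gamma_t\,\mathrm{KL}(\bolu\|\bolg_i^{t-1})-\alpha_t\,\mathrm{KL}(\bolu\|\bolg_i^t)=\alpha_{t-1}\,\mathrm{KL}(\bolu\|\bolg_i^{t-1})-\alpha_t\,\mathrm{KL}(\bolu\|\bolg_i^t)$, while the entropy pieces carried by $\lambda_t v(\cdot)$ satisfy $\alpha_t\lambda_t=\alpha_t-\alpha_{t-1}$ and therefore also collapse to a boundary term. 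Crucially, with $\beta_t=c/(c+t)$ and $c=2(1-\theta)^{-1}$ one has $\alpha_T=\beta_T\sim c/T$, so the telescoped regularization term is of order $\tfrac{\alpha_T}{\eta}\log M\sim\tfrac{2(1-\theta)^{-1}}{\eta T}\log M$; this is the dominant source of both the $T^{-1}$ rate and the leading $\tfrac{2}{\eta}\log M$ summand (with $(1-\theta)^{-1}$ pulled out front). The optimistic prediction $\bolF_i(\bolQ^{t-1},\bolz_i^{t-1})$ leaves the residual $\langle\bolF_i^t-\bolF_i^{t-1},\bolz_i^t-\bolg_i^t\rangle$, which I would control by splitting $\|\bolF_i^t-\bolF_i^{t-1}\|_\infty\le L_2\|\bolz_i^t-\bolz_i^{t-1}\|_1+L_1\|\bolQ^t-\bolQ^{t-1}\|_\infty$ using [A$_1$]; the $L_2$ part is absorbed by the negative divergence terms from strong convexity of the KL regularizer (the standard optimism cancellation, leaving the $\eta L_1^2$ residual), and the swap from $\bolF_i(\bolQ^t,\cdot)$ back to $\bolF_i(\bolQ^*,\cdot)$ needed in the previous paragraph costs $L_1\|\bolQ^t-\bolQ^*\|_\infty$ per step — exactly the origin of the $L_1 Y_T^\eta$ term.

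The crux, and what I expect to be the main obstacle, is closing the coupling between the optimization regret and the tracking error $\|\bolQ^t-\bolQ^*\|_\infty$. Writing $\bolQ^t-\bolQ^{t-1}=\beta_{t-1}\bigl(\bolP(\bolQ^{t-1},\bolz_{t-1})-\bolQ^{t-1}\bigr)$ and inserting $\bolQ^*=\bolP(\bolQ^*,\bolx^*,\boly^*)$, I would decompose $\bolP(\bolQ^{t-1},\bolz_{t-1})-\bolQ^*$ into a contraction part bounded by $\theta\|\bolQ^{t-1}-\bolQ^*\|_\infty$ (from [A$_3$]) and the part $\bolP(\bolQ^*,\bolz_{t-1})-\bolP(\bolQ^*,\bolx^*,\boly^*)$, which Assumption~\ref{ass-2}[A$_2$] bounds through $\sum_i\bolC_i\langle\bolF_i^{\bolx},\cdot\rangle$ and $\sum_i\bolB_i\langle\bolF_i^{\boly},\cdot\rangle$ — precisely the inner products appearing in the regret, with $\|\sum_i(\bolB_i+\bolC_i)\|_\infty\le\gamma$ controlling their aggregate magnitude. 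This yields a self-referential system in which the tracking error is driven by the optimization gap and vice versa; resolving it requires the contraction factor $\theta$ together with the averaging schedule to turn the system into a discrete Gr\"onwall-type recursion, whose solution gives a $\sum_t\alpha_t\|\bolQ^t-\bolQ^*\|_\infty$ bound of the stated shape, with the harmonic sum $\sum_t 1/(c+t)\sim\log(c+T)$ producing the $(\log(c+T)+1)$ factor and the boundedness $\|\bolQ\|_\infty\le C$ producing the $C^2$ term in $Y_T^\eta$.

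Finally I would assemble the per-block bound $\tfrac{2}{\eta}\log M+\eta L_1^2+L_1 Y_T^\eta$, multiply by the contraction factor $(1-\theta)^{-1}$ accumulated in the tracking bookkeeping and by $\max_{\bolz\in\cZ}\sum_i\psi_i(\bolz)$ from the GQCC reduction, absorb the numerical constants into the factor $60$, and collect the overall $T^{-1}$ rate, which completes the proof. The delicate points throughout are keeping all constants uniform across blocks $i$ (so that the weights $\psi_i$ factor out cleanly) and ensuring the step-size restriction $\eta\le (1-\theta)^{1/2}/(16L_2((\gamma L_1)^{1/2}+1))$ is exactly what makes the optimism cancellation and the Gr\"onwall closure compatible.
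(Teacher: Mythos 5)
Your proposal is correct and follows essentially the same route as the paper's proof: your first two paragraphs reproduce the content of Theorem~\ref{theorem-main-zero-sum} (GQCC reduction at the averaged iterate, swap from $\bolF_i(\bolQ^*,\cdot)$ to $\bolF_i(\bolQ^t,\cdot)$ at cost $L_1\|\bolQ^t-\bolQ^*\|_\infty$, telescoping via $\gamma_t=\alpha_{t-1}/\alpha_t$, $\lambda_t=1-\gamma_t$, and the optimism split of the Lipschitz residual), and your coupled-recursion paragraph matches Lemma~\ref{Qt-Q*_infty}, where the paper closes exactly your self-referential system---using the saddle-point sandwich from Eq.~\eqref{contraction-mapping} together with \textbf{[A$_2$]} and the $\theta$-contraction, and resolving the recursion with coefficient $32\eta^2\gamma AL_1L_2^2+\theta\leq(1+\theta)/2$ via a Gr\"onwall-type lemma of \citet{Wei2021LastiterateCO} and harmonic-sum estimates yielding the $\log(c+T)$ factor in $Y_T^\eta$.
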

    Similar to minimization Algorithm \ref{optimistic-hedge-minimization}, the iteration complexity of minimax Algorithm \ref{regularized-optimistic-hedge} linearly depends on the upper bound of $\sum_{i=1}^d \psi_i$ over $\cZ$. Generally, the upper bound of $\sum_{i=1}^d \psi_i$ on $\cZ$ is related to $d$. In specific problems of multi-variable optimization (such as two-player zero-sum Markov games), one can uniformly bound $\sum_{i=1}^d \psi_i$ on $\cZ$ by a constant. 
    
	\subsection{Application to Infinite Horizon Two-Player Zero-Sum Markov Games}\label{minimax-application}
	
	In this section, we show how to leverage Algorithm \ref{regularized-optimistic-hedge} to achieve accelerated rates for optimizing infinite horizon two-player zero-sum Markov games. Our algorithm use $\tilde{\cO}(\epsilon^{-1})$ iteration bound to find an $\epsilon$-approximate Nash equilibrium of infinite horizon two-player zero-sum Markov games. 
 
    As similar as the definition of discounted MDP in Preliminary, we utilize $\mathcal{M}=(\cS,\cA,\mathcal{B},\mathbb{P},\sigma,\theta,$ $\bolrho_0)$ to define a infinite horizon two-player zero-sum Markov game. The difference here compared to Section \ref{GQC-application} is that the cost function $\sigma$ is defined on $\cS\times\cA\times\mathcal{B}$ with values in $[0,1]$, and the transition model $\mathbb{P}(s|s',a',b')$ denotes the probability of transitioning into state $s$ upon player 1 taking action $a'$ and player 2 taking action $b'$ in state $s'$. We can define the value function $V^{\bolz}$ and action-value function $Q^{\bolz}$ on the joint distribution $\bolz=(\bolx,\boly)\in\cZ=\prod_{i=1}^{|\cS|}\Delta_{\cA}\times\prod_{i=1}^{|\cS|}\Delta_{\cB}$. The infinite horizon two-player zero-sum Markov games consider the following policy optimization problem:
    \begin{align}
        \min\limits_{\bolx\in\cX}\max\limits_{\boly\in\cY}J^{\bolx,\boly}(\bolrho_0),
    \end{align}
    where $J^{\bolz}(\rho_0)=\bE_{s_0\sim\bolrho_0}[V^{\bolz}(s_0)]$. The following proposition indicates that $J^{\bolz}$ is general quasar convex-concave, and satisfies Assumption \ref{ass-2} and the condition of Theorem \ref{main-thm-minimax},
    \begin{proposition}\label{two-player-Markov-game-GQCC}
        For any $\bolQ=(\bolQ_1,\cdots,\bolQ_{|\cS|})$ with every $\bolQ_i\in\bR^{|\cA|\times|\cB|}$, define function $f_i(\bolQ,\bolz_i):=
        \bolx_i^{\top}\bolQ_i\boly_i$ for any $i\in[1:|\cS|]$. There exists a tensor-valued function $\bolP$ such that $J^{\bolz}(\bolrho_0)$ satisfies GQCC condition with $f_i(\bolP(\bolz),\bolz_i)=f_i(\bolQ^*,\bolz_i)$ for any $\bolrho_0\in\Delta_{\cS}$, where $\bolQ^*$ satisfies the conditions mentioned in Eq.~\eqref{contraction-mapping}. Moreover, $\bolP$ satisfies Assumption \ref{ass-2}. 
    \end{proposition}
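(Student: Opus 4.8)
The plan is to realize $\bolP$ as the (normalized) minimax Bellman operator of the game and to derive the GQCC inequality from a best-response performance-difference recursion written entirely in terms of the Nash action-value tensor. First I would define, for a $Q$-tensor $\bolQ=(\bolQ_1,\dots,\bolQ_{|\cS|})$ with $\bolQ_i\in\bR^{|\cA|\times|\cB|}$ and policies $\bolz=(\bolx,\boly)$,
\[
[\bolP(\bolQ,\bolx,\boly)]_i(a,b)=(1-\theta)\sigma(s_i,a,b)+\theta\sum_{s'}\mathbb{P}(s'|s_i,a,b)\,\bolx_{s'}^{\top}\bolQ_{s'}\boly_{s'},
\]
so that each coordinate is bilinear in $\bolz$, hence continuous, linear (thus convex) in $\bolx$ and linear (thus concave) in $\boly$. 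The local games $f_i(\bolQ,\bolz_i)=\bolx_i^{\top}\bolQ_i\boly_i$ are convex--concave for fixed $\bolQ$, and a direct computation gives $\bolF_i^{\bolx}(\bolQ,\bolz_i)=\bolQ_i\boly_i$ and $\bolF_i^{\boly}(\bolQ,\bolz_i)=-\bolQ_i^{\top}\bolx_i$.

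Second, I verify Assumption~\ref{ass-2}. For \textbf{[A$_1$]}, since $\bolx_i,\boly_i$ lie in simplices, $\|\bolQ_i\boly_i-\bolQ_i'\boly_i\|_{\infty}\le\|\bolQ_i-\bolQ_i'\|_{\infty}$ and $\|\bolQ_i\boly_i-\bolQ_i\boly_i'\|_{\infty}\le\|\bolQ_i\|_{\infty}\|\boly_i-\boly_i'\|_{1}$, giving $L_1,L_2=\cO(1)$ on $\{\|\bolQ\|_\infty\le C\}$ with $C=1$ (normalized values lie in $[0,1]$). For \textbf{[A$_2$]} I read off from the display that the $(s_i,a,b)$-coordinate of $\bolP(\bolQ,\bolx,\boly)-\bolP(\bolQ,\bolx',\boly)$ equals $\theta\sum_{s'}\mathbb{P}(s'|s_i,a,b)\langle\bolF_{s'}^{\bolx},\bolx_{s'}-\bolx_{s'}'\rangle$, so choosing the non-negative matrices $[\bolC_{s'}]_{(s_i,a,b)}=[\bolB_{s'}]_{(s_i,a,b)}=\theta\mathbb{P}(s'|s_i,a,b)$ makes both inequalities of \textbf{[A$_2$]} hold with equality; moreover $\|\sum_{s'}(\bolB_{s'}+\bolC_{s'})\|_{\infty}=2\theta=:\gamma$, and the ratio condition of Lemma~\ref{function-class-minimax-remark} holds with $C'=\tfrac12$ on the support of $\mathbb{P}$. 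For \textbf{[A$_3$]}, $\|\bolP(\bolQ_1,\bolz)-\bolP(\bolQ_2,\bolz)\|_{\infty}\le\theta\max_{i,a,b}\sum_{s'}\mathbb{P}(s'|s_i,a,b)\|\bolQ_1-\bolQ_2\|_{\infty}=\theta\|\bolQ_1-\bolQ_2\|_{\infty}$, a $\theta$-contraction, and $\|\bolP(\bolQ,\bolz)\|_\infty\le(1-\theta)+\theta C\le C$. Since each coordinate of $\bolP$ is convex--concave and continuous, Lemma~\ref{function-class-minimax-remark} applies and yields $\bolQ^*$ and $\bolz^*=(\bolx^*,\boly^*)$ satisfying Eq.~\eqref{contraction-mapping}; unwinding Eq.~\eqref{contraction-mapping} shows that $\bolQ^*$ is exactly the Nash action-value tensor, with $(\bolx^*_i,\boly^*_i)$ a Nash equilibrium of the matrix game $\bolQ^*_i$ at every state and $V^*(s_i)=\mathrm{val}(\bolQ^*_i):=\min_{\bolx'}\max_{\boly'}\bolx'^{\top}\bolQ^*_i\boly'$.

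The heart of the proof is the GQCC inequality with $f_i(\bolP(\bolz),\bolz_i)=f_i(\bolQ^*,\bolz_i)$. Writing $J^*=J^{\bolx^*,\boly^*}(\bolrho_0)$, I split $\cG_J(\bolx,\boly)=(\max_{\boly'}J^{\bolx,\boly'}-J^*)+(J^*-\min_{\bolx'}J^{\bolx',\boly})$. For the first term let $\boly^{\dagger}$ be the maximizer's best response to $\bolx$ and $U^{\bolx}(s):=\max_{\boly'}V^{\bolx,\boly'}(s)$; using the Bellman equation for $U^{\bolx}$ together with the definition of $\bolQ^*$, I obtain for $\Delta(s):=U^{\bolx}(s)-V^*(s)\ge0$ the recursion
\[
\Delta(s)\le\Big(\max_{\boly'}\bolx_s^{\top}\bolQ^*_s\boly'-\mathrm{val}(\bolQ^*_s)\Big)+\theta\,\bE_{a\sim\bolx_s,\,b\sim\boly^{\dagger}_s,\,s'\sim\mathbb{P}}[\Delta(s')],
\]
where the bracket is non-negative and bounded by $\cG_{f_s(\bolQ^*,\cdot,\cdot)}(\bolx_s,\boly_s)$. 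Unrolling this discounted recursion and averaging over $s_0\sim\bolrho_0$ gives $\max_{\boly'}J^{\bolx,\boly'}-J^*\le\frac{1}{1-\theta}\sum_s\mbold^{\bolx,\boly^{\dagger}}_{\bolrho_0}(s)\,\cG_{f_s(\bolQ^*,\cdot,\cdot)}(\bolx_s,\boly_s)$, and a symmetric argument using the minimizer's best response $\bolx^{\dagger}$ bounds the second term by $\frac{1}{1-\theta}\sum_s\mbold^{\bolx^{\dagger},\boly}_{\bolrho_0}(s)\,\cG_{f_s(\bolQ^*,\cdot,\cdot)}(\bolx_s,\boly_s)$. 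Setting $\psi_i(\bolz)=\frac{1}{1-\theta}(\mbold^{\bolx,\boly^{\dagger}}_{\bolrho_0}(s_i)+\mbold^{\bolx^{\dagger},\boly}_{\bolrho_0}(s_i))$ establishes Eq.~\eqref{GQCC-def-eq}; crucially $\sum_i\psi_i(\bolz)\le\frac{2}{1-\theta}$ is uniform in $\bolrho_0$ and independent of $|\cS|$, which is precisely what keeps $\max_{\bolz}\sum_i\psi_i$ a horizon-only factor rather than scaling with the number of states.

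I expect the recursion step to be the main obstacle: the difficulty is that $\bolP$ and the local games are expressed through the \emph{fixed} Nash tensor $\bolQ^*$, whereas the natural best-response value $U^{\bolx}$ obeys a Bellman equation with its own policy-dependent $Q$-values. The key maneuver is to replace the one-step best-response quantity $\bolx_s^{\top}\bolQ^*_s\boly^{\dagger}_s$ by $\max_{\boly'}\bolx_s^{\top}\bolQ^*_s\boly'$ (a genuine local gap against $\bolQ^*$) and to push the resulting discrepancy into the discounted tail $\theta\,\bE[\Delta(s')]$; verifying $\Delta\ge0$ and that the tail telescopes into exactly the best-response visitation $\mbold^{\bolx,\boly^{\dagger}}_{\bolrho_0}$ is where the care is needed. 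The remaining items—convexity--concavity of the $f_i$, the Lipschitz and contraction constants, and the explicit matrices $\bolB_i,\bolC_i$—are routine once the operator $\bolP$ is written down.
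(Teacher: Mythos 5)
Your construction of $\bolP$ and the verification of Assumption~\ref{ass-2} coincide with the paper's proof almost verbatim: the same bilinear Bellman-type operator $[\bolP_i(\bolQ,\bolz)]_{a,b}=(1-\theta)\sigma(s_i,a,b)+\theta\bE_{s'\sim\mathbb{P}(\cdot|s_i,a,b)}[\langle\bolQ_{s'}\boly_{s'},\bolx_{s'}\rangle]$, the same Lipschitz constants, the same choice $[\bolB_{i}]_{s,a,b}=[\bolC_{i}]_{s,a,b}=\theta\mathbb{P}(s_i|s,a,b)$ with $\gamma=2\theta$, the same $\theta$-contraction, and the same appeal to Lemma~\ref{function-class-minimax-remark} (the paper additionally routes the Nash identification through Corollary~\ref{corollary-saddle-point}) to produce $\bolQ^*$ as the Nash action-value tensor. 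Where you genuinely differ is the derivation of the GQCC inequality: the paper invokes its performance-difference identity Eq.~\eqref{linear-RL} for the joint chains $(\bolx^*(\boly),\boly)$ and $(\bolx,\boly^*(\bolx))$, bounds each summand by a one-sided local gap against $f_i(\bolQ^*,\cdot,\cdot)$, and combines the two sides with $\psi_i(\bolz)=\max\{\mbold_{\bolrho_0}^{\bolx,\boly^*(\bolx)}(s_i),\mbold_{\bolrho_0}^{\bolx^*(\boly),\boly}(s_i)\}$, using Corollary~\ref{corollary-saddle-point} for the nonnegativity needed to merge them; you instead re-derive the telescoping from scratch via the recursion on $\Delta(s)=U^{\bolx}(s)-V^*(s)$, which is exactly how the performance-difference lemma is proved, so your $\boly^{\dagger},\bolx^{\dagger}$ are the paper's $\boly^*(\bolx),\bolx^*(\boly)$ and your step replacing $\bolx_s^{\top}\bolQ^*_s\boly_s^{\dagger}$ by $\max_{\boly'}\bolx_s^{\top}\bolQ^*_s\boly'$ and comparing to $\mathrm{val}(\bolQ_s^*)=f_s(\bolQ^*,\bolx_s^*,\boly_s^*)$ plays precisely the role of the corollary; in substance the two arguments are the same, with yours being self-contained. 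One discrepancy worth flagging: your $\psi_i$ carries a factor $\frac{1}{1-\theta}$ and the paper's does not. Under the paper's normalization ($V^{\bolz}=(1-\theta)\bE[\sum_t\theta^t\sigma]$ with $\mbold$ a probability distribution), a direct unrolling gives $\sum_t\theta^t\boldsymbol{\mathrm{Pr}}_t(s)=\mbold(s)/(1-\theta)$, so your factor is actually forced, whereas Eq.~\eqref{linear-RL} as stated suppresses it; the consequence is only quantitative — your $\sum_i\psi_i(\bolz)\leq 2(1-\theta)^{-1}$ versus the paper's claimed bound of $2$, shifting a single power of $(1-\theta)^{-1}$ in the complexity obtained from Theorem~\ref{main-thm-minimax} — and does not affect the validity of the proposition, which your argument establishes correctly.
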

    According to Proposition \ref{two-player-Markov-game-GQCC} and Theorem \ref{main-thm-minimax}, if we apply Algorithm \ref{regularized-optimistic-hedge} to the infinite horizon two-player Markov games basing internal operator $\bolF_i(\bolQ,\bolz)=(\boly_i^{\top}\bolQ_i^{\top},-\bolx_i^{\top}\bolQ_i)^{\top}$ for block $\bolz_i$ with parameter selection Eq.~\eqref{parameters-setting-1}, which is actually a variant of optimistic gradient descent/ascent for Markov games \citep{Wei2021LastiterateCO}, then the iterations $T$ we need to find an $\epsilon$-approximate Nash equilibrium is upper-bounded by $\tilde{\mathcal{O}}((1-\theta)^{-2.5}\epsilon^{-1})$. To the best of our knowledge, our iteration bound matches state-of-the-art iteration bound and is a factor of $(1-\theta)^{-1.5}|\cS|$ better than $\tilde{\mathcal{O}}((1-\theta)^{-4}|\cS|\epsilon^{-1})$ bound of \citet{Cen2022FasterLC}. Since the upper bound of $\sum_{i=1}^{|\cS|}\psi_i$ over feasible region $\cZ$ in infinite horizon two-player zero-sum Markov games' setting satisfies $\sum_{i=1}^{|\cS|}\psi_i(\bolz)\leq\sum_{i=1}^{|\cS|}[\mbold_{\symrho_0}^{\bolx,\boly^*(\bolx)}(s_i)+\mbold_{\symrho_0}^{\bolx^*(\boly),\boly}(s_i)]\leq2$ for any $\bolz\in\cZ$, our algorithm's iteration bound does not depend on the size of states.  

\section{Conclusion}
In this work, we introduce two function structures: GQC and GQCC and provide related algorithmic frameworks with convergence result.
To complement our result, we also show that discounted MDP and infinite horizon two-player zero-sum Markov games admit the GQC and GQCC condition, respectively, and satisfy our mild assumptions.

\bibliography{bib}
\bibliographystyle{johd}

\newpage
\tableofcontents
\appendix

\section{Preliminary}\label{preliminary-minimization}
\subsection{Supplemental Notation}
	For simplicity, we denote $g(\Gamma):=\sum_{k=1}^\infty\Gamma^{-k}[k^7+(k+1)\exp\{2k\}]$, the chi-squared divergence between $\bolp,\bolq$ as $\chi^2(\bolp\|\bolq):=\sum_{j=1}^n\frac{(\bolp(j)-\bolq(j))^2}{\bolq(j)}$, $\bE_{\bolp}(\bolx):=\sum_{j=1}^n\bolp(j)\bolx(j)$ and $\mathrm{Var}_{\bolp}(\bolx):=\sum_{j=1}^n\bolp(j)\cdot\left(\bolx(j)-\bE_{\bolp}(\bolx)\right)^2$ for any $\bolp,\bolq\in\Delta_n$ and $\bolx\in\bR^n$. For $\zeta>0, n\in\bZ_+$, we say that a sequence of distributions $\bolp^1,\cdots,\bolp^T\in\Delta_n$ is $\zeta$-consecutively close if for each $1\leq t<T$, it holds that $\max\left\{\left\|\frac{\bolp^t}{\bolp^{t+1}}\right\|,\left\|\frac{\bolp^{t+1}}{\bolp^{t}}\right\|\right\}\leq 1+\zeta$. For positive scalar $\theta\in[0,1)$, non-negative integers $t$ and $T$, we define $\beta_{T,t}^{\theta}:=\beta_t\prod_{j=t}^{T-1}(1-\beta_j+\theta\beta_j)$, and $\beta_{T,T}^{\theta}=1$.

    \subsection{Finite Differences}\label{finite-difference-comp}
    \begin{definition}[Finite Differences]
        For a sequence of vectors $\bolL=(\bolL^0,\cdots,\bolL^T)$ where each $\bolL^t\in\bR^n$, and integers $h\in\bZ_+$, the order-$h$ finite difference sequence for the sequence $\bolL$ is denoted by $D_h\bolL:=\left((D_h\bolL)^0,\cdots,(D_h\bolL)^{T-h}\right)$ recursively with $(D_0\bolL)^t:=\bolL^t$ for all $t\in[0:T]$, and
    \begin{align}
        (D_h\bolL)^t:=(D_{h-1}\bolL)^{t+1}-(D_{h-1}\bolL)^{t},
    \end{align}
    for all $h\geq 1$ and $t\in[1:T-h]$.
    \end{definition}
    As stated in \citep[Remark 4.3]{Daskalakis2021Near}, we have
    \begin{align}\label{D_h_formulation}
        (D_h\bolL)^t=\sum_{s=0}^h\begin{pmatrix}
        h\\
        s
        \end{pmatrix}(-1)^{h-s}\bolL^{t+s}.
    \end{align}
    To guarantee the coherence of the analysis’s structure, we introduce the definition of the shift operator $E_s$ as follows:
    \begin{definition}[Shift Operator]
        For a sequence of vectors $\bolL=(\bolL^0,\cdots,\bolL^T)$ where each $\bolL^t\in\bR^n$, and integers $s\in\bZ_+$, the $s$-shift sequence for the sequence $\bolL$ is denoted by $E_s\bolL:=\left((E_s\bolL)^0,\cdots,\right.$ $\left.(E_s\bolL)^{T-h}\right)$ with $(E_s\bolL)^t=\bolL^{t+s}$ for $t\in[1:T-s]$.
    \end{definition}

    \subsection{Finite Horizon Markov Decision Process}
    We also consider the following finite horizon Markov decision process (MDP), denoted by $\mathcal{M}:=(H,\cS_{1:H},\cA_{1:H},\mathbb{P}_{2:H},\sigma,\bolrho_1)$. $H\in\bZ_+$ denotes the number of horizon; $\cS_{1:H}=(\cS_1,\cdots,\cS_H)$ is a sequence of $H$ finite state spaces; $\cA_{1:H}=(\cA_1,\cdots,\cA_H)$ is a sequence of $H$ finite action spaces; $\mathbb{P}_h(s_h|s_{h-1},a_{h-1})$ denotes the probability of transitioning from $s_{h-1}$ to $s_{h}$ under playing action $a_{h-1}$ at horizon $h-1$; $\sigma:\cS_{1:H}\times\cA_{1:H}\rightarrow[0,1]$is a cost function; $\bolrho_1$ is a initial state distribution over $S_1$.

    $\bolpi=(\bolpi_1,\cdots,\bolpi_H):\cS_{1:H}\rightarrow\Delta_{\cA_1}\times\cdots\times\Delta_{\cA_H}$ denotes a stochastic policy. Similarly, we use $\boldsymbol{\mathrm{Pr}}_h^{\bolpi_{1:h-1}}(s'|s)=\boldsymbol{\mathrm{Pr}}_h^{\bolpi_{1:h-1}}(s_h=s'|s_1=s)$ to denote the probability of visiting the state $s'$ from the state $s$ at horizon $h$ according to policy $\bolpi_{1:h-1}$. Let trajectory $\tau=(s_h,a_h)_{h=1}^H$, where $s_1\sim\bolrho_1$, and, for all subsequent horizon $h$, $a_h\sim\bolpi_h(\cdot|s_h)$ and $s_{h+1}\sim \mathbb{P}_{h+1}(\cdot|s_h,a_h)$. The value function $V_h^{\bolpi_{h:H}}:\cS_h\rightarrow\bR$ is defined as the sum of future cost starting at state $s_h$ and executing $\bolpi_{h:H}=(\bolpi_h,\cdots,\bolpi_H)$, i.e.,
    $$
    V_h^{\bolpi_{h:H}}(s_h)=\bE\left[\left.\sum_{h'=h}^H\sigma(s_{h'},a_{h'})\right|\bolpi_{h:H},s_h\right].
    $$
    For convenience, we define $V_1^{\bolpi}(s_1)=V_1^{\bolpi_{1:H}}(s_1)$. Moreover, we define the action-value function $Q_h^{\bolpi_{h+1:H}}:\mathcal{S}_h\times\mathcal{A}_h\rightarrow[0,1+H-h]$
    as follows:
    \begin{align}
        Q_h^{\bolpi_{h+1:H}}(s_h,a_h)=\sigma(s_h,a_h)+\mathbb{E}\left[\left.\sum_{h'=h+1}^{H} \sigma(s_{h'},a_{h'})\right|\bolpi_{h+1:H},s_h,a_h\right].\notag
    \end{align}

\section{Minimization Optimization}\label{main-proof-of-minimization}
    \begin{algorithm*}[tb]
    \caption{Optimistic Mirior Descent for Multi-Variables}\label{optimistic-hedge-minimization-general}
    \hspace*{0.02in}{\bf Input:} $ \left\{\bolg_i^0=\bolx_i^0\right\}_{i=1}^d$, $\eta$ and $T$.
    \\
    \hspace*{0.02in}{\bf Output:} Randomly pick up $t\in\{1,\cdots,T\}$ following the probability $\mathbb{P}[t]={1}/{T}$ and return $\bolx^t$.
  
	\begin{algorithmic}[1]
		\WHILE{$t\leq T$}
		\FORALL{$i\in[1:d]$}
		\STATE  
            $\bolx_i^t=\argmin\limits_{\bolx_i\in\cX_i}\, \eta\left\langle\bolF_i(\bolx^{t-1}),\bolx_i\right\rangle+V\left(\bolx_i,\bolg_i^{t-1}\right),$\notag
            \STATE
            $\bolg_i^t=\argmin\limits_{\bolg_i\in\cX_i}\, \eta\left\langle \bolF_i(\bolx^t),\bolg_i\right\rangle+V\left(\bolg_i,\bolg_i^{t-1}\right).$\notag
		\ENDFOR
		\STATE $t\leftarrow t+1.$
		\ENDWHILE
	\end{algorithmic}
    \end{algorithm*}
    We begin with a general version of Theorem \ref{trival-convergence-minimization} basing Algorithm \ref{optimistic-hedge-minimization-general} in this part.
    \begin{theorem}\label{trival-convergence-result-general}[General Version of Theorem \ref{trival-convergence-minimization}]
        We consider the divergence-generating function $v$ with Bregman's divergence $V(\bolx_i,\bolu_i)=v(\bolx_i)-v(\bolu_i)-\llangle\nabla v(\bolu_i),\bolx_i-\bolu_i\rrangle$ for any block $\cX_i$ and any $\bolx_i,\bolu_i\in\cX_i$. Assuming that $\bolF$ is $L$-Lipschitz continuous with respect to $\|\cdot\|_*$ under $\|\cdot\|$, $V(\bolx_i,\bolu_i)\geq\|\bolx_i-\bolu_i\|^2$ for any $\bolx_i,\bolu_i\in\cX_i$ and $\gamma_{\text{max}}=\max_{i\in[1:d]}\gamma_i<\infty$, we have
        \begin{align}
            \frac{1}{T}\sum_{t=1}^T(f(\bolx^t)-f(\bolx^*))\leq\frac{2L\left(d\gamma_{\text{max}}\right)^{1/2}\left(\sum_{i=1}^d\gamma_i^{-1}\right)^{3/2}}{T}\max\limits_{i\in[1:d]}\left[\max\limits_{\bolx_i\in\cX_i}V(\bolx_i,\bolg_i^0)\right],
        \end{align}
        with setting $\eta=(L^2d\gamma_{\text{max}}\sum_{i=1}^d \gamma_i^{-1})^{-1/2}/2$.
    \end{theorem}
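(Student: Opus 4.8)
The plan is to combine the GQC condition, which turns the suboptimality gap into a weighted linear form in the internal functions, with a ``regret bounded by variation'' analysis of optimistic mirror descent run independently on each block, and then to choose the step size so that the gradient-variation penalty is absorbed by the negative recurrence terms produced by the optimism. First I would invoke the GQC inequality \eqref{GQC-equation} at each iterate $\bolx^t$, giving $f(\bolx^t)-f(\bolx^*)\le\sum_{i=1}^d\gamma_i^{-1}\langle\bolF_i(\bolx^t),\bolx_i^t-\bolx_i^*\rangle$. Summing over $t$ reduces the claim to bounding, for every block $i$, the linear regret $\sum_{t=1}^T\langle\bolF_i(\bolx^t),\bolx_i^t-\bolx_i^*\rangle$ of the two-step update in Algorithm~\ref{optimistic-hedge-minimization-general}, and then aggregating these with weights $\gamma_i^{-1}$.

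Second, for a fixed block I would run the standard optimistic-MD argument. Applying the three-point (Bregman proximal) inequality to the $\bolg_i^t$-step with comparator $\bolx_i^*$ and to the $\bolx_i^t$-step with comparator $\bolg_i^t$, adding them, and decomposing $\langle\bolF_i(\bolx^t),\bolx_i^t-\bolx_i^*\rangle=\langle\bolF_i(\bolx^t),\bolg_i^t-\bolx_i^*\rangle+\langle\bolF_i(\bolx^{t-1}),\bolx_i^t-\bolg_i^t\rangle+\langle\bolF_i(\bolx^t)-\bolF_i(\bolx^{t-1}),\bolx_i^t-\bolg_i^t\rangle$, the optimistic prediction $\bolF_i(\bolx^{t-1})$ cancels the interior Bregman terms. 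A Cauchy--Schwarz/Young bound on the last inner product, together with $V(\cdot,\cdot)\ge\|\cdot-\cdot\|^2$ to absorb half of $V(\bolg_i^t,\bolx_i^t)$, yields the telescoping bound
\[
\sum_{t=1}^T\langle\bolF_i(\bolx^t),\bolx_i^t-\bolx_i^*\rangle\le\frac{V(\bolx_i^*,\bolg_i^0)}{\eta}-\frac{1}{\eta}\sum_{t=1}^T\Big[\tfrac12 V(\bolg_i^t,\bolx_i^t)+V(\bolx_i^t,\bolg_i^{t-1})\Big]+\frac{\eta}{2}\sum_{t=1}^T\|\bolF_i(\bolx^t)-\bolF_i(\bolx^{t-1})\|_*^2 .
\]
The first term is at most $\eta^{-1}\max_{\bolx_i}V(\bolx_i,\bolg_i^0)$, so after weighting by $\gamma_i^{-1}$ and summing it produces the leading contribution $\eta^{-1}(\sum_i\gamma_i^{-1})\max_i\max_{\bolx_i}V(\bolx_i,\bolg_i^0)$.

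Third, and this is the delicate step, I would show the aggregated variation term is dominated by the aggregated negative terms. By $L$-Lipschitzness, $\|\bolF_i(\bolx^t)-\bolF_i(\bolx^{t-1})\|_*^2\le L^2\|\bolx^t-\bolx^{t-1}\|^2$; since each $\bolF_i$ sees the full iterate, the variation couples all blocks and cannot be cancelled block-by-block. Instead I bound the squared joint increment by the sum of squared block increments at the cost of a factor $d$ (Cauchy--Schwarz across the $d$ blocks), then use $\|\bolx_j^t-\bolx_j^{t-1}\|^2\le 2\|\bolx_j^t-\bolg_j^{t-1}\|^2+2\|\bolg_j^{t-1}-\bolx_j^{t-1}\|^2$, where the two pieces are controlled by $V(\bolx_j^t,\bolg_j^{t-1})$ at time $t$ and by $V(\bolg_j^{t-1},\bolx_j^{t-1})$ at time $t-1$ (the initialization $\bolg_j^0=\bolx_j^0$ kills the boundary term). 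Crucially, the variation carries the uniform weight $\sum_i\gamma_i^{-1}$ whereas the negative terms carry the individual weights $\gamma_i^{-1}\ge\gamma_{\mathrm{max}}^{-1}$, so the binding block is the one with $\gamma_i=\gamma_{\mathrm{max}}$; the cancellation succeeds exactly when $\eta^2 L^2 d\,\gamma_{\mathrm{max}}(\sum_i\gamma_i^{-1})\lesssim 1$, which is precisely why $\eta=\tfrac12(L^2 d\,\gamma_{\mathrm{max}}\sum_i\gamma_i^{-1})^{-1/2}$ is prescribed. With this $\eta$ only the leading term survives, and dividing by $T$ gives $\eta^{-1}(\sum_i\gamma_i^{-1})T^{-1}\max_i\max_{\bolx_i}V=2L(d\gamma_{\mathrm{max}})^{1/2}(\sum_i\gamma_i^{-1})^{3/2}T^{-1}\max_i\max_{\bolx_i}V$, as claimed.

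The main obstacle is exactly this weight-mismatched cancellation: the gradient-variation penalty aggregates with the full weight $\sum_i\gamma_i^{-1}$ and an extra factor $d$ from inter-block coupling, while the stabilizing negative terms only carry the per-block weights. Carefully tracking which Bregman terms may legitimately be reused (the $t\leftrightarrow t-1$ shift and the initialization $\bolg_i^0=\bolx_i^0$) and verifying that the prescribed $\eta$ satisfies the resulting inequality for the worst block $\gamma_i=\gamma_{\mathrm{max}}$ is where the constants $d$, $\gamma_{\mathrm{max}}$ and $\sum_i\gamma_i^{-1}$ in the final bound originate.
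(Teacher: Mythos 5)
Your proposal is correct and takes essentially the same route as the paper's proof of Theorem \ref{trival-convergence-result-general}: the same GQC reduction, the same three-term optimistic decomposition handled by the three-point Bregman inequality, the same Young/Lipschitz bound on the gradient-variation term, and the same inter-block Cauchy--Schwarz (costing the factor $d$) combined with $\gamma_i^{-1}\ge\gamma_{\mathrm{max}}^{-1}$, the index shift using $\bolg_i^0=\bolx_i^0$, and the prescribed $\eta$ to cancel the variation against the negative proximal terms. No gaps.
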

    \begin{proof}
        According to GQC condition (Definition \ref{def-GQC}), we have the following estimation
        \begin{align}\label{general-minimization}
            \sum_{t=1}^T(f(\bolx^t)-f(\bolx^*))\leq\sum_{i=1}^d \frac{1}{\gamma_i}\sum_{t=1}^T\left\langle \bolF_i(\bolx^t),\bolx_i^t-\bolx_i^{*}\right\rangle.
        \end{align}
        For any fixed $i\in[1:d]$, we obtain that \begin{align}\label{divide-general}
        \langle \bolF_i(\bolx^t),\bolx_i^t-\bolx_i^{*}\rangle=&\underbrace{\langle \bolF_i(\bolx^t)-\bolF_i(\bolx^{t-1}),\bolx_i^t-\bolg_i^t\rangle}_{\mathcal{I}}+\underbrace{\langle \bolF_i(\bolx^{t-1}),\bolx_i^t-\bolg_i^t\rangle}_{\mathcal{II}}\notag
        \\
        &+\underbrace{\langle \bolF_i(\bolx^t), \bolg_i^t-\bolx_i^{*}\rangle}_{\mathcal{III}}
    \end{align}
    Since $\bolF$ is $L$-Lipschitz continuous with respect to $\|\cdot\|_*$ under $\|\cdot\|$, we have following estimation of $\mathcal{I}$ by using Cauchy-Schwarz inequality
    \begin{align}\label{cauchy-schwarz-bound-general}
        \mathcal{I}\leq\frac{L^2\eta}{2}\left\|\bolx^t-\bolx^{t-1}\right\|^2+\frac{1}{2\eta}\left\|\bolx_i^t-\bolg_i^t\right\|^2.
    \end{align}
    In addition, utilizing the result of [Lemma 3.4, \citep{Lan2020First}] on step-3 and step-4 of Algorithm \ref{optimistic-hedge-minimization-general}, we have
    \begin{align}
        \mathcal{II}\leq&\frac{1}{\eta}\left[V\left(\bolg_i^{t},\bolg_i^{t-1}\right)-V\left(\bolg_i^{t},\bolx_i^{t}\right)-V\left(\bolx_i^t,\bolg_i^{t-1}\right)\right],\label{optimal-1-genral}
        \\
        \mathcal{III}\leq&\frac{1}{\eta}\left[V\left(\bolx_i^{*},\bolg_i^{t-1}\right)-V\left(\bolx_i^{*},\bolg_i^t\right)-V\left(\bolg_i^t,\bolg_i^{t-1}\right)\right].\label{optimal-2-general}
    \end{align}
    Therefore, by applying Eq.~\eqref{cauchy-schwarz-bound-general}, \eqref{optimal-1-genral} and \eqref{optimal-2-general} into Eq.~\eqref{divide-general}, we obtain
    \begin{align}\label{average-inexact-loss-general}
        \sum_{t=1}^T\langle \bolF_i(\bolx^t),\bolx_i^t-\bolx_i^{*}\rangle\leq&\frac{1}{\eta}V(\bolx_i^{*},\bolg_i^0)+\sum_{t=1}^T\left[\frac{L^2\eta}{2}\left\|\bolx^t-\bolx^{t-1}\right\|^2+\frac{1}{2\eta}\left\|\bolx_i^t-\bolg_i^t\right\|^2\right]\notag
        \\
        &-\frac{1}{\eta}\sum_{t=1}^T V(\bolg_i^t,\bolx_i^t)-\frac{1}{\eta}\sum_{t=1}^T V(\bolx_i^t,\bolg_i^{t-1})\notag
        \\
        \underset{\text{(a)}}{\leq}&\frac{1}{\eta}V(\bolx_i^{*},\bolg_i^0)+\frac{L^2\eta}{2}\sum_{t=1}^T\left\|\bolx^t-\bolx^{t-1}\right\|^2\notag
        \\
        &-\frac{1}{2\eta}\sum_{t=1}^T \left\|\bolg_i^t-\bolx_i^t\right\|^2-\frac{1}{2\eta}\sum_{t=1}^T \left\|\bolx_i^t-\bolg_i^{t-1}\right\|^2\notag
        \\
        \underset{\text{(b)}}{\leq}&\frac{1}{\eta}V(\bolx_i^{*},\bolg_i^0)+\frac{1}{2\eta}\left\|\bolg_i^0-\bolx_i^0\right\|^2+\frac{L^2\eta}{2}\sum_{t=1}^T\left\|\bolx^t-\bolx^{t-1}\right\|^2\notag
        \\
        &-\frac{1}{4\eta}\sum_{t=1}^T \left\|\bolx_i^t-\bolx_i^{t-1}\right\|^2,
    \end{align}
    where (a) is derived from the assumption that $V(\bolx_i,\bolu_i)\geq\|\bolx_i-\bolu_i\|^2$ for any $\bolx_i,\bolu_i\in\cX_i$ and (b) follows from the convexity of $\|\cdot\|$. Applying Eq.~\eqref{average-inexact-loss-general} to Eq.~\eqref{general-minimization}, we have
    \begin{align}
        \sum_{t=1}^T(f(\bolx^t)-f(\bolx^*))\underset{\text{(c)}}{\leq}&\frac{1}{\eta}\sum_{i=1}^d\frac{V(\bolx_i^*,\bolg_i^0)}{\gamma_i}+\frac{L^2\eta}{2}\left(\sum_{i=1}^d\gamma_i^{-1}\right)\sum_{t=1}^T\left\|\bolx^t-\bolx^{t-1}\right\|^2\notag
        \\
        &-\frac{1}{4\eta}\sum_{t=1}^T\left[\sum_{i=1}^d\frac{\left\|\bolx_i^t-\bolx_i^{t-1}\right\|^2}{\gamma_i}\right]\notag
        \\
        \underset{\text{(d)}}{\leq}&\frac{\sum_{i=1}^d\gamma_i^{-1}}{\eta}\max\limits_{i\in[1:d]}\left[\max\limits_{\bolx_i\in\cX_i}V(\bolx_i,\bolg_i^0)\right]\notag
        \\
        &-\left(\frac{1}{4d\eta\gamma_{\text{max}}}-\frac{L^2\eta}{2}\sum_{i=1}^d\gamma_i^{-1}\right)\sum_{t=1}^T\left\|\bolx^t-\bolx^{t-1}\right\|^2,
    \end{align}
    where (c) is derived from the fact that $\bolg_i^0=\bolx_i^0$ for any $i\in[1:d]$ and (d) follows from the convexity of $\|\cdot\|$ ($\frac{1}{d}\sum_{i=1}^d\|\bolx_i\|^2\leq\|\frac{1}{d}\sum_{i=1}^d\bolx_i\|^2$).
    \end{proof}
    
    Since KL divergence satisfies $\mathrm{KL}(\bolx_i\|\bolu_i)\geq\|\bolx_i-\bolu_i\|_1^2$ (Pinsker's inequality), Theorem \ref{trival-convergence-minimization} can be directly derived from Theorem \ref{trival-convergence-result-general}. Next, we propose Proposition \ref{proposition-assumption-smooth-gradient} and provide related proof.
    \begin{proposition}\label{proposition-assumption-smooth-gradient}
        We denote $N=\sum_{i=1}^d n_i$ and let a smooth vector-valued function $\bolF:\bR^{N}\rightarrow\bR^{\ell}$ satisfies:
        \begin{enumerate}
            \item There is a point $\boly\in\bR^{N}$ such that $\|D^{\bolalpha}\bolF(\boly)\|_{\infty}\leq\gamma^k$ with $|\bolalpha|=k$ for all $k\in[0:K]$, 
            \item For any positive integer $k$ greater than $K$, $\|D^{\bolalpha}\bolF\|_{\infty}\leq\gamma^k$ with $|\bolalpha|=k$ uniformly over $\cX$,
        \end{enumerate}
        with a positive constant $\gamma$ and a positive integer $K$, then $\bolF$ satisfies Assumption \ref{ass-0}. 
    \end{proposition}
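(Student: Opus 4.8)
The plan is to read off both bounds in Assumption \ref{ass-0} directly from the two derivative estimates, using nothing more than the multivariate Taylor theorem and the multinomial identity
\[
\sum_{|\bolalpha|=m}\frac{\bolu^{\bolalpha}}{\bolalpha!}=\frac{1}{m!}\Big(\sum_{k}\bolu(k)\Big)^m,\qquad \bolu\ge\mathbf{0},
\]
which follows from $m!/\bolalpha!=\binom{m}{\bolalpha}$. Since the hypotheses control $\|D^{\bolalpha}\bolF\|_{\infty}$ and the quantities in Assumption \ref{ass-0} are $\infty$-norms over the $\ell$ output coordinates, it suffices to prove the scalar estimates for an arbitrary component $\bolF(j)$, using $|D^{\bolalpha}\bolF(j)(\cdot)|\le\|D^{\bolalpha}\bolF(\cdot)\|_{\infty}$ throughout; the bounds will be uniform in $j$. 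I write $K$ for the threshold in the hypotheses and reserve $m$ for the running Taylor order in \textbf{[A$_\text{1}$]}/\textbf{[A$_\text{2}$]}.

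For \textbf{[A$_\text{2}$]} I would first merge the two hypotheses into the single statement $|D^{\bolalpha}\bolF(j)(\boly)|\le\gamma^{|\bolalpha|}$ for \emph{every} multi-index $\bolalpha$: orders $|\bolalpha|\le K$ come from hypothesis~1, while orders $|\bolalpha|>K$ come from hypothesis~2 evaluated at $\boly$ (this uses $\boly\in\cX$, so the uniform-over-$\cX$ estimate applies at the center $\boly$). Substituting into the definition of $P^{\bolF(j)}_{m,\boly}(\bolx)$ with $\bolu=|\bolx|+|\boly|$ and applying the identity termwise gives
\[
P^{\bolF(j)}_{m,\boly}(\bolx)\le\sum_{i=0}^{m}\frac{\big(\gamma(\|\bolx\|_1+\|\boly\|_1)\big)^i}{i!}\le\exp\{\gamma(\|\bolx\|_1+\|\boly\|_1)\}.
\]
Because $\cX=\prod_{i=1}^d\Delta_{n_i}$ forces $\|\bolx\|_1=d$ and $\boly$ is fixed, the right-hand side is a finite constant, independent of $m$ and $\bolx$, which I take as $\Theta_2$. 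This settles \textbf{[A$_\text{2}$]}.

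For \textbf{[A$_\text{1}$]} I would apply Taylor's theorem with Lagrange remainder to each component $\bolF(j)$, so that $R^{\bolF(j)}_{m,\boly}(\bolx)$ is a sum over $|\bolalpha|=m+1$ of $\frac{D^{\bolalpha}\bolF(j)(\xi)}{\bolalpha!}(\bolx-\boly)^{\bolalpha}$ with $\xi$ on the segment $[\boly,\bolx]$. Setting $K_0:=K$, for every order $m>K_0$ we have $m+1>K$, so hypothesis~2 bounds the derivative at $\xi\in\cX$ by $\gamma^{m+1}$; the identity with $\bolu=|\bolx-\boly|$ then yields
\[
\big|R^{\bolF(j)}_{m,\boly}(\bolx)\big|\le\gamma^{m+1}\,\frac{\|\bolx-\boly\|_1^{m+1}}{(m+1)!}\le\frac{(\gamma D)^{m+1}}{(m+1)!},\qquad D:=\sup_{\bolx\in\cX}\|\bolx-\boly\|_1<\infty.
\]
To convert this factorial decay into the geometric bound $\Theta_1\theta^m$ required by \textbf{[A$_\text{1}$]}, fix any $\theta\in(0,1)$; then
\[
\frac{(\gamma D)^{m+1}/(m+1)!}{\theta^{m}}=\gamma D\,\frac{(\gamma D/\theta)^{m}}{(m+1)!}\longrightarrow 0\quad\text{as}\quad m\to\infty,
\]
so the left side is bounded over all $m$ by a finite $\Theta_1$, giving $\big|R^{\bolF(j)}_{m,\boly}(\bolx)\big|\le\Theta_1\theta^m$ for $m>K_0$.

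The arithmetic is routine once the multinomial identity is available, so I expect no serious obstacle. The one point demanding care is the use of hypothesis~2 at evaluation points where only the uniform bound is assumed: in \textbf{[A$_\text{1}$]} the Lagrange point $\xi$ must be shown to lie in $\cX$, which follows from convexity of $\cX$ together with $\boly\in\cX$, and in \textbf{[A$_\text{2}$]} the high-order derivatives of $\bolF$ \emph{at the center} $\boly$ must be read off from the uniform estimate, again using $\boly\in\cX$. Making the hypothesis $\boly\in\cX$ explicit (equivalently, assuming the uniform bound holds on a convex set containing both $\boly$ and $\cX$) is the only ingredient beyond the mechanical bookkeeping.
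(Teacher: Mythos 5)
Your proposal is correct and follows essentially the same route as the paper's proof: for \textbf{[A$_\text{2}$]} the multinomial identity yields $\Theta_2=\exp\{\gamma(d+\|\boly\|_1)\}$ exactly as in the paper, and for \textbf{[A$_\text{1}$]} both arguments use the multivariate Lagrange remainder to get the factorial bound $[\gamma(d+\|\boly\|_1)]^k/k!$, the only (immaterial) difference being that the paper converts this to geometric decay with explicit constants ($\theta=1/3$, $\Theta_1=3^{k_0}$, $k_0=\lceil 3\gamma(d+\|\boly\|_1)\rceil$, and a correspondingly enlarged $K_0$) while you fix an arbitrary $\theta\in(0,1)$ and absorb the constants into $\Theta_1$ via a supremum with $K_0=K$. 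Your flag that hypothesis~2 must be usable at $\boly$ and at the Lagrange point $\xi$ (i.e., $\boly\in\cX$ plus convexity of $\cX$) identifies the same tacit assumption the paper makes without comment.
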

    \begin{proof}[Proof of Proposition \ref{proposition-assumption-smooth-gradient}]
        For any $k\in\bZ_+$ and $j\in[1:l]$, we have
        \begin{align}\label{ass-0-1}
            P_{k,\boly}^{\bolF(j)}(\bolx)\leq\sum_{i=0}^k\sum_{|\bolalpha|=i}\frac{\gamma^i}{\bolalpha!}\cdot(|\bolx|+|\boly|)^{\alpha}=\sum_{i=0}^k\frac{\left[\gamma(d+\|\boly\|_{1})\right]^i}{i!}\leq\exp\{\gamma(d+\|\boly\|_{1})\},
        \end{align}
         using the fact that $\|D^{\bolalpha}\bolF(\boly)\|_{\infty}\leq\gamma^k$ for any $k\in\bZ_+$ and $|\bolalpha|=k$. In addition, by the Taylor expansion of $\bolF(j)$ with Lagrange remainder formula for any $j\in[1:l]$ and $k>1$, we can obtain
         \begin{align}\label{ass-0-2}
             \left|R_{k,\boly}^{\bolF(j)}(\bolx)\right|=\left|\sum_{|\bolalpha|=k}\frac{D^{\bolalpha}\bolF(j)(\boly+t(\bolx-\boly))}{\bolalpha!}(\bolx-\boly)^{\bolalpha}\right|\leq\frac{\left[\gamma(d+\|\boly\|_{1})\right]^k}{k!},
         \end{align}
         where $t\in[0,1]$ depends on $\bolF(j),\bolx$ and $\boly$. Letting $k_0=\lceil 3\gamma(d+\|\boly\|_1)\rceil$ and supposing $k\geq k_0\left(1+\frac{\log(1+\gamma(d+\|\boly\|_1))}{\log(3/2)}\right)$, we derive that
         \begin{align}\label{bound-Theta}
             \frac{[\gamma(d+\|\boly\|_1)]^k}{k!}\leq 3^{k_0-k}.
         \end{align}
         Therefore, in the light of Eq.~\eqref{ass-0-1}, Eq.~\eqref{ass-0-2} and Eq.~\eqref{bound-Theta}, it's direct to derive that $\bolF$ statisfies Assumption \ref{ass-0} with $K_0=k_0\left(1+\frac{\log(1+\gamma(d+\|\boly\|_1))}{\log(3/2)}\right)$, $\theta=\frac{1}{3}$, $\Theta_1=3^{k_0}$ and $\Theta_2=\exp\{\gamma(d+\|\boly\|_{1})\}$.
    \end{proof}
    The following remark discusses the reasonability of Proposition \ref{proposition-assumption-smooth-gradient} conditions, which supports the reasonability of Assumption \ref{ass-0}. 
    \begin{remark}\label{remark-support}
        Since region $\cX=\prod_{i=1}^d\Delta_{n_i}$ is bounded, it's reasonable to assume that the growth rate of the upper bound of internal function's high-order derivatives is not faster than linear growth rate. For example, the upper bounds of high-order derivatives of $\sin(C\bolx)$, $\cos(C\bolx)$ and $\exp\{C\bolx\}$ have linear growth rate over $\cX$ for fixed constant $C$. Therefore, if the internal function $\bolF$ can be generated by the linear combination of $\{\sin(C_k\bolx)\}_{k=1}^K$ and $\{\cos(C_k\bolx)\}_{k=1}^K$ (or $\{\exp\{C_k\bolx\}\}_{k=1}^K$) with finite $K$, $\bolF$ satisfies Assumption \ref{ass-0} by using Proposition \ref{proposition-assumption-smooth-gradient}. 
    \end{remark}

    \subsection{Proof of Theorem \ref{main-theorem-minimization}}
    We briefly introduce our techniques to make the proof of Theorem \ref{main-theorem-minimization} more comprehensible in this part. Our proof consists of two ingredients. The first is applying Lemma \ref{optimistic-hege-lemma} to construct a variant upper bound of average function error $\frac{1}{T}\sum_{t=1}^T(f(\bolx^t)-f(\bolx^*))$ that is different from the upper bound derived from the classical OMD algorithm. This bound is composed of a)  $\cO\left(\frac{1}{\eta T}\right)$ invariant error and b) weighted sum of the variance for finite difference sequence $\{(D_1\bolF_i(\bolx^{t-1})\}_{t=1}^T$ and $\{(D_0\bolF_i(\bolx^{t-1})\}_{t=1}^T$ over $i\in[1:d]$, which has the form of $\sum_{i=1}^d\frac{1}{\gamma_i}[\frac{\cO(1)}{T}\sum_{t=1}^T\mathrm{Var}_{\bolx_i^t}(D_1\bolF_i(\bolx^{t-1}))-\frac{\cO(1)}{T}\sum_{t=1}^T\mathrm{Var}_{\bolx_i^t}(\bolF_i(\bolx^{t-1}))]$. The second is applying Lemma \ref{varience-bound} (refer to it as control lemma) on each $\{\bolF_i(\bolx^t)\}_{t=1}^T$ to bound (b) by a quantity that grows poly-logarithmically in $T$. 
    Therefore, it's necessary to leverage Theorem \ref{finite-difference-bound} and Lemma \ref{consecutively-x} to show that every sequence $\left\{\bolF_i(\bolx^t)\right\}_{t=0}^T$ outputted by Algorithm \ref{optimistic-hedge-minimization} satisfies the preconditions of Lemma \ref{varience-bound}. 
    \subsubsection{Part I}
        The next Lemma \ref{optimistic-hege-lemma} provides a variant convergence proof of the OMD algorithm. In this Lemma, basing on KL divergence, an explicit expression for the optimal solution of the OMD sub-problem is utilized to provide an upper bound of $\sum_{t=1}^T\left(f(\bolx^t)-f(\bolx^*)\right)$.
        \begin{lemma}\label{optimistic-hege-lemma}
    Suppose $\left\|\bolF(\bolx)\right\|_{\infty}\leq\Theta$ ($\Theta\geq 1$) for any $x\in\cX$ and policy set $\{\bolx^t\}_{t=1}^T$ follows the iteration of Algorithm \ref{optimistic-hedge-minimization} with step size $\eta\in(0,\frac{1}{32\Theta})$. Then, it holds that
    \begin{align}\label{average-loss}
        \sum_{t=1}^T\left(f(\bolx^t)-f(\bolx^*)\right)\leq&\sum_{i=1}^d \frac{1}{\gamma_i}\left[\frac{\log(n_i)}{\eta}+\hat{g}_1(\eta\Theta)\eta\Theta^2\sum_{t=1}^T\mathrm{Var}_{\bolx_i^t}\left(\bolF_i(\bolx^t)-\bolF_i(\bolx^{t-1})\right)\right.\notag
        \\
        &\left.\, \, \, \, \, \, \, \, \, \, \, \, \, \, \, \, \, \, \, \, \, \, \, \, \, \, \, \, \, \, -\hat{g}_2(\eta\Theta)\eta\Theta^2\sum_{t=1}^T\mathrm{Var}_{\bolx_i^t}(\bolF_i(\bolx^{t-1}))\right],
    \end{align}
    where $\hat{g}_1(\eta):=\frac{1}{2}+64\left(\frac{1}{3(1-16\eta)}+2\right)\eta$ and $\hat{g}_2(\eta):=\frac{1}{2}-16\left(\frac{1}{3(1-16\eta)}+2\right)\eta$.
\end{lemma}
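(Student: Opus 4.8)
The plan is to prove the bound one block at a time and then combine through the GQC inequality; this is an RVU-style (``regret bounded by variation in utilities'') refinement of the standard Optimistic Hedge analysis. First I would invoke the GQC condition \eqref{GQC-equation}, which gives $\sum_{t=1}^T(f(\bolx^t)-f(\bolx^*))\leq\sum_{i=1}^d\frac{1}{\gamma_i}\sum_{t=1}^T\langle\bolF_i(\bolx^t),\bolx_i^t-\bolx_i^*\rangle$, so that it suffices to bound the per-block ``regret'' $\sum_{t=1}^T\langle\bolF_i(\bolx^t),\bolx_i^t-\bolx_i^*\rangle$ by the bracketed quantity. For a fixed block $i$, I would apply the three-point inequality [Lemma 3.4, \citep{Lan2020First}] to the two prox-updates defining $\bolx_i^t$ and $\bolg_i^t$: taking the comparator $\bolx_i^*$ in the $\bolg_i^t$-step and $\bolg_i^t$ in the $\bolx_i^t$-step, adding the two estimates, and using the splitting $\langle\bolF_i(\bolx^t),\bolx_i^t-\bolx_i^*\rangle=\langle\bolF_i(\bolx^t)-\bolF_i(\bolx^{t-1}),\bolx_i^t-\bolg_i^t\rangle+\langle\bolF_i(\bolx^{t-1}),\bolx_i^t-\bolg_i^t\rangle+\langle\bolF_i(\bolx^t),\bolg_i^t-\bolx_i^*\rangle$ makes the two copies of $\mathrm{KL}(\bolg_i^t\|\bolg_i^{t-1})$ cancel and yields the per-step bound $\eta\langle\bolF_i(\bolx^t),\bolx_i^t-\bolx_i^*\rangle\leq\eta\langle\bolF_i(\bolx^t)-\bolF_i(\bolx^{t-1}),\bolx_i^t-\bolg_i^t\rangle+\mathrm{KL}(\bolx_i^*\|\bolg_i^{t-1})-\mathrm{KL}(\bolx_i^*\|\bolg_i^t)-\mathrm{KL}(\bolg_i^t\|\bolx_i^t)-\mathrm{KL}(\bolx_i^t\|\bolg_i^{t-1})$. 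Summing over $t$, telescoping the $\mathrm{KL}(\bolx_i^*\|\cdot)$ terms, and using $\mathrm{KL}(\bolx_i^*\|\bolg_i^0)\leq\log(n_i)$ (since $\bolg_i^0$ is uniform) gives $\sum_{t=1}^T\langle\bolF_i(\bolx^t),\bolx_i^t-\bolx_i^*\rangle\leq\frac{\log(n_i)}{\eta}+\sum_{t=1}^T\langle\bolF_i(\bolx^t)-\bolF_i(\bolx^{t-1}),\bolx_i^t-\bolg_i^t\rangle-\frac{1}{\eta}\sum_{t=1}^T[\mathrm{KL}(\bolg_i^t\|\bolx_i^t)+\mathrm{KL}(\bolx_i^t\|\bolg_i^{t-1})]$.

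The technical heart is then converting this ``prediction-error'' term and the two negative $\mathrm{KL}$ terms into the variance expressions in the claim. Here I would exploit the explicit multiplicative form of the KL-prox updates on the simplex, $\bolx_i^t(j)\propto\bolg_i^{t-1}(j)e^{-\eta\bolF_i(\bolx^{t-1})(j)}$ and $\bolg_i^t(j)\propto\bolg_i^{t-1}(j)e^{-\eta\bolF_i(\bolx^t)(j)}$, observing that both iterates are exponential tilts of the common distribution $\bolg_i^{t-1}$. Expanding each partition function $\log Z$ and each coordinate factor $e^{-\eta\bolF}$ to second order, the first-order contributions cancel against the inner products while the leading second-order terms are precisely (co)variances of $\bolF_i$ under the relevant distribution. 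Concretely I would establish, for each $t$, an upper bound $\langle\bolF_i(\bolx^t)-\bolF_i(\bolx^{t-1}),\bolx_i^t-\bolg_i^t\rangle\lesssim\eta\,\mathrm{Var}_{\bolx_i^t}(\bolF_i(\bolx^t)-\bolF_i(\bolx^{t-1}))$ together with a lower bound $\mathrm{KL}(\bolx_i^t\|\bolg_i^{t-1})\gtrsim\eta^2\,\mathrm{Var}_{\bolx_i^t}(\bolF_i(\bolx^{t-1}))$ and $\mathrm{KL}(\bolg_i^t\|\bolx_i^t)\geq0$; dividing the $\mathrm{KL}$ bound by $\eta$ produces the negative variance term. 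Throughout, all variances must be re-expressed against the single reference distribution $\bolx_i^t$, which costs multiplicative factors $e^{\pm\eta\Theta}$ that stay uniformly bounded because $\|\bolF\|_\infty\leq\Theta$ and $\eta<\frac{1}{32\Theta}$.

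The main obstacle is landing on the \emph{exact} constants $\hat{g}_1(\eta\Theta)$ and $\hat{g}_2(\eta\Theta)$ rather than crude $O(\eta)$ coefficients. This forces me to retain, not discard, the cubic-and-higher remainders in the exponential expansions and to bound them by a convergent geometric series: the factor $\frac{1}{3(1-16\eta)}$ common to both $\hat{g}_1$ and $\hat{g}_2$ is exactly the tail $\sum_{k\ge3}$ of such a series, and the hypothesis $\eta<\frac{1}{32\Theta}$ (equivalently $16\eta\Theta<\tfrac12$, so $\frac{1}{1-16\eta\Theta}<2$) is what guarantees its convergence. The sign structure, an additive $+64(\cdots)\eta$ correction in $\hat{g}_1$ versus a subtractive $-16(\cdots)\eta$ correction in $\hat{g}_2$, reflects that the remainder can only enlarge the positive prediction-error term while it can only erode, but not reverse, the negative stability term, so the two remainders must be signed consistently after the reference-measure changes. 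Careful bookkeeping of these third-order tails, and verifying that the $e^{\pm\eta\Theta}$ conversion factors combine with them to produce exactly $\hat{g}_1,\hat{g}_2$ and the $\eta\Theta^2$ prefactor, is the delicate part; once these per-step variance estimates are summed over $t$ and weighted by $\frac{1}{\gamma_i}$, the claimed inequality follows.
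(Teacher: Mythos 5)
Your outline reproduces the paper's proof route almost step for step: the GQC reduction to the per-block regret $\sum_{t=1}^T\langle\bolF_i(\bolx^t),\bolx_i^t-\bolx_i^*\rangle$, the identical three-term splitting into $\mathcal{I},\mathcal{II},\mathcal{III}$, the three-point property of the two KL-prox steps (the paper derives it as an exact identity from the closed-form multiplicative update, Lemma~\ref{optimal-condition-KL-prox}, rather than quoting Lan's inequality, but this is immaterial), telescoping with $\mathrm{KL}(\bolx_i^*\|\bolg_i^0)\leq\log(n_i)$, and the conversion of the prediction-error and stability terms into variances by exploiting the exponential-tilt structure, with the geometric tail $\sum_{k\geq 3}$ producing the $\frac{1}{3(1-16\eta)}$ factor and the change of reference measure to $\bolx_i^t$ costing $e^{\pm\eta\Theta}\approx 1\pm O(\eta\Theta)$ factors. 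The paper organizes the second stage slightly differently than you describe — it applies Cauchy--Schwarz in the local norm, $\mathcal{I}\leq\|\bolg_i^t-\bolx_i^t\|_{\bolx_i^t}^*\sqrt{\mathrm{Var}_{\bolx_i^t}(\bolF_i(\bolx^t)-\bolF_i(\bolx^{t-1}))}$, and then runs everything through the chi-squared divergence via Lemmas~\ref{bound-ka-by-KL} and~\ref{lemma:bound-ka-by-var} ($\mathrm{KL}\gtrsim\chi^2$ and $\chi^2\asymp(\eta\Theta)^2\mathrm{Var}$) — but these are the same second-order expansions you propose.

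There is, however, one concrete step where your plan cannot reach the stated constants: you discard $\mathrm{KL}(\bolg_i^t\|\bolx_i^t)\geq 0$. The paper keeps this term and lower-bounds it as well, via $\mathrm{KL}(\bolg_i^t\|\bolx_i^t)\gtrsim\frac{1}{2}\chi^2(\bolg_i^t\|\bolx_i^t)\gtrsim\frac{(\eta\Theta)^2}{2}\mathrm{Var}_{\bolx_i^t}(\bolF_i(\bolx^t)-\bolF_i(\bolx^{t-1}))$ (Eq.~\eqref{application-KL-bound-ka}), and this is precisely what halves the coefficient of the positive term: Cauchy--Schwarz alone yields $\mathcal{I}\lesssim\eta\Theta^2\,\mathrm{Var}_{\bolx_i^t}(\bolF_i(\bolx^t)-\bolF_i(\bolx^{t-1}))$ with leading coefficient $1$, and only after subtracting $\frac{1}{\eta}\mathrm{KL}(\bolg_i^t\|\bolx_i^t)$ does the coefficient drop to the $\frac{1}{2}$ appearing in $\hat{g}_1$ (see the cancellation between the first two variance terms in Eq.~\eqref{estimate-average-inexact-loss}). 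With your choice you would obtain the lemma with $\hat{g}_1(\eta)\approx 1+O(\eta)$ in place of $\frac{1}{2}+64\left(\frac{1}{3(1-16\eta)}+2\right)\eta$ — a valid but strictly weaker inequality than the one claimed. This is especially worth flagging because you correctly identify the exact constants as the delicate part of the proof, yet make the one simplification that provably loses a factor of $2$ at leading order; the repair is simply to treat $\mathrm{KL}(\bolg_i^t\|\bolx_i^t)$ symmetrically with $\mathrm{KL}(\bolx_i^t\|\bolg_i^{t-1})$, using the two-sided variance bounds on $\chi^2$ as the paper does. (The weaker version would in fact still suffice for Theorem~\ref{main-theorem-minimization}, since there $\hat{g}_1$ is only multiplied by the small factor $2\beta_0$, but it does not establish the lemma as stated.)
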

\begin{proof}
    As claimed by Definition \ref{def-GQC}, we have the following estimation
    \begin{align}\label{main-estimation}
        \sum_{t=1}^T(f(\bolx^t)-f(\bolx^*))
        \leq\sum_{i=1}^d \frac{1}{\gamma_i}\sum_{t=1}^T\left\langle \bolF_i(\bolx^t),\bolx_i^t-\bolx_i^{*}\right\rangle.
    \end{align}
   In the following, considering a fixed $i\in[1:d]$,
   it's easy to obtain that
    \begin{align}\label{divide}
        \langle \bolF_i(\bolx^t),\bolx_i^t-\bolx_i^{*}\rangle=&\underbrace{\langle \bolF_i(\bolx^t)-\bolF_i(\bolx^{t-1}),\bolx_i^t-\bolg_i^t\rangle}_{\mathcal{I}}+\underbrace{\langle \bolF_i(\bolx^{t-1}),\bolx_i^t-\bolg_i^t\rangle}_{\mathcal{II}}\notag
        \\
        &+\underbrace{\langle \bolF_i(\bolx^t), \bolg_i^t-\bolx_i^{*}\rangle}_{\mathcal{III}}
    \end{align}
    Recall the update of Algorithm \ref{optimistic-hedge-minimization} can be devided into two parts:
    \begin{align}
        \bolg_i^t=&\arg\min\limits_{\bolg_i\in\Delta_{n_i}}\eta\left\langle \bolF_i(\bolx^t),\bolg_i\right\rangle+\mathrm{KL}(\bolg_i\|\bolg_i^{t-1}), \label{update-1}
        \\
        \bolx_i^{t+1}=&\arg\min\limits_{\bolx_i\in\Delta_{n_i}}\eta\llangle \bolF_i(\bolx^t),\bolx_i\rrangle+\mathrm{KL}(\bolx_i\|\bolg_i^t),\label{update-2}
    \end{align}
    for any $i\in[1:d]$ where $\bolg_i^0\propto \bolx_i^0\cdot\exp\{\eta(\bolF_i(\bolx^0)-\bolF_i(\bolx^{-1}))\}$ and $\bolx_i^{-1}=\bolx_i^0=\left(\frac{1}{n_i},\cdots,\frac{1}{n_i}\right)^{\top}$.
    According to Cauchy-Schwarz inequality, we can evaluate $\mathcal{I}$ as follows
    \begin{align}\label{cauchy-schwarz-bound}
        \mathcal{I}\leq\left\|\bolg_i^t-\bolx_i^t\right\|_{\bolx_i^t}^*\cdot\sqrt{\mathrm{Var}_{\bolx_i^t}\left(\bolF_i(\bolx^t)-\bolF_i(\bolx^{t-1})\right)}.
    \end{align}
    In addition, utilizing the result of Lemma \ref{optimal-condition-KL-prox}, we have
    \begin{align}
        \mathcal{II}=&\frac{1}{\eta}\left[\mathrm{KL}\left(\bolg_i^{t}||\bolg_i^{t-1}\right)-\mathrm{KL}\left(\bolg_i^{t}\|\bolx_i^{t}\right)-\mathrm{KL}\left(\bolx_i^t\|\bolg_i^{t-1}\right)\right],\label{optimal-1}
        \\
        \mathcal{III}=&\frac{1}{\eta}\left[\mathrm{KL}\left(\bolx_i^{*}\|\bolg_i^{t-1}\right)-\mathrm{KL}\left(\bolx_i^{*}\|\bolg_i^t\right)-\mathrm{KL}\left(\bolg_i^t\|\bolg_i^{t-1}\right)\right].\label{optimal-2}
    \end{align}
    Therefore, by applying Eq.~\eqref{cauchy-schwarz-bound}, \eqref{optimal-1} and \eqref{optimal-2} into Eq.~\eqref{divide}, we obtain
    \begin{align}\label{average-inexact-loss}
        \sum_{t=1}^T\langle \bolF_i(\bolx^t),\bolx_i^t-\bolx_i^{*}\rangle\leq&\frac{1}{\eta}\mathrm{KL}(\bolx_i^{*}\|\bolg_i^0)+\sum_{t=1}^T\|\bolg_i^t-\bolx_i^t\|_{\bolx_i^t}^*\cdot\sqrt{\mathrm{Var}_{\bolx_i^t}\left(\bolF_i(\bolx^t)-\bolF_i(\bolx^{t-1})\right)}\notag
        \\
        &-\frac{1}{\eta}\sum_{t=1}^T\mathrm{KL}(\bolg_i^t\|\bolx_i^t)-\frac{1}{\eta}\sum_{t=1}^T\mathrm{KL}(\bolx_i^t\|\bolg_i^{t-1}).
    \end{align}
    Since there is a vector $\bolF_i(\bolx^t)-\bolF_i(\bolx^{t-1})$ such that for any $j\in[1:n_i]$
    \begin{align}\label{tilde_t-relate-t}
        \bolg_i^{t}(j)=\frac{\bolx_i^{t}(j)\exp\left\{\eta\left(\bolF_{i}(j)(\bolx^t)-\bolF_{i}(j)(\bolx^{t-1})\right)\right\}}{\sum_{j'=1}^{n_i}\bolx_i^{t}(j')\exp\left\{\eta\left(\bolF_{i}(j')(\bolx^t)-\bolF_{i}(j')(\bolx^{t-1})\right)\right\}},
    \end{align}
    we have that 
    \begin{align}
        \max\limits_{i\in[1:d]}\left\|\frac{\bolg_i^t}{\bolx_i^t}\right\|_\infty\leq&\exp\{2\eta\left\|\bolF_i(\bolx^t)-\bolF_i(\bolx^{t-1})\right\|_\infty\}\leq\exp\{4\eta\Theta\}\leq1+8\eta\Theta,
        \end{align}
        and
        \begin{align}
        \max\limits_{i\in[1:d]}\left\|\frac{\bolx_i^t}{\bolg_i^{t-1}}\right\|_\infty&\leq\exp\{2\eta\|\bolF_i(\bolx^{t-1})\|_\infty\}\leq\exp\{2\eta\Theta\}\leq1+4\eta\Theta,\notag
    \end{align}
    with combining Eq.~\eqref{update-2} and choosing proper $\eta$ such that $\eta\Theta\leq\frac{1}{4}$. According to Lemma \ref{bound-ka-by-KL}, we have
    \begin{equation}\label{application-KL-bound-ka}
        \begin{split}
            \mathrm{KL}(\bolg_i^t\|\bolx_i^t)\geq&\left(\frac{1-8\eta\Theta}{2}-\frac{16\eta\Theta}{3(1-8\eta\Theta)}\right)\cX^2(\bolg_i^t,\bolx_i^t),\\
            \mathrm{KL}(\bolx_i^t\|\bolg_i^{t-1})\geq&\left(\frac{1-4\eta\Theta}{2}-\frac{8\eta\Theta}{3(1-4\eta\Theta)}\right)\cX^2(\bolx_i^t,\bolg_i^{t-1}),
        \end{split}
    \end{equation}
    for any $i\in[1:d]$.
    Noting that $\cX^2(\rho,{\mu})=\left(\left\|\rho-{\mu}\right\|_{{\mu}}^*\right)^2$, in the light of Lemma \ref{lemma:bound-ka-by-var}, we derive that
    \begin{equation}
        \begin{split}
            \cX^2(\bolg_i^t,\bolx_i^t)\leq&\left(1+32\left(\frac{1}{3(1-16\eta\Theta)}+2\right)\eta\Theta\right)(\eta\Theta)^2\mathrm{Var}_{\bolx_i^t}\left(\bolF_i(\bolx^t)-\bolF_i(\bolx^{t-1})\right),
            \\
            \cX^2(\bolg_i^t,\bolx_i^t)\geq&\left(1-32\left(\frac{1}{3(1-16\eta\Theta)}+2\right)\eta\Theta\right)(\eta\Theta)^2\mathrm{Var}_{\bolx_i^t}\left(\bolF_i(\bolx^t)-\bolF_i(\bolx^{t-1})\right),
        \end{split}
    \end{equation}
    as long as $\eta\Theta\leq\frac{1}{32}$. There exists a similar lower bound with respect to $\cX^2(\bolx_i^t,\bolg_i^{t-1})$
    \begin{align}\label{application-ka-bound-var}
        \cX^2(\bolx_i^t,\bolg_i^{t-1})\geq&\left(1-16\left(\frac{1}{3(1-8\eta\Theta)}+2\right)\eta\Theta\right)(\eta\Theta)^2\mathrm{Var}_{\bolg_i^{t-1}}(\bolF_i(\bolx^{t-1}))
        \notag
        \\
        \geq&\left(1-16\left(\frac{1}{3(1-8\eta\Theta)}+2\right)\eta\Theta\right)(\eta\Theta)^2\exp\{-2\eta\Theta\}\mathrm{Var}_{\bolx_i^t}(\bolF_i(\bolx^{t-1}))
        \notag
        \\
        \underset{\text{(a)}}{\geq}&\left(1-16\left(\frac{1}{3(1-8\eta\Theta)}+3\right)\eta\Theta\right)(\eta\Theta)^2\mathrm{Var}_{\bolx_i^t}(\bolF_i(\bolx^{t-1})),
    \end{align}
    where (a) is derived from $\exp\{-2\eta\Theta\}\geq 1-4\eta\Theta$ for any $\eta\Theta\leq\frac{1}{32}$. Relying on Eq.~\eqref{average-inexact-loss}, Eq.~\eqref{application-KL-bound-ka}-\eqref{application-ka-bound-var}, we conclude that
    \begin{align}\label{estimate-average-inexact-loss}
        &\sum_{t=1}^T\left\langle \bolF_i(\bolx^t),\bolx_i^t-\bolx_i^{*}\right\rangle\notag
        \\
        \leq&\frac{\log(n_i)}{\eta}+\left(1+32\left(\frac{1}{3(1-16\eta\Theta)}+2\right)\eta\Theta\right)\eta\Theta^2\sum_{t=1}^T\mathrm{Var}_{\bolx_i^t}\left(\bolF_i(\bolx^t)-\bolF_i(\bolx^{t-1})\right)\notag
        \\
        &-\left(\frac{1}{2}-\left(\frac{32}{3(1-16\eta\Theta)}+36\right)\eta\Theta\right)\eta\Theta^2\sum_{t=1}^T\mathrm{Var}_{\bolx_i^t}\left(\bolF_i(\bolx^t)-\bolF_i(\bolx^{t-1})\right)\notag
        \\
        &-\left(\frac{1}{2}-\left(\frac{16}{3(1-8\eta\Theta)}+27\right)\eta\Theta\right)\eta\Theta^2\sum_{t=1}^T\mathrm{Var}_{\bolx_i^t}(\bolF_i(\bolx^{t-1}))\notag
        \\
        \leq&\frac{\log(n_i)}{\eta}+\left(\frac{1}{2}+64\left(\frac{1}{3(1-16\eta\Theta)}+2\right)\eta\Theta\right)\eta\Theta^2\sum_{t=1}^T\mathrm{Var}_{\bolx_i^t}\left(\bolF_i(\bolx^t)-\bolF_i(\bolx^{t-1})\right)\notag
        \\
        &-\left(\frac{1}{2}-16\left(\frac{1}{3(1-16\eta\Theta)}+2\right)\eta\Theta\right)\eta\Theta^2\sum_{t=1}^T\mathrm{Var}_{\bolx_i^t}(\bolF_i(\bolx^{t-1})).
    \end{align}
    Finally, applying the estimation Eq.~\eqref{estimate-average-inexact-loss} to Eq.~\eqref{main-estimation}, we complete the proof.
    \end{proof}

    \subsubsection{Part II}
    Basing on the conclusion of Lemma \ref{optimistic-hege-lemma}, if the finite sum of $\text{Var}_{\bolx_i^t}(\bolF_i(\bolx^t)-\bolF_i(\bolx^{t-1}))$ can be controlled by the finite sum of $\text{Var}_{\bolx_i^t}(\bolF_i(\bolx^{t-1}))$ with a $\cO(\mathrm{poly}(\log(T)))$ constant for each $i\in[1:d]$, the final convergence result can be obtained directly. Hence, to demonstrate this relationship, we require the assistance of auxiliary Lemma \ref{varience-bound}. Our initial step is to prove that $\bolF_i(\bolx^t)$ satisfies the first condition in Lemma \ref{varience-bound} for any $i\in[1:d]$.
    \begin{theorem}\label{finite-difference-bound}
		Assuming $f$ satisfies GQC condition and Assumption \ref{ass-0} holds, $\bolx^t$ follows the iteration of Algorithm \ref{optimistic-hedge-minimization}, we set $\beta\in\left(0,\frac{1}{(\Theta_1+\Theta_2+1)(H+3)}\right)$, $\Gamma\geq e^2+322560\Theta_2$, $\hat{K}\geq \max\{K_0,$ $\frac{H\log(4\beta^{-1})+\log(\Theta_1)}{\log(\theta^{-1})}\}$ and $\eta=\frac{\beta}{6e^3\hat{K}\Gamma\max\{\Theta, 1\}}$. Then, the following finite difference bound with respect to $\left\{\bolF_{i}(\bolx^t)\right\}_{i=1}^d$ holds
		\begin{align}
			\max\limits_{i\in[1:d]}\left\|(D_h \bolF_{i}(\bolx))^{t_0}\right\|_{\infty}\leq\beta^h h^{3h+1},
		\end{align}
	for all $h\in[1:H]$ and $t_0\in[0:T-h]$. Without loss of generality, we require that $H$ does not exceed $T$.
	\end{theorem}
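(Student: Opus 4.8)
The plan is to prove the bound by strong induction on the difference order $h$, carrying alongside it an auxiliary bound on the finite differences of the \emph{log-iterates} $\{\log\bolx_i^t(j)\}$ and $\{\log\bolg_i^t(j)\}$. The conceptual difficulty is a circular coupling: the finite differences of $\{\bolF_i(\bolx^t)\}$ are governed by those of the iterates $\{\bolx^t\}$ through the ``polynomial-like'' structure of $\bolF$, while the finite differences of the iterates are governed by those of $\{\bolF_i(\bolx^t)\}$ through the multiplicative-weights update. The induction breaks this circularity because each pass through the update rule consumes exactly one order of differencing, so the iterate differences at order $h$ are pinned to the $\bolF$-differences at order $h-1$, which the hypothesis already controls.

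Two combinatorial tools drive the argument. First, Assumption \ref{ass-0} lets me replace $\bolF_i$ by its degree-$\hat K$ Taylor polynomial about $\boly$: writing $\bolF_i(\bolx^t)=\sum_{k=0}^{\hat K}\sum_{|\bolalpha|=k}\frac{D^{\bolalpha}\bolF_i(\boly)}{\bolalpha!}(\bolx^t-\boly)^{\bolalpha}+\bolR_{\hat K,\boly}^{\bolF_i}(\bolx^t)$ and applying $D_h$, the remainder contributes at most $2^h\Theta_1\theta^{\hat K}$ by \eqref{D_h_formulation}; the choice $\hat K\geq\frac{H\log(4\beta^{-1})+\log(\Theta_1)}{\log(\theta^{-1})}$ forces $\theta^{\hat K}\leq \beta^{H}/(4^{H}\Theta_1)$, so this term is dwarfed by the target $\beta^h h^{3h+1}$ for every $h\leq H$. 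Second, a Leibniz-type product rule for finite differences (in the spirit of \citet{Daskalakis2021Near}) expresses $\big(D_h(\bolx-\boly)^{\bolalpha}\big)^{t_0}$ as a sum over distributions of the $h$ difference operations among the $|\bolalpha|$ scalar factors, reducing the polynomial part to products of finite differences of the coordinate sequences $\{\bolx_i^t(j)\}$, whose sizes the auxiliary induction supplies.

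For the iterate side I would work in the log-domain. The MWU closed form \eqref{tilde_t-relate-t} expresses each increment $\log\bolg_i^t(j)-\log\bolg_i^{t-1}(j)$ as $\pm\eta\bolF_i(j)(\bolx^t)$ plus a $j$-independent normalizer $c_i^t$ (the sign is immaterial here). Hence $\big(D_h\log\bolg_i(j)\big)^{t_0}=\mp\eta\big(D_{h-1}\bolF_i(j)(\bolx)\big)^{t_0+1}-\big(D_{h-1}c_i\big)^{t_0+1}$, so one order of differencing of a log-iterate is pinned to the $(h{-}1)$-st difference of $\bolF_i$, bounded by $\beta^{h-1}(h-1)^{3(h-1)+1}$ via the hypothesis; the log-sum-exp normalizer $c_i^t$ is itself a smooth function of strictly lower-order data and is handled by the same Taylor/Leibniz machinery at orders $<h$. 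Converting these log-domain estimates back to bounds on $\big(D_h\bolx\big)^{t_0}$ via the exponential map and feeding them into the Leibniz expansion of the polynomial part closes the recursion for $\|(D_h\bolF_i)^{t_0}\|_\infty$. The base case $h=1$ is a direct one-step estimate: the update moves each log-weight by $O(\eta\Theta)$, so $\|(D_1\bolF_i)^{t_0}\|_\infty=O(\eta\cdot\mathrm{poly})\leq\beta$ once $\eta=\frac{\beta}{6e^3\hat K\Gamma\max\{\Theta,1\}}$.

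The main obstacle is the constant book-keeping, not the skeleton. Closing the recursion at exactly $\beta^h h^{3h+1}$ requires the super-exponential factor $h^{3h+1}$ to simultaneously absorb three sources of combinatorial blow-up: the multinomial count of terms in the Leibniz distribution of $h$ operations over up to $\hat K$ factors, the number $\binom{n_i+k-1}{k}$ of degree-$k$ monomials weighted by the Taylor coefficients (controlled through $\boldsymbol{P}_{\hat K,\boly}^{\bolF}$ and Assumption \ref{ass-0}[A$_2$]), and the factorial-type weights that appear when the exponential map is differenced. The thresholds $\Gamma\geq e^2+322560\Theta_2$, $\beta<\big((\Theta_1+\Theta_2+1)(H+3)\big)^{-1}$, and the stated $\eta$ are calibrated precisely so that each use of the update contributes a factor $\eta\Gamma\hat K\lesssim\beta$ and the degree-summed series $g(\Gamma)=\sum_{k}\Gamma^{-k}[k^7+(k+1)e^{2k}]$ converges; verifying that the accumulated constants still respect the induction hypothesis is the delicate final step.
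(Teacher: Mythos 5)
Your proposal is correct and takes essentially the same route as the paper's proof: Taylor-expand $\bolF_i$ at $\boly$ with the remainder annihilated by the choice of $\hat K$, break the $\bolF$--iterate circularity by inducting on the difference order (one MWU step consumes exactly one order of differencing), and control the polynomial part through a Leibniz/chain rule for finite differences of analytic compositions, with the combinatorial blow-up absorbed by $h^{3h+1}$ and $g(\Gamma)$. Your log-domain bookkeeping is a re-parametrization rather than a different argument: the paper writes $\bolu^{t_0+t}(k)=\bolu^{t_0}(k)\cdot\bolpsi_{t_0,k}^{j(k)}(\eta\bolr_{t_0,k}^t)$ and applies the $(Q,R)$-bounded chain-rule lemma of \citet{Daskalakis2021Near} to these softmax-type functions --- your increments $\pm\eta\bolF_i(j)$ plus the log-sum-exp normalizer are exactly the logarithms of these $\bolpsi$'s, and your (necessary) factoring out of the base value $\bolx^{t_0}(j)$ before re-exponentiating, since raw log-iterates are unbounded, corresponds to the paper's factor $(\bolu^{t_0})^{\bole-\bolalpha}$, which is absorbed via $\boldsymbol{P}_{\hat K,\boly}^{\bolF}$ and Assumption \ref{ass-0}.
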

	\begin{proof}[Proof of Theorem \ref{finite-difference-bound}]
		According to the Taylor expansion of each component $k$ of $\bolF_{i}$ at $\boly$, one can notice that 
		\begin{align}
			\left|\left(D_h\bolF_{i}(k)(\bolx)\right)^{t_0}\right|\leq\sum_{j=0}^{\hat{K}}\sum_{|\bolalpha|=j}\frac{|D^{\bolalpha}\bolF_{i}(k)(\boly)|}{\bolalpha!}\left|(D_h(\bolx-\boly)^{\bolalpha})^{t_0}\right|+\left|\left(D_h R_{\hat{K},\boly}^{\bolF_{i}(k)}(\bolx)\right)^{t_0}\right|,
		\end{align}
	for any $\hat{K}\in\bZ_+$. Therefore, setting $\hat{K}\geq\max\left\{\frac{H\log(4\beta^{-1})+\log(\Theta_1)}{\log(\theta^{-1})}, K_0\right\}$ and combining the remark Eq.~\eqref{D_h_formulation} of operator $D_h$ in Appendix \ref{finite-difference-comp}, we can guarantee the validity of the following estimation
	\begin{align}
		\left|\left(D_h R_{\hat{K},\boly}^{\bolF_{i}(k)}(\bolx)\right)^{t_0}\right|\leq 2^h \max_{\bolx\in\cX}\left|R_{\hat{K},\boly}^{\bolF_{i}(k)}(\bolx)\right|\leq\Theta_1 2^h\theta^{\hat{K}}\leq\frac{1}{2}\beta^H\leq\frac{1}{2}\beta^H h^{Bh+1},
	\end{align}
	for any $h\in[1:H]$. Moreover, as stated by Assumption \ref{ass-0}, we obtain $\max\limits_{i\in[1:d]}\left\|\bolF_i(\bolx)\right\|_{\infty}\leq\Theta_1+\Theta_2$ for any $\bolx\in\cX$. Suppose that $\max\limits_{i\in[1:d]}\left\|(D_{h'}\bolF_{i}(\bolx))^{t_0}\right\|_{\infty}\leq\beta^{h'} h'^{Bh'+1}$ holds for any $h'\in[1:h]$ and $t_0\in[0:T-h']$, we deduce
	\begin{equation}\label{main-induction}
        \begin{split}
		\left|(D_{h+1}\bolF_{i}(k)(\bolx))^{t_0}\right|\leq&g(\Gamma)\beta^{h+1}(h+1)^{B(h+1)+1}P_{\hat{K},\boly}^{\bolF_{i}(k)}(\bolx^{t_0})+\frac{1}{2}\beta^{h+1} (h+1)^{B(h+1)+1}
		\\
		\leq&\left(\frac{1}{2}+g(\Gamma)\Theta_2\right)\beta^{h+1} (h+1)^{B(h+1)+1}\leq\beta^{h+1} (h+1)^{B(h+1)+1},
        \end{split}
	\end{equation}
	by using Lemma \ref{auxiliary-lemma-polynomial} with $p(\bolx):=\frac{\left|D^{\bolalpha}\bolF_{i}(k)(\boly)\right|}{\bolalpha!}(\bolx-\boly)^{\bolalpha}$ and the fact that $g(\Gamma)\Theta_2\leq\frac{1}{2}$ (which can be derived from Lemma \ref{setting-of-Gamma}). Therefore, to apply mathematical induction, it suffices to prove that $\max\limits_{i\in[1:d]}\left\|(D_{h'}\bolF_{i}(\bolx))^{t_0}\right\|_{\infty}\leq\beta^{h'} h'^{Bh'+1}$ holds when $h'=1$. Observe that Lemma \ref{auxiliary-lemma-polynomial} holds in the case $h=0$. Thus, we can obtain Eq.~\eqref{main-induction} for $h=0$ as well. Hence, we have $\max\limits_{i\in[1:d]}\left\|(D_{1}\bolF_{i}(\bolx))^{t_0}\right\|_{\infty}\leq\beta$.
    \end{proof}
    The proof of Theorem \ref{finite-difference-bound} relies on the next Lemma \ref{auxiliary-lemma-polynomial}.
    \begin{lemma}\label{auxiliary-lemma-polynomial}
	   Assume $\max\limits_{i\in[1:d]}\left\|\bolF_i(\bolx)\right\|_{\infty}\leq\Theta$ for any $\bolx\in\cX$ and each element in $\bolu_t$ belongs to one of the $d$ probability distributions generated by Algorithm \ref{optimistic-hedge-minimization} with $\eta\leq\frac{\beta}{6e^3\Gamma \hat{K}\max\{\Theta, 1\}}$ for some $\Gamma>1, \hat{K}\geq K$ in iteration $t$, and consider positive constants $B\geq3$, $\beta\in\left(0,\frac{1}{(\Theta+1)(H+3)}\right)$ and polynomial function $p(\bolu):=C\prod_{k=1}^{K}(\bolu(k)-\boly(k))$ where $\bolu:=(\bolu(1),\cdots,\bolu(K))^{\top}$ and $\boly:=(\boly(1),\cdots,\boly(K))^{\top}\in\mathbb{R}^K$ is a fixed point. Given $h\in[1:H-1]$, we derive that
		\begin{align}
			\left|(D_{h+1}p(\bolu))^{t_0}\right|\leq g(\Gamma)C\prod_{k=1}^K(\bolu^{t_0}(k)+|\boly(k)|)\beta^{h+1}(h+1)^{B(h+1)+1},
		\end{align}
	if the condition $\max\limits_{i\in[1:d]}\left\|(D_{h'}\bolF_i(\bolx))^{t_0}\right\|_{\infty}\leq\beta^{h'} h'^{Bh'+1}$ holds for any $h'\in[1:h]$ and $t_0\in[0:T-h']$.
	\end{lemma}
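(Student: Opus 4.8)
This lemma is the combinatorial engine behind the induction in Theorem~\ref{finite-difference-bound}: it must upgrade the \emph{hypothesis} on the finite differences of the internal function $\bolF_i$ into a bound on the finite differences of a single monomial $p(\bolu)=C\prod_{k=1}^K(\bolu(k)-\boly(k))$ sampled along the trajectory, where each coordinate $\bolu(k)$ is an entry $\bolx_{i_k}^t(\cdot)$ of one of the simplex iterates produced by Algorithm~\ref{optimistic-hedge-minimization}. I would split the argument into three stages: (i) translate the hypothesis $\max_i\|(D_{h'}\bolF_i)^{t_0}\|_\infty\le\beta^{h'}h'^{Bh'+1}$ into a \emph{multiplicative} finite-difference estimate for each factor $\bolu(k)$; (ii) apply the discrete multinomial Leibniz rule to the product $p$; and (iii) sum over compositions and collapse the geometric weights into $g(\Gamma)$.

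\textbf{Stage (i) --- from $\bolF_i$-differences to coordinate differences.} The first step exploits the closed form of the optimistic update. From Eq.~\eqref{tilde_t-relate-t} and the analogous formula for $\bolx_i^t$, each coordinate is a softmax $\bolx_i^t(j)=\exp\{-\eta\Phi_i^t(j)\}/\sum_{j'}\exp\{-\eta\Phi_i^t(j')\}$ of a cumulative (optimistically corrected) sum $\Phi_i^t$ whose order-$m$ difference telescopes into the order-$(m-1)$ difference of $\bolF_i$, namely $(D_m\Phi_i)^{t}=2(D_{m-1}\bolF_i)^{t}-(D_{m-1}\bolF_i)^{t-1}$ for $m\ge 1$. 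Feeding the inductive hypothesis through a discrete Fa\`a-di-Bruno expansion of the softmax, and using the smallness $\eta\le\beta/(6e^3\Gamma\hat K\max\{\Theta,1\})$ to suppress the higher-order compositional terms, I would establish for each $m\ge 1$ a bound of the multiplicative form $|(D_m\bolu(k))^{t_0+s}|\le \Gamma^{-m}\,q(m)\,\bolu^{t_0}(k)\,\beta^{m}m^{Bm+1}$, where $q(m)$ is polynomial and the geometric factor $\Gamma^{-m}$ is exactly the surplus of $\eta$-powers beyond the leading $\beta^m$. The factor $\bolu^{t_0}(k)$ (rather than a constant) is essential and is what ultimately produces the prefactor $\prod_k(\bolu^{t_0}(k)+|\boly(k)|)$; the shift by $s$ is absorbed using the $\zeta$-consecutive-closeness of the iterates from Lemma~\ref{consecutively-x}, which bounds $\bolu^{t_0+s}(k)\le e^{2s}\bolu^{t_0}(k)$ and likewise controls the undifferentiated ($j_k=0$) factors by $\bolu^{t_0}(k)+|\boly(k)|$ up to the same closeness constant.

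\textbf{Stages (ii)--(iii) --- product rule and summation.} Writing $v_k^t:=\bolu^t(k)-\boly(k)$, the multinomial Leibniz rule for finite differences (tracked with the shift operator $E_s$ and Eq.~\eqref{D_h_formulation}) gives
\[
\bigl(D_{h+1}p(\bolu)\bigr)^{t_0}=C\!\!\sum_{j_1+\cdots+j_K=h+1}\binom{h+1}{j_1,\dots,j_K}\prod_{k=1}^K\bigl(E_{j_1+\cdots+j_{k-1}}D_{j_k}v_k\bigr)^{t_0}.
\]
I would bound each $j_k=0$ factor by $\bolu^{t_0}(k)+|\boly(k)|$ and each $j_k\ge1$ factor by the Stage~(i) estimate, so that $C\prod_k(\bolu^{t_0}(k)+|\boly(k)|)$ and one power $\beta^{h+1}$ factor out cleanly. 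The residual combinatorial sum then consists of the multinomial coefficients, the polynomial weights $q(m)m^{Bm+1}$, and the geometric factors $\Gamma^{-j_k}$. Invoking a super-multiplicativity estimate $\prod_k m_k^{Bm_k+1}\le(h+1)^{B(h+1)+1}$ for $\sum_k m_k=h+1$ and reorganizing the $\Gamma^{-j_k}$ by the total differentiation mass $k=\sum_k j_k$, the series telescopes exactly into $\sum_{k\ge1}\Gamma^{-k}[k^7+(k+1)e^{2k}]=g(\Gamma)$, yielding the claimed bound $g(\Gamma)\,C\prod_k(\bolu^{t_0}(k)+|\boly(k)|)\,\beta^{h+1}(h+1)^{B(h+1)+1}$. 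The base case $h=0$ is the same identity with a single difference on one factor, matching the remark used in Theorem~\ref{finite-difference-bound}.

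\textbf{Main obstacle.} I expect Stage~(i) to be the crux: translating the $\bolF_i$-difference control into a \emph{multiplicative} coordinate-difference bound requires a careful discrete analogue of Fa\`a di Bruno for the softmax, and it is precisely here that the step size must be small enough for the higher-order terms to form a convergent geometric series in $\Gamma^{-1}$; this is where the specific constants $6e^3$, the $e^{2k}$ closeness factors, and the polynomial weights $k^7$ defining $g(\Gamma)$ are generated. By comparison, the shift bookkeeping in Stage~(ii) and the weight inequalities in Stage~(iii) are tedious but routine.
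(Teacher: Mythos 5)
Your scaffolding (factor out $\bolu^{t_0}$, reduce to softmax compositions of the cumulative optimistic sums $\eta\bolr_{t_0}$, let the small step size pay for the combinatorics) matches the paper's setup, but at the decisive step you diverge, and the divergence opens genuine gaps. The paper never distributes differences across the $K$ factors with a Leibniz rule. It first expands $p(\bolu)=C\sum_{\bolalpha}(-1)^{|\bolalpha|}\boly^{\bolalpha}\bolu^{\bole-\bolalpha}$ (Eq.~\eqref{p-def}) and then uses the exact identity $\bolu^{t_0+t}(k)=\bolu^{t_0}(k)\,\bolpsi_{t_0,k}^{j(k)}(\eta\bolr_{t_0,k}^t)$, so each pure monomial factors as $(\bolu^{t_0})^{\bole-\bolalpha}$ times a \emph{single scalar sequence} $\phi\circ(\eta\bolr_{t_0})$, where $\phi$ is a product of at most $K$ softmax-type functions (Eq.~\eqref{finite-difference-transform}). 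By Lemma~\ref{Q-RR-bounded-softmax} this $\phi$ is $(1,e^3K)$-bounded, and one application of the chain-rule Lemma~\ref{Boundness-chain-rule-finite-difference} yields the $g(\Gamma)$ bound; the factor $\hat{K}\geq K$ in the step size is precisely what pays for $R=e^3K$, and the index $k$ in $g(\Gamma)=\sum_{k\geq1}\Gamma^{-k}[k^7+(k+1)e^{2k}]$ is the \emph{Taylor degree} of $\phi$, not a difference order. Because every factor is anchored at $t_0$ by this factorization, no shift bookkeeping and no consecutive-closeness argument are needed (Lemma~\ref{consecutively-x} is used only for the variance-control Lemma~\ref{varience-bound}, not here).

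Three quantitative claims in your plan fail as stated. First, Stage~(i): the available machinery does not give $|(D_m\bolu(k))^{t_0+s}|\leq\Gamma^{-m}q(m)\bolu^{t_0}(k)\beta^m m^{Bm+1}$. Each order-$h'$ difference of $\eta\bolr_{t_0}$ carries a single factor of order $(\Gamma\hat{K})^{-1}$ coming from $\eta$, so a differentiated factor earns \emph{one} unit of $g$-type smallness no matter how many difference orders it absorbs; there is no geometric decay per order $m$. Consequently your Stage~(iii) claim that the factors $\Gamma^{-j_k}$ ``reorganized by total differentiation mass $\sum_k j_k$'' telescope into $g(\Gamma)$ conflates the difference order $h+1$ with the Taylor degree that actually indexes $g(\Gamma)$: with only one smallness factor per differentiated coordinate, the series you describe never forms (a single factor absorbing all $h+1$ differences contributes one $\Gamma^{-1}$-type factor, not $\Gamma^{-(h+1)}$). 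Second, your Leibniz sum over compositions $j_1+\cdots+j_K=h+1$ has roughly $K^{h+1}$ terms with $K\leq\hat{K}$ growing polylogarithmically in $T$, and your sketch never identifies the compensating $K^{-r}$; it does exist---$g(\Gamma\hat{K})\lesssim(\Gamma K)^{-1}$ per differentiated factor cancels the $\binom{K}{r}$ positional choices---but supplying it amounts to re-deriving the internals of Lemma~\ref{Boundness-chain-rule-finite-difference} by hand, which is exactly the crux you defer; note also $\prod_k j_k^{Bj_k+1}\leq(h+1)^{B(h+1)+r}$ with $r$ nonzero parts, not $(h+1)^{B(h+1)+1}$. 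Third, your shift bound $\bolu^{t_0+s}(k)\leq e^{2s}\bolu^{t_0}(k)$ is far too lossy: applied to up to $K$ shifted factors it accumulates to $e^{\Omega(hK)}$ and destroys the bound (the true per-step ratio from the iterates is $1+O(\eta\Theta)$, and the paper's factorization avoids shifts altogether). So while a careful expert could patch your route into a special-case reproof of the composition lemma, the proposal as written breaks precisely at its load-bearing steps.
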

	\begin{proof}
            Drawing on the premises outlined in the lemma, we assume that each $\bolu(k)$ corresponds to a unique $\bolx^{i(k)}(j(k))$. According to the iteration of Algorithm \ref{optimistic-hedge-minimization}, we can obtain
            \begin{align}
                \bolu^{t+1}(k)=&\frac{\bolx_{i(k)}^{t}(j(k))\cdot\exp\left\{\eta\cdot\left(2\bolF_{i(k)}(j(k))\left(\bolx^t\right)-\bolF_{i(k)}(j(k))\left(\bolx^{t-1}\right)\right)\right\}}{\sum_{j=1}^{n_{i(k)}}\bolx_{i(k)}^{t}(j)\cdot\exp\left\{\eta\cdot\left(2\bolF_{i(k)}(j)\left(\bolx^t\right)-\bolF_{i(k)}(j)\left(\bolx^{t-1}\right)\right)\right\}},\notag
                \\
                =&\frac{\bolu^{t}(k)\cdot\exp\left\{\eta\cdot\left(2\bolF_{i(k)}(j(k))\left(\bolx^t\right)-\bolF_{i(k)}(j(k))\left(\bolx^{t-1}\right)\right)\right\}}{\sum_{j=1}^{n_{i(k)}}\bolx_{i(k)}^{t}(j)\cdot\exp\left\{\eta\cdot\left(2\bolF_{i(k)}(j)\left(\bolx^t\right)-\bolF_{i(k)}(j)\left(\bolx^{t-1}\right)\right)\right\}},
            \end{align}
            for any $k\in[1:K]$ and $t\in[1:T-1]$.
            Given the sequence $\bolx^1,\cdots,\bolx^{t_0+h}$ generated by Algorithm \ref{optimistic-hedge-minimization}, it is straightforward to derive that
        \begin{align}
            \bolu^{t_0+t+1}(k)=(N_{\bolu}^k)^{-1}\bolu^{t_0}(k)\cdot\exp\{\eta\cdot(&\bolF_{i(k)}(j(k))(\bolx^{t_0+t})+\sum_{t'=0}^{t}\bolF_{i(k)}(j(k))(\bolx^{t_0+t'})\notag
            \\
            &-\bolF_{i(k)}(j(k))(\bolx^{t_0-1}))\},
        \end{align}
        for any $k\in[1:K]$, $t_0\in[1:T-h-1]$ and $t\in[1:h]$, where $N_{\bolu}^k=\sum_{j=1}^{n_{i(k)}}\bolx_{i(k)}^{t_0}(j)\cdot\exp\{\eta\cdot(\bolF_{i(k)}(j)(\bolx^{t_0+t})+\sum_{t'=0}^{t}\bolF_{i(k)}(j)(\bolx^{t_0+t'})-\bolF_{i(k)}(j)(\bolx^{t_0-1}))\}$. We write
        \begin{align}
            \bolr_{t_0,k}^{t}:=\bolF_{i(k)}(\bolx^{t_0+t-1})+\sum_{t'=0}^{t-1}\bolF_{i(k)}(\bolx^{t_0+t'})-\bolF_{i(k)}(\bolx^{t_0-1}).
        \end{align}
        Also, for a vector $\bolz\in\bR^{n_{i(k)}}$ and an index $j\in[1:n_{i(k)}]$, define
        \begin{align}\label{sub-poly}
            \bolpsi_{t_0,k}^j(\bolz)=\frac{\exp\{\bolz(j)\}}{\sum_{j'=1}^{n_{i(k)}}\bolx_{t_0}^{i(k)}(j')\cdot\exp\left\{\bolz(j')\right\}},
        \end{align}
        so that $\bolu^{t_0+t}(k)=\bolx_{i(k)}^{t_0}(j(k))\cdot\bolpsi_{t_0,k}^{j(k)}\left(\eta \bolr_{t_0,k}^t\right)=\bolu^{t_0}(k)\cdot\bolpsi_{t_0,k}^{j(k)}\left(\eta \bolr_{t_0,k}^t\right)$ for $t\geq 1$. For convenience, we denote that $\mathcal{D}:=\left\{\bolalpha\in\mathbb{N}^K| \bolalpha(i)\in\{0,1\},\forall\, i\in[1:K]\right\}$ and $\bole:=(1,\cdots,1)\in\mathbb{N}^K$. In particular, for any $\bolalpha\in\mathcal{D}$, we have
        \begin{align}\label{finite-difference-transform}
            \left|\left(D_{h'}(\bolu^{\bole-\bolalpha})\right)^{t_0}\right|\leq (\bolu^{t_0})^{\bole-\bolalpha}\underbrace{\left|\left(D_{h'}(\bolpsi_{t_0}(\eta \bolr_{t_0}))^{\bole-\bolalpha}\right)^{0}\right|}_{\mathcal{I}(\bolalpha, h', t_0)},
        \end{align}
        where $\bolpsi_{t_0}(\eta \bolr_{t_0}^t):=\left(\bolpsi_{t_0,1}^{j(1)}(\eta \bolr_{t_0,1}^t),\cdots, \bolpsi_{t_0,K}^{j(K)}(\eta \bolr_{t_0,K}^t)\right)$,  $h'\in[1:h+1]$ and $t_0\in[1:T-h-1]$. It is important to observe that the finite difference in Eq.~\eqref{finite-difference-transform} pertains specifically to $\bolpsi_{t_0,k}^{j(k)}(\eta \bolr_{t_0,k}^t)$. Notice that 
        \begin{align}
            \left(D_1 \bolr_{t_0,k}\right)^t=2(E_1\bolF_{i(k)}(\bolx))^{t_0+t-1}-\bolF_{i(k)}(\bolx^{t_0+t-1}),
        \end{align}
        for any $t\in[0:h]$. Therefore, for any $h'\in[1:h+1]$, we obtain
        \begin{align}\label{finite-difference-induction}
            \left(D_{h'} \bolr_{t_0,k}\right)^t=2\left(E_1 D_{h'-1}\left(\bolF_{i(k)}(\bolx)\right)\right)^{t_0+t-1}-\left(D_{h'-1}\left(\bolF_{i(k)}(\bolx)\right)\right)^{t_0+t-1},
        \end{align}
        for any $t\in[0:h+1-h']$. Because the step size $\eta$ satisfies $\eta\leq\frac{\beta}{6e^3\Gamma \hat{K}\max\{\Theta,1\}}$, $\left\|\left(D_0\eta \bolr_{t_0}\right)^t\right\|_{\infty}\leq\eta H\Theta\leq\frac{1}{6e^3\Gamma\hat{K}}$ and $\max\limits_{i\in[1:d]}\left\|\left(D_{h'}\bolF_i(\bolx)\right)^{t_0}\right\|_{\infty}\leq\beta^{h'}h'^{Bh'+1}$ for all $h'\in[1:h]$, the following estimation holds
        \begin{align}
            \left\|\left(D_{h'+1}\eta \bolr_{t_0}\right)^0\right\|_{\infty}\leq\frac{1}{2e^2\Gamma \hat{K}}\beta^{h'+1}(h'+1)^{B(h'+1)},
        \end{align}
        for any $h'\in[0:h]$ by using Eq.~\eqref{finite-difference-induction} where $\bolr_{t_0}^t:=\left(\bolr_{t_0,1}^t(j(1)),\cdots,\bolr_{t_0,K}^t(j(K))\right)$. By Lemma \ref{Q-RR-bounded-softmax} and Lemma \ref{Boundness-chain-rule-finite-difference}, we have
        \begin{align}\label{bound-psi}
            \mathcal{I}(\bolalpha, h+1, t_0)\leq g(\Gamma)\beta^{h+1}(h+1)^{B(h+1)+1}.
        \end{align}
        Noting that
		\begin{align}\label{p-def}
			p(\bolu)=C\sum_{i=0}^K(-1)^i\left(\sum_{\bolalpha\in\mathcal{D}:|\bolalpha|=i}\boly^{\bolalpha}\bolu^{\bole-\bolalpha}\right),
		\end{align}
	and applying bound Eq.~\eqref{bound-psi} to Eq.~\eqref{sub-poly}, we can derive
        \begin{align}
            \left|\left(D_{h+1}p(\bolu)\right)^{t_0}\right|\underset{\text{(a)}}{=}&|C|\left|\sum_{i=0}^K(-1)^i\left(\sum_{\bolalpha\in\mathcal{D}:|\bolalpha|=i}\boly^{\bolalpha}\sum_{h'=1}^{h+1}\dbinom{h+1}{h'}(-1)^{h'}(\bolu^{t_0+h'})^{\bole-\bolalpha}\right)\right|\notag
            \\
            \leq&|C|\sum_{i=0}^K\sum_{\bolalpha\in\mathcal{D}:|\bolalpha|=i}|\boly|^{\alpha}\left|\left(D_{h+1}\left(\bolu^{\bole-\bolalpha}\right)\right)^{t_0}\right|\notag
            \\
            \leq&|C|\sum_{i=0}^K\sum_{\bolalpha\in\mathcal{D}:|\bolalpha|=i}|\boly|^{\bolalpha} (\bolu^{t_0})^{\bole-\bolalpha}g(\Gamma)\beta^{h+1}(h+1)^{B(h+1)+1}\notag
            \\
            \leq&g(\Gamma)\beta^{h+1}(h+1)^{B(h+1)+1}C\prod_{k=1}^K(\bolu^{t_0}(k)+|\boly(k)|),
        \end{align}
        for any $t_0\in[1:T-h-1]$ where (a) is derived from the equivalent expression Eq.~\eqref{p-def} of the polynomial $p(\bolu)$ and Eq.~\eqref{D_h_formulation} of the finite difference $\left(D_hf(\bolx)\right)^{t_0}$ w.r.t function $f$ respectively.
	\end{proof}
        Recalling that we set parameters as follows
        \begin{equation}\label{condition-setting-2}
        \begin{split}
		T\geq 4&,\, H:=\lceil\log(T)\rceil, \beta=\frac{1}{8(\Theta_1+\Theta_2+1)H^{7/2}},\,   \Gamma= e^2+322560\Theta_2,\,
        \\
        \hat{K}&=\max\left\{\frac{H\log(4\beta^{-1})+\log(\Theta_1)}{\log(\theta^{-1})}, K_0\right\},\,  \eta=\frac{\beta}{6e^2\hat{K}\Gamma},\, B\geq3,
        \end{split}
	\end{equation}
        According to Theorem \ref{finite-difference-bound}, we have $\max\limits_{i\in[1:d]}\left\|\left(D_h \bolF_i(\bolx)\right)^t\right\|_{\infty}\leq\beta^h H^{3h+1}$ for each $ h\in[0:H]$ and $t\in [1:T-h]$.
        We are now prepared to prove that $\bolx_i^t$ satisfies the second condition of Lemma \ref{varience-bound}.
        \begin{lemma}\label{consecutively-x}
            The sequence $\{\bolx_i^t\}_{t=1}^T$ which has been generated from Algorithm \ref{optimistic-hedge-minimization} is $7\eta(\Theta_1+\Theta_2)-$ consecutively close when $H\geq 1$, $\beta_0=(4H)^{-1}$ and $\eta\in(0, \beta_0^4(\Theta_1+\Theta_2+1)^{-1}/57792]$.
        \end{lemma}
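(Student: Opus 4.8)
The plan is to exploit the closed-form exponential (softmax) structure of the updates in Algorithm~\ref{optimistic-hedge-minimization} and reduce the claim to an elementary bound on a ratio of exponentials. First I would combine the two KL-proximal steps of the algorithm into a single multiplicative update, exactly as derived in the proof of Lemma~\ref{auxiliary-lemma-polynomial}: for every block $i$, coordinate $j$, and $t$,
\[
\bolx_i^{t+1}(j)=\frac{\bolx_i^t(j)\exp\{a_j\}}{\sum_{j'}\bolx_i^t(j')\exp\{a_{j'}\}},\qquad a_j:=\eta\big(2\bolF_i(j)(\bolx^t)-\bolF_i(j)(\bolx^{t-1})\big).
\]
This immediately gives $\bolx_i^{t+1}(j)/\bolx_i^t(j)=\exp\{a_j\}\big/\sum_{j'}\bolx_i^t(j')\exp\{a_{j'}\}$, so controlling the coordinatewise consecutive ratio amounts to controlling the exponents $a_j$.

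Next I would invoke the uniform bound $\max_{i}\|\bolF_i(\bolx)\|_{\infty}\le\Theta_1+\Theta_2$ for all $\bolx\in\cX$ (which follows from Assumption~\ref{ass-0} and is already used in the proof of Theorem~\ref{finite-difference-bound}) to obtain $|a_j|\le 3\eta(\Theta_1+\Theta_2)$ for every $j$. Since the denominator $\sum_{j'}\bolx_i^t(j')\exp\{a_{j'}\}$ is a convex combination of the quantities $\exp\{a_{j'}\}$, it lies in $[\exp\{-3\eta(\Theta_1+\Theta_2)\},\exp\{3\eta(\Theta_1+\Theta_2)\}]$. Combining the numerator and denominator bounds, and noting the estimate is symmetric in $t$ and $t+1$, yields the coordinatewise inequality
\[
\max\left\{\left\|\frac{\bolx_i^t}{\bolx_i^{t+1}}\right\|_{\infty},\left\|\frac{\bolx_i^{t+1}}{\bolx_i^t}\right\|_{\infty}\right\}\le\exp\{6\eta(\Theta_1+\Theta_2)\}
\]
for every $i$ and every $1\le t<T$.

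Finally I would convert this exponential bound into the stated linear one via the elementary inequality $e^{6y}\le 1+7y$, valid for $y$ small (e.g. $y\le 1/18$). Here $y=\eta(\Theta_1+\Theta_2)$, and the hypothesis $\eta\le\beta_0^4(\Theta_1+\Theta_2+1)^{-1}/57792$ with $\beta_0=(4H)^{-1}\le 1/4$ forces $y\le\beta_0^4/57792$, far below the required threshold; hence $\exp\{6\eta(\Theta_1+\Theta_2)\}\le 1+7\eta(\Theta_1+\Theta_2)$. This is precisely the definition of $7\eta(\Theta_1+\Theta_2)$-consecutive closeness for each block $i$, which completes the argument.

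I expect no deep obstacle here: once the multiplicative form of the update is in hand the proof is a direct computation. The only points that require care are (i) correctly recovering the combined update with its factor of $2$ on $\bolF_i(\bolx^t)$, which comes from the optimistic (two-step) nature of the algorithm and feeds into the constant $3$ in $|a_j|\le 3\eta(\Theta_1+\Theta_2)$; and (ii) verifying that the prescribed step-size threshold is genuinely small enough to license the passage from the $6\eta(\Theta_1+\Theta_2)$ exponent to the $7\eta(\Theta_1+\Theta_2)$ linear bound — the slack between the constants $6$ and $7$ is exactly what the tiny choice of $\eta$ buys.
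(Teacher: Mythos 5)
Your proof is correct and takes essentially the same route as the paper's: collapse the two KL-proximal steps into the single multiplicative update $\bolx_i^{t+1}(j)\propto \bolx_i^t(j)\exp\{\eta(2\bolF_i(j)(\bolx^t)-\bolF_i(j)(\bolx^{t-1}))\}$, bound the exponents via $\max_i\|\bolF_i\|_\infty\le\Theta_1+\Theta_2$ to get the consecutive ratio at most $\exp\{6\eta(\Theta_1+\Theta_2)\}$, and linearize (the paper uses $\exp(x)\le 1+\tfrac{7}{6}x$ for $x\in[0,1/24]$, which is your step with $x=6y$). One tiny numerical nit: $e^{6y}\le 1+7y$ actually fails at $y=1/18$ (the true threshold is roughly $y\le 1/20$), but this is immaterial since your hypothesis forces $y=\eta(\Theta_1+\Theta_2)\le\beta_0^4/57792\le 2^{-8}/57792$, far inside the valid range.
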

        \begin{proof}
            According to the iteration of Algorithm \ref{optimistic-hedge-minimization}, we have
            \begin{align}
                \bolx_i^{t+1}(k)=\frac{\bolx_i^{t}(k)\cdot\exp\{\eta\cdot(2\bolF_{i}(k)(\bolx^t)-\bolF_{i}(k)(\bolx^{t-1}))\}}{\sum_{k'=1}^{n_i}\bolx_i^{t}(k')\cdot\exp\{\eta\cdot(2\bolF_{i}(k')(\bolx^t)-2\bolF_{i}(k')(\bolx^{t-1}))\}},    
            \end{align}
            for any $i\in[1:d]$ and $k\in[1:n_i]$. Therefore, for any $i\in[1:d]$ and $t\in[1:T-1]$, we obtain
            \begin{align}
                \max\left\{\left\|\frac{\bolx_i^t}{\bolx_i^{t+1}}\right\|_{\infty},\left\|\frac{\bolx_i^{t+1}}{\bolx_i^t}\right\|_{\infty}\right\}\leq\exp\{6\eta(\Theta_1+\Theta_2)\}\underset{\text{(a)}}{=}(1+7\eta(\Theta_1+\Theta_2)),
            \end{align}
            where (a) is derived from the fact that $\exp(x)\leq 1+\frac{7}{6}x$ for $x\in[0,1/24]$.
        \end{proof}

        \subsubsection{The Last Step}
        With the preparatory work for proving Theorem \ref{main-theorem-minimization} is completed, we now turn to providing the final proof:
        
        \begin{proof}[Proof of Theorem \ref{main-theorem-minimization}]
            Applying Theorem \ref{finite-difference-bound} and Lemma \ref{consecutively-x} to Lemma \ref{varience-bound}, we have
            \begin{align}
                \sum_{t=1}^T\text{Var}_{\bolx_i^t}(\bolF_i(\bolx^t)-\bolF_i(\bolx^{t-1}))\leq 2\beta_0\sum_{t=1}^T\text{Var}_{\bolx_i^t}(\bolF_i(\bolx^{t-1}))+165120\Theta^2(1+7\eta\Theta)H^5+2.
            \end{align}
            According to the result of Lemma \ref{optimistic-hege-lemma}, we obtain
            \begin{align}
                \sum_{t=1}^T(f(\bolx^t)-f(\bolx^*))\leq&\sum_{i=1}^d \frac{1}{\gamma_i}\left[\frac{\log(n_i)}{\eta}-\left(\hat{g}_2(\eta\Theta)\eta\Theta^2-2\beta_0\hat{g}_1(\eta\Theta)\eta\Theta^2\right)\sum_{t=1}^T\text{Var}_{\bolx_i^t}(\bolF_i(\bolx^t))\right]\notag
                \\
                &+\hat{g}_1(\eta\Theta)\eta\Theta^2\sum_{i=1}^d \frac{1}{\gamma_i}[8\beta_0\Theta^2+165120\Theta^2(1+7\eta\Theta)H^5+2].
            \end{align}
            Combining Assumptions \ref{ass-0} and parameters selection Eq.~\eqref{parameter-selecting-minimization}, we complete the proof. 
        \end{proof}

        \subsection{Simple Example}\label{simple-example}
        In this section, we provide the proof of Example \ref{example-1} which satisfies GQC condition and Assumption \ref{ass-0}.
        \begin{proof}[Proof of Example \ref{example-1}]
            Recalling the objective function $f(\bolp,\bolP)=\frac{1}{2}\bE_{\bolx,y}(\sum_{i=1}^m\bolp_i\sigma(\bolx^\top \bolP_{i})-y)^2$, we have
            \begin{align}\label{proof-example-1}
                f(\bolp^*,\bolP)-f(\bolp,\bolP)\geq\llangle\bolF_{\bolp}(\bolp,\bolP),\bolp^*-\bolp\rrangle,
            \end{align}
            since $f(\cdot,\bolP)$ is convex for any fixed $\bolP$. In addition, we obtain
            \begin{align}\label{proof-example-2}
                f(\bolp^*,\bolP^*)-f(\bolp^*,\bolP)=&-\frac{1}{2}\bE\left[(\sigma(\bolx^{\top}\bolP_1)-\sigma(\bolx^{\top}\bolP_1^*))^2\right]\notag
                \\
                \overset{\text{(a)}}{\geq}&\frac{B_C}{2}\bE\left[\langle\sigma(\bolx^{\top}\bolP_1)-\sigma(\bolx^{\top}\bolP_1^*),\bolx^{\top}(\bolP_1^*-\bolP_1)\rangle\right]\notag
                \\
                =&\frac{B_C}{2}\bE\left[(\sigma(\bolx^{\top}\bolP_1^*)-y)\bolx,\bolP_1^*-\bolP_1\right],
            \end{align}
            where $B_C$ is a constant depends on $C$ and (a) is derived from the fact that $B_C$ $\langle\exp\{x_1\}-\exp\{x_2\},x_1-x_2\rangle\geq|\exp\{x_1\}-\exp\{x_2\}|^2$ for any $x_1,x_2\in[-C,C]$. Therefore, summing up Eq.~\eqref{proof-example-1} and Eq.~\eqref{proof-example-2}, we have that $f$ satisfies GQC condition with the internal functions $\bolF_{\bolp}=\{\bE[(\sum_{j=1}^m\bolp_j\sigma(\bolx^{\top}\bolP_j)-y)]\sigma(\bolx^{\top}\bolP_i)\}_{i=1}^m$ for block $\bolp$ and $\bolF_{\bolP_{i}}=\bE\left[(\sigma(\bolx^{\top}\bolP_i)-y)\bolx\right]$ for block $\bolP_i$. Notice that $\gamma_{\bolp}=1$, $\gamma_{\bolP_1}=\frac{B_C}{2}$ and $\gamma_{\bolP_i}=0$ for any $i\neq 1$. Furthermore, we have $\|D^{\bolalpha}\bolF_{\bolp}(\cdot)\|_{\infty}\leq2\exp\{C\}(2C)^{|\bolalpha|}$ and $\|D^{\bolalpha}\bolF_{\bolP_i}(\cdot)\|_{\infty}\leq\exp\{C\}C^{|\bolalpha|+1}$ by using and $\bolx\in[-C,C]^d$. According to Proposition \ref{proposition-assumption-smooth-gradient}, we complete the proof. 
        \end{proof}
There is also a toy example satisfying GQC condition and Assumption \ref{ass-0}.
\begin{example}
    Assuming $(\bolp_1,\bolp_2)\in\Delta_m\times\Delta_n$, the function $f(\bolp_1,\bolp_2)=\frac{1}{2}\|\bolp_1\bolp_2^{\top}\|_{\mathrm{F}}^2$ satisfies GQC condition and Assumption \ref{ass-0} with the internal functions $\bolF_{\bolp_1}=\|\bolp_2\|^2\bolp_1$ for block $\bolp_1$ and $\bolF_{\bolp_2}=\bolp_2$ for block $\bolp_2$.
\end{example}
\begin{proof}
    We have $\frac{1}{2}\|(\bolp_1^*)^{\top}\bolp_2\|_{\bolF}-\frac{1}{2}\|\bolp_1^{\top}\bolp_2\|_{\bolF}\geq\|\bolp_2\|^2\bolp_1^{\top}(\bolp_1^*-\bolp_1)$ and $\frac{1}{2}\|(\bolp_1^*)^{\top}\bolp_2^*\|_{\bolF}-\frac{1}{2}\|(\bolp_1^*)^{\top}\bolp_2\|_{\bolF}\geq\|\bolp_1^*\|^2\bolp_2^{\top}(\bolp_2^*-\bolp_2)$. Therefore, we have that $f$ satisfies GQC condition with the internal functions $\bolF_{\bolp_1}=\|\bolp_2\|^2\bolp_1$ for block $\bolp_1$ and $\bolF_{\bolp_2}=\bolp_2$ for block $\bolp_2$. Notice that $\gamma_{\bolp_1}=1$ and $\gamma_{\bolp_2}=\|\bolp_1^*\|^2$. Since both $\|\bolp_2\|^2\bolp_1$ and $\bolp_2$ are polynomials with respect to $(\bolp_1,\bolp_2)$, we derive that the internal function of $f$ satisfies Assumption \ref{ass-0}. 
\end{proof}

        \subsection{Application to Reinforcement Learning}\label{GQC-app}

        \subsubsection{Analysis of Infinite Horizon Reinforcement Learning}
        \begin{proof}[Proof of Proposition \ref{lemma-reinforcement-GQC}] The following performance difference lemma
        \citep{Kakade2002ApproximatelyOA, cheng20b, Agarwal21OnThe, Lan2022Policy} plays an important role in the policy gradient based model of infinite horizon reinforcement learning problems,
        \begin{align}\label{linear-RL}
        V^{\bolpi^*}(\bolrho_0)-V^{\bolpi}(\bolrho_0)=\bE_{s\sim\mbold_{\bolrho_0}^{\bolpi^*}}\llangle A^{\bolpi}(s,\cdot),\bolpi^*(\cdot|s)-\bolpi(\cdot|s)\rrangle.
        \end{align} 
        Let $d=|\cS|, \cS=\{s_i\}_{i=1}^d$ and write $1/\gamma_i=\mbold_{\bolrho_0}^{\bolpi^*}(s_i)$, $\bolF_i(\bolpi)=\boldsymbol{Q}^{\bolpi}(s_i,\cdot)$. According to Eq.~\eqref{linear-RL} whose proof is given in \citet{cheng20b} and $\left\langle\bolA^{\bolpi}(s_i,\cdot),\bolpi'(\cdot|s_i)-\right.$ $\left.\bolpi(\cdot|s)\right\rangle=\llangle\boldsymbol{Q}^{\bolpi}(s_i,\cdot),\bolpi'(\cdot|s_i)-\bolpi(\cdot|s)\rrangle$ for any policy $\bolpi$ and $\bolpi'$, we obtain that
    \begin{align}\label{RL-GQC}
        V^{\bolpi^*}(\bolrho_0)-V^{\bolpi}(\bolrho_0)=\sum_{i=1}^d \frac{1}{\gamma_i}\llangle \bolF_i(\bolpi),\bolpi^{*}(\cdot|s_i)-\bolpi(\cdot|s_i)\rrangle.
    \end{align}
    Eq.~\eqref{RL-GQC} implies that $V^{\bolpi}(\bolrho_0)$ satisfies GQC condition. For every $a\in\cA$, the Taylor expansion of $Q^{\bolpi}(s_i,a)$ up to $K$-th order at origin is the same as its truncation at horizon $K$, which indicates 
    $$
    R_{K,\mathbf{0}}^{Q^{\bolpi}(s_i,a)}(\bolpi)=\theta^{K+1}\bE_{s_{K+1}}[V^{\bolpi}(s_{K+1})|s_0=s_i,a_0=a]\leq\theta^{K+1}.
    $$ 
    Therefore, according to the fact that 
    $$
    P_{K,\mathrm{0}}^{Q^{\bolpi}(s_i,a)}(\bolpi)\leq Q^{\bolpi}(s_i,a)\leq1,
    $$
    we have that $\boldsymbol{Q}^{\bolpi}(s_i,\cdot)$ satisfies Assumption \ref{ass-0} with $\Theta_1=\theta$, $\Theta_2=1$ and $K_0=1$.
    \end{proof}

    \subsubsection{Analysis of Finite Horizon Reinforcement Learning}\label{analysis-finite-horizon-reinforcement-learning}

    The function structure of finite horizon reinforcement learning on policy is strictly polynomial. Moreover, since the action-value functions on horizon $h$ is only dependent of policy $\bolpi_{h+1:H}$, we may therefore verify that the objective function of finite horizon reinforcement learning satisfies GQC condition by utilizing finite difference expansion on function error $J_1^{\bolpi}(\bolrho_1)-J_1^{\bolpi^*}(\bolrho_1)$. 
    
    The finite horizon reinforcement learning considers the following policy optimization problem:
    \begin{align}
        \min\limits_{\bolpi\in\cX} J_1^{\bolpi}(\bolrho_1),
    \end{align}
    where $J_1^{\bolpi}(\bolrho_1)=\mathbb{E}_{s_1\sim \bolrho_1}[V_1^{\bolpi}(s_1)]$, and $\cX=\cX_1\times\cdots\times\cX_H$, and each $\cX_h$ denotes $|\cS_h|$ probability simplexes. 
    We write $\cS_h=\left\{s_{h,i_h}\right\}_{i_h=1}^{|\cS_h|}$ for any $h\in[1:H]$ and denote the action-value vector on state $s_{h,i_h}$ at horizon $h$ by $\boldsymbol{Q}_h^{\bolpi_{h+1:H}}(s_{h,i_h},\cdot)$. According to the definition of finite horizon value function $V_h^{\bolpi_{h:H}}$, we obtain the observation as Eq.~\eqref{finite-horizon-RL-difference}.
    \begin{align}\label{finite-horizon-RL-difference}
    J_1^{\bolpi^*}(\bolrho_1)-J_1^{\bolpi}(\bolrho_1)=&\sum_{h=1}^{H}\left[J_1^{\bolpi_{1:h}^{*},\bolpi_{h+1:H}}(\bolrho_1)-J_1^{\bolpi_{1:h-1}^{*},\bolpi_{h:H}}(\bolrho_1)\right]\notag
		\\
		=&\sum_{h=1}^H\mathbb{E}_{s_h\sim \bE_{s_1\sim\bolrho_1}\boldsymbol{\mathrm{Pr}}_h^{\bolpi_{1:h-1}^{*}}(\cdot|s_1)}\left\langle \boldsymbol{Q}_{h}^{\bolpi_{h+1:H}}(s_{h},\cdot),\bolpi_{h}^{*}(\cdot|s_h)-\bolpi_h(\cdot|s_h)\right\rangle,
	\end{align}
    Since $\boldsymbol{Q}_{h}^{\bolpi_{h+1:H}}(s_{h},a_{h})$ is a polynomial with respect to policy $\bolpi_{1:H}$, whose value is bounded by $1+H-h$ for any $s_h\in\cS_h$ and $a_h\in\cA_h$, we derive that $J_1^{\bolpi}(\bolrho_1)$ satisfies GQC condition with internal function $\bolF_{h,i_h}(\bolpi)=\boldsymbol{Q}_h^{\bolpi_{h+1:H}}(s_{h,i_h},\cdot)$ for variable block $\bolx_{h,i_h}$ where $i_h\in[1:|\cS_h|]$, and $\bolF$ satisfies Assumption \ref{ass-0} with $\theta=0$, $\Theta_1=0$, $\Theta_2=H$ and $K_0=H$. Therefore, for finite horizon reinforcement learning, it follows from Theorem \ref{main-theorem-minimization} that Algorithm \ref{optimistic-hedge-minimization} with parameter selection Eq.~\eqref{parameter-selecting-minimization} finds an $\epsilon$-suboptimal global solution in a number of iterations that is at most $\cO(H\max_{h\in[0:H]}\log(|\cA_h|)\epsilon^{-1}\log^4(\epsilon^{-1}))$.

\section{Minimax Optimization}\label{main-proof-of-minimax}
We begin with showing the connection between GQCC condition and GQC condition. Without loss of generality, we assume $n_i=n$ and $m_i=m$ for any $i\in[1:d]$, and let $\ell=n+m$. If $f(\cdot,\boly)$ and $-f(\bolx,\cdot)$ satisfy GQC condition with respect to a pair of minimizers $\bolx^*(\boly)$ and $\boly^*(\bolx)$, respectively, then we have the following estimations of function error
\begin{align}
    f(\bolx,\boly)-f(\bolx^*(\boly),\boly)\leq&\sum_{i=1}^d\frac{1}{\gamma_i(\boly)}\left(f_i(\bolP(\bolz),\bolx_i,\boly_i)-f_i(\bolP(\bolz),\bolx^*(\boly)_i,\boly_i)\right),\label{GQC-on-x}
    \\
    f(\bolx,\boly^*(\bolx))-f(\bolx,\boly)\leq&\sum_{i=1}^d\frac{1}{\tau_i(\bolx)}\left(f_i(\bolP(\bolz),\bolx_i,\boly^*(\bolx)_i)-f_i(\bolP(\bolz),\bolx_i,\boly_i)\right),\label{GQC-on-y}
\end{align}
where $f_i(\bolQ,\bolz_i)=\llangle\bolQ_i,\bolz_i\rrangle$ for any $\bolQ\in\bR^{\ell\times d}$ and $\bolz_i\in\bR^{n+m}$, and each $\bolP_i$ includes the internal function of $f(\cdot,\boly)$ for varriable block $\bolx_i$ and the internal function of $-f(\bolx,\cdot)$ for variable block $\boly_i$. It follows from Eq.~\eqref{GQC-on-x} and Eq.~\eqref{GQC-on-y} that Eq.~\eqref{GQCC-def-eq} holds for $f$ with $\psi_i(\bolz)=\max\{1/\gamma_{i}(\boly),1/\tau_{i}(\bolx)\}$.
\subsection{Preparatory Discussion}
    In this section, we provide the convergence analysis of general version of Algorithm \ref{regularized-optimistic-hedge}, i.e., Algorithm \ref{regularized-optimistic-hedge-general}. We consider the divergence-generating function $v$ with Bregman's divergence $V$ (i.e., $V(\bolx,\bolu)=v(\bolx)-v(\bolu)-\llangle\nabla v(\bolu),\bolx-\bolu\rrangle$ for any $\bolx,\bolu$) over general compact convex regions $\cZ=\cX\times\cY\subset\bR^{\sum_{i=1}^d n_i}\times\bR^{\sum_{i=1}^d m_i}$. Before we introduce the main theorem, we need the following assumptions:
    \begin{assumption}\label{ass-3}
		There exists positive constants $A, D$ such that
		\begin{enumerate}
            \item[\textbf{[A$_\text{1}$]}] $\max\limits_{i\in[1:d]}\{\|\bolz_i\|\}\leq A$ 
            uniformly on $\cZ$.
			\item[\textbf{[A$_\text{2}$]}] $\max\left\{\max\limits_{\bolz_i\in\cZ_i\atop i\in[1:d]}\left(V\left(\bolx_i,(\bolg_i^{\bolx})^0\right)+V\left(\boly_i,(\bolg_i^{\boly})^0\right)\right),\max\limits_{\bolz_i\in\cZ_i\atop i\in[1:d]}\left(v(\bolx_i)+v(\boly_i)\right)\right\}\leq D$.
            \item[\textbf{[A$_\text{3}$]}] $v$ modulus 2 with respect to $\|\cdot\|$ (i.e., $\forall i\in[1:d]$,  $V(\bolx_i,\bolu_i)\geq\|\bolx_i-\bolu_i\|^2$ for any $\bolx_i,\bolu_i\in\cX_i$ and $V(\boly_i,\bolw_i)\geq\|\boly_i-\bolw_i\|^2$ for any $\boly_i,\bolw_i\in\cY_i$).
		\end{enumerate}
	\end{assumption}
    If we choose $v(\bolx)=\sum_{j=1}^{n}\bolx(j)\log(\bolx(j))$ and $\|\cdot\|=\|\cdot\|_1$, then (1) in Assumption \ref{ass-3} holds with $A=2$; (2) in Assumption \ref{ass-3} holds with $D=2\max_{i\in[1:d]}\{\log(n_i)+\log(m_i)\}$; (3) in Assumption \ref{ass-3} holds following Pinsker's inequality. According to Remark \ref{remark-minimax}, we state that there exist some compact convex regions in $\bR^{\sum_{i=1}^d(n_i+m_i)}$ with proper divergence-generating function $v$ and proper choice of $\bolg^0$ satisfy Assumption \ref{ass-3}.
    \begin{remark}\label{remark-minimax}
            If the feasible region $\cZ$ is a compact set of Euclidean space, then it is reasonable that assuming the divergence-generating function $v$ (i.e., $v(\bolx)=\sum_{j=1}^{n}\bolx(j)\log(\bolx(j))$ over the probability simplex or $v(\bolx)=\|\bolx\|_2^2$ over the standard compact set) and the norm $\|\cdot\|$ are uniformly bounded on every $\cZ_i$. For some Bregman divergences, if $\bolx^0$ is a fixed point, $V(\cdot, \bolx^0)$ can be bounded by a constant (may depend on the dimension of space) on a compact feasible region, such as $V(\cdot, \bolx^0)=\|\cdot-\bolx^0\|_2^2$ with $\bolx^0=\mathbf{0}$ on the closed ball $\mathbb{B}_R(\mathbf{0})$ for radius $R\in(0,\infty)$ and $V(\cdot, \bolx^0)=\mathrm{KL}(\cdot\|\bolx^0)$ with $\bolx^0=(1/n,\cdots,1/n)$ on the probability simplex $\Delta_n$.
        \end{remark}
    
	\begin{assumption}\label{ass-4}
		In Definition \ref{def-GQCC}, let matrix-valued function $\bolP$ has the form of $\bolP(\bolQ^{\bolz},\bolz)$ where $\bolQ^{\bolz}\in\bR^{\ell\times d}$ depends on $\bolz$, and assume that $\bolP$ satisfies the following properties on region 
        $\left.\left\{\bolQ\in\bR^{\ell\times d}\right|\right.$ $\left.\|\bolQ\|_{\infty}\leq C\right\}\times\cZ$ for some constant $C>0$:
		\begin{enumerate}
			\item[\textbf{[A$_\text{4}$]}] There exist constants $L_1,L_2\geq 0$ such that $\bolF_i(\cdot,\bolz_i)$ is uniformly $L_1$-Lipschitz continuous with respect to $\|\cdot\|_{*}$ under $\|\cdot\|_{\infty}$, and $\bolF_i(\bolP,\cdot)$ is uniformly $L_2$-Lipschitz continuous with respect to $\|\cdot\|_{*}$ under $\|\cdot\|$.
		
		\item[\textbf{[A$_\text{5}$]}]\label{upper-lower-app} There are a positive constant $\gamma>0$ and a pair of sets of matrices $\left\{\{\bolB_{i}\}_{i=1}^d, \{\bolC_{i}\}_{i=1}^d\right\}\subset\bR_+^{\ell\times d}\cup\mathbf{0}$ satisfying $\left\|\sum_{i=1}^{d}(\bolB_i+\bolC_i)\right\|_{\infty}\leq \gamma$, such that the following bounds hold
        \begin{align}
        \mathbf{D}_{\bolP(\bolQ,\cdot,\boly)}(\bolx,\bolx')\leq&\sum_{i=1}^d \bolC_{i}\llangle \bolF_{i}^{\bolx}(\bolQ,\bolz_i), \bolx_i-\bolx'_i\rrangle,\notag
        \\
        \mathbf{D}_{\bolP(\bolQ,\bolx,\cdot)}(\boly',\boly)\leq&\sum_{i=1}^d \bolB_i\llangle -\bolF_{i}^{\boly}(\bolQ,\bolz_i),  \boly'_i-\boly_i\rrangle,\notag
        \end{align} 
        for any $\boly, \boly'\in\cY$ and $\bolx,\bolx'\in\cX$.
		\item[\textbf{[A$_\text{6}$]}] There exists $\theta\in[0,1)$ such that $\bolP(\cdot,\bolz)$ 
            is a $\theta$-contraction mapping under $\|\cdot\|_{\infty}$,  and $\|\bolP(\bolQ,\bolz)\|_{\infty}\leq C$ for any $\bolz\in\cZ$.
		\end{enumerate}
	\end{assumption}
    
    \begin{lemma}[General Version of Lemma \ref{function-class-minimax-remark}]\label{function-class-minimax-remark-general}
		Assuming that Assumption \ref{ass-3} and \ref{ass-4} hold, $[\bolP(\bolQ,\cdot,$ $\cdot)]_{k,j}$ is continuous, and convex with respect to $\bolx$, and concave with respect to $\boly$ for any $(k,j)$, and $\min\limits_{k,j
        \atop
        i\in[1:d]}\frac{\min\{[\bolC_{i}]_{k,j},[\bolB_{i}]_{k,j}\}}{[\bolC_{i}]_{k,j}+[\bolB_{i}]_{k,j}}\geq C'$ for some $C'>0$, then we claim that there exist $\bolQ^*\in\bR^{\ell\times d}$ and $\bolz^*\in\cZ$ such that
		\begin{align}
			\bolQ^*=&\bolP(\bolQ^*,\bolx^*,\boly^*),\label{contraction-mapping-app}
			\\
			\bolQ^*\leq&\bolP(\bolQ^*,\bolx,\boly^*),\label{min-x-app}
			\\
			\bolQ^*\geq&\bolP(\bolQ^*,\bolx^*,\boly).\label{max-y-app}
		\end{align}
    \end{lemma}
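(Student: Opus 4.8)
The plan is to decouple the problem into an inner equilibrium computation at a frozen $\bolQ$ and an outer fixed-point search over $\bolQ$, and then read off the three relations in Eq.~\eqref{contraction-mapping-app}--\eqref{max-y-app} from the monotone bounds in \textbf{[A$_5$]}. For a frozen $\bolQ$ with $\|\bolQ\|_\infty\le C$, the objective is block-separable and each $f_i(\bolQ,\cdot,\cdot)$ is continuous, convex in $\bolx_i$ and concave in $\boly_i$ on the compact convex sets $\cX_i,\cY_i$; Sion's minimax theorem therefore yields a nonempty, convex, compact set $\mathrm{Sad}(\bolQ)\subseteq\cZ$ of (per-block) saddle points $(\bolx^\bolQ,\boly^\bolQ)$. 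I would then introduce the value-type operator $\bolT\bolQ:=\bolP(\bolQ,\bolx^\bolQ,\boly^\bolQ)$ and aim to show it admits a fixed point $\bolQ^*$; the associated saddle $(\bolx^*,\boly^*)$ is the desired $\bolz^*$.

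Granting such a $(\bolQ^*,\bolx^*,\boly^*)$ with $\bolQ^*=\bolP(\bolQ^*,\bolx^*,\boly^*)$ and $(\bolx^*,\boly^*)$ a per-block saddle of $f_i(\bolQ^*,\cdot,\cdot)$, the two inequalities follow directly. For Eq.~\eqref{min-x-app}, apply the first bound of \textbf{[A$_5$]} with reference point $\bolx^*$: $\bolQ^*-\bolP(\bolQ^*,\bolx,\boly^*)=\mathbf{D}_{\bolP(\bolQ^*,\cdot,\boly^*)}(\bolx^*,\bolx)\le\sum_{i=1}^d\bolC_i\llangle\bolF_i^{\bolx}(\bolQ^*,\bolz_i^*),\bolx_i^*-\bolx_i\rrangle$, where $\bolz_i^*=(\bolx_i^*,\boly_i^*)$. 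The first-order optimality of $\bolx_i^*$ for the convex map $\bolx_i\mapsto f_i(\bolQ^*,\bolx_i,\boly_i^*)$ gives $\llangle\bolF_i^{\bolx}(\bolQ^*,\bolz_i^*),\bolx_i-\bolx_i^*\rrangle\ge0$, so each inner product above is $\le 0$; since every $\bolC_i\ge\mathbf{0}$ entrywise, the right-hand side is $\le\mathbf{0}$ and $\bolQ^*\le\bolP(\bolQ^*,\bolx,\boly^*)$. Symmetrically, the second bound of \textbf{[A$_5$]} with reference point $\boly$, together with $-\bolF_i^{\boly}=\nabla_{\boly_i}f_i$, the concavity of $f_i(\bolQ^*,\bolx_i^*,\cdot)$ and maximality of $\boly_i^*$, and $\bolB_i\ge\mathbf{0}$, yields Eq.~\eqref{max-y-app}. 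Neither step uses the ratio hypothesis.

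The crux is the existence of the outer fixed point. The contraction hypothesis \textbf{[A$_6$]} makes $\bolP(\cdot,\bolx,\boly)$ a $\theta$-contraction of $\{\|\bolQ\|_\infty\le C\}$ into itself for each frozen $(\bolx,\boly)$, and I would first check that $\bolT$ is \emph{single-valued}: for two saddles of the same game, the interchangeability property of convex-concave games combined with the one-sided arguments above forces $\bolP(\bolQ,\bolx^1,\boly^1)=\bolP(\bolQ,\bolx^2,\boly^2)$ entrywise, so $\bolT\bolQ$ is independent of the selection. The remaining task is to upgrade $\bolT$ to a genuine $\theta$-contraction (equivalently, to apply Kakutani's theorem to the convex-valued, upper-hemicontinuous correspondence $(\bolQ,\bolx,\boly)\mapsto\bigl(\bolP(\bolQ,\bolx,\boly),\mathrm{Sad}(\bolQ)\bigr)$, where upper hemicontinuity of $\mathrm{Sad}$ follows from Berge's maximum theorem). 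This is where \textbf{[A$_5$]} and the ratio condition $\min_{k,j,i}\min\{[\bolC_i]_{k,j},[\bolB_i]_{k,j}\}/([\bolC_i]_{k,j}+[\bolB_i]_{k,j})\ge C'$ are expected to enter: they guarantee that every coordinate of $\bolP$ is squeezed from both the $\bolx$- and $\boly$-sides in a balanced way, so that $\bolP(\bolQ,\bolx^\bolQ,\boly^\bolQ)$ depends on the strategies only through the uniquely determined, $\bolQ$-Lipschitz minimax values of the blocks; composing the non-expansive value map with the $\theta$-contraction $\bolP(\cdot,\bolx,\boly)$ then gives $\|\bolT\bolQ-\bolT\bolQ'\|_\infty\le\theta\|\bolQ-\bolQ'\|_\infty$, and Banach's theorem supplies $\bolQ^*$. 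Converting the one-sided monotone bounds of \textbf{[A$_5$]} into a two-sided, dimension-free control of the strategy-induced change in $\bolP$, uniformly over all blocks and entries, is the step I expect to be the main obstacle.
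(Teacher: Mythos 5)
Your derivation of Eq.~\eqref{min-x-app} and Eq.~\eqref{max-y-app} from \textbf{[A$_5$]} plus first-order optimality at a per-block saddle of $f_i(\bolQ^*,\cdot,\cdot)$ is correct, and it is a genuinely different inner step from the paper's: the paper never invokes Sion on the $f_i$, but instead constructs an entrywise saddle of the matrix-valued map $\bolP(\bolQ,\cdot,\cdot)$ directly (Lemma \ref{sub-minimax-Q}), by running the optimistic iteration \eqref{construct-process} and bounding the entrywise max-min gap of $\bolP$ through a regret argument; the ratio condition enters precisely there, via the step size $\eta=\sqrt{C'}/(2L_2)$ that balances the Lipschitz error term carrying $[\bolC_i]_{k,j}+[\bolB_i]_{k,j}$ against the negative term carrying $\min\{[\bolC_i]_{k,j},[\bolB_i]_{k,j}\}$. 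So your observation that the two inequalities themselves do not use the ratio hypothesis is consistent with the paper, where it is only consumed inside this constructive inner lemma; your Sion-based route buys a shorter, non-algorithmic existence proof of the inner saddle.

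The genuine gap is the outer fixed point, which you explicitly defer as ``the main obstacle.'' But that obstacle dissolves using exactly the one-sided inequalities you already established: no single-valuedness of $\bolT$, no Berge/Kakutani machinery, and no ``two-sided, dimension-free control of the strategy-induced change in $\bolP$'' is needed. The paper sets $\bolQ^{t+1}=\bolP(\bolQ^t,\bolx_t^{*},\boly_t^{*})$, where $(\bolx_t^{*},\boly_t^{*})$ is \emph{any} entrywise saddle of $\bolP(\bolQ^t,\cdot,\cdot)$ --- which your paragraph on \textbf{[A$_5$]} shows is produced by any per-block saddle of the $f_i(\bolQ^t,\cdot,\cdot)$ --- and interleaves the saddle inequalities: $\bolQ^{t+1}=\bolP(\bolQ^t,\bolx_t^{*},\boly_t^{*})\leq\bolP(\bolQ^t,\bolx_{t-1}^{*},\boly_t^{*})$ by minimality of $\bolx_t^{*}$, while $\bolQ^t=\bolP(\bolQ^{t-1},\bolx_{t-1}^{*},\boly_{t-1}^{*})\geq\bolP(\bolQ^{t-1},\bolx_{t-1}^{*},\boly_t^{*})$ by maximality of $\boly_{t-1}^{*}$, so that $\bolQ^{t+1}-\bolQ^t\leq\bolP(\bolQ^t,\bolx_{t-1}^{*},\boly_t^{*})-\bolP(\bolQ^{t-1},\bolx_{t-1}^{*},\boly_t^{*})\leq\theta\|\bolQ^t-\bolQ^{t-1}\|_{\infty}$ entrywise by \textbf{[A$_6$]}, with the symmetric lower bound obtained by swapping the roles of the players. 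Hence $\{\bolQ^t\}$ is Cauchy; extracting a convergent subsequence of $(\bolx_t^{*},\boly_t^{*})$ in the compact set $\cZ$ and using continuity of $\bolP$ gives $(\bolQ^*,\bolz^*)$ satisfying Eq.~\eqref{contraction-mapping-app}--Eq.~\eqref{max-y-app}. The same interleaving applied to saddles at two different matrices $\bolQ,\bolQ'$ shows your map $\bolT$ is a $\theta$-contraction regardless of the saddle selection, so your Banach route closes as well; your interchangeability argument for single-valuedness, though correct, is then unnecessary.
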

 
	\begin{proof}
		We shall begin the proof by proving the following lemma.
		\begin{lemma}\label{sub-minimax-Q}
			Under the conditions of Lemma \ref{function-class-minimax-remark-general}, it can be proven that for any  $\bolQ\in\bR^{\ell\times d}$, there exists a pair of $\bolx^*,\boly^*$ that satisfy the following \begin{align}\label{minimax-fixed-Q}
				\bolP(\bolQ,\bolx^*,\boly)\leq \bolP(\bolQ,\bolx^*,\boly^*)\leq \bolP(\bolQ,\bolx,\boly^*).
			\end{align}
		\end{lemma}
		\begin{proof}
			Considering the following iteration
			\begin{equation}\label{construct-process}
				\begin{split}
					\bolz_i^t&=\argmin\limits_{\bolz_i\in\cZ_i}\eta\left\langle \bolF_i(\bolQ,\bolz_i^{t-1}),\bolz_i\right\rangle+V\left(\bolx_i,(\bolg_i^{\bolx})^{t-1}\right)+V\left(\boly_i,(\bolg_i^{\boly})^{t-1}\right),\\
					\bolg_i^t&=\argmin\limits_{\bolg_i\in\cZ_i}\eta\left\langle \bolF_i(\bolQ,\bolz_i^t),\bolg_i\right\rangle+V\left(\bolg_i^{\bolx},(\bolg_i^{\bolx})^{t-1}\right)+V\left(\bolg_i^{\boly}, (\bolg_i^{\boly})^{t-1}\right),
				\end{split}
			\end{equation}
		for any $i\in[1:d]$ and combining \citep[Lemma 1]{Rakhlin2013Opt}, we have
		\begin{align}\label{index-of-P}
			&[\bolC_{i}]_{k,j}\sum_{t=1}^{T}\left\langle\bolF_i^{\bolx}(\bolQ,\bolz_i^t),\bolx_i^t-\bolx'_i\right\rangle+[\bolB_{i}]_{k,j}\sum_{t=1}^{T}\left\langle\bolF_i^{\boly}(\bolQ,\bolz_i^t),\boly_i^t-\boly'_i\right\rangle\notag
			\\
			\leq&([\bolC_{i}]_{k,j}+[\bolB_{i}]_{k,j})\eta^{-1}D+[\bolC_{i}]_{k,j}\sum_{t=1}^{T}\left\|\bolF_i^{\bolx}(\bolQ,\bolz_i^t)-\bolF_i^{\bolx}(\bolQ,\bolz_i^{t-1})\right\|_*\left\|\bolx_i^t-(\bolg_i^{\bolx})^t\right\|\notag
			\\
			&+[\bolB_{i}]_{k,j}\sum_{t=1}^T\left\|\bolF_i^{\boly}(\bolQ,\bolz_i^t)-\bolF_i^{\boly}(\bolQ,\bolz_i^{t-1})\right\|_*\left\|\boly_i^t-(\bolg_i^{\boly})^t\right\|\notag
			\\
			&-\frac{[\bolC_{i}]_{k,j}}{\eta}\sum_{t=1}^T\left(\left\|\bolx_i^t-(\bolg_i^{\bolx})^t\right\|^2+\left\|(\bolg_i^{\bolx})^{t-1}-\bolx_i^t\right\|^2\right)\notag
			\\
			&-\frac{[\bolB_{i}]_{k,j}}{\eta}\sum_{t=1}^T\left(\left\|\boly_i^t-(\bolg_i^{\boly})^t\right\|^2+\left\|(\bolg_i^{\boly})^{t-1}-\boly_i^t\right\|^2\right)\notag
			\\
			\underset{\text{(a)}}{\leq}&([\bolC_{i}]_{k,j}+[\bolB_{i}]_{k,j})\eta^{-1}D+\frac{\eta L_2^2([\bolC_{i}]_{k,j}+[\bolB_{i}]_{k,j})}{2}\sum_{t=1}^T\left\|\bolz_i^t-\bolz_i^{t-1}\right\|^2\notag
			\\
			&-\frac{\min\{[\bolC_{i}]_{k,j},[\bolB_{i}]_{k,j}\}}{\eta}\sum_{t=1}^T\left(\frac{1}{4}\left\|\bolz_i^t-\bolg_i^{t}\right\|^2+\frac{1}{2}\left\|\bolg_i^{t-1}-\bolz_i^t\right\|^2\right),
		\end{align}
		for any $i\in[1:d]$ , $(k,j)\in[1:\ell]\times[1:d]$, and $\bolz'_i\in\cZ_i$, where (a) is derived from \textbf{A$_\text{4}$} in Assumption \ref{ass-4} and Cauchy-Schwarz inequality. Therefore, by setting $\eta=\frac{\sqrt{C'}}{2L_2}$ and $\frac{1}{T}\sum_{t=1}^T\bolz^t=\bar{\bolz}_T=(\bar{\bolx}_T,\bar{\boly}_T)$, the following estimation holds for any $(k,j)$ 
		\begin{align}\label{sequence-max-P}
			&\max\limits_{\bolz'=(\bolx',\boly')\in\cZ}\left[\bolP(\bolQ,\bar{\bolx}_T,\boly')-\bolP(\bolQ,\bolx',\bar{\boly}_T)\right]_{k,j}\notag
            \\
            &\underset{\text{(b)}}{\leq}\frac{1}{T}\sum_{t=1}^T\left[\bolP(\bolQ,\bolx^t,\bar{\boly}_T^*)-\bolP(\bolQ,\bar{\bolx}_T^*,\boly^t)\right]_{k,j}\notag
			\\
			&\underset{\text{(c)}}{\leq}\frac{1}{T}\sum_{i=1}^{d}\sum_{t=1}^{T}\left([\bolC_{i}]_{k,j}\left\langle \bolF_i(\bolQ,\bolz_i^t),\bolx_i^t-(\bar{\bolx}_{T}^{*})_i\right\rangle+[\bolB_{i}]_{k,j}\left\langle \bolF_i(\bolQ,\bolz_i^t),\boly_i^t-(\bar{\boly}_{T}^{*})_i\right\rangle\right)\notag
			\\
			&\leq\frac{2\gamma\eta^{-1}D+4\eta\gamma A^2L_2^2}{T},
		\end{align}
		where the convexity of function $[\bolP(\bolQ,\cdot,\bolw)-\bolP(\bolQ,\bolu,\cdot)]_{k,j}$ for fixed $\bolQ$ and $\bolv=(\bolu,\bolw)$ implies (b), and (c) is derived from Eq.~\eqref{index-of-P} and the definition that $(\bar{\bolx}_T^*,\bar{\boly}_T^*):=\argmax\limits_{\bolz'\in\cZ}[\bolP(\bolQ,\bar{\bolx}_T,\boly')-\bolP(\bolQ,\bolx',\bar{\boly}_T)]_{k,j}$. Since $\cZ$ is a compact set, the sequence $\left\{(\bar{\bolx}_T, \bar{\boly}_T)\right\}_{T=1}^{\infty}$ must have a convergent subsequence. Therefore, all accumulation points of the sequence $\left\{(\bar{\bolx}_T, \bar{\boly}_T)\right\}_{T=1}^{\infty}$ satisfy Eq.~\eqref{minimax-fixed-Q} by using the continuity of $\bolP(\bolQ,\cdot,\cdot)$.
		\end{proof}
	Now, we define the iterately update as follows
	\begin{align}
		\bolQ^{t+1}=\bolP(\bolQ^t,\bolx_t^{*},\boly_t^{*}),
	\end{align}
	where $(\bolx_t^{*},\boly_t^{*})$ satisfies Eq.~\eqref{minimax-fixed-Q} in Lemma \ref{sub-minimax-Q} w.r.t $\bolP(\bolQ^t,\cdot,\cdot)$. It's direct to derive that
	\begin{align}
		\bolQ^{t+1}-\bolQ^t\leq&\bolP(\bolQ^t,\bolx_{t-1}^{*},\boly_t^{*})-\bolP(\bolQ^{t-1},\bolx_{t-1}^{*},\boly_t^{*})\notag
		\\
		\leq&\theta\left\|\bolQ^t-\bolQ^{t-1}\right\|_{\infty},
		\\
		\bolQ^{t+1}-\bolQ^t\geq&\bolP(\bolQ^t,\bolx_t^{*},\boly_{t-1}^{*})-\bolP(\bolQ^{t-1},\bolx_t^{*},\boly_{t-1}^{*})\notag
		\\
		\geq&-\theta\left\|\bolQ^t-\bolQ^{t-1}\right\|_{\infty}.
	\end{align}
	Finally, according to the contraction mapping principle, we complete the proof.
	\end{proof}

    \begin{corollary}\label{corollary-saddle-point}
        Assuming preconditions of Lemma \ref{function-class-minimax-remark-general} hold, and letting $\{f_i(\bolQ,\cdot):\bR^{n_i+m_i}\rightarrow\bR\}_{i=1}^d$ be a sequence of continuous convex-concave functions which satisfies $\nabla f_i(\bolQ,\cdot)=(\bolF_i^{\bolx}(\bolQ,\cdot),$ $-\bolF_i^{\boly}(\bolQ,\cdot))$
        for any fixed $\bolQ\in\bR^{\ell\times d}$ and $i\in[1:d]$, then there exist a matrix $\bolQ^*$ and a pair of $(\bolx^*,\boly^*)$ which satisfy Eq.~\eqref{contraction-mapping-app}-Eq.~\eqref{max-y-app} and 
        \begin{align}
            f_i(\bolQ^*,\bolx_i^{*},\boly_i^{*})&\geq f_i(\bolQ^*,\bolx_i^{*},\boly_i),\notag
            \\
            f_i(\bolQ^*,\bolx_i^{*},\boly_i^{*})&\leq
            f_i(\bolQ^*,\bolx_i,\boly_i^{*}),\notag
        \end{align}
        for any $\bolz_i\in\cZ_i$ and $i\in[1:d]$.
    \end{corollary}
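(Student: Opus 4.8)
The plan is to take the matrix $\bolQ^*$ and the pair $(\bolx^*,\boly^*)$ already produced by Lemma~\ref{function-class-minimax-remark-general}, which satisfy Eq.~\eqref{contraction-mapping-app}--\eqref{max-y-app}, and to show that this \emph{same} pair is a block-wise saddle point of each $f_i(\bolQ^*,\cdot,\cdot)$. Substituting the fixed-point identity Eq.~\eqref{contraction-mapping-app} into Eq.~\eqref{min-x-app} and Eq.~\eqref{max-y-app} yields the entrywise matrix saddle inequalities
\begin{align}
\bolP(\bolQ^*,\bolx^*,\boly^*)\leq \bolP(\bolQ^*,\bolx,\boly^*),\qquad \bolP(\bolQ^*,\bolx^*,\boly^*)\geq \bolP(\bolQ^*,\bolx^*,\boly),\notag
\end{align}
for all $\bolx\in\cX$ and $\boly\in\cY$. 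The entire argument then consists of converting these matrix inequalities into per-block variational inequalities for the internal operator $\bolF_i$ via the bounds in Assumption~\ref{ass-4} (\textbf{A$_5$}), and finishing with the convexity--concavity of $f_i$ together with the gradient identity $\nabla f_i=(\bolF_i^{\bolx},-\bolF_i^{\boly})$.

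For the $\bolx$-side I would instantiate the $\bolC$-bound of \textbf{A$_5$} as $\mathbf{D}_{\bolP(\bolQ^*,\cdot,\boly^*)}(\bolx,\bolx^*)=\bolP(\bolQ^*,\bolx,\boly^*)-\bolP(\bolQ^*,\bolx^*,\boly^*)\leq\sum_{i=1}^d\bolC_i\langle\bolF_i^{\bolx}(\bolQ^*,\bolx_i,\boly_i^*),\bolx_i-\bolx_i^*\rangle$, taking the variable point $\bolx$ as the first argument. The min-saddle inequality forces the left-hand side to be nonnegative entrywise, so for every index $(k,j)$ one gets $0\leq\sum_{i=1}^d[\bolC_i]_{k,j}\langle\bolF_i^{\bolx}(\bolQ^*,\bolx_i,\boly_i^*),\bolx_i-\bolx_i^*\rangle$ for all $\bolx$. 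Because $\cX=\prod_i\cX_i$ is a product set, I would perturb a single block $i_0$ (holding $\bolx_i=\bolx_i^*$ for $i\neq i_0$), which collapses the sum to its $i_0$ term; the ratio hypothesis $\min_{k,j,i}\frac{\min\{[\bolC_i]_{k,j},[\bolB_i]_{k,j}\}}{[\bolC_i]_{k,j}+[\bolB_i]_{k,j}}\geq C'>0$ guarantees $[\bolC_{i_0}]_{k,j}>0$, so it divides out to give the block Minty inequality $\langle\bolF_{i_0}^{\bolx}(\bolQ^*,\bolx_{i_0},\boly_{i_0}^*),\bolx_{i_0}-\bolx_{i_0}^*\rangle\geq0$ for all $\bolx_{i_0}\in\cX_{i_0}$. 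Since $\bolF_{i_0}^{\bolx}(\bolQ^*,\cdot,\boly_{i_0}^*)=\nabla_{\bolx_{i_0}}f_{i_0}(\bolQ^*,\cdot,\boly_{i_0}^*)$ is continuous (by the Lipschitz bound in \textbf{A$_4$}) and $f_{i_0}(\bolQ^*,\cdot,\boly_{i_0}^*)$ is convex on the convex set $\cX_{i_0}$, restricting $\bolx_{i_0}$ to the segment $\bolx_{i_0}^*+t(\bolw-\bolx_{i_0}^*)$ and letting $t\to0^+$ upgrades this to the Stampacchia inequality $\langle\bolF_{i_0}^{\bolx}(\bolQ^*,\bolx_{i_0}^*,\boly_{i_0}^*),\bolw-\bolx_{i_0}^*\rangle\geq0$, whence the gradient inequality for convex functions yields $f_{i_0}(\bolQ^*,\bolx_{i_0}^*,\boly_{i_0}^*)\leq f_{i_0}(\bolQ^*,\bolw,\boly_{i_0}^*)$ for every $\bolw$.

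The $\boly$-side is symmetric but slightly cleaner, because the orientation of the $\bolB$-bound lets me use the optimum $\boly^*$ as the first argument directly: $\mathbf{D}_{\bolP(\bolQ^*,\bolx^*,\cdot)}(\boly^*,\boly)=\bolP(\bolQ^*,\bolx^*,\boly^*)-\bolP(\bolQ^*,\bolx^*,\boly)\leq\sum_{i=1}^d\bolB_i\langle-\bolF_i^{\boly}(\bolQ^*,\bolx_i^*,\boly_i^*),\boly_i^*-\boly_i\rangle$. The max-saddle inequality makes the left-hand side nonnegative, and after the same single-block isolation and the positivity of $[\bolB_{i_0}]_{k,j}$ this gives the Stampacchia inequality $\langle\bolF_{i_0}^{\boly}(\bolQ^*,\bolx_{i_0}^*,\boly_{i_0}^*),\boly_{i_0}-\boly_{i_0}^*\rangle\geq0$ at $\boly_{i_0}^*$ with no limiting step needed. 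Using $\nabla_{\boly_{i_0}}f_{i_0}=-\bolF_{i_0}^{\boly}$ together with the concavity of $f_{i_0}(\bolQ^*,\bolx_{i_0}^*,\cdot)$ then gives $f_{i_0}(\bolQ^*,\bolx_{i_0}^*,\boly_{i_0})\leq f_{i_0}(\bolQ^*,\bolx_{i_0}^*,\boly_{i_0}^*)$, completing the block-wise saddle statement for every $i_0$ and hence the corollary.

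The main obstacle I anticipate is the bookkeeping of signs, since both bounds in \textbf{A$_5$} are one-sided upper bounds on $\mathbf{D}_{\bolP}$. One must choose which argument plays the role of the ``first'' point so that the matrix saddle inequality produces a usefully-signed variational inequality; as the calculation shows this is forced to be a Minty-type inequality on the min-block $\bolx$ and a Stampacchia-type inequality on the max-block $\boly$, and the convexity (resp.\ concavity) gradient inequality must be applied at the matching point—this is why the $\bolx$-block additionally requires passing from Minty to Stampacchia through the continuity of $\bolF_{i_0}^{\bolx}$. The remaining care is only in justifying the single-block reduction, which rests on the product structure $\cZ=\prod_i\cZ_i$ and on the strict positivity of the entries of $\bolC_i,\bolB_i$ guaranteed by the ratio hypothesis.
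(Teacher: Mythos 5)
Your proof is essentially correct as a piece of mathematics, but it takes a genuinely different route from the paper's. The paper never extracts the per-block saddle property from the matrix inequalities \eqref{contraction-mapping-app}--\eqref{max-y-app}; instead it re-runs the auxiliary optimistic iteration \eqref{construct-process} at the fixed $\bolQ$ and bounds the \emph{per-block} duality gap $\max_{\boly'_i\in\cY_i}f_i(\bolQ,(\bar{\bolx}_T)_i,\boly'_i)-\min_{\bolx'_i\in\cX_i}f_i(\bolQ,\bolx'_i,(\bar{\boly}_T)_i)\leq(4\eta^{-1}D+8\eta A^2L_2^2)/T$, mirroring Eq.~\eqref{sequence-max-P} but now for each $f_i$ separately via convexity--concavity and $\nabla f_i=(\bolF_i^{\bolx},-\bolF_i^{\boly})$; an accumulation point of the averaged iterates is then simultaneously the pair produced in Lemma~\ref{sub-minimax-Q} (hence satisfying the matrix saddle/fixed-point conditions) and a saddle point of every $f_i(\bolQ^*,\cdot,\cdot)$. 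Your argument is static rather than algorithmic: matrix saddle $\Rightarrow$ entrywise sign information $\Rightarrow$ \textbf{[A$_5$]} $\Rightarrow$ block variational inequality $\Rightarrow$ saddle via the gradient inequality. Your sign bookkeeping is right --- the orientation of the \textbf{[A$_5$]} bounds, with $\bolF_i$ evaluated at the moving first argument exactly as the paper uses them in Eq.~\eqref{Qt-up} and Eq.~\eqref{Qt-low}, does force a Minty-type inequality on the $\bolx$-block and a direct Stampacchia inequality on the $\boly$-block --- and the Minty$\to$Stampacchia limit is legitimately backed by the $L_2$-Lipschitz continuity of $\bolF_i(\bolQ,\cdot)$ from \textbf{[A$_4$]}. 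Your route buys brevity and independence from the iteration machinery; the paper's buys uniformity across blocks, which matters below.

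The one genuine weak point is your claim that the ratio hypothesis $\min_{k,j,i}\min\{[\bolC_i]_{k,j},[\bolB_i]_{k,j}\}/([\bolC_i]_{k,j}+[\bolB_i]_{k,j})\geq C'>0$ guarantees $[\bolC_{i_0}]_{k,j}>0$. Read literally it does, but the paper's own application contradicts the literal reading: in Proposition~\ref{two-player-Markov-game-GQCC} one has $[\bolB_i]_{s,a,b}=[\bolC_i]_{s,a,b}=\theta\,\mathbb{P}(s_i|s,a,b)$, which vanishes on every transition that cannot reach $s_i$, yet the paper asserts the ratio is identically $1/2$ --- so the intended convention tolerates $0/0$ entries. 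Under that convention, a block $i_0$ with $\bolC_{i_0}=\bolB_{i_0}=\mathbf{0}$ (e.g.\ a state unreachable in one step) gives you nothing: after holding the other blocks at the optimum your collapsed inequality reads $0\leq 0$ and no entry divides out, so your proof cannot establish the saddle property for that block, while the corollary claims it for \emph{all} $i\in[1:d]$ and the Markov-game proof downstream relies on it. The paper's constructive argument is immune to this, since the $\cO(1/T)$ gap bound holds for each $f_i$ independently of $\bolB_i,\bolC_i$. To repair your version you must either adopt the strict-positivity reading of the ratio hypothesis as an explicit extra assumption, or handle degenerate blocks by falling back on the paper's iteration-based argument for those $i$.
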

    \begin{proof}
        With proper selection of  $\eta$, we have the following bound which is similar to that derived from Eq.\eqref{sequence-max-P}
        \begin{align}
            &\max\limits_{\boly_i'\in\cY_i}f_i(\bolQ,(\Bar{\bolx}_T)_i,\boly'_i)-\min\limits_{\bolx_i'\in\cX_i}f_i(\bolQ,\bolx'_i,(\Bar{\boly}_T)_i)\notag
            \\
            \leq&\sum_{t=1}^T[f_i(\bolQ,\bolx_i^t,(\Bar{\boly}_T^{*})_i)-f_i(\bolQ,(\Bar{\bolx}_T^{*})_i,\boly_i^t)]\notag
            \\
            \leq&\sum_{t=1}^T\left[\llangle \bolF_i(\bolQ,\bolz_i^t),\bolx_i^t-(\Bar{\bolx}_T^{*})_i\rrangle+\llangle\bolF_i(\bolQ,\bolz_i^t),\boly_i^t-(\Bar{\boly}_T^{*})_i\rrangle\right]\notag
            \\
            \leq&\frac{4\eta^{-1}D+8\eta A^2L_2^2}{T},
        \end{align}
        for every $i\in[1:d]$, where $\{\bolz_t=(\bolx_t,\boly_t)\}_{t=1}^T$ follows from the iteration~\eqref{construct-process} and $(\Bar{\bolz}_T^{*})_i=((\Bar{\bolx}_T^{*})_i,(\Bar{\boly}_T^{*})_i)$ denotes $\argmax\limits_{\bolz'_i\in\cZ_i}[f_i(\bolQ,(\Bar{\bolx}_T)_i,\boly'_i)-f_i(\bolQ,\bolx'_i,(\Bar{\boly}_T)_i)]$. Hence, by directly leveraging the result of Lemma \ref{function-class-minimax-remark}, we obtain the result.
    \end{proof}
    Before stating the general version of Theorem \ref{main-thm-minimax} as follows, we define \begin{align}
		Y_{T}^\eta=8(c+1)\left[\gamma D\left(\frac{1}{\eta}+16\eta L_2\right)+40\eta^3\gamma A^2L_2^4+2\eta\gamma L_1^2(1+64\eta^2L_2^2C^2)\right](\log(c+T)+1).
	\end{align}
    \subsection{Theorem \ref{main-thm-minimax-general} and Relate Proof}
    \begin{theorem}\label{main-thm-minimax-general}[General Version of Theorem \ref{main-thm-minimax}]
		For any generaized quasar-convex-concave function $f$ which satisfies Assumption \ref{ass-3} and \ref{ass-4} with constant matrix function $\bolP\equiv\bolQ^*$, where $\bolQ^*$ is unknown and satisfies Eq.~\eqref{contraction-mapping-app}-Eq.~\eqref{max-y-app}, with parameter configuration in Eq.~\eqref{parameters-setting-1}, the weighted average of Algorithm \ref{regularized-optimistic-hedge}'s outputs $\{\bolz_t\}_{t=1}^T$ satisfies the following inequality
		\begin{align}
            \cG_f(\bar{\bolx}_T,\bar{\boly}_T)\leq\frac{6\left(\max\limits_{\bolz\in\cZ}\sum_{i=1}^d\psi_i(\bolz)\right)(1-\theta)^{-1}\left(\frac{3D}{\eta}+10\eta L_1^2+5AL_1 Y_T^{\eta}+4\eta A^2L_2^2\right)}{T+3}.
		\end{align}
	\end{theorem}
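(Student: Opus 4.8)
The plan is to use the GQCC condition to decouple the global duality gap into $d$ independent convex–concave subproblems, bound each one by a regularized optimistic mirror descent (OMD) regret, and then recombine the blocks through the weight $\max_{\bolz\in\cZ}\sum_i\psi_i(\bolz)$. First I would invoke Definition~\ref{def-GQCC} at the returned point $\bar\bolz_T=(\bar\bolx_T,\bar\boly_T)$. Since $\bolP\equiv\bolQ^*$, each $f_i(\bolQ^*,\cdot,\cdot)$ is a \emph{fixed} convex–concave function whose block saddle point exists by Corollary~\ref{corollary-saddle-point}, so
\begin{align*}
\cG_f(\bar\bolx_T,\bar\boly_T)\le\sum_{i=1}^d\psi_i(\bar\bolz_T)\,\cG_{f_i(\bolQ^*,\cdot,\cdot)}(\bar\bolx_{T,i},\bar\boly_{T,i})\le\Big(\max_{\bolz\in\cZ}\sum_{i=1}^d\psi_i(\bolz)\Big)\max_{i}\cG_{f_i(\bolQ^*,\cdot,\cdot)}(\bar\bolx_{T,i},\bar\boly_{T,i}),
\end{align*}
and it remains to bound the per-block gap by the bracketed quantity over $T+3$. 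For a fixed block I would use convexity–concavity together with Jensen on the weights $\{\alpha_t\}$ (recall $\sum_t\alpha_t=1$) to pass to a linearized regret: letting $(\bolx_i',\boly_i')$ attain the gap and recalling $\bolF_i=((\nabla_{\bolx_i}f_i)^\top,(-\nabla_{\boly_i}f_i)^\top)^\top$,
\begin{align*}
\cG_{f_i(\bolQ^*,\cdot,\cdot)}(\bar\bolx_{T,i},\bar\boly_{T,i})\le\sum_{t=1}^T\alpha_t\big\langle\bolF_i(\bolQ^*,\bolz_i^t),\bolz_i^t-\bolz_i'\big\rangle.
\end{align*}

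Next I would replace the unknown $\bolQ^*$ by the running matrix $\bolQ^t$ actually used in the updates, writing $\bolF_i(\bolQ^*,\cdot)=\bolF_i(\bolQ^t,\cdot)+[\bolF_i(\bolQ^*,\cdot)-\bolF_i(\bolQ^t,\cdot)]$ and bounding the second term by $2AL_1\|\bolQ^t-\bolQ^*\|_\infty$ using the diameter bound \textbf{A$_1$} and the $\bolQ$-Lipschitzness \textbf{A$_4$}. The resulting genuine OMD regret $\sum_t\alpha_t\langle\bolF_i(\bolQ^t,\bolz_i^t),\bolz_i^t-\bolz_i'\rangle$ I would control with the optimistic-descent lemma \citep[Lemma~1]{Rakhlin2013Opt}, where the prediction fed into the $\bolx^t,\boly^t$ step is the previously realized operator $\bolF_i(\bolQ^{t-1},\bolz_i^{t-1})$. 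The time-varying prox weights $\gamma_t=\alpha_{t-1}/\alpha_t$ together with the vanishing anchor $\lambda_t v$ make $\gamma_t\,\mathrm{KL}(\cdot\|\bolg^{t-1})+\lambda_t v(\cdot)$ behave as a weighted FTRL regularizer whose Bregman terms telescope into the initialization bound $\tfrac{D}{\eta}$ (\textbf{A$_2$}). By \textbf{A$_4$} the optimistic prediction error $\|\bolF_i(\bolQ^t,\bolz_i^t)-\bolF_i(\bolQ^{t-1},\bolz_i^{t-1})\|_*$ splits into an $L_2\|\bolz_i^t-\bolz_i^{t-1}\|$ piece, absorbed by the negative/strong-convexity Bregman terms exactly when $\eta\le(1-\theta)^{1/2}/(16L_2((\gamma L_1)^{1/2}+1))$ (using modulus-$2$ convexity \textbf{A$_3$}), and an $L_1\|\bolQ^t-\bolQ^{t-1}\|_\infty$ piece that I would carry into the $\bolQ$-tracking accounting.

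All $\bolQ$-dependent residuals — $\sum_t\alpha_t\|\bolQ^t-\bolQ^*\|_\infty$ from the first splitting and $\sum_t\alpha_t\|\bolQ^t-\bolQ^{t-1}\|_\infty$ from the prediction error — I would bound using the update $\bolQ^t=(1-\beta_{t-1})\bolQ^{t-1}+\beta_{t-1}\bolP(\bolQ^{t-1},\bolz^{t-1})$, the $\theta$-contraction \textbf{A$_6$}, and $\|\bolP\|_\infty\le C$. These give geometric decay of $\|\bolQ^t-\bolQ^*\|_\infty$ modulated by the schedule $\beta_t=c/(c+t)$ with $c=2(1-\theta)^{-1}$; evaluating $\sum_t\alpha_t(\cdot)$ with $\alpha_t=\beta_{T,t}$ then produces exactly the poly-logarithmic quantity $Y_T^\eta$, with the prefactor $8(c+1)(\log(c+T)+1)$ and the overall $(1-\theta)^{-1}$ scaling traceable to $c$ and to the estimate $\sum_t\beta_{T,t}\beta_t\lesssim(c+1)(\log(c+T)+1)$. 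Collecting the initialization term $\tfrac{D}{\eta}$, the leftover gradient terms $\eta L_1^2$ and $\eta A^2L_2^2$, and the tracking term $AL_1Y_T^\eta$, substituting the parameter choices \eqref{parameters-setting-1}, and finally reattaching $\max_{\bolz}\sum_i\psi_i(\bolz)$ from the first step yields the stated bound. The main obstacle is the coupling between OMD stability and the $\bolQ^t$-tracking: the operator driving the $(\bolx,\boly)$ updates depends on $\bolQ^t$, whose accuracy in turn depends on how far the iterates move through $\bolP(\bolQ^{t-1},\bolz^{t-1})$, so one must run a \emph{simultaneous} induction that keeps $\|\bolz^t-\bolz^{t-1}\|$ small for the contraction estimate while keeping $\|\bolQ^t-\bolQ^*\|_\infty$ small for the regret estimate, and verify that the chosen $\eta,\beta_t,\alpha_t,\gamma_t,\lambda_t$ force both feedback loops to contract with only a logarithmic loss in $T$.
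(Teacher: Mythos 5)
Your outline of the regret half of the argument tracks the paper's Theorem~\ref{theorem-main-zero-sum} almost step for step: the GQCC decomposition at $\bar\bolz_T$, the Jensen linearization, the swap $\bolF_i(\bolQ^*,\cdot)\to\bolF_i(\bolQ^t,\cdot)$ at cost $2AL_1\|\bolQ^t-\bolQ^*\|_\infty$, the optimistic regret bound with time-varying weights $\gamma_t=\alpha_{t-1}/\alpha_t$, $\lambda_t=1-\gamma_t$ telescoping to the $D/\eta$ term, and the absorption of the $L_2\|\bolz_i^t-\bolz_i^{t-1}\|$ part of the prediction error into the negative Bregman terms are all exactly the paper's Part~I. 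The genuine gap is in how you bound the tracking residuals. You claim that the update $\bolQ^t=(1-\beta_{t-1})\bolQ^{t-1}+\beta_{t-1}\bolP(\bolQ^{t-1},\bolz^{t-1})$, the $\theta$-contraction \textbf{[A$_\text{6}$]}, and $\|\bolP\|_\infty\le C$ ``give geometric decay of $\|\bolQ^t-\bolQ^*\|_\infty$''. They do not: writing $\bolP(\bolQ^{\kappa},\bolz^{\kappa})-\bolQ^*=\left[\bolP(\bolQ^{\kappa},\bolz^{\kappa})-\bolP(\bolQ^{*},\bolz^{\kappa})\right]+\left[\bolP(\bolQ^{*},\bolz^{\kappa})-\bolP(\bolQ^{*},\bolz^{*})\right]$, the contraction controls only the first bracket, while the second depends on the raw iterate $\bolz^{\kappa}$, which is nowhere guaranteed to approach $\bolz^*$ along the trajectory; bounding it crudely by $2C$ leaves a constant, non-vanishing term in the recursion, so $Y_T^\eta$ cannot be produced this way. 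You do flag this coupling at the end, but your proposed remedy --- a simultaneous induction keeping $\|\bolz^t-\bolz^{t-1}\|$ small --- attacks the wrong quantity: small per-step movement says nothing about $\bolP(\bolQ^*,\bolz^{\kappa})$ being close to $\bolQ^*$.

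What the paper's Lemma~\ref{Qt-Q*_infty} actually does, and what your plan is missing, is to control the $\bolz$-direction error through the \emph{regret} of the dynamics rather than through norms of increments. Using the one-sided saddle inequalities \eqref{min-x-app}--\eqref{max-y-app} satisfied by $\bolQ^*$ together with Assumption \textbf{[A$_\text{5}$]}, the differences $\mathbf{D}_{\bolP}$ are dominated by $\bolC_i$- and $\bolB_i$-weighted inner products $\llangle\bolF_i^{\bolx}(\bolQ^{\kappa},\bolz_i^{\kappa}),\bolx_i^{\kappa}-\bolx_i^{*}\rrangle$ (and the $\boly$-analogue), i.e., by block regrets of the very OMD process being run (Eq.~\eqref{Qt-up}--\eqref{Qt-low}). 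These regrets are then bounded from above by the OMD analysis (Eq.~\eqref{reg-high}) and from below (Eq.~\eqref{reg-low}, using monotonicity of $\bolF_i(\bolQ,\cdot)$ from convexity-concavity and another swap $\bolQ^{\kappa}\to\bolQ^*$ at cost $2AL_1\|\bolQ^{\kappa}-\bolQ^*\|_\infty$); combining the two extracts the movement estimate $\sum_{\kappa}\beta_{t-1,\kappa}\|\bolz_i^{\kappa}-\bolz_i^{\kappa-1}\|^2\le 16\eta AL_1\sum_{\kappa}\beta_{t-1,\kappa}\|\bolQ^{\kappa}-\bolQ^*\|_\infty+\cdots$, which is fed back into the tracking recursion. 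The result is $\|\bolQ^t-\bolQ^*\|_\infty\le H_t+(\theta+32\eta^2\gamma AL_1L_2^2)\sum_{\kappa}\beta_{t-1,\kappa}\|\bolQ^{\kappa}-\bolQ^*\|_\infty$, solved via \citep[Lemma 33]{Wei2021LastiterateCO}, the step-size condition making the effective contraction factor at most $(1+\theta)/2$; this is where the weights $\beta_{t,j}^{(1+\theta)/2}$, the bound $\|\bolQ^t-\bolQ^*\|_\infty\le\frac{c}{c+t}Y_t^\eta$, and hence the $AL_1Y_T^\eta$ term in the final statement actually come from. Without ingredients (i) the saddle inequalities plus \textbf{[A$_\text{5}$]} and (ii) the two-sided regret estimate closing the feedback loop, your derivation of $Y_T^\eta$ does not go through as written.
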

    For a generalized quasar-convex-concave function satisfying smoothness and recurrence conditions, the iteration complexity of our algorithm matches the lower bound \citep{Ouyang2018LowerCB} for solving $\epsilon$-approximate Nash equilibrium points in the smooth convex-concave setting, up to a logarithmic factor. Furthermore, we prove that standard smooth convex-concave functions satisfy the preconditions of Theorem \ref{main-thm-minimax-general} (as discussed in Appendix \ref{section-convex-concave-minimax}).
    
    Our analysis relies on the connection between $\bolF_i(\bolQ^t,\bolz_i^t)$ and $\bolF_i(\bolQ^*,\bolz_i^t)$. Theorem \ref{theorem-main-zero-sum} combines a) classical $\cO(\log(T)/T)$ bound derived from regularized OMD, and b) the weighted average of iteration error $\|\bolQ^t-\bolQ^{t-1}\|_{\infty}^2$ over $t\in[1:T]$ which has the form of $\sum_{t=1}^T\alpha_t\|\bolQ^t-\bolQ^{t-1}\|_{\infty}^2$, with magnitude $\cO(T^{-1}\log(T))$, and c) weighted average of approximation error $\|\bolQ^t-\bolQ^*\|_{\infty}$ over $t\in[1:T]$ which has the form of $\sum_{t=1}^T\alpha_t\|\bolQ^t-\bolQ^*\|_{\infty}$ to bound the max-min gap of $f$ at $\Bar{\bolz}_T$. Next, we leverage Lemma \ref{Qt-Q*_infty} to show the decreasing trend of approximation error $\|\bolQ^t-\bolQ^*\|_{\infty}$ and bound $\sum_{t=1}^T\alpha_t\|\bolQ^t-\bolQ^*\|_{\infty}$ by a quantity that grows only logarithmically in $T$. We may therefore obtain the result of Theorem \ref{main-thm-minimax-general} by applying the estimation of weighted average of approximation error $\sum_{t=1}^T\alpha_t\|\bolQ^t-\bolQ^*\|_{\infty}$ to Theorem \ref{theorem-main-zero-sum}.

    \begin{algorithm}[tb]
		\small
		\caption{Optimistic Mirror Descent with Regularization for Multi-Variables}
		\label{regularized-optimistic-hedge-general}
		\hspace*{0.02in}{\bf Input:} $\left\{\bolz_i^0\right\}_{i=1}^d=\left\{\bolg_i^0\right\}_{i=1}^d$, $\{\alpha_t\geq0\}_{t=1}^T\, (\sum_{t=1}^T\alpha_t=1)$, $\{\gamma_t\geq0\}_{t=1}^T$, $\{\lambda_t\geq0\}_{t=1}^T$, $\eta$ and $\bolQ^0=\mathbf{0}$.
		\\
		\hspace*{0.02in}{\bf Output:} $\bar{\bolz}_T=\sum_{t=1}^T\alpha_t \bolz^t$.
		\begin{algorithmic}[1]
			\WHILE{$t\leq T$}
			\STATE $\bolQ^{t}=(1-\beta_{t-1})\bolQ^{t-1}+\beta_{t-1} \bolP(\bolQ^{t-1},\bolz_{t-1})$.
            \FORALL{$i\in[1:d]$}
			\STATE   
                $\bolx_i^t=\argmin\limits_{\bolx_i\in\cX_i}\, \eta\llangle\bolF_i^{\bolx}(\bolQ^{t-1},\bolz_i^{t-1}),\bolx_i\rrangle+\gamma_t V\left(\bolx_i, (\bolg_i^{\bolx})^{t-1}\right)+\lambda_t v(\bolx_i),$\notag
                
                \STATE  $\boly_i^t=\argmin\limits_{\boly_i\in\cY_i}\, \eta\left\langle\bolF_i^{\boly}(\bolQ^{t-1},\bolz_i^{t-1}),\boly_i\right\rangle+\gamma_t V\left(\boly_i, (\bolg_i^{\boly})^{t-1}\right)+\lambda_t v(\boly_i),$\notag
			 
                \STATE $(\bolg_i^{\bolx})^t=\argmin\limits_{\bolg_i^{\bolx}\in\cX_i}\, \eta\llangle\bolF_i^{\bolx}(\bolQ^{t},\bolz_i^{t}),\bolg_i^{\bolx}\rrangle+\gamma_t V\left(\bolg_i^{\bolx}, (\bolg_i^{\bolx})^{t-1}\right)+\lambda_t v(\bolg_i^{\bolx}),$\notag
			
                \STATE       $(\bolg_i^{\boly})^t=\argmin\limits_{\bolg_i^{\boly}\in\cY_i}\, \eta\llangle\bolF_i^{\boly}(\bolQ^{t},\bolz_i^{t}),\bolg_i^{\boly}\rrangle+\gamma_t V\left(\bolg_i^{\boly}, (\bolg_i^{\boly})^{t-1}\right)+\lambda_t v(\bolg_i^{\boly}).$\notag
			\ENDFOR
			\STATE $t\leftarrow t+1.$
			\ENDWHILE
		\end{algorithmic}
	\end{algorithm}

    \subsubsection{Part I}
    \begin{theorem}\label{theorem-main-zero-sum}
		Assuming that Assumption \ref{ass-3} holds, we set the hyper-parameters for Algorithm \ref{regularized-optimistic-hedge} carefully such that
		\begin{align}
			\alpha_t(\gamma_t+\lambda_t)&\geq\alpha_{t+1}\gamma_{t+1},\label{parameter-setting-1}
			\\
			\eta&\leq\min\limits_{t\in[1:T]}\frac{\sqrt{\gamma_t(\gamma_t+\lambda_t)}}{4L_2}.\label{parameter-setting-2}
		\end{align}
		Suppose that $v$ modulus 2 w.r.t $\|\cdot\|$,  $\|\bolz\|\leq A$ for any $\bolz\in\cZ$, and $\{\bolz_t\}_{t=1}^T$ follows the iterations of Algorithm \ref{regularized-optimistic-hedge-general}, then we can show that 
		\begin{align}
			\max_{\boly'\in\cY}f(\bar{\bolx}_T,\boly')-\min_{\bolx'\in\cX}f(\bolx',\bar{\boly}_T)\leq&B(\psi)\max\limits_{i\in[1:d]}\left\{\frac{\alpha_1\gamma_1}{\eta}\max_{\bolz_i\in\cZ_i}\left(V\left(\bolx_i,(\bolg_{i}^{\bolx})^0\right)+V\left(\boly_i,(\bolg_{i}^{\boly})^0\right)\right)\notag\right.
			\\
			&-\frac{1}{2\eta}\sum_{t=1}^T\alpha_t\left[\gamma_t\|\bolz_i^t-\bolg_i^{t-1}\|^2+\frac{\gamma_t+\lambda_t}{2}\|\bolg_i^t-\bolz_i^t\|^2\right]
			\notag
			\\
			&\left.+\frac{2\sum_{t=1}^T\alpha_t\lambda_t}{\eta}\max_{\bolz_i\in\cZ_i}(v(\bolx_i)+v(\boly_i))\right\}\notag
			\\
			&+2B(\psi)\eta L_1^2\sum_{t=1}^T\left[\frac{\alpha_t}{\gamma_t+\lambda_t}\|\bolQ^t-\bolQ^{t-1}\|_{\infty}^2\right]
			\notag
			\\
	        &+2AB(\psi)L_1\sum_{t=1}^T\alpha_t\|\bolQ^t-\bolQ^*\|_{\infty}+\frac{8A^2B(\psi)L_2^2\alpha_1\eta}{\gamma_1+\lambda_1},
		\end{align}
		where $B(\psi):=\max\limits_{\bolz\in\cZ}\left|\sum_{i=1}^d \psi_i(\bolz)\right|$ and $\bar{\bolz}_T:=\sum_{t=1}^{T}\alpha_t \bolz^t$.
	\end{theorem}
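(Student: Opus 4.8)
The plan is to prove Theorem~\ref{theorem-main-zero-sum} in three stages: first reduce the max-min gap of $f$ to a weighted regret of the internal operators by combining the GQCC structure with convexity-concavity, then strip off the approximation error $\bolQ^t\to\bolQ^*$, and finally apply the regularized optimistic mirror descent machinery per block and reassemble with the weight sequences $\alpha_t,\gamma_t,\lambda_t$.

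First I would apply the GQCC condition (Definition~\ref{def-GQCC}) at the output $\bar\bolz_T=(\bar\bolx_T,\bar\boly_T)$, where the relevant convex-concave functions are $f_i(\bolQ^*,\cdot,\cdot)$ for the fixed point $\bolQ^*$ of Lemma~\ref{function-class-minimax-remark-general}. This gives $\cG_f(\bar\bolx_T,\bar\boly_T)\le\sum_{i=1}^d\psi_i(\bar\bolz_T)\cG_{f_i(\bolQ^*,\cdot,\cdot)}(\bar\bolx_{T,i},\bar\boly_{T,i})$; since each $f_i(\bolQ^*,\cdot,\cdot)$ is convex-concave every factor $\cG_{f_i}$ is non-negative, so the sum is at most $B(\psi)\max_i\cG_{f_i}$, which produces the leading $B(\psi)\max_i\{\cdots\}$ structure. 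For a fixed block $i$, using $\bar\bolx_{T,i}=\sum_t\alpha_t\bolx_i^t$ and $\bar\boly_{T,i}=\sum_t\alpha_t\boly_i^t$ with Jensen's inequality (convexity in $\bolx$, concavity in $\boly$) I would bound $\cG_{f_i}(\bar\bolz_{T,i})\le\max_{\bolz_i'}\sum_t\alpha_t[f_i(\bolQ^*,\bolx_i^t,\boly_i')-f_i(\bolQ^*,\bolx_i',\boly_i^t)]$, and then the first-order convex-concave inequality, together with $\bolF_i^{\bolx}=\nabla_{\bolx_i}f_i$ and $\bolF_i^{\boly}=-\nabla_{\boly_i}f_i$, to arrive at $\cG_{f_i}(\bar\bolz_{T,i})\le\max_{\bolz_i'}\sum_t\alpha_t\langle\bolF_i(\bolQ^*,\bolz_i^t),\bolz_i^t-\bolz_i'\rangle$.

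The second stage replaces the unavailable $\bolQ^*$ by the iterates $\bolQ^t$. Writing $\bolF_i(\bolQ^*,\bolz_i^t)=\bolF_i(\bolQ^t,\bolz_i^t)+[\bolF_i(\bolQ^*,\bolz_i^t)-\bolF_i(\bolQ^t,\bolz_i^t)]$, bounding the bracket by $L_1\|\bolQ^t-\bolQ^*\|_\infty$ (the $L_1$-Lipschitz dependence on $\bolQ$) and using $\|\bolz_i^t-\bolz_i'\|\le 2A$, yields the approximation term $2AB(\psi)L_1\sum_t\alpha_t\|\bolQ^t-\bolQ^*\|_\infty$. It then remains to control the genuine weighted optimistic regret $\sum_t\alpha_t\langle\bolF_i(\bolQ^t,\bolz_i^t),\bolz_i^t-\bolz_i'\rangle$, where the prediction is $\bolF_i(\bolQ^{t-1},\bolz_i^{t-1})$ and the correction is $\bolF_i(\bolQ^t,\bolz_i^t)$. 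I would split each term in the standard optimistic way into $\langle\bolF_i(\bolQ^t,\bolz_i^t)-\bolF_i(\bolQ^{t-1},\bolz_i^{t-1}),\bolz_i^t-\bolg_i^t\rangle$ plus two proximal pieces, and apply the three-point optimality lemma (as in \citep[Lemma~1]{Rakhlin2013Opt}) to the two $\argmin$ updates, now carrying the regularizer $\gamma_t V(\cdot,\bolg_i^{t-1})+\lambda_t v(\cdot)$.

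The main obstacle is precisely this weighted, regularized optimistic bound. The three-point lemma produces telescoping terms $\gamma_t V(\bolz_i',\bolg_i^{t-1})-(\gamma_t+\lambda_t)V(\bolz_i',\bolg_i^t)$, extra $v$-terms $\lambda_t[v(\bolz_i')-v(\bolg_i^t)]$, and negative quadratics $-V(\bolg_i^t,\bolg_i^{t-1})$ and $-V(\bolz_i^t,\bolg_i^{t-1})$. Weighting by $\alpha_t$ and summing, the telescoping collapses to the single boundary term $\tfrac{\alpha_1\gamma_1}{\eta}V(\bolz_i',\bolg_i^0)$ exactly because the coefficient condition $\alpha_t(\gamma_t+\lambda_t)\ge\alpha_{t+1}\gamma_{t+1}$ (Eq.~\eqref{parameter-setting-1}) renders all intermediate coefficients non-positive, while the $v$-terms accumulate into $\tfrac{2\sum_t\alpha_t\lambda_t}{\eta}\max_{\bolz_i}(v(\bolx_i)+v(\boly_i))$. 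For the optimistic cross term I would use Cauchy-Schwarz with the two-sided Lipschitz estimate $\|\bolF_i(\bolQ^t,\bolz_i^t)-\bolF_i(\bolQ^{t-1},\bolz_i^{t-1})\|_*\le L_1\|\bolQ^t-\bolQ^{t-1}\|_\infty+L_2\|\bolz_i^t-\bolz_i^{t-1}\|$, then Young's inequality: the $L_2$-part is absorbed by the negative quadratics precisely under the step-size condition $\eta\le\sqrt{\gamma_t(\gamma_t+\lambda_t)}/(4L_2)$ (Eq.~\eqref{parameter-setting-2}) together with the modulus-$2$ bound $V\ge\|\cdot\|^2$ (Assumption~\ref{ass-3}\,[A$_3$]), whereas the $L_1$-part yields $\tfrac{\eta L_1^2}{\gamma_t+\lambda_t}\|\bolQ^t-\bolQ^{t-1}\|_\infty^2$. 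The residual negative quadratics that survive absorption are retained in the stated bound (they feed the Part~II estimate of $\|\bolQ^t-\bolQ^*\|_\infty$), and the isolated $t=1$ initialization, where no genuine prediction is available, contributes the boundary constant $\tfrac{8A^2B(\psi)L_2^2\alpha_1\eta}{\gamma_1+\lambda_1}$. The conceptual steps are otherwise routine; the heaviest bookkeeping lies in tracking the exact numerical constants through the coupled $\bolx$/$\boly$ blocks and the two correlated updates.
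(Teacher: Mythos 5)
Your proposal is correct and follows essentially the same route as the paper's own proof: the GQCC reduction to $B(\psi)\max_{i\in[1:d]}\{\cdot\}$ via Jensen and the convex-concave first-order bound (the paper's Eq.~\eqref{gap-bound} and Eq.~\eqref{Q*-optimistic-hedge}), the $L_1$-Lipschitz swap of $\bolQ^*$ for $\bolQ^t$ with the $2A$ diameter bound, the optimistic three-term split handled by the regularized three-point optimality conditions (Eq.~\eqref{optimal-z}--\eqref{one-step-alg}), telescoping under Eq.~\eqref{parameter-setting-1}, and absorption of the $L_2$ part by the negative quadratics under Eq.~\eqref{parameter-setting-2} and the modulus-2 property, leaving exactly the $t=1$ residual $8A^2B(\psi)L_2^2\alpha_1\eta/(\gamma_1+\lambda_1)$. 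No gaps to flag.
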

	\begin{proof}
		Recalling the definition of GQCC, we derive that
		\begin{align}\label{gap-bound}
			\!\!\!\!\!\max_{\boly'\in\cY}f(\bolx,\boly')-\min_{\bolx'\in\cX}f(\bolx',\boly)\leq& B(\psi)\max\limits_{i\in[1:d]}\left[\max\limits_{
            \bolw_i\in\cY_i}
            f_i(\bolQ^{*},\bolx_i,\bolw_i)-\min_{\bolu_i\in\cX_i}f_i(\bolQ^{*},\bolu_i,\boly_i)
            \right],
		\end{align}
		for any $\bolz=(\bolx,\boly),\bolv=(\bolu,\bolw)\in\cZ$ and
		\begin{align}\label{Q*-optimistic-hedge}
			f_i(\bolQ^{*},(\Bar{\bolx}_T)_i,\bolw_i)-f_i(\bolQ^{*},\bolu_i,(\Bar{\boly}_T)_i)\leq&\sum_{t=1}^T\alpha_t\left[f_i(\bolQ^{*},\bolx_i^t,\bolw_i)-f_i(\bolQ^{*},\bolu_i,\boly_i^t)\right]
            \\
            \leq&\sum_{t=1}^T\alpha_t\left\langle \bolF_i(\bolQ^{*},\bolz_i^t),\bolz_i^t-\bolv_i\right\rangle\notag
		\\
			\leq&\sum_{t=1}^T\alpha_t\left\langle \bolF_{i}(\bolQ^t,\bolz_i^t),\bolz_i^t-\bolv_i\right\rangle+2AL_1\sum_{t=1}^T\alpha_t\|\bolQ^t-\bolQ^*\|_{\infty},\notag
		\end{align}
		for any $\bolv_i\in\cZ_i$. Using the optimality condition, we obtain
		\begin{align}
			\left\langle \bolF_{i}(\bolQ^{t-1},\bolz_i^{t-1}),\bolz_i^t-\bolg_i^t\right\rangle\leq&\frac{\gamma_t}{\eta} \left(V\left((\bolg_{i}^{\bolx})^t,(\bolg_{i}^{\bolx})^{t-1}\right)+V\left((\bolg_{i}^{\boly})^t,(\bolg_{i}^{\boly})^{t-1}\right)\right)\notag
            \\
            &-\frac{\gamma_t}{\eta} \left(V\left(\bolx_i^t,(\bolg_i^{\bolx})^{t-1}\right)+V\left(\boly_i^t,(\bolg_i^{\boly})^{t-1}\right)\right)
			\notag
			\\
			&-\frac{\gamma_t+\lambda_t}{\eta}\left(V\left((\bolg_i^{\bolx})^t,\bolx_i^t\right)+V\left((\bolg_i^{\boly})^t,\boly_i^t\right)\right)\notag
            \\
            &+\frac{\lambda_t}{\eta}\left(v\left((\bolg_i^{\bolx})^t\right)+v\left((\bolg_i^{\boly})^t\right)-v\left(\bolx_i^t\right)-v\left(\boly_i^t\right)\right),\label{optimal-z}
		  \\
			\left\langle \bolF_i(\bolQ^t,\bolz_i^t),\bolg_i^t-\bolv_i\right\rangle\leq&\frac{\gamma_t}{\eta}\left(V\left(\bolu_i,(\bolg_i^{\bolx})^{t-1}\right)+V\left(\bolw_i,(\bolg_i^{\boly})^{t-1}\right)\right)\notag
            \\
            &-\frac{\gamma_t}{\eta}\left(V\left((\bolg_i^{\bolx})^{t},(\bolg_i^{\bolx})^{t-1}\right)+V\left((\bolg_i^{\boly})^t,(\bolg_i^{\boly})^{t-1}\right)\right)\notag
            \\
            &-\frac{\gamma_t+\lambda_t}{\eta}\left(V\left(\bolu_i,(\bolg_i^{\bolx})^t\right)+V\left(\bolw_i,(\bolg_i^{\boly})^t\right)\right)\notag
            \\
            &+\frac{\lambda_t}{\eta}\left(v(\bolu_i)+v(\bolw_i)-v\left((\bolg_i^{\bolx})^t\right)-v\left((\bolg_i^{\boly})^t\right)\right).\label{optimal-g}
		\end{align}
		For each $t\in[1:T]$, we can apply Eq.~\eqref{optimal-z} and Eq.~\eqref{optimal-g} to the following equation
		\begin{align}\label{one-step-alg}
			\alpha_t\left\langle \bolF_i(\bolQ^t,\bolz_i^t),\bolz_i^t-\bolv_i\right\rangle=&\alpha_t\left[\left\langle \bolF_i(\bolQ^t,\bolz_i^t),\bolg_i^t-\bolv_i\right\rangle+\left\langle \bolF_i(\bolQ^{t-1},\bolz_i^{t-1}),\bolz_i^t-\bolg_i^t\right\rangle\right.
			\notag
			\\
			&+\left.\left\langle \bolF_{i}(\bolQ^t,\bolz_i^t)-\bolF_{i}(\bolQ^{t-1},\bolz_i^{t-1}),\bolz_i^t-\bolg_i^t\right\rangle\right],\notag
			\\
			\leq&\alpha_t\left[\frac{\gamma_t}{\eta}\left(V\left(\bolu_i,(\bolg_i^{\bolx})^{t-1}\right)+V\left(\bolw_i,(\bolg_i^{\boly})^{t-1}\right)\right)-\frac{\gamma_t+\lambda_t}{\eta}\left(V\left(\bolu_i,(\bolg_i^{\bolx})^t\right)\right.\right.\notag
			\\
			&+\left.V\left(\bolw_i,(\bolg_i^{\boly})^t\right)\right)-\frac{\gamma_t}{\eta}\left(V\left(\bolx_i^t,(\bolg_i^{\bolx})^{t-1}\right)+V\left(\boly_i^t,(\bolg_i^{\boly})^{t-1}\right)\right)\notag
			\\
			&\left.-\frac{\gamma_t+\lambda_t}{\eta}\left(V\left((\bolg_i^{\bolx})^t,\bolx_i^t\right)+V\left((\bolg_i^{\boly})^t,\boly_i^t\right)\right)\right]+\frac{\alpha_t\lambda_t}{\eta}\left(v(\bolu_i)+v(\bolw_i)\right.\notag
			\\
			&\left.-v\left((\bolg_i^{\bolx})^t\right)-v\left((\bolg_i^{\boly})^t\right)\right)+\alpha_t\left\langle \bolF_{i}(\bolQ^t,\bolz_i^t)-\bolF_{i}(\bolQ^{t-1},\bolz_i^{t-1}),\bolz_i^t-\bolg_i^t\right\rangle.
		\end{align}
	Therefore, by summing Eq.\eqref{one-step-alg} from $t=1$ to $t=T$ and utilizing Eq.~\eqref{parameter-setting-1}, we have
	\begin{align}\label{error-bound-main}
		\sum_{t=1}^T\alpha_t\left\langle \bolF_i(\bolQ^t,\bolz_i^t),\bolz_i^t-\bolv_i\right\rangle\leq&\frac{\alpha_1\gamma_1}{\eta}\left(V\left(\bolu_i,(\bolg_i^{\bolx})^0\right)+V\left(\bolw_i,(\bolg_i^{\boly})^0\right)\right)\notag
        \\
        &+\frac{2\sum_{t=1}^T\alpha_t\lambda_t}{\eta}\max_{\bolz_i\in\cZ_i}(v(\bolx_i)+v(\boly_i))\notag
		\\
		&-\frac{1}{\eta}\sum_{t=1}^T\alpha_t\left[\gamma_t\|\bolz_i^t-\bolg_i^{t-1}\|^2+\frac{\gamma_t+\lambda_t}{2}\|\bolg_i^t-\bolz_i^t\|^2\right]\notag
		\\
		&+\eta\sum_{t=1}^T\left[\frac{\alpha_t}{\gamma_t+\lambda_t}\left\|\bolF_i(\bolQ^t,\bolz_i^t)-\bolF_i(\bolQ^{t-1},\bolz_i^{t-1})\right\|_*^2\right].
	\end{align}
	According to the Lipschitz continuity of $\bolF_i$, we derive that
	\begin{align}\label{error-bound-F}
		&\eta\sum_{t=1}^T\left[\frac{\alpha_t}{\gamma_t+\lambda_t}\left\|\bolF_i(\bolQ^t,\bolz_i^t)-\bolF_i(\bolQ^{t-1},\bolz_i^{t-1})\right\|_*^2\right]
		\notag
		\\
		\leq&2\eta L_2^2\sum_{t=1}^T\left[\frac{\alpha_t}{\gamma_t+\lambda_t}\|\bolz_i^t-\bolz_i^{t-1}\|^2\right]+2\eta L_1^2\sum_{t=1}^T\left[\frac{\alpha_t}{\gamma_t+\lambda_t}\|\bolQ^t-\bolQ^{t-1}\|_{\infty}^2\right].
	\end{align}
	It follows from parameter setting Eq.~\eqref{parameter-setting-1} and Cauchy-Schwarz inequality that
	\begin{align}\label{cauchy}
		\frac{\alpha_t\gamma_t}{2}\|\bolz_i^t-\bolg_i^{t-1}\|^2+\frac{\alpha_{t-1}(\gamma_{t-1}+\lambda_{t-1})}{2}\|\bolg_i^{t-1}-\bolz_i^{t-1}\|^2\geq\frac{\alpha_t\gamma_t}{4}\|\bolz_i^t-\bolz_i^{t-1}\|^2.
	\end{align}
	Combining Eq.~\eqref{parameter-setting-2} and Eq.~\eqref{cauchy}, we may therefore obtain 
	\begin{align}\label{error-bound-F-between-optimistic-hedge}
		&-\frac{1}{\eta}\sum_{t=1}^T\alpha_t\left[\frac{\gamma_t}{2}\|\bolz_i^t-\bolg_i^{t-1}\|^2+\frac{\gamma_t+\lambda_t}{4}\|\bolg_i^t-\bolz_i^t\|^2\right]+2\eta L_2^2\sum_{t=1}^T\left[\frac{\alpha_t}{\gamma_t+\lambda_t}\|\bolz_i^t-\bolz_i^{t-1}\|^2\right]\notag
        \\
        \leq&\frac{2\eta L_2^2\alpha_1}{\gamma_1+\lambda_1}\|\bolz_i^1-\bolz_i^0\|^2.
	\end{align}
	Applying Eq.~\eqref{error-bound-F} and Eq.~\eqref{error-bound-F-between-optimistic-hedge} to Eq.~\eqref{error-bound-main} and utilizing Eq.~\eqref{gap-bound}, Eq.~\eqref{Q*-optimistic-hedge}, we complete the proof.
	\end{proof}
	
    \subsubsection{Part II: Estimation of Approximation Error $\|\bolQ^t-\bolQ^*\|$}
	According to the iterately update of $\bolQ^t$, we can derive the upper bound of weighted average of $\|\bolQ^t-\bolQ^{t-1}\|_{\infty}^2$. Next, we aim to bound $\|\bolQ^t-\bolQ^*\|$ for each iteration $t$. In this section, we select the following parameter settings: 
	\begin{equation}\label{parameters-setting}
			c=2(1-\theta)^{-1},\,  \eta\leq\frac{(1-\theta)^{1/2}}{8(\gamma AL_1)^{1/2}L_2},\, \beta_t=\frac{c}{c+t},\, \alpha_t=\beta_{T,t},\,  \gamma_t=\frac{\alpha_{t-1}}{\alpha_t},\,  \lambda_t=1-\gamma_t.
	\end{equation}
	\begin{lemma}\label{Qt-Q*_infty}
		Consider the settings: $\gamma_t=\frac{\alpha_{t-1}}{\alpha_t}\leq 1, \lambda_t=1-\gamma_t$, and $\eta\leq\frac{(1-\theta)^{1/2}}{8(\gamma AL_1L_2)^{1/2}}$. Then, we obtain the estimation of $\|\bolQ^t-\bolQ^*\|$ as follows,
		\begin{align}
			\|\bolQ^t-\bolQ^*\|_{\infty}\leq\sum_{j=2}^t\beta_{t,j}^{(1+\theta)/2}H_j,
		\end{align}
		for any $t\geq 2$ where 
		\begin{align}
			H_j:=&\gamma D\left(\frac{1}{\eta}+16\eta L_2\right)\left(\beta_{j-1,1}\gamma_1+2\sum_{\kappa=1}^{j-1}\beta_{j-1,\kappa}\lambda_{\kappa}\right)\notag
            \\
            &+2\eta\gamma L_1^2(1+64\eta^2L_2^2C^2)\sum_{\kappa=1}^{j-1}\beta_{j-1,\kappa}\beta_{\kappa-1}^2+128\eta^3\gamma A^2L_2^4\beta_{j-1,1}.
		\end{align}
	\end{lemma}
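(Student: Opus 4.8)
The plan is to control the approximation error $\|\bolQ^t-\bolQ^*\|_\infty$ through a \emph{discounted} error recursion driven by the update $\bolQ^t=(1-\beta_{t-1})\bolQ^{t-1}+\beta_{t-1}\bolP(\bolQ^{t-1},\bolz_{t-1})$ together with the fixed-point identity $\bolQ^*=\bolP(\bolQ^*,\bolx^*,\boly^*)$ of Eq.~\eqref{contraction-mapping-app}. Subtracting gives
\[
\bolQ^t-\bolQ^* = (1-\beta_{t-1})(\bolQ^{t-1}-\bolQ^*) + \beta_{t-1}\big(\bolP(\bolQ^{t-1},\bolz_{t-1})-\bolQ^*\big),
\]
so the whole argument reduces to a two-sided (entrywise) estimate of the increment $\bolP(\bolQ^{t-1},\bolz_{t-1})-\bolQ^*$.

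For that increment I would peel off the $\bolQ$-dependence and the saddle structure separately. First, condition~A$_6$ of Assumption~\ref{ass-4} ($\theta$-contraction in the first argument) gives $\|\bolP(\bolQ^{t-1},\bolz_{t-1})-\bolP(\bolQ^*,\bolz_{t-1})\|_\infty\le\theta\|\bolQ^{t-1}-\bolQ^*\|_\infty$. Second, to bound $\bolP(\bolQ^*,\bolz_{t-1})-\bolQ^*$ I would use the one-sided saddle relations \eqref{min-x-app}--\eqref{max-y-app}: for the upper bound discard a coordinate via $\bolQ^*\ge\bolP(\bolQ^*,\bolx^*,\boly_{t-1})$ and apply the $\bolC_i$-bound of A$_5$, and for the lower bound discard the other coordinate via $\bolQ^*\le\bolP(\bolQ^*,\bolx_{t-1},\boly^*)$ and apply the $\bolB_i$-bound. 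This turns the increment into the instantaneous inner products $\sum_i\bolC_i\langle\bolF_i^{\bolx}(\bolQ^*,\bolz_i^{t-1}),\bolx_i^{t-1}-\bolx_i^*\rangle$ and $\sum_i\bolB_i\langle -\bolF_i^{\boly}(\bolQ^*,\bolz_i^{t-1}),\boly_i^{t-1}-\boly_i^*\rangle$, and a final use of A$_4$ (Lipschitzness of $\bolF_i$ in $\bolQ$) replaces $\bolF_i(\bolQ^*,\cdot)$ by the operators $\bolF_i(\bolQ^{t-1},\cdot)$ actually used by Algorithm~\ref{regularized-optimistic-hedge-general}, at the price of an additive $O(A L_1\|\bolQ^{t-1}-\bolQ^*\|_\infty)$ term.

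The essential point — and the reason the claimed exponent is $(1+\theta)/2$ rather than $\theta$ — is that the single-step saddle inner products are $O(1)$ and must never be bounded pointwise. Instead I would unroll the $\bolQ$-recursion fully, so that these inner products appear under the averaging weights $\beta_{j-1,\kappa}$ and can be replaced by the \emph{weighted} regularized-OMD regret; rerunning the estimate of Theorem~\ref{theorem-main-zero-sum} with $\alpha_\kappa=\beta_{j-1,\kappa}$ and collapsing the block structure through $\|\sum_i(\bolB_i+\bolC_i)\|_\infty\le\gamma$ produces exactly $H_j$, whose three summands match the Bregman term $\gamma D(\tfrac1\eta+16\eta L_2)(\beta_{j-1,1}\gamma_1+2\sum_\kappa\beta_{j-1,\kappa}\lambda_\kappa)$, the $\|\bolQ^\kappa-\bolQ^{\kappa-1}\|_\infty^2\propto\beta_{\kappa-1}^2$ movement term carrying $L_1^2$, and the initialization term $128\eta^3\gamma A^2L_2^4\beta_{j-1,1}$. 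I would then absorb the residual $O(AL_1\|\bolQ^{t-1}-\bolQ^*\|_\infty)$ Lipschitz correction into half of the $(1-\theta)$ contraction budget: the step-size choice $\eta\le(1-\theta)^{1/2}/(8(\gamma AL_1L_2)^{1/2})$ of Eq.~\eqref{parameters-setting} is calibrated precisely so that the net one-step factor becomes $1-\tfrac{1-\theta}{2}\beta_{t-1}$, and unrolling this linear recursion yields the weights $\beta_{t,j}^{(1+\theta)/2}$ multiplying the forcing terms $H_j$, with the base case $t=2$ verified directly from the $\beta_{j-1,1}$ and initialization contributions.

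The hard part will be the bookkeeping in the last step: two interacting geometric weightings must be kept consistent — the inner $\beta_{j-1,\kappa}$ averaging that produces the summable OMD regret $H_j$, and the outer $\beta_{t,j}^{(1+\theta)/2}$ discounting that propagates the error across levels — while simultaneously charging the Lipschitz-in-$\bolQ$ coupling and the $\|\bolQ^\kappa-\bolQ^{\kappa-1}\|_\infty$ terms to the correct budget, so that neither the contraction rate nor the step-size constraint is violated.
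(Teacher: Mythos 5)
Your skeleton --- unroll the $\bolQ$-update against the fixed point $\bolQ^*$, split the increment into a $\theta$-contraction part plus one-sided saddle terms via \textbf{[A$_5$]}, rerun the weighted regret machinery of Theorem~\ref{theorem-main-zero-sum} with $\alpha_\kappa=\beta_{j-1,\kappa}$ to produce $H_j$, and close a recursion with effective rate $(1+\theta)/2$ --- is indeed the paper's proof. But the order in which you peel the increment creates a genuine gap. You apply the contraction first and then the saddle bounds at $\bolQ^*$, so your inner products carry $\bolF_i(\bolQ^*,\bolz_i^{t-1})$, and you convert to the algorithm's operators $\bolF_i(\bolQ^{t-1},\cdot)$ via \textbf{[A$_4$]} at cost $O(AL_1\|\bolQ^{t-1}-\bolQ^*\|_\infty)$ \emph{inside the main decomposition}. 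After collapsing through the matrices $\bolB_i,\bolC_i$ (whose sum is bounded by $\gamma$ in $\|\cdot\|_\infty$), this injects a self-coupling term of order $\gamma AL_1\sum_{\kappa}\beta_{t-1,\kappa}\|\bolQ^\kappa-\bolQ^*\|_\infty$ whose coefficient carries \emph{no factor of $\eta$}. Your claim that the step size is ``calibrated precisely'' to absorb this into half of the $(1-\theta)$ budget therefore fails: $\gamma AL_1$ is a problem constant (in the Markov-game instantiation $\gamma=2\theta$, $A=2$, $L_1=2$, so the correction's coefficient is of order $16\theta$, which exceeds $(1-\theta)/2$ except for tiny $\theta$), and no choice of $\eta$ shrinks it. The paper avoids this entirely by peeling in the opposite order (Eq.~\eqref{Qt-up}): \textbf{[A$_5$]} is applied at $\bolQ^\kappa$, so the saddle inner products already involve $\bolF_i(\bolQ^\kappa,\bolz_i^\kappa)$ --- exactly the operators the algorithm uses --- while the contraction acts on $\bolP(\bolQ^\kappa,\bolx^*,\boly^\kappa)-\bolP(\bolQ^*,\bolx^*,\boly^\kappa)$ together with the maximizer property $\bolP(\bolQ^*,\bolx^*,\boly^\kappa)\le\bolQ^*$; no \textbf{[A$_4$]} conversion occurs at this stage.

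In the paper the Lipschitz-in-$\bolQ$ coupling enters only through a step your sketch leaves implicit: controlling the leftover movement term $2\eta\gamma L_2^2\sum_\kappa\beta_{t-1,\kappa}\|\bolz_i^\kappa-\bolz_i^{\kappa-1}\|^2$, which with weights $\beta_{t-1,\kappa}$ cannot simply be cancelled against the negative Bregman terms as in Theorem~\ref{theorem-main-zero-sum}. There a two-sided argument is required: the regret upper bound \eqref{reg-high}, which retains $-\frac{1}{8\eta}\sum_\kappa\beta_{t-1,\kappa}\|\bolz^\kappa-\bolz^{\kappa-1}\|^2$, is played against the lower bound \eqref{reg-low}, obtained from monotonicity of $\bolF_i(\bolQ^\kappa,\cdot)$, the \textbf{[A$_4$]} swap $\bolQ^\kappa\to\bolQ^*$, and nonnegativity at the weighted-average comparator. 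This bounds the movement by $16\eta AL_1\sum_\kappa\beta_{t-1,\kappa}\|\bolQ^\kappa-\bolQ^*\|_\infty$ plus forcing terms, and only after multiplication by $2\eta\gamma L_2^2$ does the self-coupling acquire the coefficient $32\eta^2\gamma AL_1L_2^2$, which the step size of Eq.~\eqref{parameters-setting} does drive below $(1-\theta)/2$. Note finally that the resulting recursion is history-weighted, $\|\bolQ^t-\bolQ^*\|_\infty\le H_t+\frac{1+\theta}{2}\sum_{\kappa=1}^{t-1}\beta_{t-1,\kappa}\|\bolQ^\kappa-\bolQ^*\|_\infty$, so the weights $\beta_{t,j}^{(1+\theta)/2}$ come from \citep[Lemma 33]{Wei2021LastiterateCO} rather than from unrolling a one-step linear recursion as you suggest; your one-step heuristic predicts the correct weights but is not itself a proof.
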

	\begin{proof}
		According to the fact that $\bolQ^*$ is a fixed point of function $\boldsymbol{P}$, we have
		\begin{align}
			\bolQ^t-\bolQ^*=&\sum_{\kappa=1}^{t-1}\beta_{t-1,\kappa}[\boldsymbol{P}(\bolQ^{\kappa},\bolz^{\kappa})-\boldsymbol{P}(\bolQ^*,\bolz^*)]\notag
			\\
			\underset{\text{(a)}}{\leq}&\sum_{\kappa=1}^{t-1}\beta_{t-1,\kappa}\left\{[\boldsymbol{P}(\bolQ^{\kappa},\bolx^{\kappa},\boly^{\kappa})-\boldsymbol{P}(\bolQ^{\kappa},\bolx^*,\boly^{\kappa})]+[\boldsymbol{P}(\bolQ^{\kappa},\bolx^*,\boly^{\kappa})-\boldsymbol{P}(\bolQ^*,\bolx^*,\boly^{\kappa})]\right\}
			\notag
			\\
			\leq&\sum_{i=1}^{d}\left(\sum_{\kappa=1}^{t-1}\beta_{t-1,\kappa}\left\langle \bolF_i^{\bolx}(\bolQ^{\kappa},\bolz_i^{\kappa}),\bolx_i^{\kappa}-\bolx_i^{*}\right\rangle\right)\bolC_i+\theta\left(\sum_{\kappa=1}^{t-1}\beta_{t-1,\kappa}\|\bolQ^{\kappa}-\bolQ^*\|_{\infty}\right)\bole_d\bole_d^{\top}.\label{Qt-up}
		\end{align}
		Where (a) is derived from the maximizer's property of $\boly^*$ for matrix-valued function $\boldsymbol{P}(\bolQ^*,\bolx^*,\cdot)$.  Similarly, we can obtain
		\begin{align}
			\bolQ^t-\bolQ^*\geq-\sum_{i=1}^d\left(\sum_{\kappa=1}^{t-1}\beta_{t-1,\kappa}\left\langle\bolF_i^{\boly}(\bolQ^{\kappa},\bolz_i^{\kappa}),\boly_i^{\kappa}-\boly_i^{*}\right\rangle\right)\bolB_i-\theta\left(\sum_{\kappa=1}^{t-1}\beta_{t-1,\kappa}\|\bolQ^{\kappa}-\bolQ^*\|_{\infty}\right)\bole_d\bole_d^{\top}.\label{Qt-low}
		\end{align}
            Hence, we derive
		\begin{align}
			\|\bolQ^t-\bolQ^*\|_{\infty}\leq&\gamma\max_{i\in[1:d]}\left\{\frac{\beta_{t-1,1}\gamma_1}{\eta}\max_{\bolz_i\in\cZ_i}\left(V\left(\bolx_i,(\bolg_i^{\bolx})^0\right)+V\left(\boly_i,(\bolg_i^{\boly})^0\right)\right)\right.\notag
            \\
            &\, \, \, \, \, \, \, \, \, \, \, \, \, \, \, \, \, \, \, \, \, \,  \left.+\frac{2\sum_{\kappa=1}^{t-1}\beta_{t-1,\kappa}\lambda_{\kappa}}{\eta}\max_{\bolz_i\in\cZ_i}\left(v(\bolx_i)+v(\boly_i)\right)\right.\notag
			\\
			&\, \, \, \, \, \, \, \, \, \, \, \, \, \, \, \, \, \, \, \, \, \, \left.+2\eta L_2^2\sum_{\kappa=1}^{t-1}\frac{\beta_{t-1,\kappa}}{\gamma_{\kappa}+\lambda_{\kappa}}\left\|\bolz_i^{\kappa}-\bolz_i^{\kappa-1}\right\|^2\right\}\notag
			\\
			&+2\eta\gamma L_1^2\sum_{\kappa=1}^{t-1}\frac{\beta_{t-1,\kappa}}{\gamma_{\kappa}+\lambda_{\kappa}}\|\bolQ^{\kappa}-\bolQ^{\kappa-1}\|_{\infty}^2+\theta\left(\sum_{\kappa=1}^{t-1}\beta_{t-1,\kappa}\|\bolQ^{\kappa}-\bolQ^*\|_{\infty}\right),
		\end{align}
		by combining $\beta_{t-1,\kappa}\prod_{j=t}^T(1-\beta_j)=\alpha_{\kappa}$ and Eq.~\eqref{Qt-up}, and using the proof technique of Theorem \ref{theorem-main-zero-sum}. Next, for any $i\in[1:d]$, we can obtain an upper bound estimation of $\max\limits_{\bolv_i\in\cZ_i}\sum_{\kappa=1}^{t-1}\beta_{t-1,\kappa}\langle \bolF_i(\bolQ^{\kappa},\bolz_i^{\kappa}),$ $\bolz_i^{\kappa}-\bolv_i\rangle$ as follows
		\begin{align}
			\max_{\bolv_i\in\cZ_i}\sum_{\kappa=1}^{t-1}\beta_{t-1,\kappa}\left\langle \bolF_i(\bolQ^{\kappa},\bolz_i^{\kappa}),\bolz_i^{\kappa}-\bolv_i\right\rangle\leq&\frac{D}{\eta}\left(\beta_{t-1,1}\gamma_1+2\sum_{\kappa=1}^{t-1}\beta_{t-1,\kappa}\lambda_{\kappa}\right)+8\eta A^2L_2^2\beta_{t-1,1}\notag
			\\
			&+8\eta L_1^2C^2\sum_{\kappa=1}^{t-1}\beta_{t-1,\kappa}\beta_{\kappa-1}^2-\frac{1}{8\eta}\sum_{\kappa=1}^{t-1}\beta_{t-1,\kappa}\left\|z_{\kappa}^k-z_{\kappa-1}^k\right\|^2.\label{reg-high}
		\end{align}
		Furthermore, we also have a lower bound estimation of it
		\begin{align}
			\max_{\bolv_i\in\cZ_i}\sum_{\kappa=1}^{t-1}\beta_{t-1,\kappa}\left\langle \bolF_i(\bolQ^{\kappa},\bolz_i^{\kappa}),\bolz_i^{\kappa}-\bolv_i\right\rangle\geq&\max_{\bolv_i\in\cZ_i}\sum_{\kappa=1}^{t-1}\beta_{t-1,\kappa}\left\langle \bolF_i(\bolQ^{\kappa},\bolv_i),\bolz_i^{\kappa}-\bolv_i\right\rangle\notag
			\\
			\geq&\max_{\bolv_i\in\cZ_i}\sum_{\kappa=1}^{t-1}\beta_{t-1,\kappa}\left\langle \bolF_i(\bolQ^{*},\bolv_i),\bolz_i^{\kappa}-\bolv_i\right\rangle\notag
			\\
			&-2AL_1\sum_{\kappa=1}^{t-1}\beta_{t-1,\kappa}\|\bolQ^{\kappa}-\bolQ^*\|_{\infty}\notag
			\\
			\geq&-2AL_1\sum_{\kappa=1}^{t-1}\beta_{t-1,\kappa}\|\bolQ^{\kappa}-\bolQ^*\|_{\infty}.\label{reg-low}
		\end{align}
		Therefore, combining Eq.~\eqref{reg-high} and Eq.~\eqref{reg-low}, we derive the following result
		\begin{align}
			\sum_{\kappa=1}^{t-1}\beta_{t-1,\kappa}\left\|\bolz_i^{\kappa}-\bolz_i^{\kappa-1}\right\|^2\leq&16\eta AL_1\sum_{\kappa=1}^{t-1}\beta_{t-1,\kappa}\|\bolQ^{\kappa}-\bolQ^*\|_{\infty}+64\eta^2A^2L_2^2\beta_{t-1,1}\notag
			\\
			&+8D\left(\beta_{t-1,1}\gamma_1+2\sum_{\kappa=1}^{t-1}\beta_{t-1,\kappa}\lambda_{\kappa}\right)+64\eta^2 C^2L_1^2\sum_{\kappa=1}^{t-1}\beta_{t-1,\kappa}\beta_{\kappa-1}^2,
		\end{align}
	for any $i\in[1:d]$.
	\begin{align}\label{Qt-Q*-new}
		\|\bolQ^t-\bolQ^*\|_{\infty}\leq&\gamma D\left(\frac{1}{\eta}+16\eta L_2^2\right)\left(\beta_{t-1,1}\gamma_1+2\sum_{\kappa=1}^{t-1}\beta_{t-1,\kappa}\lambda_{\kappa}\right)+128\eta^3\gamma A^2L_2^4\beta_{t-1,1}\notag
		\\
		&+2\eta\gamma L_1^2\left(1+64\eta^2C^2L_2^2\right)\sum_{\kappa=1}^{t-1}\beta_{t-1,\kappa}\beta_{\kappa-1}^2\notag
		\\
		&+(32\eta^2\gamma AL_1L_2^2+\theta)\sum_{\kappa=1}^{t-1}\beta_{t-1,\kappa}\|\bolQ^{\kappa}-\bolQ^*\|_{\infty}.
	\end{align}
	Finally, by applying \citep[Lemma 33]{Wei2021LastiterateCO} to Eq.~\eqref{Qt-Q*-new}, we complete the proof.
	\end{proof} 
    Under parameter settings Eq.~\eqref{parameters-setting}, the following auxiliary Lemma \ref{lemma2-auxiliary} provides both lower bound and upper bound of $\beta_{T,t}$.
    \begin{lemma}\label{lemma2-auxiliary}
		Assuming that $\beta_t=\frac{c'}{c+t}$ and $c\geq c'$, we can obtain the following result:
		\begin{align}
			\exp\left\{-\frac{(c'+c'c)^2}{2c}\right\}\frac{(c+t)^{c'-1}}{(c+T)^{c'}}\leq\beta_{T,t}\leq\frac{(1+c)(c+t+1)^{c'-1}}{(c+T+1)^{c'}},
		\end{align}
		for any $T\geq t\geq 1$.
	\end{lemma}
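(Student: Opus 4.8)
The plan is to unfold the definition $\beta_{T,t}=\beta_t\prod_{j=t+1}^T(1-\beta_j)$ and reduce the whole statement to two-sided control of the product $\Pi_{T,t}:=\prod_{j=t+1}^T\frac{c+j-c'}{c+j}$, since $\beta_t=\frac{c'}{c+t}$. Every factor is strictly positive: because $c'\le c$ and $j\ge t+1\ge 2$ we have $c+j-c'\ge j\ge 2>0$, so $\log\Pi_{T,t}=\sum_{j=t+1}^T\log\big(1-\tfrac{c'}{c+j}\big)$ is well defined and the elementary inequalities $-\tfrac{x}{1-x}\le\log(1-x)\le -x$ (valid for $x\in[0,1)$) apply term by term with $x=\tfrac{c'}{c+j}$. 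Comparing the resulting sums with integrals of the monotone functions $\tfrac{c'}{c+x}$ and $\tfrac{c'}{c+x-c'}$ then turns both bounds into powers of the relevant linear expressions.

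For the upper bound I would use $\log(1-x)\le -x$, giving $\log\Pi_{T,t}\le -\sum_{j=t+1}^T\tfrac{c'}{c+j}\le -c'\int_{t+1}^{T+1}\tfrac{dx}{c+x}=-c'\log\tfrac{c+T+1}{c+t+1}$, hence $\Pi_{T,t}\le\big(\tfrac{c+t+1}{c+T+1}\big)^{c'}$. Multiplying by $\beta_t=\tfrac{c'}{c+t}$ and comparing with the target, it remains to verify $\tfrac{c'(c+t+1)}{c+t}\le 1+c$, which follows from $c'\le c$ and $t\ge 1$ (indeed $c'(c+t+1)\le c(c+t+1)\le(1+c)(c+t)$ because $c\le c+t$). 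This yields exactly $\beta_{T,t}\le\tfrac{(1+c)(c+t+1)^{c'-1}}{(c+T+1)^{c'}}$.

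For the lower bound I would use $\log(1-x)\ge-\tfrac{x}{1-x}$, so that $\log\Pi_{T,t}\ge -\sum_{j=t+1}^T\tfrac{c'}{c+j-c'}\ge -c'\int_{t}^{T}\tfrac{dx}{c+x-c'}=-c'\log\tfrac{c+T-c'}{c+t-c'}$, i.e. $\Pi_{T,t}\ge\big(\tfrac{c+t-c'}{c+T-c'}\big)^{c'}$ and $\beta_{T,t}\ge\tfrac{c'}{c+t}\big(\tfrac{c+t-c'}{c+T-c'}\big)^{c'}$. \textbf{The remaining work, and the main obstacle, is to absorb the discrepancy between the shifted denominators $c+T-c',\,c+t-c'$ and the clean quantities $c+T,\,c+t$ of the target into the single constant $\exp\{-\tfrac{c'^2(1+c)^2}{2c}\}$.} Concretely, I would factor $\big(\tfrac{c+t-c'}{c+T-c'}\big)^{c'}=\big(\tfrac{c+t}{c+T}\big)^{c'}\big(1-\tfrac{c'}{c+t}\big)^{c'}\big(1-\tfrac{c'}{c+T}\big)^{-c'}$, discard the last factor (which is $\ge 1$), and lower-bound the leftover $c'\big(1-\tfrac{c'}{c+t}\big)^{c'}$ by $\exp\{-\tfrac{c'^2(1+c)^2}{2c}\}$ using $\log(1-x)\ge-\tfrac{x}{1-x}$ once more together with $c+t-c'\ge t\ge 1$; the crude inequality $(1+c)^2/(2c)\ge 1$ supplies the slack needed to cover the prefactor $c'$ (in the application $c'=c\ge 2$, so $c'\ge 1$ only helps). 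Assembling the two estimates delivers both inequalities simultaneously. Throughout, the delicate points are keeping each monotone-function/integral comparison on the correct side and checking that the lower-order corrections, namely the quadratic tail $\sum\big(\tfrac{c'}{c+j}\big)^2$, equivalently the gap between $\tfrac{c'}{c+j}$ and $\tfrac{c'}{c+j-c'}$, remain within the budget fixed by the stated constant.
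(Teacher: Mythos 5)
Your proof is correct in the regime where the lemma is actually invoked, and your upper bound is essentially verbatim the paper's: both use $\log(1-x)\le -x$, the integral comparison $\sum_{j=t+1}^T\frac{1}{c+j}\ge\int_{t+1}^{T+1}\frac{dx}{c+x}$, and the final check $c'(c+t+1)\le(1+c)(c+t)$. Your lower bound, however, takes a genuinely different route. The paper (via its Lemma \ref{lemma1-auxiliary}) splits each term as $\log\bigl(1-\tfrac{c'}{c+k}\bigr)\ge-\tfrac{c'}{c+k}-\tfrac{(c'+cc')^2}{2}\tfrac{1}{(c+k)^2}$, so the constant $\exp\{-\tfrac{(c'+c'c)^2}{2c}\}$ arises by summing the quadratic tail along the whole product, $\sum_k(c+k)^{-2}\le 1/c$; you instead use the single bound $\log(1-x)\ge-\tfrac{x}{1-x}$, compare with $\int\tfrac{dx}{c+x-c'}$, and repair the shifted denominators through the factorization $\bigl(\tfrac{c+t-c'}{c+T-c'}\bigr)^{c'}=\bigl(\tfrac{c+t}{c+T}\bigr)^{c'}\bigl(1-\tfrac{c'}{c+t}\bigr)^{c'}\bigl(1-\tfrac{c'}{c+T}\bigr)^{-c'}$, discarding the favorable $T$-endpoint factor and absorbing the $t$-endpoint factor into the stated constant via $\bigl(1-\tfrac{c'}{c+t}\bigr)^{c'}\ge e^{-c'^2/(c+t-c')}\ge e^{-c'^2}$ and $(1+c)^2/(2c)\ge 1$. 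So the entire correction is localized at the endpoint rather than accumulated term by term — a slightly cruder but self-contained mechanism that avoids the paper's separate auxiliary lemma. One caveat, which your parenthetical half-acknowledges: your final step genuinely needs $c'\ge 1$ to dispose of the prefactor $c'$, since for $c'<1$ the slack from $(1+c)^2/(2c)$ does not cover $\log c'$ (e.g. $c=1$, $c'=1/2$ gives $\log c'+c'^2(\tfrac{(1+c)^2}{2c}-1)<0$). This is not a defect relative to the paper: the paper's own proof silently drops the same factor $c'$ after applying Lemma \ref{lemma1-auxiliary}, and the lemma as stated is in fact false for small $c'$ (take $t=T$: it would require $c'\ge\exp\{-c'^2(1+c)^2/(2c)\}$, which fails as $c'\to 0$); in the only application one has $c'=c=2(1-\theta)^{-1}\ge 2$, so both arguments are sound there.
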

	\begin{proof}
		Recalling that
		\begin{align}
			\beta_{T,t}=\frac{c'}{c+t}\prod_{k=t+1}^{T}\left(1-\frac{c'}{c+k}\right)=\frac{c'}{c+t}\exp\left\{\sum_{k=t+1}^{T}\log\left(1-\frac{c'}{c+k}\right)\right\},
		\end{align}
		we have 
		\begin{align}
			\beta_{T,t}\leq\frac{c'}{c+t}\left(\frac{c+t+1}{c+T+1}\right)^{c'}\leq \frac{(1+c)(c+t+1)^{c'-1}}{(c+T+1)^{c'}},
		\end{align}
		and
		\begin{align}
			\beta_{T,t}\geq&\exp\left\{-\frac{(c'+c'c)^2}{2c}\right\}\frac{(c+t)^{c'-1}}{(c+T)^{c'}},
		\end{align}
		by combining the result of Lemma \ref{lemma1-auxiliary}.
	\end{proof}
	\begin{corollary}\label{corollary-beta-theta}
		Assuming that $\beta_t=\frac{c'}{c+t}$, $c\geq 1$ and $c'(1-\theta)\geq 1$, we can obtain
		\begin{align}
			\beta_{T,t}^{\theta}\leq\frac{c'}{c+T},
		\end{align}
		for any $T\geq t\geq1$.
	\end{corollary}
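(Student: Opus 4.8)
The plan is to unfold the recursive definition of $\beta_{T,t}^{\theta}$ into an explicit product, substitute $\beta_j = c'/(c+j)$, and then collapse the product by a telescoping estimate. Writing out the definition gives
\[
\beta_{T,t}^{\theta} = \beta_t \prod_{j=t}^{T-1}\bigl(1-(1-\theta)\beta_j\bigr) = \frac{c'}{c+t}\prod_{j=t}^{T-1}\Bigl(1-\frac{(1-\theta)c'}{c+j}\Bigr),
\]
so it is natural to abbreviate $c'' := (1-\theta)c'$, which satisfies $c'' \geq 1$ by hypothesis. Each factor then reads $(c+j-c'')/(c+j)$, and since $c'' \geq 1$ and $c+j > 0$ we have the clean per-factor bound $1-c''/(c+j) \leq 1-1/(c+j) = (c+j-1)/(c+j)$.

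First I would record that, in the parameter regime of interest (and in particular in the paper's application, where the exponent is $(1+\theta)/2$ and one computes $c''=1$ exactly), every factor $1-(1-\theta)\beta_j$ lies in $[0,1)$, so replacing each factor by its larger counterpart $(c+j-1)/(c+j)$ only enlarges the product. Multiplying these bounds telescopes:
\[
\prod_{j=t}^{T-1}\frac{c+j-1}{c+j} = \frac{(c+t-1)(c+t)\cdots(c+T-2)}{(c+t)(c+t+1)\cdots(c+T-1)} = \frac{c+t-1}{c+T-1}.
\]
Combining with the prefactor $c'/(c+t)$ yields $\beta_{T,t}^{\theta} \leq \frac{c'(c+t-1)}{(c+t)(c+T-1)}$.

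The final step is the elementary comparison $\frac{c'(c+t-1)}{(c+t)(c+T-1)} \leq \frac{c'}{c+T}$. Clearing denominators (all positive, using $c\geq 1$ and $t\geq 1$), this is equivalent to $(c+t-1)(c+T) \leq (c+t)(c+T-1)$, and expanding both sides leaves the single inequality $c+t \leq c+T$, i.e. $t \leq T$, which holds by assumption. The boundary case $T=t$ is covered directly, since the product is then empty and $\beta_{t,t}^{\theta}=\beta_t = c'/(c+T)$, attaining equality.

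I expect the only genuine subtlety to be the sign bookkeeping in the product estimate: bounding a finite product term-by-term by larger terms requires the factors to be nonnegative, which is why the hypotheses $c\geq 1$ and $c''\geq 1$ (with $c''$ not exceeding $c+1$, as holds in the application) matter; everything else is routine telescoping and a one-line cross-multiplication. An equivalent route that sidesteps writing the full product is a short induction on $T$ using the recursion $\beta_{T+1,t}^{\theta} = \beta_{T,t}^{\theta}\bigl(1-(1-\theta)\beta_T\bigr)$ together with the identity $(c+T-1)(c+T+1)\leq (c+T)^2$, but I would keep the telescoping version as the cleaner presentation.
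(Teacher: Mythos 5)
Your proof is correct, and it takes a genuinely more elementary route than the paper. The paper states this corollary without any explicit proof, placing it right after Lemma \ref{lemma2-auxiliary}; the intended derivation is clearly the same logarithmic machinery used there, namely writing $\beta_{T,t}^{\theta}=\frac{c'}{c+t}\exp\{\sum_{j=t}^{T-1}\log(1-\tfrac{c''}{c+j})\}$ with $c''=c'(1-\theta)$, invoking the integral-comparison upper bound of Lemma \ref{lemma1-auxiliary} to get $\beta_{T,t}^{\theta}\leq\frac{c'}{c+t}\bigl(\frac{c+t}{c+T}\bigr)^{c''}$, and then using $c''\geq 1$ together with $\frac{c+t}{c+T}\leq 1$. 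Your term-by-term comparison $1-\frac{c''}{c+j}\leq\frac{c+j-1}{c+j}$ followed by telescoping reaches the same endpoint without logarithms or integrals, and in fact yields the slightly sharper intermediate bound $\frac{c'(c+t-1)}{(c+t)(c+T-1)}$. Your caveat about factor nonnegativity is not cosmetic: the corollary's stated hypotheses do not force $c''\leq c+t$, and without such a condition the claim is literally false --- take $c=1$, $c'=10$, $\theta=0$, $t=1$, $T=3$, so that $\beta_{3,1}^{0}=5\cdot(-4)\cdot(-\tfrac{7}{3})=\tfrac{140}{3}>\tfrac{10}{4}$ --- while the paper's log-based route needs the same implicit condition for the logarithms to be defined (Lemma \ref{lemma1-auxiliary} assumes $c\geq c'$); in the one place the corollary is applied one has $c''=1$ exactly, so both arguments go through there. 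One small mismatch worth noting: for $t=T$ you use the natural empty-product convention $\beta_{t,t}^{\theta}=\beta_t$, giving equality, whereas the paper's notation section decrees $\beta_{T,T}^{\theta}=1$, under which the corollary would fail at $t=T$ whenever $c'<c+T$; your reading is the one that makes the statement true, so this discrepancy is a defect of the paper's convention rather than of your proof.
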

	By utilizing the result of Lemma \ref{lemma2-auxiliary}, we notice that
	\begin{align}
		\sum_{\kappa=1}^{j-1}\beta_{j-1,\kappa}\lambda_{\kappa}\leq&\sum_{\kappa=1}^{j-1}\frac{(1+c)^2(c+\kappa+1)^{c-2}}{(c+j)^c}\notag
		\\
		\underset{\text{(a)}}{\leq}&\frac{(1+c)^2}{(c+j)^c}\int_{1}^{j-1}(c+x+1)^{c-2}\mathrm{d}x+\frac{(1+c)^2}{(c+j)^2}\notag
		\\
		\leq&\frac{(1+c)^2}{(c-1)(c+j)}+\frac{(1+c)^2}{(c+j)^2},
	\end{align}
	and
	\begin{align}
		\sum_{\kappa=1}^{j-1}\beta_{j-1,\kappa}\beta_{\kappa-1}^2\leq&\sum_{\kappa=1}^{j-1}\frac{(1+c)^4(c+\kappa+1)^{c-3}}{c(c+j)^c}\notag
		\\
		\underset{\text{(b)}}{\leq}&\frac{(1+c)^3}{c(c+j)^c}\int_{1}^{j-1}(c+x+1)^{c-2}\mathrm{d}x+\frac{(1+c)^4}{c(c+j)^3}\notag
		\\
		\leq&\frac{(1+c)^3}{c(c-1)(c+j)}+\frac{(1+c)^4}{c(c+j)^3},
	\end{align}
	where (a) and (b) are derived from the fact that $\sum_{\kappa=1}^{j-2}(c+\kappa+1)^{c-2}\leq\int_{1}^{j-1}(c+x+1)^{c-2}\mathrm{d}x$. Next, we have
	\begin{align}\label{Qt-Qt-1}
		H_j\leq&\gamma D\left(\frac{1}{\eta}+16\eta L_2\right)\left(\frac{2(c+2)^{c-1}}{(c+j)^c}+\frac{2(1+c)^2}{(c-1)(c+j)}+\frac{2(1+c)^2}{(c+j)^2}\right)\notag
		\\
		&+2\eta\gamma L_1^2(1+64\eta^2L_2^2C^2)\left(\frac{(1+c)^3}{c(c-1)(c+j)}+\frac{(1+c)^4}{c(c+j)^3}\right)\notag
        \\
        &+128\eta^3\gamma A^2L_2^4\frac{(c+2)^c}{(c+j)^c},
	\end{align}
	and
	\begin{align}\label{re-bound-Qt-Q*-0}
		\sum_{j=2}^{t}\beta_{t,j}^{(1+\theta)/2}H_j\underset{\text{c}}{\leq}&\frac{c}{c+t}\left\{\gamma D\left(\frac{1}{\eta}+16\eta L_2\right)\left[\int_{2}^t\frac{2(c+2)^{c-1}}{(c+x)^c}\notag\mathrm{d}x\right.\right.\notag
        \\
        &\, \, \, \, \, \, \, \, \, \, \, \, \, \, \, \, \, \left.\left.+\int_{1}^t\left(\frac{2(1+c)^2}{(c-1)(c+x)}+\frac{2(1+c)^2}{(c+x)^2}\right)\mathrm{d}x+1\right]\right.\notag
            \\
		&\, \, \, \, \, \, \, \, \, \, \, \, \, \, \, \, \, \left.+2\eta\gamma L_1^2(1+64\eta^2L_2^2C^2)\int_{1}^t\left(\frac{(1+c)^3}{c(c-1)(c+x)}+\frac{(1+c)^4}{c(c+x)^3}\right)\mathrm{d}x\right\}\notag
            \\
		&\, \, \, \, \, \, \, \, \, \, \, \, \, \, \, \, \, +128\eta^3\gamma A^2L_2^4\left[1+\int_{2}^t\frac{(c+2)^c}{(c+x)^c}\mathrm{d}x\right]\notag
		\\
		\leq&\frac{c}{c+t}\left\{\gamma D\left(\frac{1}{\eta}+16\eta L_2\right)\left[\frac{2(c+1)^2}{c-1}\log(c+t)+5+2c\right]+640\eta^3\gamma A^2L_2^4\right.\notag
		\\
		&\, \, \, \, \, \, \, \, \, \, \, \, \, \, \, \, \, \, +\left.2\eta\gamma L_1^2(1+64\eta^2L_2^2C^2)\left[\frac{2(c+1)^2}{c-1}\log(c+t)+\frac{(c+1)^2}{2c}\right]\right\},
	\end{align}
	where (c) follows from Corollary \ref{corollary-beta-theta}. For simplicity, we denote
	\begin{align}
		Y_{T}^\eta:=8(c+1)\left[\gamma D\left(\frac{1}{\eta}+16\eta L_2\right)+40\eta^3\gamma A^2L_2^4+2\eta\gamma L_1^2(1+64\eta^2L_2^2C^2)\right](\log(c+T)+1).
	\end{align}
        Therefore, it follows from Eq.~\eqref{re-bound-Qt-Q*-0} and Lemma \ref{Qt-Q*_infty} that 
        \begin{align}\label{re-bound-Qt-Q*}
            \|\bolQ^t-\bolQ^*\|_{\infty}\leq\sum_{j=2}^{t}\beta_{t,j}^{(1+\theta)/2}H_j\leq\frac{c}{c+t}Y_t^{\eta}.
        \end{align}

    \subsubsection{The Last Step}
    According to Eq.~\eqref{re-bound-Qt-Q*} and the initial $\bolQ^0$ satisfies $\|\bolQ^0\|_{\infty}\leq C$, we have $\|\bolQ^*\|\leq C$. 
    We are ready to complete the proof of Theorem \ref{main-thm-minimax-general}. 
    
	\begin{proof}[Proof of Theorem \ref{main-thm-minimax-general}]
		It is noteworthy that the hyper-parameters selected in Eq.~\eqref{parameters-setting} satisfies the preconditions of Theorem \ref{theorem-main-zero-sum}. Combining the conclusion of Theorem \ref{theorem-main-zero-sum}, Eq.~\eqref{re-bound-Qt-Q*} and the estimation of $\alpha_t$ (i.e. $\beta_{T,t}$) in Lemma \ref{lemma2-auxiliary}, we obtain:
		\begin{align}
			\cG_f(\bolx,\boly)\leq&B\max_{i\in[1:d]}\left\{\frac{\alpha_1\gamma_1}{\eta}\max_{\bolz_i\in\cZ_i}\left(V\left(\bolx_i,(\bolg_i^{\bolx})^0\right)+V\left(\boly_i,(\bolg_i^{\boly})^0\right)\right)
			\right.\notag
			\\
			&\, \, \, \, \, \, \, \, \, \, \, \, \, \, \, \, \, \, \, \left.+\frac{2\sum_{t=1}^T\alpha_t\lambda_t}{\eta}\max_{\bolz_i\in\cZ_i}\left(v(\bolx_i)+v(\boly_i)\right)\right\}+2\eta BL_1^2\sum_{t=1}^T\alpha_t\beta_{t-1}^2\notag
			\\
			&+2ABcL_1Y_T^{\eta}\sum_{t=1}^T\frac{\alpha_t}{c+t}+8\eta A^2BL_2^2\alpha_1\notag
			\\
			\underset{\text{a}}{\leq}&\frac{2BD}{\eta}\left(\frac{(c+2)^{c-1}}{(c+T+1)^c}+\frac{c(c+2)}{(c+T+1)^2}+\frac{2(c+1)}{c+T+1}\right)\notag
			\\
			&+2\eta BL_1^2\left(\frac{(c+1)^3}{c(c-1)(c+T+1)}+\frac{(c+1)^4}{c(c+T+1)^3}\right)\notag
			\\
			&+2ABL_1Y_T^{\eta}\left(\frac{4c}{c+T+1}+\frac{c(c+2)}{(c+T+1)^2}\right)+8\eta A^2BL_2^2\frac{(1+c)(c+2)^{c-1}}{(c+T+1)^c}
			\notag
			\\
			\leq&2B\left(\frac{3D}{\eta}+10\eta L_1^2+5AL_1 Y_T^{\eta}+4\eta A^2L_2^2\right)\frac{c+1}{c+T+1},
		\end{align}
		when $T\geq1$, where $B$ denotes $\max_{\bolz\in\cZ}\sum_{i=1}^d\psi_i(\bolz)$ and (a) is derived from parameter settings Eq.~\eqref{parameters-setting} and the result of Lemma \ref{lemma2-auxiliary}. 
	\end{proof}
	
    \subsection{Application to Minimax Problems}

    \subsubsection{Infinite Horizon Two-Player Zero-Sum Markov Games}
    To simplify notations, in the following discussion, we write $\cS=\{s_i\}_{i=1}^{|\cS|}$, and denote by $\bolQ^{\bolz}=\left(\bolQ^{\bolz}(s_1,\cdot,\cdot),\cdots,\bolQ^{\bolz}(s_{|\cS|},\cdot,\cdot)\right)$ the joint action-value matrix where $\bolQ^{\bolz}(s_i,\cdot,\cdot)\in\bR^{|\cA|\times|\cB|}$ is an action-value matrix on state $s_i$. According to the connection between value function and action-value function 
    $$
    V^{\bolz}(s_i)=\bE_{a\sim\bolx(\cdot|s_i)\atop
    b\sim\boly(\cdot|s_i)}[Q^{\bolz}(s_i,a,b)],\, \, Q^{\bolz}(s_i,a,b)=(1-\theta)\sigma(s_i,a,b)+\theta\bE_{s_{i'}\sim \mathbb{P}(\cdot|s_i,a,b)}[V^{\bolz}(s_{i'})],
    $$
    we provide the following proof for Proposition \ref{two-player-Markov-game-GQCC}.
    \begin{proof}[Proof of Proposition \ref{two-player-Markov-game-GQCC}]
    By defining 
    \begin{align}
    [\bolP_i(\bolQ,\bolz)]_{a,b}:=(1-\theta)\sigma(s_i,a,b)+\theta\bE_{s_{i'}\sim \mathbb{P}(\cdot|s_i,a,b)}\left[\langle \bolQ_{i'} \boly_{i'},\bolx_{i'}\rangle\right],
    \end{align}
    we derive that $\bolQ^{\bolz}=\bolP(\bolQ^{\bolz},\bolz)$. We can notice that
    \begin{equation}\label{Markov-upper-lower}
    \begin{split}
    [\bolP_i(\bolQ,\bolx,\boly)-\bolP_i(\bolQ,\bolx',\boly)]_{a,b}=&\theta\bE_{s_{i'}\sim \mathbb{P}(\cdot|s_i,a,b)}\left[\langle \bolQ_{i'} \boly_{i'},\bolx_{i'}-(\bolx')_{i'}\rangle\right],
    \\
    [\bolP_i(\bolQ,\bolx,\boly')-\bolP_i(\bolQ,\bolx,\boly)]_{a,b}=&\theta\bE_{s_{i'}\sim \mathbb{P}(\cdot|s_i,a,b)}\left[\langle -\bolQ_{i'}^{\top}\bolx_{i'},\boly_{i'}-(\boly')_{i'}\rangle\right].
    \end{split}
    \end{equation}
    Therefore, for any $\bolQ$ satisfies $\|\bolQ\|_{\infty}\leq1$, it's easy to verify that 
    \begin{enumerate}
        \item $\bolF_i(\cdot,\bolz_i)$ is uniformly 2-Lipschitz continuous with respect to $\|\cdot\|_{\infty}$ under $\|\cdot\|_{\infty}$ for any $\bolz_i\in\cZ_i$, and $\bolF_i(\bolQ,\cdot)$ is uniformly 1-Lipschitz continuous with respect to $\|\cdot\|_{\infty}$ under $\|\cdot\|_1$, since $\bolF(\bolQ,\bolz_i)=\left(\boly_i^{\top}\bolQ_i^{\top},-\bolx_i^{\top}\bolQ_i\right)^{\top}$,
        \item $\bolP$ satisfies \textbf{[A$_\text{2}$]} in Assumptions \ref{ass-2} with $[\bolB_i]_{s,a,b}=[\bolC_i]_{s,a,b}=\theta\mathbb{P}(s_i|s,a,b)$ and $\gamma=2\theta$, since Eq.~\eqref{Markov-upper-lower},
        \item $\bolP(\cdot,\bolz)$ is a $\theta$-contraction mapping under $\|\cdot\|_{\infty}$, and $\|\bolP(\cdot,\cdot)\|_{\infty}\leq1$, since the definition of $\bolP$,
        \item $[\bolP_i(\bolQ,\cdot,\cdot)]_{a,b}$ is bi-linear with respect to $\bolx$ and $\boly$, and $\frac{\min\{[\bolC_{i}]_{s,a,b},[\bolB_{i}]_{s,a,b}\}}{[\bolC_{i}]_{s,a,b}+[\bolB_{i}]_{s,a,b}}\equiv1/2$ for any $i$ and $s,a,b$.
    \end{enumerate}
    Therefore, according to Lemma \ref{function-class-minimax-remark}, there exist a tensor $\bolQ^*$ and a pair of $(\bolx^*,\boly^*)$ satisfy Eq.~\eqref{contraction-mapping}. Furthermore, the $(\bolx^*,\boly^*)$ mentioned above is a Nash equilibrium of $J^{\bolx,\boly}(\bolrho_0)$ by utilizing Corollary \ref{corollary-saddle-point}. We may therefore derive that $\bolQ^*\equiv\bolQ^{\bolz^*}$. Leveraging Eq.~\eqref{linear-RL} for any Nash equilibrium $(\bolx^*,\boly^*)\in\cZ$ and denoting $\bolQ_i^{*}=\bolQ^{\bolx^*,\boly^*}(s_i,\cdot,\cdot)$, we have
        \begin{align}
            J^{\bolx^*,\boly^*}(\bolrho_0)-J^{\bolx^*(\boly),\boly}(\bolrho_0)=&\sum_{s\in\cS}\mbold_{\bolrho_0}^{\bolx^*(\boly),\boly}(s)\left[\langle \bolQ^{\bolx^*,\boly^*}(s,\cdot,\cdot)\boly^*(\cdot|s), \bolx^*(\cdot|s)\rangle\right.
            \\
            &\, \, \, \, \, \, \, \, \, \, \, \, \, \, \, \, \, \, \, \, \, \, \, \, \, \, \, \, \, \, \, \, \, \, \, \, \, \, \, \, \, \, \, \left.-\langle \bolQ^{\bolx^*,\boly^*}(s,\cdot,\cdot)\boly(\cdot|s),\bolx^*(\boly)(\cdot|s)\rangle\right]\notag
            \\
            \leq&\sum_{i=1}^{|\cS|}\mbold_{\bolrho_0}^{\bolx^*(\boly),\boly}(s_i)\left[\langle \bolQ_i^*\boly_i^*, \bolx_i^*\rangle-\min\limits_{\bolu_i\in\cX_i}\langle \bolQ_i^*\boly_i,\bolu_i\rangle\right],
        \end{align}
        where $\bolx^*(\boly)=\argmin\limits_{\bolu\in\cX}J^{\bolu,\boly}(\bolrho_0)$ and $\boly^*(\bolx)=\argmax\limits_{\bolw\in\cY}J^{\bolx,\bolw}(\bolrho_0)$. Similarly, we have
        \begin{align}
            J^{\bolx,\boly^*(\bolx)}(\bolrho_0)-J^{\bolx^*,\boly^*}(\bolrho_0)\leq\sum_{i=1}^{|\cS|}\mbold_{\bolrho_0}^{\bolx,\boly^*(\bolx)}(s_i)\left[\max\limits_{\bolw_i\in\cY_i}\langle(\bolQ_i^*)^{\top}\bolx_i,\bolw_i\rangle-\langle(\bolQ_i^*)^{\top}\bolx_i^*,\boly_i^*\rangle\right].
        \end{align}
        By setting $\psi_i(\bolz):=\max\{\mbold_{\bolrho_0}^{\bolx,\boly^*(\bolx)}(s_i),\mbold_{\bolrho_0}^{\bolx^*(\boly),\boly}(s_i)\}$ and combining the facts that $f_i(\bolQ^{*},$ $\bolx_i^{*},\boly_i^{*})-\min\limits_{\bolu_i\in\cX_i}f_i(\bolQ^{*},\bolu_i,\boly_i)\geq0$ and $\max\limits_{\bolw_i\in\cY_i}f_i(\bolQ^{*},\bolx_i,\bolw_i)-f_i(\bolQ^{*},\bolx_i^{*},\boly_i^{*})\geq0$ derived from Corollary \ref{corollary-saddle-point}, we have
        \begin{align}
            J^{\bolx,\boly^*(\bolx)}(\bolrho_0)-J^{\bolx^*(\boly),\boly}(\bolrho_0)\leq\sum_{i=1}^d\psi_i(\bolz)\left[\max\limits_{\bolw_i\in\cY_i}f_i(\bolQ^{*},\bolx_i,\bolw_i)-\min\limits_{\bolu_i\in\cX_i}f_i(\bolQ^{*},\bolu_i,\boly_i)\right].
        \end{align}
    \end{proof}

    \subsubsection{Convex-Concave Minimax Problems}\label{section-convex-concave-minimax}

    In this section, we consider convex-concave minimax problem over compact concave region $\cZ=\cX\times\cY\subset\bR^{\sum_{i=1}^d n_i}\times\bR^{\sum_{i=1}^d m_i}$ which satisfies Assumption \ref{ass-3} with divergence-generating function $v$. The standard convex-concave minimax problem is formulated as follows:
    \begin{align}
        \min\limits_{\bolx\in\cX}\max\limits_{\boly\in\cY}f(\bolx,\boly),
    \end{align}
    where $f$ is convex with respect to $\bolx$ and concave with respect to $\boly$. Therefore, we obtain that
    \begin{align}\label{convex-concave-minimax}
        f(\bolx,\boly^*(\bolx))-f(\bolx^*(\boly),\boly)
        \leq\llangle\nabla_{\bolx}f(\bolz),\bolx-\bolx^*(\boly)\rrangle+\llangle -\nabla_{\boly}f(\bolz),\boly-\boly^*(\bolx)\rrangle,
    \end{align}
    for any $\bolz=(\bolx,\boly)\in\cZ$. We may therefore derive that $f$ satisfies GQCC condition with $g(\bolz)\equiv 1$ and $f(\bolP(\bolz),\bolz)=f(\bolz)$. Furthermore, assuming $\nabla f$ is $L$-lipschitz continuous (i.e., $\|\nabla f(\bolz)-\nabla f(\bolv)\|_{*}\leq L\|\bolz-\bolv\|$ for any $\bolz,\bolv\in\cZ$) and choosing $\bolP\equiv\mathbf{0}$, then verifying that $f$ satisfies the preconditions of general version of Theorem \ref{main-thm-minimax-general} is reduced to verifying that $f$ satisfies (1) in Assumption \ref{ass-4}. Since $\bolF=\nabla f$ only depends on variable $\bolz$, it is evident that $f$ satisfies (1) in Assumption \ref{ass-4} when $\nabla f$ is L-Lipschitz. Therefore, under the smoothness condition of $f$, Theorem \ref{main-thm-minimax-general} implies that $\cO(\epsilon^{-1})$ iterations Algorithm \ref{regularized-optimistic-hedge-general} needs to find an $\epsilon$-approximate Nash equilibrium of $f$ matches the lower bounds of $\Omega(\epsilon^{-1})$ \citep{Ouyang2018LowerCB} for the number of iterations that any deterministic first-order method requires to find an $\epsilon$-approximate Nash equilibrium of a smooth convex-concave function.

\section{Auxiliary Lemma}
\begin{lemma}\label{setting-of-Gamma}
    For $\Gamma\geq 17$, the function $g(\Gamma)$ can be bounded by $\frac{80640}{\Gamma-1}+\frac{2}{\Gamma e^{-2}-1}$. Let $g(\Gamma)$ be defined as $\sum_{k=1}^{\infty}\Gamma^{-k}[k^7+(k+1)\exp\{2k\}]$.
\end{lemma}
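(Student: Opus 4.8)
The plan is to split $g(\Gamma)=P(\Gamma)+E(\Gamma)$ into its polynomial and exponential parts,
\[
P(\Gamma)=\sum_{k=1}^\infty k^7\Gamma^{-k},\qquad E(\Gamma)=\sum_{k=1}^\infty (k+1)e^{2k}\Gamma^{-k},
\]
and to bound each piece, exploiting that for $\Gamma\ge 17$ both $e/\Gamma<1$ and $e^2/\Gamma<1$, so all the relevant geometric series converge. Noting that $80640=16\cdot 7!$ and $\tfrac{1}{\Gamma-1}=\sum_{k\ge1}\Gamma^{-k}$, the target $\tfrac{80640}{\Gamma-1}$ is enormously loose; the strategy is therefore to match $E$ almost exactly against $\tfrac{2}{\Gamma e^{-2}-1}$ and to route every leftover term into the generous polynomial bound.

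First I would compute $E$ in closed form. Writing $y=e^2/\Gamma\in(0,1)$ and differentiating the geometric series gives $\sum_{k\ge1}(k+1)y^k=\tfrac{y(2-y)}{(1-y)^2}$; since $\tfrac{1}{\Gamma e^{-2}-1}=\tfrac{y}{1-y}$ and $\tfrac{1}{(\Gamma e^{-2}-1)^2}=\tfrac{y^2}{(1-y)^2}$, this rearranges to the exact identity
\[
E(\Gamma)=\frac{2}{\Gamma e^{-2}-1}+\frac{1}{(\Gamma e^{-2}-1)^2}.
\]
Thus $E$ exceeds the claimed second term only by the small quadratic correction $\tfrac{1}{(\Gamma e^{-2}-1)^2}$, which must be absorbed into the polynomial bound.

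For the polynomial part I would use $k^7\le 7!\,e^k$ (immediate from $e^k=\sum_j k^j/j!\ge k^7/7!$), which yields
\[
P(\Gamma)\le 7!\sum_{k\ge1}(e/\Gamma)^k=\frac{7!\,e}{\Gamma-e}\le\frac{4\cdot 7!}{\Gamma-1},
\]
the last step being the linear inequality $e(\Gamma-1)\le 4(\Gamma-e)$, valid for $\Gamma\ge 17$ because $e(\Gamma-1)/(\Gamma-e)$ is decreasing in $\Gamma$ with value about $3.05$ at $\Gamma=17$. For the correction, since $\Gamma\ge 17\ge 2e^2$ one has $\Gamma e^{-2}-1\ge \Gamma/(2e^2)$, hence $\tfrac{1}{(\Gamma e^{-2}-1)^2}\le 4e^4/\Gamma^2\le \tfrac{7!}{\Gamma-1}$. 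Adding these two estimates gives $P(\Gamma)+\tfrac{1}{(\Gamma e^{-2}-1)^2}\le \tfrac{5\cdot 7!}{\Gamma-1}=\tfrac{25200}{\Gamma-1}\le\tfrac{80640}{\Gamma-1}$, and combining with the closed form of $E$ delivers the claim.

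The main obstacle is conceptual rather than computational: the natural attempt to match the two summands of $g$ one-to-one against the two summands of the bound fails, because $E(\Gamma)\le \tfrac{2}{\Gamma e^{-2}-1}$ is in fact \emph{false} (it misses precisely the $\tfrac{1}{(\Gamma e^{-2}-1)^2}$ term). The key realization is that the stated inequality nonetheless holds with a huge margin, so one should compute $E$ exactly and dump its quadratic remainder, together with the whole of $P$, into the deliberately slack bound $\tfrac{80640}{\Gamma-1}$. Once this routing is fixed, every remaining estimate reduces to an elementary geometric-series evaluation and a single linear comparison in $\Gamma$.
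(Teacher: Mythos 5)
Your proof is correct, and it takes a genuinely different route from the paper's. The paper bounds $k^7\le (k+7)!/k!$ and recognizes both series as derivatives of geometric series, writing $g(\Gamma)\le \frac{\mathrm{d}^7}{\mathrm{d}\alpha^7}\bigl(\frac{\alpha^8}{1-\alpha}\bigr)\big|_{\alpha=\Gamma^{-1}}+\frac{\mathrm{d}}{\mathrm{d}\alpha}\bigl(\frac{\alpha^2}{1-\alpha}\bigr)\big|_{\alpha=e^2\Gamma^{-1}}$; it then expands the seventh derivative exactly as $7!\bigl[(1+\frac{\alpha}{1-\alpha})^8-1\bigr]$, dominates it by $7!\bigl[\exp\{8\alpha/(1-\alpha)\}-1\bigr]$, and applies $e^x-1\le 2x$ — valid since $8\alpha/(1-\alpha)\le 1/2$ iff $\Gamma\ge 17$, which is exactly where both the threshold $17$ and the constant $80640=2\cdot 8\cdot 7!$ come from. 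For the exponential series, however, the paper's step (a) implicitly asserts $\sum_{k\ge1}(k+1)(e^2/\Gamma)^k\le \frac{2}{\Gamma e^{-2}-1}$, and your exact computation shows this is false as written: the closed form is $\frac{2}{\Gamma e^{-2}-1}+\frac{1}{(\Gamma e^{-2}-1)^2}$, exceeding the claimed bound by precisely the quadratic term. Your proof repairs this by evaluating $E(\Gamma)$ exactly and absorbing the correction, together with the entire polynomial sum (bounded via the more elementary $k^7\le 7!\,e^k$ rather than factorial domination), into the slack of $\frac{80640}{\Gamma-1}$, ending at $\frac{25200}{\Gamma-1}$ plus the exact exponential term. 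So your argument is not merely valid but strictly more careful than the paper's: the lemma survives because the polynomial bound is loose by orders of magnitude, but only your version makes that absorption explicit, whereas the paper's displayed chain for the seventh derivative ends exactly at $\frac{80640\alpha}{1-\alpha}$ and leaves no visible room for the missing $\frac{1}{(\Gamma e^{-2}-1)^2}$. What the paper's approach buys is a mechanical template (factorial domination, Leibniz-type expansion, $e^x-1\le 2x$) that explains the provenance of the constants; what yours buys is an exact identity for the exponential part, simpler series estimates, and a proof in which every intermediate inequality is actually true.
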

\begin{proof}
    \begin{align}
        g(\Gamma)\leq&\sum_{k=1}^\infty\left[\Gamma^{-k}\frac{(k+7)!}{k!}+\left(\frac{e^2}{\Gamma}\right)^k(k+1)\right]\notag
        \\
        =&\frac{\mathrm{d}^7}{\mathrm{d}\alpha^7}\left.\left(\frac{\alpha^8}{1-\alpha}\right)\right|_{\alpha=\Gamma^{-1}}+\frac{\mathrm{d}}{\mathrm{d}\alpha}\left.\left(\frac{\alpha^2}{1-\alpha}\right)\right|_{\alpha=e^2\Gamma^{-1}}\notag
        \\
        \underset{\text{(a)}}{\leq}&\frac{80640}{\Gamma-1}+\frac{2}{\Gamma e^{-2}-1},
    \end{align}
    (a) can be deduced based on the following inequality
    \begin{align}
        \frac{\mathrm{d}^7}{\mathrm{d}\alpha^7}\left(\frac{\alpha^8}{1-\alpha}\right)=&\sum_{k=0}^7(-1)^k\begin{pmatrix}
            7\\
            k
        \end{pmatrix}\frac{8!k!}{(k+1)!}\left(\frac{\alpha}{1-\alpha}\right)^{k+1}\notag
        \\
        \underset{\text{(b)}}{\leq}&7!\sum_{k=1}^8\begin{pmatrix}
            8\\
            k
        \end{pmatrix}\left(\frac{\alpha}{1-\alpha}\right)^k\notag
        \\
        =&7!\left[\left(1+\frac{\alpha}{1-\alpha}\right)^8-1\right]\notag
        \\
        \leq&7!\left[\exp\left\{\frac{8\alpha}{1-\alpha}\right\}-1\right]\notag
        \\
        \underset{\text{(c)}}{\leq}&\frac{80640\alpha}{1-\alpha},
    \end{align}
    where (b) and (c) are derived from Leibniz equation, and the inequality $e^x-1\leq 2x$ holds for $0\leq x\leq 1/2$ respectively.
\end{proof}
\begin{lemma}\label{optimal-condition-KL-prox}
    For any $n\in\mathbb{N}, \bolr\in\bR^n, \bolp\in\Delta^n$, if it holds that $\bolp^*=\arg\min\limits_{\bolp\in\Delta_n}\eta\llangle \bolp,\bolr\rrangle+\mathrm{KL}(\bolp\|\bolq)$, then we have
    \begin{align}\label{optimal-difference}
        \llangle \bolp^*-\bolp,\bolr\rrangle=&\frac{1}{\eta}\left(\mathrm{KL}(\bolp\|\bolq)-\mathrm{KL}(\bolp\|\bolp^*)-\mathrm{KL}(\bolp^*\|\bolq)\right).
    \end{align}
\end{lemma}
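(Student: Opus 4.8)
The plan is to combine the explicit stationarity condition for the entropic (KL-regularized) subproblem with the fact that $\bolp^*$ and $\bolp$ are both probability vectors, so that a normalizing constant drops out. Since $\mathrm{KL}(\cdot\|\bolq)$ is strictly convex and its gradient blows up at the boundary of $\Delta_n$, the minimizer $\bolp^*$ lies in the relative interior; hence the only active constraint is $\sum_{j}\bolp(j)=1$ and the first-order condition holds with equality. Introducing a Lagrange multiplier $c$ for this constraint and differentiating $\eta\llangle\bolp,\bolr\rrangle+\mathrm{KL}(\bolp\|\bolq)$ in the $j$-th coordinate gives $\eta\bolr(j)+\log(\bolp^*(j)/\bolq(j))+1=c$, i.e.
\begin{align}
\eta\bolr(j)=(c-1)+\log\frac{\bolq(j)}{\bolp^*(j)},\notag
\end{align}
for every $j\in[1:n]$ (equivalently $\bolp^*(j)\propto\bolq(j)\exp\{-\eta\bolr(j)\}$).

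First I would substitute this into the left-hand side $\eta\llangle\bolp^*-\bolp,\bolr\rrangle=\sum_{j=1}^n(\bolp^*(j)-\bolp(j))\,\eta\bolr(j)$. Because $\sum_j\bolp^*(j)=\sum_j\bolp(j)=1$, the constant term $(c-1)$ is annihilated, leaving
\begin{align}
\eta\llangle\bolp^*-\bolp,\bolr\rrangle=\sum_{j=1}^n(\bolp^*(j)-\bolp(j))\log\frac{\bolq(j)}{\bolp^*(j)}.\notag
\end{align}
Next I would expand the right-hand side directly from the definition of the KL divergence: combining the $\bolp(j)\log\bolp(j)$ contributions in $\mathrm{KL}(\bolp\|\bolq)-\mathrm{KL}(\bolp\|\bolp^*)$ collapses them to $\sum_j\bolp(j)\log(\bolp^*(j)/\bolq(j))$, so that
\begin{align}
\mathrm{KL}(\bolp\|\bolq)-\mathrm{KL}(\bolp\|\bolp^*)-\mathrm{KL}(\bolp^*\|\bolq)=\sum_{j=1}^n(\bolp(j)-\bolp^*(j))\log\frac{\bolp^*(j)}{\bolq(j)},\notag
\end{align}
which is exactly $\sum_j(\bolp^*(j)-\bolp(j))\log(\bolq(j)/\bolp^*(j))$. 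Comparing the two displays shows both equal $\frac{1}{\eta}$ times this common quantity, establishing Eq.~\eqref{optimal-difference}. Equivalently, one may recognize the statement as the three-point identity for the Bregman divergence generated by the negative entropy $\phi(\bolp)=\sum_j\bolp(j)\log\bolp(j)$, for which $\mathrm{KL}(\bolp\|\bolq)-\mathrm{KL}(\bolp\|\bolp^*)-\mathrm{KL}(\bolp^*\|\bolq)=\llangle\nabla\phi(\bolp^*)-\nabla\phi(\bolq),\bolp-\bolp^*\rrangle$; here $\nabla\phi(\bolp^*)-\nabla\phi(\bolq)$ equals the vector with entries $\log(\bolp^*(j)/\bolq(j))$, which coincides with $-\eta\bolr$ up to an additive multiple of $\mathbf{1}$, and that multiple is immaterial along the zero-sum direction $\bolp-\bolp^*$.

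I do not anticipate a substantive obstacle: this is a routine entropic-mirror-descent identity. The only points requiring care are justifying that the minimizer is interior, so that Lagrangian stationarity holds as an equality rather than a KKT inequality, and that $\bolq$ has full support so that every $\mathrm{KL}$ term is finite. Both hold in the algorithmic setting, where the iterates are produced by strictly positive multiplicative updates initialized at the uniform distribution.
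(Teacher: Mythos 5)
Your proof is correct and follows essentially the same route as the paper: both identify the closed-form minimizer $\bolp^*(j)\propto\bolq(j)\exp\{-\eta\bolr(j)\}$ and then obtain Eq.~\eqref{optimal-difference} from the same algebraic expansion of the three KL terms (the Bregman three-point identity, with the additive constant killed along the zero-sum direction $\bolp-\bolp^*$). The only cosmetic difference is how the closed form is certified: you invoke interiority and Lagrangian stationarity, while the paper verifies optimality directly by showing $F(\bolp)=F(\bolp')+\mathrm{KL}(\bolp\|\bolp')$ for the explicit candidate $\bolp'$, which sidesteps the interiority/KKT discussion you rightly flag as the one point needing care.
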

\begin{proof}
    We just need to prove $\bolp^*(i)\equiv \bolp'(i):=\frac{\bolq(i)\exp\{-\eta \bolr(i)\}}{\sum_{j=1}^n \bolq(j)\exp\{-\eta \bolr(j)\}}$ for any $i\in[n]$ which satisfies 
    \begin{align}\label{optimal-condition}
        \llangle \bolp-\bolp',\eta \bolr+\log(\bolp')-\log(\bolq)\rrangle=0,
    \end{align}
    for any $\bolp\in\Delta_n$. Assume that $F(\bolp):=\eta\llangle \bolp,\bolr\rrangle+\mathrm{KL}(\bolp\|\bolq)$ and define $\mathcal{E}(\bolp)=\sum_{i=1}^n \bolp(i)\log(\bolp(i))$ for any $\bolp\in\Delta_n$. Clearly, $\bolp'\in\Delta_n$. Hence, for all $\bolp\in\Delta_n$,
    \begin{align}
        F(\bolp)=&\eta\llangle \bolp,\bolr\rrangle+\mathrm{KL}(\bolp\|\bolq)\notag
        \\
        =&\eta\llangle \bolp',\bolr\rrangle+\mathrm{KL}(\bolp'\|\bolq)+\llangle \bolp-\bolp',\eta \bolr-\log(\bolq)\rrangle+\mathcal{E}(\bolp)-\mathcal{E}(\bolp')\notag
        \\
        \underset{\text{(a)}}{=}&\eta\llangle \bolp',\bolr\rrangle+\mathrm{KL}(\bolp'\|\bolq)+\mathcal{E}(\bolp)-\mathcal{E}(\bolp')+\llangle \bolp-\bolp',-\log(\bolp')\rrangle\notag
        \\
        \underset{\text{(b)}}{=}&F(\bolp')-\mathrm{KL}(\bolp\|\bolp'),
    \end{align}
    where (a) is derived from Eq.~\eqref{optimal-condition}. Therefore, we obtain that $\bolp^*\equiv \bolp'$. By using equality (b), we finish the proof.
\end{proof}
\begin{lemma}\label{bound-ka-by-KL}
    Suppose that for $\tau\in(0,1)$, we have $\left\|\frac{\bolp}{\bolq}\right\|_\infty\leq 1+\tau$. Then 
    $$\left(\frac{1-\tau}{2}-\frac{2\tau}{3(1-\tau)}\right)\cX^2(\bolp,\bolq)\leq\mathrm{KL}(\bolp\|\bolq).$$
\end{lemma}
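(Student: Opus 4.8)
The plan is to reduce the inequality to a one-dimensional estimate on the likelihood ratio and then bound the Taylor remainder of $r\mapsto r\log r$. First I would restrict attention to the support of $\bolq$ (any index with $\bolq(j)=0$ forces $\bolp(j)=0$ by the ratio bound and contributes nothing to either side) and set $r_j:=\bolp(j)/\bolq(j)$, so that the hypothesis reads $0<r_j\le 1+\tau$ for every $j$. Writing $u_j:=r_j-1$, the two normalizations give the zero-mean identity $\sum_j \bolq(j)u_j=\sum_j(\bolp(j)-\bolq(j))=0$, while a direct computation shows $\cX^2(\bolp,\bolq)=\sum_j\bolq(j)u_j^2$ and $\mathrm{KL}(\bolp\|\bolq)=\sum_j\bolq(j)\,r_j\log r_j$.

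Next I would Taylor-expand $\phi(r):=r\log r$ about $r=1$ with Lagrange remainder. Since $\phi(1)=0$, $\phi'(1)=1$, $\phi''(1)=1$ and $\phi'''(r)=-r^{-2}$, for each $j$ there is a $\xi_j$ between $1$ and $r_j$ with
\[
  r_j\log r_j=u_j+\tfrac12 u_j^2-\tfrac{1}{6\xi_j^2}u_j^3 .
\]
Multiplying by $\bolq(j)$ and summing, the linear term drops out by the zero-mean identity and the quadratic term assembles into $\tfrac12\cX^2(\bolp,\bolq)$, leaving the exact identity $\mathrm{KL}(\bolp\|\bolq)=\tfrac12\cX^2(\bolp,\bolq)-\tfrac16\sum_j\bolq(j)\,u_j^3/\xi_j^2$. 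It then remains only to control the cubic remainder from above.

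Here is the key step, and the only place where the hypothesis $r_j\le 1+\tau$ really enters. I would split the sum according to the sign of $u_j$. For $u_j>0$ we have $r_j>1$, hence $\xi_j\in[1,1+\tau]$ so $\xi_j\ge 1$, and $u_j\le\tau$; therefore $u_j^3/\xi_j^2\le u_j^3\le\tau u_j^2$. For $u_j<0$ the quantity $u_j^3/\xi_j^2$ is negative and may simply be discarded when bounding from above. This yields $\sum_j\bolq(j)u_j^3/\xi_j^2\le\tau\cX^2(\bolp,\bolq)$, whence $\mathrm{KL}(\bolp\|\bolq)\ge(\tfrac12-\tfrac{\tau}{6})\cX^2(\bolp,\bolq)$. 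Finally, since $\tfrac12-\tfrac{\tau}{6}-\bigl(\tfrac{1-\tau}{2}-\tfrac{2\tau}{3(1-\tau)}\bigr)=\tfrac{\tau}{3}+\tfrac{2\tau}{3(1-\tau)}\ge 0$ for $\tau\in(0,1)$, the stated (slightly weaker) coefficient follows immediately.

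The main obstacle is precisely this cubic remainder: because the hypothesis bounds $r_j$ only from above, $|u_j|$ can be as large as $1$ on the side $r_j<1$, so no two-sided bound on $|u_j|$ is available and the naive estimate $|u_j^3/\xi_j^2|\le\tau u_j^2$ fails outright. The resolution—and the crux of the argument—is the observation that the dangerous small-$r_j$ terms carry the favorable (negative) sign in the remainder and can be dropped, so that only the well-controlled $r_j\ge 1$ terms require a quantitative estimate.
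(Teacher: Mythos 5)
Your proof is correct, and it takes a genuinely different route from the paper's. The paper expands $\log(1+x)$ at $x_j=\bolp(j)/\bolq(j)-1$ as a power series weighted by $\bolp(j)$: the linear term directly yields $\cX^2(\bolp,\bolq)$ (since $\sum_j\bolp(j)x_j=\sum_j\bolp(j)^2/\bolq(j)-1$), the quadratic term is converted using $\bolp(j)\le(1+\tau)\bolq(j)$, and the cubic-and-higher tail is absorbed by a geometric-series bound that is valid only for $|x_j|\le\tau$. You instead expand $\phi(r)=r\log r$ to third order with a Lagrange remainder, weight by $\bolq(j)$, kill the linear term via the zero-mean identity $\sum_j\bolq(j)u_j=0$, and control the remainder by splitting on the sign of $u_j$. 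This buys two things. First, a sharper constant: your $\tfrac12-\tfrac{\tau}{6}$ dominates the stated $\tfrac{1-\tau}{2}-\tfrac{2\tau}{3(1-\tau)}$, as your final comparison verifies. Second, and more substantively, your argument uses only the one-sided hypothesis $\bolp/\bolq\le 1+\tau$ that the lemma actually states, whereas the paper's pointwise inequality $\log(1+x)\ge x-(\tfrac12+D\tau)x^2$ is proved only on $[-\tau,\tau]$ and genuinely fails for $x$ near $-1$ (the left side tends to $-\infty$ while the right stays bounded); the paper's proof therefore implicitly needs the two-sided ratio control $\bolq/\bolp\le 1+\tau$ as well, which does hold where the lemma is invoked (the iterates are consecutively close in both ratios) but not under the lemma's literal hypothesis. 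Your sign argument on the $r_j<1$ side is precisely what removes that requirement. One cosmetic point: $r_j$ can equal $0$ (when $\bolp(j)=0$ but $\bolq(j)>0$), where the Lagrange form is not literally available since $\phi'$ blows up at the origin; but the needed pointwise bound $r\log r\ge(r-1)+\tfrac12(r-1)^2$ on $[0,1]$ follows directly because the difference has derivative $\log r-(r-1)\le 0$ and vanishes at $r=1$, so this edge case is harmless.
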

\begin{proof}
    We consider the Taylor expansion of the function $\log(1+x)=\sum_{k=1}^\infty\frac{(-1)^{k-1}}{k}x^k$ and define $Q_{\tau, D}(x):=x-\left(\frac{1}{2}+D\tau\right)x^2$. According to
    \begin{align}
        \log(1+x)-Q_{\tau, D}(x)\geq D\tau x^2-\frac{|x|^3}{3(1-\tau)},
    \end{align}
    for any $x\in[-\tau,\tau]$, we have $\log(1+x)\geq Q_{\tau,D}(x)$ when $D\geq\frac{1}{3(1-\tau)}$ and $x\in[-\tau,\tau]$. Therefore, we obtain
    \begin{align}
        \mathrm{KL}(\bolp\|\bolq)=&\sum_{j=1}^n\bolp(j)\log\left(\frac{\bolp(j)}{\bolq(j)}\right)\notag
        \\
        \geq&\sum_{j=1}^n\bolp(j)\left[\left(\frac{\bolp(j)}{\bolq(j)}-1\right)-\left(\frac{1}{2}+D\tau\right)\left(\frac{\bolp(j)}{\bolq(j)}-1\right)^2\right]\notag
        \\
        =&\cX^2(\bolp,\bolq)-\left(\frac{1}{2}+D\tau\right)\sum_{j=1}^n\frac{\bolp(j)}{\bolq(j)}\bolq(j)\left(\frac{\bolp(j)}{\bolq(j)}-1\right)^2\notag
        \\
        \geq&\cX^2(\bolp,\bolq)-\left(\frac{1+\tau}{2}+D\tau(1+\tau)\right)\cX^2(\bolp,\bolq)\notag
        \\
        =&\left(\frac{1-\tau}{2}-D\tau(1+\tau)\right)\cX^2(\bolp,\bolq).
    \end{align}
    We complete the proof if $D=\frac{1}{3(1-\tau)}$.
\end{proof}
\begin{lemma}\label{lemma:bound-ka-by-var}
    Suppose that $\bolr\in\bR^n, \tau\in(0,1/2), \|\bolr\|_\infty\leq\frac{\tau}{2},$ and $\bolp, \tilde{\bolp}\in\Delta_n$ satisfy, for each $j\in[n]$,
    \begin{align}
        \tilde{\bolp}(j)=\frac{\bolp(j)\cdot\exp\{\bolr(j)\}}{\sum_{j'\in[n]}\bolp(j')\cdot\exp\{\bolr(j')\}}.
    \end{align}
    Then
    \begin{align}\label{bound-ka-by-var}
        \left(1-\left(\frac{2}{3(1-\tau)}+4\right)\tau\right)\mathrm{Var}_{\bolp}(\bolr)\leq\cX^2(\Tilde{\bolp},\bolp)\leq\left(1+\left(\frac{2}{3(1-\tau)}+4\right)\tau\right)\mathrm{Var}_{\bolp}(\bolr).
    \end{align}
\end{lemma}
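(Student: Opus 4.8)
The plan is to reduce the chi-squared divergence to an exact closed form and then compare variances pointwise, avoiding any series expansion of $\exp$. First I would make the normalization explicit: writing $Z:=\bE_{\bolp}(\exp\{\bolr\})=\sum_j\bolp(j)\exp\{\bolr(j)\}$, the definition of $\tilde{\bolp}$ gives $\tilde{\bolp}(j)-\bolp(j)=\bolp(j)\bigl(Z^{-1}\exp\{\bolr(j)\}-1\bigr)=\bolp(j)Z^{-1}\bigl(\exp\{\bolr(j)\}-Z\bigr)$. Substituting into $\cX^2(\tilde{\bolp},\bolp)=\sum_j(\tilde{\bolp}(j)-\bolp(j))^2/\bolp(j)$ and recognizing $Z=\bE_{\bolp}(\exp\{\bolr\})$ yields the exact identity
\[
\cX^2(\tilde{\bolp},\bolp)=Z^{-2}\sum_j\bolp(j)\bigl(\exp\{\bolr(j)\}-Z\bigr)^2=\frac{\mathrm{Var}_{\bolp}(\exp\{\bolr\})}{Z^2}.
\]
Thus the lemma becomes a statement purely about comparing $\mathrm{Var}_{\bolp}(\exp\{\bolr\})$ with $\mathrm{Var}_{\bolp}(\bolr)$ and controlling the scalar $Z$.

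Second, I would compare the two variances using the pairwise representation $\mathrm{Var}_{\bolp}(\bolu)=\tfrac12\sum_{j,k}\bolp(j)\bolp(k)(\bolu(j)-\bolu(k))^2$. By the mean value theorem, for each pair $(j,k)$ there is a point $\xi_{jk}$ lying between $\bolr(j)$ and $\bolr(k)$ with $\exp\{\bolr(j)\}-\exp\{\bolr(k)\}=\exp\{\xi_{jk}\}(\bolr(j)-\bolr(k))$; since $\|\bolr\|_\infty\le\tau/2$ we have $\xi_{jk}\in[-\tau/2,\tau/2]$ and hence $\exp\{2\xi_{jk}\}\in[\exp\{-\tau\},\exp\{\tau\}]$. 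Squaring and summing against $\tfrac12\bolp(j)\bolp(k)$ gives the clean two-sided bound
\[
\exp\{-\tau\}\,\mathrm{Var}_{\bolp}(\bolr)\le\mathrm{Var}_{\bolp}(\exp\{\bolr\})\le\exp\{\tau\}\,\mathrm{Var}_{\bolp}(\bolr).
\]

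Third, since each $\exp\{\bolr(j)\}\in[\exp\{-\tau/2\},\exp\{\tau/2\}]$, the average $Z$ lies in the same interval, so $Z^2\in[\exp\{-\tau\},\exp\{\tau\}]$. Dividing the variance bound by $Z^2$ combines the two factors of $\exp\{\pm\tau\}$ into $\exp\{-2\tau\}\,\mathrm{Var}_{\bolp}(\bolr)\le\cX^2(\tilde{\bolp},\bolp)\le\exp\{2\tau\}\,\mathrm{Var}_{\bolp}(\bolr)$. It then remains to absorb the exponentials into the stated affine factors, i.e.\ to verify $\exp\{2\tau\}\le 1+\bigl(\tfrac{2}{3(1-\tau)}+4\bigr)\tau$ and $\exp\{-2\tau\}\ge 1-\bigl(\tfrac{2}{3(1-\tau)}+4\bigr)\tau$ for $\tau\in(0,1/2)$. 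The lower inequality is immediate from $\exp\{-2\tau\}\ge 1-2\tau$ together with $\tfrac{2}{3(1-\tau)}+4\ge 2$; the upper one follows by noting the difference vanishes at $\tau=0$ and checking that its derivative $\tfrac{2}{3(1-\tau)^2}+4-2\exp\{2\tau\}$ stays positive on $(0,1/2)$.

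The computation is largely mechanical once the exact identity $\cX^2(\tilde{\bolp},\bolp)=\mathrm{Var}_{\bolp}(\exp\{\bolr\})/Z^2$ is in hand, and the one place that needs the right idea is the variance comparison. The main obstacle I would watch for is resisting the natural but messier route of expanding $\exp\{\bolr\}=1+\bolr+O(\bolr^2)$, which forces one to control a cross term $\mathrm{Cov}_{\bolp}(\bolr,\exp\{\bolr\}-1-\bolr)$ and higher moments of $\bolr$; the pairwise/mean-value-theorem identity sidesteps this entirely by turning the comparison into a single uniform pointwise estimate on $\exp\{2\xi_{jk}\}$, after which the only remaining care is bookkeeping the two $\exp\{\pm\tau\}$ factors so they merge into $\exp\{\pm2\tau\}$.
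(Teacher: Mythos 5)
Your proof is correct, but it takes a genuinely different route from the paper's. The paper first recenters, replacing $\bolr$ by $\bolr-\llangle\bolp,\bolr\rrangle\bole$ (this is exactly where the hypothesis $\|\bolr\|_\infty\leq\tau/2$ is spent, since recentering can double the sup-norm to $\tau$), then sandwiches $\exp\{x\}$ between the explicit quadratics $F_D^1(x)=1+x+\frac{1-D\tau}{2}x^2$ and $F_D^2(x)=1+x+\frac{1+D\tau}{2}x^2$ with $D=\frac{1}{3(1-\tau)}$, and propagates these through $(\bE_{\bolp}[\exp\{\bolr\}])^2$ and $\bE_{\bolp}[\exp\{2\bolr\}]$ via the identity $\cX^2(\tilde\bolp,\bolp)=-1+\bE_{\bolp}[\exp\{\bolr\}/\bE_{\bolp}[\exp\{\bolr\}]]^2$ — that quadratic-sandwich bookkeeping is precisely where the constant $\frac{2}{3(1-\tau)}+4$ originates, and it mirrors the technique of the companion Lemma on $\mathrm{KL}$ versus $\cX^2$ (same $D$). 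You instead exploit the exact identity $\cX^2(\tilde\bolp,\bolp)=\mathrm{Var}_{\bolp}(\exp\{\bolr\})/Z^2$ with $Z=\bE_{\bolp}[\exp\{\bolr\}]$, compare variances through the pairwise representation $\mathrm{Var}_{\bolp}(\bolu)=\frac12\sum_{j,k}\bolp(j)\bolp(k)(\bolu(j)-\bolu(k))^2$ and the mean value theorem, and need no recentering at all, since both the pairwise differences and $\mathrm{Var}_{\bolp}$ are translation-invariant and $Z$ is bounded directly from $\|\bolr\|_\infty\leq\tau/2$. This buys a cleaner and, for small $\tau$, strictly sharper conclusion $e^{-2\tau}\mathrm{Var}_{\bolp}(\bolr)\leq\cX^2(\tilde\bolp,\bolp)\leq e^{2\tau}\mathrm{Var}_{\bolp}(\bolr)$, from which the stated affine bounds follow; the paper's heavier machinery buys uniformity of method across its auxiliary lemmas. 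One small tightening: your final derivative check (positivity of $\frac{2}{3(1-\tau)^2}+4-2e^{2\tau}$ on $(0,1/2)$) is true but asserted — it can be closed by noting the second derivative $\frac{4}{3(1-\tau)^3}-4e^{2\tau}$ is negative there, so the minimum is at $\tau=1/2$ where the value is $\frac{8}{3}+4-2e>1.2$ — or avoided entirely by convexity of $e^{2\tau}$: on $(0,1/2]$ the chord gives $e^{2\tau}-1\leq 2(e-1)\tau<\frac{14}{3}\tau\leq\left(\frac{2}{3(1-\tau)}+4\right)\tau$, which disposes of the upper absorption in one line.
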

\begin{proof}
    Without loss of generality, we consider the case $\langle \bolp,\bolr\rangle=0$. If not, redefine $\tilde{\bolr}:=\bolr-\llangle \bolp,\bolr\rrangle\cdot \bole(\|\Tilde{\bolr}\|_\infty\leq\tau)$ and analyze $\tilde{\bolr}$ where $\bole\in\bR^n$ is an all 1 vector. It's clear that 
    \begin{align}\label{expression-ka}
        \cX^2(\Tilde{\bolp},\bolp)=-1+\sum_{j=1}^n\bolp(j)\left(\frac{\Tilde{\bolp}(j)}{\bolp(j)}\right)^2=-1+\bE_{\bolp}\left[\frac{\exp\{r\}}{\bE_{\bolp}[\exp\{\bolr\}]}\right]^2.
    \end{align}
    We define $F_{D}^1(x):=1+x+\frac{1-D\tau}{2}x^2, F_{D}^2(x):=1+x+\frac{1+D\tau}{2}x^2$ and note that for any $x\in[-\tau,\tau]$
    \begin{align}
        \exp\{x\}-F_D^1(x)\geq&\frac{D\tau}{2}x^2-\frac{x^3}{6},\label{bound-exp-1}
        \\
        F_D^2(x)-\exp\{x\}\geq&\frac{D\tau}{2}x^2-\frac{|x|^3}{6(1-\tau)},\label{bound-exp-2}
    \end{align}
    where Eq.~\eqref{bound-exp-1} is derived from the summation of the $2k$-th and $2k+1$-th ($k\geq 2$) terms in the Taylor expansion of $\exp\{x\}$ is always non-negative, Eq.~\eqref{bound-exp-2} is derived from $\sum_{k=3}^\infty\frac{x^k}{k!}\leq\frac{|x|^3}{6(1-x)}\leq\frac{|x|^3}{6(1-\tau)}$ for any $x\in[-\tau,\tau]$. Therefore, we have $\exp\{x\}-F_D^1(x)\geq0$ and $F_D^2(x)-\exp\{x\}\geq0$ for all $x\in[-\tau,\tau]$ if $D\geq\frac{1}{3(1-\tau)}$. Then, we have
    \begin{align}\label{sub-1}
        1+2x+(2-(D+2)\tau)x^2\leq(\exp\{x\})^2\leq1+2x+(2+(D+2)\tau)x^2,
    \end{align}
    when $D\tau\leq\frac{1}{2}$. In addition, by $\langle \bolp,\bolr\rangle=0$, it's obvious that
    \begin{align}\label{sub-2}
        1+\frac{1-D\tau}{2}\bE_{\bolp}[\bolr^2]\leq\bE_{\bolp}[\exp\{\bolr\}]\leq1+\frac{1+D\tau}{2}\bE_{\bolp}[\bolr^2].
    \end{align}
    Combining Eq.~\eqref{sub-1} and \eqref{sub-2}, we derived that
    \begin{align}
        1+(1-(D+1)\tau)\bE_{\bolp}[\bolr^2]\leq&\left(\bE_{\bolp}[\exp\{\bolr\}]\right)^2\leq1+(1+(D+1)\tau)\bE_{\bolp}[\bolr^2],\label{sub-3}
        \\
        1+(2-(D+2)\tau)\bE_{\bolp}[\bolr^2]\leq&\bE_{\bolp}\left[(\exp\{\bolr\})^2\right]\leq1+(2+(D+2)\tau)\bE_{\bolp}[\bolr^2],\label{sub-4}
    \end{align}
    for $D\tau\leq\frac{1}{2}$. According to Eq.~\eqref{expression-ka} ,\eqref{sub-3} and \eqref{sub-4}, we have
    \begin{align}
        -1+\bE_{\bolp}\left[\frac{\exp\{\bolr\}}{\bE_{\bolp}[\exp\{\bolr\}]}\right]^2&\geq\frac{(1-(2D+3)\tau)\bE_{\bolp}[\bolr^2]}{1+(1+(D+1)\tau)\bE_{\bolp}[\bolr^2]}\geq(1-(2D+4)\tau)\bE_{p}[\bolr^2],\notag
        \\
        -1+\bE_{\bolp}\left[\frac{\exp\{\bolr\}}{\bE_{\bolp}[\exp\{\bolr\}]}\right]^2&\leq\frac{(1+(2D+3)\tau)\bE_{\bolp}[\bolr^2]}{1+(1-(D+1)\tau)\bE_{\bolp}[\bolr^2]}\leq(1-(2D+4)\tau)\bE_{\bolp}[\bolr^2].\notag
    \end{align}
    We derive Eq.~\eqref{bound-ka-by-var} by setting $D=\frac{1}{3(1-\tau)}$.
\end{proof}
 
	\begin{lemma}[Lemma B.6, \citep{Daskalakis2021Near}]\label{Q-RR-bounded-softmax}
		Let $\phi_1,\cdots,\phi_l$ be softmax-type functions.
		\begin{align}
			\phi_i(\bolx)=\frac{\exp\{\bolx(j_i)\}}{\sum_{k=1}^n\tau_{ik}\exp\{\bolx(k)\}},
		\end{align}
		where $j_i\in[1,\cdots,n], \sum_{k=1}^n\tau_{ik}=1$ for any $i\in[1,\cdots,l]$. Let $P(\bolx)=\sum_{k=0}\sum_{|\bolalpha|=k}\frac{D^{\bolalpha}P(\boldsymbol{0})}{\bolalpha!}\bolx^{\alpha}$ denote the Taylor series of $\prod_{i=1}^l\phi_i$. Then for any integer $k$,
		\begin{align}
			\sum_{|\bolalpha|=k}\frac{|D^{\bolalpha}P(\boldsymbol{0})|}{\bolalpha!}\leq(e^3l)^k.
		\end{align}
	\end{lemma}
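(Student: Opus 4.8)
The plan is to bound the \emph{coefficient-absolute-sum} functional $\nu_k(f):=\sum_{|\bolalpha|=k}\frac{|D^{\bolalpha}f(\mathbf{0})|}{\bolalpha!}$ by passing to a one-dimensional majorizing series and reducing the product $\prod_{i=1}^{l}\phi_i$ to a single softmax factor. First I would record that $\nu_k$ is \emph{sub-convolutive}: if $f=\sum_{\bolalpha}a_{\bolalpha}\bolx^{\bolalpha}$ and $g=\sum_{\bolbeta}b_{\bolbeta}\bolx^{\bolbeta}$, then the degree-$\bolalpha$ coefficient of $fg$ equals $\sum_{\bolbeta+\bolbeta'=\bolalpha}a_{\bolbeta}b_{\bolbeta'}$, so the triangle inequality gives $\nu_k(fg)\le\sum_{j=0}^{k}\nu_j(f)\,\nu_{k-j}(g)$. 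Encoding the values $\nu_k$ in the scalar generating series $G_f(t):=\sum_{k\ge0}\nu_k(f)\,t^k$, this reads $G_{fg}\preceq G_fG_g$, where $\preceq$ denotes coefficientwise domination between power series with non-negative coefficients. Iterating, $G_{\prod_{i=1}^{l}\phi_i}\preceq\prod_{i=1}^{l}G_{\phi_i}$, so it suffices to produce one clean majorant for $G_{\phi_i}$ and read off a degree-$k$ coefficient at the end.

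Second, I would bound $G_{\phi_i}$ for a single softmax. Writing the denominator as $g_i(\bolx)=\sum_{r}\tau_{ir}e^{\bolx(r)}=1+h_i(\bolx)$ with $h_i(\bolx)=\sum_{r}\tau_{ir}\left(e^{\bolx(r)}-1\right)$, I factor $\phi_i=e^{\bolx(j_i)}\,(1+h_i)^{-1}$. Since $\tau_{ir}\ge0$ and $\sum_r\tau_{ir}=1$, the degree-$k$ part of $h_i$ is $\sum_{r}\tau_{ir}\,\bolx(r)^k/k!$, whence $\nu_k(h_i)=1/k!$ for $k\ge1$ and $\nu_0(h_i)=0$, i.e. $G_{h_i}=e^{t}-1$; likewise $G_{e^{\bolx(j_i)}}=e^{t}$. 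Expanding $(1+h_i)^{-1}=\sum_{m\ge0}(-1)^m h_i^m$ and applying sub-convolutivity to each $h_i^m$ gives $G_{(1+h_i)^{-1}}\preceq\sum_{m\ge0}(e^t-1)^m=(2-e^t)^{-1}$; the inversion step is where I must check that $G\mapsto(1-G)^{-1}$ preserves $\preceq$ for series with zero constant term, which holds because it is the monotone sum $\sum_m G^m$. Multiplying the two factors, $G_{\phi_i}\preceq e^{t}(2-e^{t})^{-1}$.

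Third, I would extract the numerical constant. Using $e^{t}(2-e^{t})^{-1}=-1+2(2-e^{t})^{-1}$ and $(2-e^t)^{-1}=\sum_{m\ge0}(e^t-1)^m$, the coefficient is $[t^k]\,e^{t}(2-e^{t})^{-1}=2a_k/k!$ for $k\ge1$ (and $1$ for $k=0$), where $a_k=\sum_{m}m!\,S(k,m)$ is the ordered Bell number. The crude surjection bound $m!\,S(k,m)\le m^k$ yields $a_k\le(k+1)k^k$, so $2a_k/k!\le2(k+1)e^k\le e^{3k}$ for every $k\ge1$ (using $k^k/k!\le e^k$ and $2(k+1)\le e^{2k}$). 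Hence $\nu_k(\phi_i)\le(e^3)^k$, i.e. $G_{\phi_i}\preceq(1-e^3t)^{-1}$. Feeding this into the product bound gives $G_{\prod_i\phi_i}\preceq(1-e^3t)^{-l}$, and reading off the coefficient, $\nu_k\!\left(\prod_{i=1}^l\phi_i\right)\le\binom{k+l-1}{k}(e^3)^k\le l^k(e^3)^k=(e^3l)^k$, where $\binom{k+l-1}{k}=\prod_{j=1}^{k}\frac{l-1+j}{j}\le l^k$ since each factor is at most $l$.

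I expect the main obstacle to be organizational rather than deep: setting up the formal-power-series majorant calculus so that $\preceq$ is provably preserved through multiplication and through the geometric inversion $(1+h_i)^{-1}$, and confirming that the generous slack in the last step still closes to the exact constant $e^3$. The true growth rate of $e^t/(2-e^t)$ is only $(1/\ln2)^k\approx1.44^k$, so the Fubini-number estimate has ample room; everything else—computing $G_{e^{\bolx(j_i)}}$ and $G_{h_i}$, and the binomial estimate $\binom{k+l-1}{k}\le l^k$—is routine.
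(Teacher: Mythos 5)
The paper does not prove this statement at all---it is quoted verbatim as Lemma B.6 of \citet{Daskalakis2021Near} and used as a black box (feeding the $(Q,R)$-boundedness hypothesis of Lemma \ref{Boundness-chain-rule-finite-difference})---so the only meaningful comparison is with the original source, whose strategy your proof essentially reproduces: factor each softmax as $e^{\bolx(j_i)}(1+h_i)^{-1}$, majorize coefficient-absolute sums by the univariate series $e^t/(2-e^t)$ via the geometric expansion of the inverse, bound the resulting Fubini-number coefficients with ample slack, and combine the $l$ factors multiplicatively. Your argument is correct as written: the sub-convolutivity of $\nu_k$, the formal validity of the inversion (only $m\le k$ terms contribute at degree $k$ since $h_i$ has zero constant term), and the numerical steps $m!\,S(k,m)\le m^k$, $k^k/k!\le e^k$, $2(k+1)\le e^{2k}$ for $k\ge1$, and $\binom{k+l-1}{k}\le l^k$ all check out, with the $k=0$ case trivially equal to $1$; the only cosmetic difference from the source is that you combine the $l$ single-softmax bounds coefficientwise through the negative-binomial coefficient, whereas the $(Q,R)$-calculus of \citet{Daskalakis2021Near} reaches $(e^3l)^k$ by letting the $R$-parameters add under products.
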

    We introduce the conception of $(Q,R)$-bounded function briefly.
    Suppose $\phi:\bR^n\rightarrow\bR$ is real-analytic in a neighborhood of the origin. For real numbers $Q,R>0$, we say that $\phi$ is $(Q,R)$-bounded if the Taylor expansion of $\phi$ at $\boldsymbol{0}$, denoted $P_{\phi}(\bolx)=\sum_{k=0}^{\infty}\sum_{|\bolalpha|=k}\frac{D^{\bolalpha}f(\boldsymbol{0})}{\bolalpha!}\bolx^{\bolalpha}$, satisfies, for each integer $i\geq 0$, $\sum_{|\bolalpha|=k}\frac{|D^{\bolalpha}\phi(\boldsymbol{0})|}{\bolalpha!}\leq Q\cdot R^k$.
	\begin{lemma}[Detailed version of Lemma 4.5, \citep{Daskalakis2021Near}]\label{Boundness-chain-rule-finite-difference}
		Suppose that $h,n\in\bN, \phi:\bR^n\rightarrow\bR$ is a $(Q,R)$-bounded function such that the radius of convergence of its power series at $\mathbf{0}$ is at least $\nu>0$, and $\bolZ=\{\bolZ^0,\cdots,\bolZ^T\}\subset\bR^n$ is a sequence of vectors satisfying $\left\|\bolZ^t\right\|_{\infty}\leq\nu$ for $t\in[0,\cdots,T]$. Suppose for some $\beta\in(0,1)$, for each $0\leq h'\leq h$ and $t\in[0,\cdots,T-h']$, it holds that $\left\|D_{h'}\bolZ^t\right\|_{\infty}\leq\frac{1}{\Gamma R}\beta^{h'}(h')^{B h'}$ for some $B\geq 3, \Gamma\geq e^3$. Then for all $t\in[0,\cdots,T-h]$,
		\begin{align}
			\left|(D_{h}(\phi\circ \bolZ))^t\right|\leq Q\cdot g(\Gamma)\cdot\beta^{h}h^{B h+1},
		\end{align}
		where $g(\Gamma)$ is a bounded function with respect to $\Gamma$.
	\end{lemma}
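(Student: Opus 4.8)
The plan is to follow the strategy of \citet{Daskalakis2021Near}: expand $\phi$ in a Taylor series about the current point $\bolZ^t$, exploit the linearity of the finite-difference operator $D_h$ together with the fact that $D_h$ annihilates constants, and then reduce everything to a combinatorial sum of finite differences of the coordinate sequences of $\bolZ$.

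First, using the closed form \eqref{D_h_formulation}, I would write $(D_h(\phi\circ\bolZ))^t=\sum_{s=0}^h\binom{h}{s}(-1)^{h-s}\phi(\bolZ^{t+s})$. Since $\|\bolZ^{t+s}\|_\infty\le\nu$ lies within the radius of convergence, each $\phi(\bolZ^{t+s})$ equals its Taylor series about $\bolZ^t$. The degree-zero term $\phi(\bolZ^t)$ is independent of $s$ and is killed by $D_h$ (because $\sum_{s=0}^h\binom{h}{s}(-1)^{h-s}=0$ for $h\ge1$), so introducing $\bolV^s:=\bolZ^{t+s}-\bolZ^t$ (which satisfies $\bolV^0=\mathbf 0$) gives
\begin{align}
(D_h(\phi\circ\bolZ))^t=\sum_{\bolbeta\ne\mathbf 0}\frac{D^{\bolbeta}\phi(\bolZ^t)}{\bolbeta!}\,\bigl(D_h(\bolV^{\bolbeta})\bigr)^{0}.\notag
\end{align}
Two ingredients then feed in. For the coefficients, differentiating the power series and using $(Q,R)$-boundedness together with $\|\bolZ^t\|_\infty\le\nu$ bounds the aggregate $\sum_{|\bolbeta|=m}|D^{\bolbeta}\phi(\bolZ^t)|/\bolbeta!$ by $Q$ times an $R^m$ and a geometric $(1-R\nu)^{-(m+1)}$-type factor; the latter, with $\nu$ sitting inside the radius, is what produces the $\exp\{2m\}$ growth appearing in the definition of $g(\Gamma)$.

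For the finite differences $\bigl(D_h(\bolV^{\bolbeta})\bigr)^{0}$ I would apply the discrete product (Leibniz) rule, writing the monomial as a product of $m=|\bolbeta|$ coordinate sequences of $\bolV$ and distributing the $h$ differences as an ordered composition $s_1+\cdots+s_m=h$ carrying a multinomial weight (the shifts that accompany the rule are irrelevant once absolute values are taken). Each factor carrying $s_r\ge1$ differences is bounded by the hypothesis $\|D_{s_r}\bolV\|_\infty=\|D_{s_r}\bolZ\|_\infty\le\frac{1}{\Gamma R}\beta^{s_r}s_r^{Bs_r}$, while each undifferentiated factor equals $\bolV^{\sigma}=\bolZ^{t+\sigma}-\bolZ^{t}$ with $\|\bolV^{\sigma}\|_\infty\le\sigma\,\|D_1\bolZ\|_\infty\le\frac{h\beta}{\Gamma R}$, thanks precisely to $\bolV^0=\mathbf 0$. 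Hence \emph{every} one of the $m$ factors contributes the crucial decay $\tfrac{1}{\Gamma R}$ and at least one power of $\beta$; using $\prod_{r}s_r^{Bs_r}\le h^{Bh}$ and collecting the $\beta$'s yields $\beta^{h}h^{Bh}$ together with $(\Gamma R)^{-m}$. Multiplying by the $R^{m}$ from the coefficient bound cancels $R$ and leaves a clean $\Gamma^{-m}$ per degree-$m$ monomial.

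The final step, and the main obstacle, is the purely combinatorial summation: over all $m\ge1$, all multi-indices $\bolbeta$ with $|\bolbeta|=m$, and all compositions of $h$, one must sum the multinomial coefficients and placement counts weighted by $\Gamma^{-m}$ and show the total is at most $g(\Gamma)$ times one extra factor of $h$ (which supplies the $h^{Bh+1}$). The polynomial bookkeeping (number of compositions, the $\sigma\le h$ estimates, multinomial counts) contributes the $k^7$ term, the derivative/radius-of-convergence factor contributes the $(k+1)\exp\{2k\}$ term, and the decay $\Gamma^{-m}$ with $\Gamma\ge e^3>e^2$ guarantees convergence of $\sum_k\Gamma^{-k}[k^7+(k+1)\exp\{2k\}]=g(\Gamma)$. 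Carrying out this counting carefully—so that the differentiated-factor and undifferentiated-factor decays are balanced against the combinatorial growth and land exactly on the stated $g(\Gamma)$—is the technical heart, and essentially reproduces the computation of \citet{Daskalakis2021Near}.
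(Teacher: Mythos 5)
Your skeleton---power-series expansion of $\phi$, the discrete Leibniz rule distributing the $h$ differences over the factors of each monomial, and a final combinatorial summation against the decay $\Gamma^{-k}$---is the same as the paper's proof, which itself follows \citet{Daskalakis2021Near}. The structural difference is the expansion point: the paper takes $\phi(\mathbf{0})=0$ without loss of generality and expands about $\mathbf{0}$, so every factor of a degree-$k$ monomial is a coordinate of $\bolZ$ itself and is bounded, differentiated or not, by the single hypothesis $\|D_{h'}\bolZ^t\|_{\infty}\leq\frac{1}{\Gamma R}\beta^{h'}(h')^{Bh'}$ with $h'\geq 0$; the case $h'=0$, i.e.\ $\|\bolZ^t\|_{\infty}\leq\frac{1}{\Gamma R}$, is exactly what handles the undifferentiated factors and produces the clean $(\Gamma R)^{-k}\beta^{h}$, while the $(Q,R)$-bound applies verbatim to the coefficients. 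Your recentering at $\bolZ^t$ opens two genuine gaps.

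First, the coefficient bound via a ``$(1-R\nu)^{-(m+1)}$-type factor'' is ill-founded: $\nu$ is only a \emph{lower} bound on the radius of convergence, and nothing in the statement prevents $R\nu\geq 1$ (the $(Q,R)$-condition guarantees convergence only on $\|\bolx\|_{\infty}<1/R$, and $\nu$ may exceed $1/R$), in which case your geometric factor is meaningless; moreover, with $\|\bolZ^t\|_{\infty}$ allowed to equal $\nu$, the re-expansion about $\bolZ^t$ need not even converge at $\bolZ^{t+s}$. Both problems are repaired only by the $h'=0$ case of the hypothesis, which you never invoke: $\|\bolZ^t\|_{\infty}\leq\frac{1}{\Gamma R}$ gives $\sum_{|\bolbeta|=m}|D^{\bolbeta}\phi(\bolZ^t)|/\bolbeta!\leq QR^m(1-\Gamma^{-1})^{-(m+1)}$ and justifies the recentering. (Relatedly, the $e^{2k}$ in $g(\Gamma)$ is not a radius-of-convergence artifact: in the paper it arises from the placement count $\sum_{x:[h]\rightarrow[k]}\prod_{j}(h'_{x,j})^{Bh'_{x,j}}\leq h^{Bh}\max\{k^7,(hk+1)\exp\{2k/h^{B-1}\}\}$.) Second, bounding each undifferentiated factor by $\|\bolZ^{t+\sigma}-\bolZ^t\|_{\infty}\leq h\beta/(\Gamma R)$ injects an uncontrolled $(h\beta)^{m-1}$ into each degree-$m$ monomial: the lemma assumes only $\beta\in(0,1)$ with no relation between $\beta$ and $h$, so your sum over $m$ behaves like $\sum_m \Gamma^{-m}(h\beta)^{m-1}$ times the combinatorial counts, which need not converge once $h\beta\geq\Gamma$ and in any case cannot land on $Q\,g(\Gamma)\beta^{h}h^{Bh+1}$. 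The repair is again the $h'=0$ hypothesis, yielding $\|\bolZ^{t+\sigma}-\bolZ^t\|_{\infty}\leq 2/(\Gamma R)$ at the price of an extra $2^m$ and hence a modified $g$, or simply centering at $\mathbf{0}$ as the paper does, which gives $1/(\Gamma R)$ per undifferentiated factor and the exact $g(\Gamma)=\sum_{k\geq1}\Gamma^{-k}[k^7+(k+1)e^{2k}]$ that the downstream argument requires (Theorem \ref{finite-difference-bound} needs $g(\Gamma)\Theta_2\leq\frac{1}{2}$ via Lemma \ref{setting-of-Gamma}).
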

	\begin{proof}
		Without loss of generality, we assume $\phi(\mathbf{0})=0$. We define $(\phi\circ Z)^t=\sum_{\boldsymbol{\gamma}\in\bZ_{\geq0}^n:|\boldsymbol{\gamma}|=k}a_{\gamma}\left(\bolZ^t\right)^\gamma$ and obtain
		\begin{align}
			\left|\left(D_{h}(\phi\circ \bolZ)\right)^t\right|=&\left|\sum_{k=1}^\infty\, \sum_{\boldsymbol{\gamma}\in\bZ_{\geq0}^n:|\boldsymbol{\gamma}|=k}a_\gamma\left(D_{h} \bolZ^\gamma\right)^t\right|\notag
			\\
			\leq&\sum_{k=1}^\infty\, \sum_{\boldsymbol{\gamma}\in\bZ_{\geq0}^n:|\boldsymbol{\gamma}|=k}|a_\gamma|\left(\sum_{x:[h]\rightarrow[k]}\prod_{j=1}^k\left|\left(E_{t'_{x,j}}D_{h'_{x,j}}\bolZ(l'_{x,j})\right)^t\right|\right)\notag
			\\
			\leq&\sum_{k=1}^\infty\, \sum_{\boldsymbol{\gamma}\in\bZ_{\geq0}^n:|\boldsymbol{\gamma}|=k}|a_\gamma|\cdot\frac{\beta^{h}}{(\Gamma R)^k}\cdot\left(\sum_{x:[h]\rightarrow[k]}\prod_{j=1}^k(h'_{x,j})^{Bh'_{x,j}}\right)\notag
			\\
			\leq&\sum_{k=1}^\infty\, \sum_{\boldsymbol{\gamma}\in\bZ_{\geq0}^n:|\boldsymbol{\gamma}|=k}|a_\gamma|\cdot\frac{\beta^{h}}{(\Gamma R)^k}h^{Bh}\max\left\{k^7, (hk+1)\exp\left\{\frac{2k}{h^{B-1}}\right\}\right\}\notag
			\\
			\underset{\text{(c)}}{\leq}&\sum_{k=1}^\infty Q\left(\frac{R}{\Gamma R}\right)^k\cdot\max\left\{k^7, (k+1)\exp\left\{2k\right\}\right\}\cdot\beta^{h} h^{Bh+1}\notag
			\\
			\leq&Q\cdot g(\Gamma)\cdot\beta^{h} h^{Bh+1},
		\end{align}
		where (c) is derived from $(Q,R)$-bounded condition.
	\end{proof}
        \begin{lemma}[Lemma C.4, \citep{Daskalakis2021Near}]\label{varience-bound}
            Let $\{n,T\}\subset\bZ_+$ with $n\geq2$ and $T\geq4$, we select $H:=\lceil\log(T)\rceil, \beta_0=\frac{1}{4H}$, and $\beta=\frac{\sqrt{\beta_0/8}}{H^3}$. Assume that $\{\bolz^t\}_{t=1}^T\subset[0,1]^n$ and $\{\bolp^t\}_{t=1}^T\subset\Delta_n$ satisfy the following condition
            \begin{enumerate}
                \item For each $0\leq h\leq H$ and $1\leq t\leq T-h$, it holds that $\left\|(D_h\bolz)^t\right\|_{\infty}\leq\beta^h H^{3h+1}$.
                \item The sequence $\{\bolp^t\}_{t=1}^T$ is $\zeta-$consecutively close for some $\zeta\in\left[(2T)^{-1},\beta_0^4/8256\right].$ 
            \end{enumerate}
            Then, we have
            \begin{align}
                \sum_{t=1}^T\mathrm{Var}_{\bolp^t}(\bolz^t-\bolz^{t-1})\leq 2\beta_0\sum_{t=1}^T\mathrm{Var}_{\bolp^t}(\bolz^{t-1})+165120(1+\zeta)H^5+2.
            \end{align}
        \end{lemma}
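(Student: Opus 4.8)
The plan is to follow the variance-control argument of \citet{Daskalakis2021Near}, treating the increment $\bolz^t-\bolz^{t-1}=(D_1\bolz)^{t-1}$ as a slowly varying sequence and reducing the whole estimate to a telescoping identity. First I would expand the increment variance through the covariance form $\mathrm{Var}_{\bolp^t}(\bolz^t-\bolz^{t-1})=\mathrm{Var}_{\bolp^t}(\bolz^t)-2\,\mathrm{Cov}_{\bolp^t}(\bolz^t,\bolz^{t-1})+\mathrm{Var}_{\bolp^t}(\bolz^{t-1})$ and rewrite it as $\mathrm{Var}_{\bolp^t}(\bolz^t)-\mathrm{Var}_{\bolp^t}(\bolz^{t-1})-2\,\mathrm{Cov}_{\bolp^t}(\bolz^{t-1},(D_1\bolz)^{t-1})$. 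Summing the telescoping skeleton $\sum_{t}[\mathrm{Var}_{\bolp^{t+1}}(\bolz^t)-\mathrm{Var}_{\bolp^t}(\bolz^{t-1})]$ collapses to the boundary value $\mathrm{Var}_{\bolp^{T+1}}(\bolz^T)-\mathrm{Var}_{\bolp^1}(\bolz^0)$, which is at most $1$ since $\bolz^t\in[0,1]^n$; this produces the additive constant $2$ in the statement and isolates two residual pieces, a measure-change term and a covariance cross-term.

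The measure-change piece $\sum_t[\mathrm{Var}_{\bolp^{t+1}}(\bolz^t)-\mathrm{Var}_{\bolp^t}(\bolz^t)]$ I would control with the pairwise representation $\mathrm{Var}_{\bolp}(\bolu)=\tfrac12\sum_{j,k}\bolp(j)\bolp(k)(\bolu(j)-\bolu(k))^2$: since $\{\bolp^t\}$ is $\zeta$-consecutively close, every ratio $\bolp^{t+1}(j)/\bolp^t(j)$ lies in $[(1+\zeta)^{-1},1+\zeta]$, so each product $\bolp^{t+1}(j)\bolp^{t+1}(k)$ differs from $\bolp^t(j)\bolp^t(k)$ by a factor in $[(1+\zeta)^{-2},(1+\zeta)^2]$ and the term is bounded by $((1+\zeta)^2-1)\mathrm{Var}_{\bolp^t}(\bolz^t)$. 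Because $\zeta\le\beta_0^4/8256$ is far smaller than $\beta_0$, and consecutive closeness also makes $\mathrm{Var}_{\bolp^t}(\bolz^t)$ comparable to $\mathrm{Var}_{\bolp^t}(\bolz^{t-1})$, this sum is swallowed by the target term $2\beta_0\sum_t\mathrm{Var}_{\bolp^t}(\bolz^{t-1})$ with room to spare.

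The heart of the argument is the covariance cross-term $\sum_t\mathrm{Cov}_{\bolp^t}(\bolz^{t-1},(D_1\bolz)^{t-1})$, and here a direct Cauchy--Schwarz estimate $|\mathrm{Cov}|\le\sqrt{\mathrm{Var}_{\bolp^t}(\bolz^{t-1})}\sqrt{W_t}$ (with $W_t:=\mathrm{Var}_{\bolp^t}(\bolz^t-\bolz^{t-1})$) is too lossy, as it would force a large constant in front of $\sum_t\mathrm{Var}_{\bolp^t}(\bolz^{t-1})$ rather than the required $\beta_0$. Instead I would exploit smoothness: the hypothesis $\|(D_h\bolz)^t\|_\infty\le\beta^h H^{3h+1}$ together with $\beta=\sqrt{\beta_0/8}/H^3$ makes $\beta^h H^{3h}=(\beta_0/8)^{h/2}$ decay geometrically, so the first-difference sequence $\{(D_1\bolz)^{t}\}$ is itself nearly constant over windows of length $H$ (its own differences are $O(\beta^2 H^7)$) and the order-$H$ difference is negligible. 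Performing an iterated summation-by-parts that repeatedly transfers the difference operator from $(D_1\bolz)^{t-1}$ onto $\bolz^{t-1}$ (a Newton forward-difference, or window-averaging, maneuver), each transfer costs a factor tied to $\beta$ and the process terminates at the negligible $H$-th difference; this extracts the genuine coefficient $\beta_0$ on $\sum_t\mathrm{Var}_{\bolp^t}(\bolz^{t-1})$ while leaving only boundary contributions whose accumulated size is $O((1+\zeta)H^5)$, matching the constant $165120(1+\zeta)H^5$ after bookkeeping.

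I expect this iterated summation-by-parts to be the main obstacle. The difficulty is twofold: carrying the binomial coefficients from the discrete Taylor/Newton expansion while preserving the geometric cancellation supplied by $\beta$, and simultaneously tracking the measure $\bolp^t$ through the parts summation so that the consecutive-closeness error stays of order $\zeta$ rather than accumulating over $t$. Once the cross-term is pinned at coefficient at most $\beta_0$ with $O((1+\zeta)H^5)$ slack, combining it with the measure-change absorption and the $O(1)$ boundary term yields $\sum_{t=1}^T\mathrm{Var}_{\bolp^t}(\bolz^t-\bolz^{t-1})\le 2\beta_0\sum_{t=1}^T\mathrm{Var}_{\bolp^t}(\bolz^{t-1})+165120(1+\zeta)H^5+2$, and a final check that the admissible ranges of $\beta$, $\beta_0=1/(4H)$ and $\zeta$ are mutually consistent completes the proof.
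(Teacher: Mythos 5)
The paper does not actually prove this lemma: it is imported verbatim---statement, hypotheses, and the constants $165120$ and $\beta_0^4/8256$ included---from Lemma~C.4 of \citet{Daskalakis2021Near}, so the only benchmark is that cited proof, and your outline follows its architecture rather than offering a different route. Your first two reductions are sound: the identity $\mathrm{Var}_{\bolp^t}(\bolz^t-\bolz^{t-1})=\mathrm{Var}_{\bolp^t}(\bolz^t)-\mathrm{Var}_{\bolp^t}(\bolz^{t-1})-2\,\mathrm{Cov}_{\bolp^t}(\bolz^{t-1},(D_1\bolz)^{t-1})$ is correct, the telescoped boundary term is $O(1)$ since $\bolz^t\in[0,1]^n$, and the measure-change sum is indeed $O(\zeta)$ per step via the pairwise form of the variance. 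One local slip: comparability of $\mathrm{Var}_{\bolp^t}(\bolz^t)$ and $\mathrm{Var}_{\bolp^t}(\bolz^{t-1})$ does \emph{not} follow from consecutive closeness, which constrains the $\bolp$'s and not the $\bolz$'s; writing $W_t:=\mathrm{Var}_{\bolp^t}(\bolz^t-\bolz^{t-1})$, you instead need $\mathrm{Var}_{\bolp^t}(\bolz^t)\le 2\,\mathrm{Var}_{\bolp^t}(\bolz^{t-1})+2W_t$ and then to absorb the resulting $O(\zeta)\sum_t W_t$ into the left-hand side, which is exactly where the ceiling $\zeta\le\beta_0^4/8256$ earns its keep. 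Bounding the increment variance crudely by $\beta^2H^8$ instead would leak a term of order $\zeta T\beta^2H^8\sim T/\mathrm{polylog}(T)$, which is not $O(H^5)$.

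The genuine gap is that the covariance cross-term---which you yourself flag as the heart---is planned, not proved. Everything that makes the lemma true with these constants lives inside the step you defer: (i) summation by parts in $t$ takes place inside the $t$-dependent bilinear form $\mathrm{Cov}_{\bolp^t}(\cdot,\cdot)$, so each transfer of a difference operator also differentiates the weights $\bolp^t$ and the $\bolp^t$-mean, and one must show these errors are chargeable to $\zeta$ per step without accumulating a factor of $T$; (ii) the iterated Newton/Abel expansion generates combinatorial coefficients growing like $h^{\Theta(h)}$, and the hypothesis $\beta^h h^{3h+1}$ with $\beta=\sqrt{\beta_0/8}/H^3$ (so that $\beta^hH^{3h}=(\beta_0/8)^{h/2}$) is calibrated precisely to beat them---your sketch never runs this race, and in particular never derives the asserted leading coefficient $2\beta_0$ (an unspecified $C\beta_0$ would not reproduce the statement, and the downstream use in the proof of Theorem~\ref{main-theorem-minimization} requires $\hat{g}_2(\eta\Theta)-2\beta_0\hat{g}_1(\eta\Theta)\geq 0$) nor the explicit $165120(1+\zeta)H^5$ slack. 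As written, the proposal is an accurate roadmap of the argument of \citet{Daskalakis2021Near} rather than a proof; closing it amounts to reproducing their Appendix~C bookkeeping, which is presumably why the present paper simply cites the lemma.
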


        \begin{proposition}\label{prop1-auxiliary}
		Given a constant $c>0$, we have
		\begin{align}
			\sum_{k=1}^t\left(\frac{c}{c+k}\right)^2\leq c.
		\end{align}
	\end{proposition}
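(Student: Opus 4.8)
The plan is to bound the summand termwise by a telescoping quantity, so that the whole sum collapses. The key observation is that since $c+k \geq c+k-1 > 0$ for every $k \geq 1$, we have $(c+k)^2 \geq (c+k-1)(c+k)$, and therefore I would first establish
$$\left(\frac{c}{c+k}\right)^2 = \frac{c^2}{(c+k)^2} \leq \frac{c^2}{(c+k-1)(c+k)}.$$
This replaces the quadratic denominator by a product of consecutive terms, which is exactly what makes a telescoping argument available.

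Next I would apply partial fractions to the right-hand side, writing
$$\frac{c^2}{(c+k-1)(c+k)} = c^2\left(\frac{1}{c+k-1} - \frac{1}{c+k}\right),$$
so that summing over $k$ telescopes. Carrying out the sum from $k=1$ to $t$ then yields
$$\sum_{k=1}^t \left(\frac{c}{c+k}\right)^2 \leq c^2\left(\frac{1}{c} - \frac{1}{c+t}\right) = c - \frac{c^2}{c+t}.$$

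Finally, since $\frac{c^2}{c+t} \geq 0$ for all $c > 0$ and $t \geq 1$, the right-hand side is at most $c$, which is the claimed bound. There is no genuine obstacle here; this is an elementary estimate, and the only point requiring care is choosing the comparison so that the telescoping difference carries the correct sign. An equivalent route would compare the sum against $\int_0^t c^2/(c+x)^2\,\mathrm{d}x$, exploiting that the summand is decreasing in $k$ (so that each term is dominated by the integral over $[k-1,k]$); this produces the same value $c - c^2/(c+t) \leq c$, and I would use whichever presentation reads most cleanly in context.
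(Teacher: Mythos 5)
Your proof is correct, and in fact the paper states Proposition~\ref{prop1-auxiliary} without any proof at all, so there is no argument of the authors' to compare against; your write-up fills a genuine (if small) gap. Both of your routes work: the termwise bound $(c+k)^2\geq(c+k-1)(c+k)$ followed by the partial-fraction telescope gives exactly $\sum_{k=1}^t \left(\frac{c}{c+k}\right)^2\leq c-\frac{c^2}{c+t}\leq c$, with the only prerequisite being $c+k-1\geq c>0$, which holds for all $k\geq1$. Your alternative integral comparison, dominating each term by $\int_{k-1}^{k}\frac{c^2}{(c+x)^2}\,\mathrm{d}x$ via monotonicity of the summand, yields the same value $c-\frac{c^2}{c+t}$ and has the incidental virtue of matching the technique the authors actually use in the adjacent Lemma~\ref{lemma1-auxiliary}, where sums of $\frac{c'}{c+k}$ and $\left(\frac{1}{c+k}\right)^2$ are bounded by the corresponding integrals; so if stylistic consistency with the paper matters, the integral presentation is the natural choice, while the telescoping version is the more elementary and self-contained of the two.
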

	\begin{lemma}\label{lemma1-auxiliary}
		For a constant $c\geq c'>0$, the following inequality holds
		\begin{align}
			c'\log\left(\frac{c+t-1}{c+T}\right)-\frac{(c'+c'c)^2}{2c}\leq\sum_{k=t}^{T}\log\left(1-\frac{c'}{c+k}\right)\leq c'\log\left(\frac{c+t}{c+1+T}\right),
		\end{align}
		when $T>t\geq 1$.
	\end{lemma}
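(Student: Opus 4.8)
The plan is to prove the two inequalities separately; in both directions the idea is to replace the discrete sum by a continuous (integral) comparison justified by monotonicity of the summand, and then to control $\log(1-u)$ by elementary scalar bounds. Throughout note that the summand $\phi(x):=\log\left(1-\frac{c'}{c+x}\right)$ is well defined and increasing on $x\ge t-1\ge 0$, since $c\ge c'>0$ and $k\ge t\ge1$ force $c+k-c'\ge c+1-c'\ge1>0$, so every logarithm that appears is finite.

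For the upper bound I would use that $\phi$ is increasing, so $\phi(k)\le\int_{k}^{k+1}\phi(x)\,dx$ and hence $\sum_{k=t}^{T}\phi(k)\le\int_{t}^{T+1}\phi(x)\,dx$. Applying the elementary bound $\log(1-u)\le -u$ pointwise gives $\int_{t}^{T+1}\phi(x)\,dx\le -c'\int_{t}^{T+1}\frac{dx}{c+x}=c'\log\frac{c+t}{c+T+1}$, which is exactly the claimed upper bound $c'\log\frac{c+t}{c+1+T}$.

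For the lower bound I would instead use the second-order estimate $\log(1-u)\ge -u-\frac{u^2}{2(1-u)}$ (valid for $u\in[0,1)$, obtained by bounding the tail $\sum_{n\ge2}u^n/n\le\frac12\sum_{n\ge2}u^n=\frac{u^2}{2(1-u)}$), applied termwise with $u=\frac{c'}{c+k}$, so that $\phi(k)\ge -\frac{c'}{c+k}-\frac{c'^2}{2(c+k)(c+k-c')}$. The linear part is summed by the decreasing-function comparison $\sum_{k=t}^{T}\frac{1}{c+k}\le\int_{t-1}^{T}\frac{dx}{c+x}=\log\frac{c+T}{c+t-1}$, contributing at least $c'\log\frac{c+t-1}{c+T}$, the main term of the target. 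For the quadratic remainder I would first show $c+k-c'\ge\frac{c+k}{1+c}$ — equivalently $c(c+k)\ge c'(1+c)$, which follows from $k\ge1$ and $c\ge c'$ — so that $\frac{c'^2}{2(c+k)(c+k-c')}\le\frac{c'^2(1+c)}{2(c+k)^2}$, and then invoke Proposition~\ref{prop1-auxiliary} in the form $\sum_{k=1}^{t}\frac{1}{(c+k)^2}\le\frac1c$ (for all $t$) to bound the remainder sum by $\frac{c'^2(1+c)}{2c}\le\frac{(c'+c'c)^2}{2c}$, the last step using $1+c\le(1+c)^2$.

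The main obstacle is the constant in the lower bound: matching $\frac{(c'+c'c)^2}{2c}$ is only possible with a remainder estimate that scales correctly in $c$, so the denominator bound $c+k-c'\ge\frac{c+k}{1+c}$ (rather than a cruder $c$-independent estimate such as $\sum 1/k^2$) is essential, as is the tight $\sum_{k\ge1}(c+k)^{-2}\le 1/c$ supplied by Proposition~\ref{prop1-auxiliary}. The rest is bookkeeping: confirming $c+k-c'>0$ so the termwise logarithm estimate applies, and checking the integral-comparison endpoints $t-1\ge0$ and $c+t-1\ge c>0$.
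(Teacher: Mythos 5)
Your proof is correct and takes essentially the same route as the paper's: the upper bound via $\log(1-u)\le -u$ together with an integral comparison, and the lower bound via the second-order Taylor remainder $\log(1-u)\ge -u-\frac{u^2}{2(1-u)}$ summed against $\log\frac{c+T}{c+t-1}$ plus a quadratic tail of size at most $\frac{(c'+c'c)^2}{2c}$. The only (cosmetic) differences are that you justify the termwise quadratic estimate explicitly via $c+k-c'\ge\frac{c+k}{1+c}$ — a detail the paper asserts without proof in Eq.~\eqref{low-estimation-log} — and you bound the remainder sum by Proposition~\ref{prop1-auxiliary} instead of the paper's integral $\int_{t-1}^{T}(c+x)^{-2}\,\mathrm{d}x\le 1/c$.
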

	\begin{proof}
		Accoding to the Taylor expansion of $\log(1-x)$ when $x<1$, we obtain the estimation of $\log\left(1-\frac{c'}{c+k}\right)$ for any $k\geq 1$ as follows
		\begin{align}
			\log\left(1-\frac{c'}{c+k}\right)\leq&-\frac{c'}{c+k},\label{up-estimation-log}
			\\
			\log\left(1-\frac{c'}{c+k}\right)\geq&-\frac{c'}{c+k}-\frac{(c'+cc')^2}{2}\left(\frac{1}{c+k}\right)^2.\label{low-estimation-log}
		\end{align}
		Next, we have
		\begin{align}
			\sum_{k=t}^{T}-\frac{c'}{c+k}\leq-\int_{t}^{T+1}\frac{c'}{c+x}dx&=c'\log\left(\frac{c+t}{c+1+T}\right),\label{up-estimation}
			\\
			\sum_{k=t}^{T}\left[-\frac{c'}{c+k}-\frac{(c'+cc')^2}{2}\left(\frac{1}{c+k}\right)^2\right]&\geq-\int_{t-1}^{T}\left[\frac{c'}{c+x}+\frac{(c'+cc')^2}{2}\left(\frac{1}{c+x}\right)^2\right]\mathrm{d}x\notag
			\\
			&\geq c'\log\left(\frac{c+t-1}{c+T}\right)-\frac{(c'+cc')^2}{2c}.
		\end{align}
	\end{proof}

\end{document}